\documentclass[letterpaper,11pt]{amsart}
\usepackage{amsmath}
\usepackage{amssymb,amsfonts,scalerel,dsfont, mathtools,bm, comment,float}
\usepackage{bbm}
\usepackage{hyperref}
\hypersetup{
	colorlinks,
	linkcolor={blue},
	citecolor={blue},
	urlcolor={blue}
}

\usepackage{tikz}
\usetikzlibrary{positioning, arrows.meta, shapes.geometric,decorations.pathreplacing,fit, calc}

\usepackage{mathtools,amssymb,amsthm}
\usepackage{xcolor}

\usepackage{helvet}

\RequirePackage{mathrsfs} \let\mathcal\mathscr

\numberwithin{equation}{section}

\newtheorem{theorem}{Theorem}[section]
\newtheorem{lemma}[theorem]{Lemma}
\newtheorem{proposition}[theorem]{Proposition}

\theoremstyle{definition}
\newtheorem*{ack}{Acknowledgments}
\newtheorem*{oa}{Open Access}
\newtheorem*{fund}{Funding}
\newtheorem{remark}[theorem]{Remark}
\newtheorem{definition}[theorem]{Definition}
\newtheorem{example}{Comparison}

\newtheorem{heuristic}{Heuristic}

\usepackage{graphicx}
\usepackage{amsmath}
\usepackage{amsthm}
\usepackage{amssymb}
\usepackage{enumerate}
\usepackage{amstext}
\usepackage{amsfonts}
\usepackage{graphicx}

\numberwithin{equation}{section}

\newcommand{\Z}{\mathbb{Z}}

\newcommand{\PP}{\mathbb{P}}
\renewcommand{\AA}{\mathbb{A}}
\newcommand{\FF}{\mathbb{F}}
\newcommand{\ZZ}{\mathbb{Z}}

\newcommand{\NN}{\mathbb{N}}
\newcommand{\QQ}{\mathbb{Q}}
\newcommand{\RR}{\mathbb{R}}

\renewcommand{\hat}{\widehat}

\newcommand{\x}{\mathbf{x}}
\newcommand{\y}{\mathbf{y}}

\newcommand{\uu}{\mathbf{u}}
\newcommand{\bb}{\mathbf{b}}

\newcommand{\Y}{\underline{\mathbf{Y}}}

\renewcommand{\v}{\mathbf{v}}
\renewcommand{\d}{\mathbf{d}}
\newcommand{\e}{\mathbf{e}}

\newcommand{\z}{\mathbf{z}}

\newcommand{\starsum}{\sideset{}{^*}\sum}

\newcommand{\ux}{{\vec{\mathrm{x}}}}
\newcommand{\ua}{\vec{\mathrm{a}}}
\newcommand{\ub}{\vec{\mathrm{b}}}
\newcommand{\ur}{\vec{\mathrm{r}}}

\newcommand{\uz}{{\vec{\mathrm{z}}}}
\newcommand{\uw}{{\vec{\mathrm{w}}}}
\newcommand{\uv}{{\vec{\mathrm{v}}}}
\newcommand{\uF}{{\vec{\mathrm{F}} }}

\newcommand{\uc}{{\vec{\mathrm{c}}}}
\newcommand{\ud}{\vec{\mathrm{d}}}
\newcommand{\ualf}{\vec{\alpha}}

\newcommand{\uy}{{\vec{\mathrm{y}}}}

\def\scrF{{\mathcal F}}

\def\vecnull{{\text{\boldmath$0$}}}

\def\GL{\operatorname{GL}}

\def\diag{{\mathrm{diag}}}

\mathtoolsset{showonlyrefs=true}
\newcommand{\scrm}{\mathfrak{m}}

\DeclareMathOperator{\meas}{meas}

\DeclareMathOperator{\Spec}{Spec}

\renewcommand{\hat}{\widehat}
\renewcommand{\bar}{\overline}

\renewcommand{\mod}{\:\text{mod}\:}

\newcommand{\ve}{\varepsilon}
\newcommand{\F}{{\underline{F}}}

\newcommand{\N}{\mathbb{N}}
\newcommand{\R}{\mathbb{R}}

\newcommand{\na}{\ua}

\newcommand{\nn}{\vec{\mathrm{n}}}

\newcommand{\nF}{\uF}

\begin{document}


\keywords{Circle method, delta symbol, Diophantine equations, Hardy-Littlewood method, Kloosterman refinement, quadratic forms, exponential sum, representations of zero, Hasse principle, Manin-Peyre conjecture}

\subjclass[2020]{11P55 (11D45, 14G05, 14J45, 11D09)}

\title[Two-dimensional delta symbol method]{A  two-dimensional delta symbol method and its application to pairs of quadratic forms}

 \author[J. Li]{Junxian Li}
 \address{Department of Mathematics\\
 University of California, Davis,
 1 Shields Avenue, Davis, CA 95616, U.S.A
 }
 \urladdr{https://orcid.org/0000-0003-0740-7404}
 \email{junxian@math.ucdavis.edu}

 \author[S. L. Rydin Myerson]{Simon L. Rydin Myerson}
 \address{Department of Mathematical Sciences, Chalmers University of Technology and the University of Gothenburg, 412 96 Gothenburg, Sweden}
 \urladdr{https://orcid.org/0000-0002-1486-6054}
 \email{myerson@chalmers.se}

 \author[P. Vishe]{Pankaj Vishe}
 \address{Department of Mathematical Sciences, Durham University, Durham, DH1 3LE, United Kingdom}
 \urladdr{https://orcid.org/0000-0002-7355-0615}
 \email{pankaj.vishe@durham.ac.uk}

\begin{abstract}
	We present a  two-dimensional delta symbol method that facilitates a version of the Kloosterman refinement of the circle method, addressing a question posed by Heath-Brown.  As an application, we establish the asymptotic formula for the number of integral points on a non-singular intersection of two integral quadratic forms with at least $10$ variables. Assuming the generalized Lindel\"of Hypothesis, we reduce the number of variables to $9$ by performing a double Kloosterman refinement. A heuristic argument suggests our  two-dimensional delta symbol will typically outperform known expressions of this type by an increasing margin as the number of variables grows.
\end{abstract}

\maketitle
 \setcounter{tocdepth}{1}
\tableofcontents

\section{Overview: quadratic forms and delta symbol methods}\label{sec:delta}
Consider a system of $R$ homogeneous degree $d$ integral forms in $s$ variables. We aim to derive an asymptotic formula for the number of integral zeroes of this system with size at most $P$, as $P$ approaches infinity. A straightforward heuristic suggests that there are roughly $P^{s-dR}$ integral solutions as long as $s>dR$. Indeed, such asymptotic formulae can be obtained using the circle method if $s$ is sufficiently large in terms of $d$ and $R$ and the forms are jointly non-singular.
Particularly sharp results are obtained when $R=1$ by an idea of Kloosterman, who took advantage of cancellations in certain complete exponential sums which arise from a
partition of $[0,1]$ into regions of scale $1/q$ centered at rationals of the form $a/q$.
 When $R=2$, a literal partition of $[0,1]^2$ of this kind would be impossible, since the distance between nearest rational neighbours of type $\ua/q$  not only depends on their respective denominators but also on the heights of affine lines that they belong to.
Our main technical result, Theorem~\ref{thm:delta}, in a certain sense comes as close as possible to such a partition, enabling a Kloosterman refinement of the circle method in dimension two. As an illustration of the strength of the method, we use it to derive an asymptotic formula for the number of integral points on non-singular intersections of two quadrics.

 \begin{theorem}\label{thm:application}
 	Let $F_1, F_2$ be two quadratic forms with integral coefficients in $s$ variables. 
    Suppose the projective variety defined by $F_1(\x)=F_2(\x)=0$ is non-singular of codimension 2. Let $w\in C_c^\infty (\RR^s)$. If $s$ is even and $s\geq10$, then for any $\Delta< 1/6$, we have 
 	\begin{align}\label{asymtwoquadrics}
 \operatornamewithlimits{\sum}_{\substack{\x\in \ZZ^s\\ F_1(\x)=F_2(\x)=0}}w\Big(\frac{\x}{P}\Big)=\mathfrak S \mathfrak I P^{s-4}+O(P^{s-4-\Delta}),
 	\end{align} 
 	where the singular series $\mathfrak S$ defined in \eqref{eq:SS} depends on $F_1, F_2$ and the singular integral $\mathfrak I$ defined in \eqref{eq:SI} depends on $F_1, F_2$ and $w$. The implicit constant in the error term depends on \(\Delta,F_1,F_2\) and \(w\).
    
    If $s$ is odd and $s\geq 11$, then \eqref{asymtwoquadrics} holds with $\Delta<1/15$. 
    Moreover, under the generalized Lindel\"of
    Hypothesis (GLH) for Dirichlet $L$-functions,~\eqref{asymtwoquadrics} also holds for $s=9$ with $\Delta<1/15$.
 \end{theorem}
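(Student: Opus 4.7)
The plan is to apply the two-dimensional delta symbol developed earlier in this paper to detect both equations $F_1(\x)=0$ and $F_2(\x)=0$ simultaneously in
\[N(P):=\sum_{\x\in\ZZ^s} w(\x/P)\,\mathbbm{1}_{F_1(\x)=F_2(\x)=0}.\]
Substituting this expansion and interchanging the order of summation leads to a sum over moduli of complete exponential sums in $\x$ attached to a quadratic phase of the shape $(a_1F_1(\x)+a_2F_2(\x))/q$, multiplied by smooth archimedean integrals depending on $F_1$, $F_2$ and $w$. The point of working with the two-dimensional delta symbol, rather than applying the one-dimensional Heath-Brown delta method twice, is that both equations are carried by a common Kloosterman modulus; this arrangement lets the Kloosterman refinement exploit cancellation from both attached character sums simultaneously, which should be a strictly richer source of savings than the iterated one-dimensional approach.

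The next step is Poisson summation in the $\x$-variable, which replaces the inner sum by a dual lattice sum indexed by $\y\in\ZZ^s$. The arithmetic factor becomes a Gauss-type exponential sum associated to the pencil $a_1F_1+a_2F_2$, and the archimedean factor becomes a dual oscillatory integral determined by $F_1,F_2$ and $w$. Isolating the $\y=\bn$ contribution and evaluating the Gauss sums at good moduli produces the main term; rearranging local factors and the archimedean integral identifies this contribution with $\mathfrak{S}\mathfrak{I}P^{s-4}$, where $\mathfrak{S}$ and $\mathfrak{I}$ are as in \eqref{eq:SS} and \eqref{eq:SI}.

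The main obstacle is controlling the contribution of $\y\neq\bn$. Non-singularity of the projective intersection $F_1=F_2=0$ ensures that for generic $(a_1,a_2,q)$ the attached pair of Gauss sums admits a Weil-type square-root bound, with the discriminantal locus of the pencil producing only a mild additional loss. Combining this pointwise bound with the saving from averaging over the modulus (the Kloosterman refinement), and carefully balancing the natural scales of modulus size $Q$ against the dual length $|\y|$ in the stationary phase analysis of the dual integral, should yield the unconditional threshold $s\geq10$ together with the error saving $\Delta<1/6$. For $s=9$ under GLH, one performs a second Kloosterman refinement: after the first Poisson step, the arithmetic side is itself a sum of Kloosterman-type sums in the dual modulus, and GLH for Dirichlet $L$-functions bounds averages of such sums by their conjecturally sharp size, producing the additional saving required to bring the threshold down to $s=9$ with $\Delta<1/15$.
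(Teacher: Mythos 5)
Your outline coincides with the paper's strategy at every major step: apply the two-dimensional delta symbol (Theorem~\ref{thm:delta}, repackaged as Proposition~\ref{thm:delta2}), Poisson-sum in $\x$ to pass to dual frequencies $\uu$, extract the main term $\mathfrak{S}\mathfrak{J}P^{s-4}$ from $\uu=\vecnull$ on the major arcs (Section~\ref{sec:major arc}), and bound the remaining frequencies using square-root bounds on $D_q$ and $S_{q,d\uc}$ together with the Kloosterman refinement, upgrading to a double Kloosterman refinement under GLH to reach $s=9$. Two imprecisions worth flagging: the \emph{first} Kloosterman refinement averages over the numerators $\ua \bmod q$, exploiting that $p_{\Lambda(\ua,q)}$ depends only on the lattice $\Lambda(\ua,q)$ and hence is invariant under $\ua\mapsto\lambda\ua$ for $\gcd(\lambda,q)=1$ (cf.~\eqref{deltaav}); the \emph{double} refinement is the average over the modulus $q$ itself, which is what needs GLH. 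Also, ``Kloosterman-type sums in the dual modulus'' is not quite the right picture — the objects controlled by GLH are partial sums $\sum_{q\leq x}S_{q,\uc}(\uu)$ of Gauss sums in the original modulus $q$, and Lemma~\ref{hypo:GRH} relates these to a Dirichlet $L$-function of the quadratic character $\psi_\uu$ attached to $F_\uc^*(\uu)$ (of conductor $\ll|\uc|^{s-1}|\uu|^2$), not to Kloosterman sums.
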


As explained on the next page, the asymptotic formula \eqref{asymtwoquadrics} verifies the Manin--Peyre conjecture for nonsingular complete intersections of two quadrics with dimension at least $7$ (or $6$ under GLH). This improves an earlier result of Munshi~\cite{M}, which requires $s\geq 11$ with  $\Delta <1/32$.
Our condition $s\geq 10$
 matches that in the work of Heath-Brown--Pierce~\cite{HBP}, which handles the ``split" case when $F_i(\mathbf x,\mathbf y)=G_i(\mathbf x)+H_i(\mathbf y)$ where $G_i,H_i$ are quadratic forms in at least $5$ variables for $i=1,2$ with $\Delta<1/32$.
The conditional part of our result $s\geq 9$ matches the analogous result in the function field setting obtained by Vishe~\cite{V19}, where GLH follows from the generalized Riemann Hypothesis over finite fields. 

Before going into details of the method, we briefly discuss what is known and conjectured for smaller $s$. We phrase our discussion in terms of integer solutions of homogeneous equations. (See for example~\cite{JL} for more on the terminology of smooth proper models used in parts of the literature.)

Let $K$ be a number field and let $\mathcal O_K$ denotes the ring of integers in $K$. 
For nontrivial zeroes in $\mathcal O_K^s$ of two quadratic forms in $s$ variables over $K$, the Hasse Principle holds provided that $s\geq 8$ and the quadratic forms cut out a smooth projective variety by a result of Heath-Brown~\cite{HBHasse}, which  improves upon an earlier result of Colliot-Th\'{e}l\`ene--Sansuc--Swinnerton-Dyer~\cite{CSS}. In a recent work, Molyakov~\cite{Molyakov} proves the Hasse Principle for $\mathcal O_K$-points on the smooth part of the zero locus of two quadratic forms over $K$, provided $s\geq 8$ and the quadratic forms define a non-conical, geometrically irreducible projective variety. If we reduce the number of variables to $5$, which includes the cases of del Pezzo surfaces of degree 4 and Ch\^atelet surfaces, then the smooth Hasse principle can fail due to Brauer--Manin obstruction~\cite[Example 15.5]{CSS}. If the pair of quadratic forms define a smooth codimension $2$ variety and $s\geq 6$, then there is no Brauer--Manin obstruction and the Picard rank is $1$. Thus the Manin--Peyre conjecture predicts that \eqref{asymtwoquadrics} should hold in that case, at least with a weaker error term and after removing those solutions lying in a thin set~\cite{LST}.
The usual statement of the conjecture has the indicator function of a box in place of the smooth weight $w$; this can be recovered by allowing $w$ to approach the indicator function sufficiently slowly from above and from below. One could even preserve the power-saving error term if one checks that our implicit constants depends at most polynomially on bounds for the derivatives of $w$, but we do not pursue this here.

As mentioned before, the proof of Theorem \ref{thm:application} is based on a new version of the two-dimensional form of the circle method, which we present in Theorem~\ref{thm:delta} below. In order to describe this result and its context we introduce the $R$-dimensional delta symbol
\begin{equation}
    \label{eq:delta_def}
\delta_{\nn} =\begin{cases} 1&\text{if }\nn=\vec{0},\\0&\text{if }\nn\in\ZZ^R\setminus\{\vec{0}\}.\end{cases}
\end{equation}
When \(R=1\), this is written as \(\delta_n\) to detect if an integer $n$ is zero. 

\subsection{Variations on the circle method: a history}

The classical circle method from Hardy, Littlewood, Ramanujan and Vinogradov writes $\delta_n$ as $\int_{0}^1 e(\alpha n)d\alpha$. One then analyzes the quantity in question by breaking the unit circle into major and minor arcs. The major arc contributions are estimated asymptotically leading to the expected main terms, while the minor arc contributions are only estimated with an upper bound which should be in the error terms.  This type of argument allows one to obtain the \emph{analytic Hasse Principle} (for the exact meaning of this term see Arala~\cite{Arala}), when $s$ is sufficiently large. Building on the work of Davenport~\cite{Davenport}, Birch~\cite{Birch} established the analytic Hasse Principle for a smooth complete intersection defined by $R$ forms of degree $d$ in $s$ variables over $\QQ$ when $s>(d-1)2^{d-1}R(R+1)+R-1$. Improvements of Birch's result have been made in many cases. In particular, the quadratic dependency on $R$ has been reduced to linear in the work of Rydin Myerson~\cite{Myerson} by estimating the minor arc contributions using repulsions in exponential sums.  

Another type of refinement of the circle method, aimed at reducing the number of required variables, avoids the traditional splitting of the unit circle into major and minor arcs and instead treats the contributions of all arcs asymptotically. This imitates the method of Kloosterman~\cite{Kloosterman} who used
the Farey dissection of the unit circle to 
obtain the analytic Hasse Principle for representations of integers by positive definite quaternary quadratic forms. This bypasses the barrier encountered in the Hardy--Littlewood type circle method which requires at least five variables (see also Birch~\cite{Birch} for $R=1, d=2$). 

The advantage of Kloosterman's method is to make use of cancellations between complete exponential sums when averaged over different rationals with the same denominator. One can obtain a version of this Kloosterman refinement without appealing to the Farey dissection by using the delta symbol method, originated from the work of Duke-Friedlander-Iwaniec~\cite{DFI} and further developed by Heath-Brown~\cite{Heath-Brown96}. 
The delta symbol method essentially 
 provides a smooth partition of the unit circle (see Marmon-Vishe~\cite[Proposition 1.2]{Marmon_Vishe}) of the form 
 \begin{equation}
     \label{Marmon_Vishe}
 			\delta_{n}
			=
			\sum_{1\leq q\leq Q}
			\underset{\hphantom{a\bmod{q}}}{\starsum_{\ a\bmod{q}}}			
			\int_{-Q^\delta/qQ}^{Q^\delta/qQ}
			p_{q}(w)
			e((a/q+w)n)\,dw
			+O_{\delta,N}(Q^{-N}),
 \end{equation}
 for all \(n\in\ZZ\) and all \(\delta,N,Q>0\).
 This not only allows one to carry out the Kloosterman refinement more easily, by switching the sum and the integral to make use of averages over the $a$-sum, but also allows one to take advantage of the average over the \(q\)-sum. That is, one can use cancellations in averages over rationals with different denominators, performing a \emph{double} Kloosterman refinement, a key technique in Heath-Brown~\cite{Heath-Brown96} for studying integral solutions to a single quadratic form in at least $3$ variables. The delta symbol method has seen many other applications in recent years, such as shifted convolution problems and subconvexity estimates for $L$-functions (see e.g.~\cite{Munshi0, Munshi1, Munshi2, Munshi3}). There are also delta symbol methods over number fields by Browning--Vishe~\cite{BV} and over central simple division algebras over number fields (for example, quaternions over $\QQ$) by Arala--Getz--Hou--Hsu--Li--Wang~\cite{AGHHLW}.
 
\subsection{The higher-dimensional problem}\label{sec:higher}

It is natural to ask whether an analogue of the Kloosterman refinement of the circle method can be obtained in higher dimensions. This problem was posed by Heath-Brown~\cite{Heath-Brown96} for dimension two; we return to his question in Section~\ref{sec:heuristic}.

There have been various attempts of a  two-dimensional Kloosterman refinement over $\QQ$, including work of Browning--Munshi~\cite{BM}, Munshi~\cite{M}, Heath-Brown--Pierce~\cite{HBP} and Arala~\cite{Arala} for pairs of quadratic forms and Northey--Vishe~\cite{NV} for pairs of cubic forms.
The work of Pierce--Schindler--Wood~\cite{PSW} and Browning--Pierce--Schindler~\cite{BPS} even extends such a process to higher dimensions. 
The strategy in Heath-Brown--Pierce~\cite{HBP}, Pierce--Schindler--Wood~\cite{PSW} and Northey--Vishe~\cite{NV} starts with the setup in the classical circle method and uses  two-dimensional Dirichlet approximation with the same denominator to carry out Kloosterman refinement on the minor arc contributions. However, this is only possible when the underlying exponential sum is an absolute square since the arcs created by the Dirichlet approximation overlap. 

Alternatively, one may hope to use the one dimensional delta symbol to carry out Kloosterman refinement in higher dimensions. 
Browning--Munshi~\cite{BM} and Arala~\cite{Arala} considered the special case when one equation contains a binary quadratic form independent of the rest of the variables. The structure of the binary quadratic forms allows the detection of one equation with convolutions of Dirichlet characters and the remaining equation can then be detected using the one dimensional delta symbol. More generally, Munshi~\cite{M} applied a nested delta symbol to study smooth pairs of quadratic forms. However, the moduli in these results are larger than what one expects from a  two-dimensional Dirichlet approximation, thus making it less efficient in the Poisson summation step; see Comparison \ref{eg:GRH} for a more detailed discussion. 

Progress has been made over function fields as Vishe~\cite{V19} established a  two-dimensional Farey dissection of the unit square over $\FF_q(t)$, thus allowing a Kloosterman refinement in dimension two with optimal sizes of the denominators at the centers of the arcs in the dissection. This was applied to establish the analytic Hasse Principle for zero locus of any nonsingular pair of quadratic forms in at least $9$ variables over $\mathbb F_q(t)$ when $q$ is odd. Vishe's work has been generalized by Glas~\cite{Glas} to handle the nonsingular intersection of a cubic and a quadratic form over $\mathbb F_q(t)$. However, the non-Archimedean nature of the norm in positive characteristics was crucially used in~\cite{V19}, thus making it difficult to generalize the method there to the number field setting.

\subsection{A  two-dimensional delta symbol method}

In this paper, we develop a  two-dimensional version of the delta symbol method which provides an alternative approach to proving the analogous result of~\cite[Theorem 1.1]{V19} that facilitates a (double) Kloosterman refinement of the circle method in dimension two over $\mathbb Q$. We then apply it to study non-singular intersections of two quadratic forms over $\QQ$ in few variables. As the heuristic in Section~\ref{sec:heuristic} suggests, our version of the  two-dimensional delta symbol would typically outperform previous results due to the optimal size of the denominators in the center of the arcs in the smooth decomposition of the unit square. 

Before we state the main results, we need to set some notation. Throughout, we will write \(|\,\cdot\,|\) for the standard Euclidean norm on $\RR^n$. Let \(\delta_{\nn}\) be as in \eqref{eq:delta_def}.
  We use \(A \ll_{a,\dotsc,z} B\) or \(A = O_{a,\dotsc,z}(B)\) to mean that \(|A|<C|B|\) for some implicit constant \(C\) depending only on the parameters \(a,\dotsc,z\). (In later sections we slightly relax this convention, see  Section~\ref{sec:notation} below.)  We will use $A\asymp B $ to denote $B\ll A \ll B$. 
We write $e(x):=\exp(2\pi i x)$ and $e_q(x):=\exp(2\pi i x/q)$. 
When we write a sum with an asterisk, as in \(\sum_{\ua\mod q}^\ast\), it indicates that \(\gcd(\ua,q)=1\). Given \(q\in \N,\ua\in \Z^2\) with \(\gcd(\ua,q)=1\), a central figure in our analysis will be the lattice  \(\Lambda(\ua,q)\) defined as 
\begin{equation}\label{eq:Lambdadef}
\Lambda(\ua,q):=\{k\ua+q\uy: k\in\Z,\uy\in\Z^2\}.
\end{equation}
Throughout this work, we fix smooth, non-negative functions $\omega, \omega_0$ on $\R$, with the following properties. The function $\omega$ has support on $(1/2, 1)$ and $\int \omega (x)dx=1$ holds;
see~\cite[section~3]{Heath-Brown96} for an explicit example.  Meanwhile $\omega_0$ is even, satisfies $\omega_0(0)=1$ and $\int \omega_0(x)dx=1$, has support on $(-1/2,1/2)$, and $\omega_0(|x|^{1/2})$ is a smooth function. For instance, one function with all these properties is the triple-bump function given by
$$ f(x)=\begin{cases}e^{1-1/(1-(4x)^4)}&\textrm{ if }|x|<1/4,\\
(1-\beta)
\omega(2|x|)&\textrm{ otherwise,}
\end{cases}$$
with $\beta =
\int_{-1/4}^{1/4} e^{1-1/(1-(4y)^4)}\,dy.$ This 
 is non-negative, since  $\beta \approx 0.4<1$. Therefore, this is an admissible choice for $\omega_0$.

With these notations, we summarize our version of the  two-dimensional delta symbol method in the following theorem, proved in Section~\ref{sec:proof-delta}.

\begin{theorem}\label{thm:delta}
		Let $\nn \in \mathbb Z^2$ and let $Q\geq 1$ be a large parameter.
	For each \(q\in \N,\ua\in \Z^2\) with \(\gcd(\ua,q)=1\),
	 there exists a function $p_{\Lambda(\ua,q)}$ on $\RR^2$, defined in \eqref{eq:p2_p_2} below, which depends only on the lattice \(\Lambda(\ua,q)\) and the choice of functions \(\omega\) and \(\omega_0\) above, such that
	\begin{equation}\label{eq:delta}
			\delta_{\nn}
			=
			\sum_{1\leq q\leq Q}
			\underset{\hphantom{\ua\bmod{q}}}{\starsum_{\ua\bmod{q}}}			
			\int_{\R^2}
			p_{\Lambda(\ua,q)}(\uw)
			e((\ua/q+\uw)\cdot\nn)\,d\uw
	+O_{N}(Q^{-N}),
	\end{equation}
	for any $N>0$.
Here the function
\(p_{\Lambda(\ua,q)}(\uw)\)  
satisfies 
\begin{equation}
    	\label{eq:p1bound1}
	p_{\Lambda(\ua,q)}(\uw)
	= 1+O_{N, \delta}(Q^{-N})
	\quad \text{ if } q< Q^{1/2-\delta},
	|\uw| < q^{-1} Q^{-1-\delta},
\end{equation}
for any $\delta>0$. More explicitly, we have the following decomposition for $p_{\Lambda(\ua,q)}:$ $$
p_{\Lambda(\ua,q)}(\uw)
=
 p_{1, q}(\uw)
 +	\sum_{\substack{\ur\in \Lambda(\ua,q)\\ k=\gcd(
 		\ur)/\gcd(\ur, q)}}\omega\big(\frac{\ur}{Q^{1/2}}\big) p_{2,\ur,k,q}(\uw),
 $$
where the functions $p_{1, q}$ and $p_{2,\ur,k,q}$, defined in Lemma~\ref{lem:def_of_p_i}, satisfy
	\begin{align}\label{eq:p1bound2}
	p_{1, q}(\uw)&\ll _N \frac{Q}{q(1+|\uw|Q^{3/2})}(1+|\uw|qQ^{1/2})^{-N},\\
 \label{eq:p1bound3}
p_{2,\ur,k,q}(\uw)
&\ll_{N}
(1+|\uw|kqQ^{1/2}+|\uw\cdot \ur^\perp|Q)^{-N}.
	\end{align}
Moreover, we may interchange the sums over $\ua\bmod q$ and $\ur\in\Lambda(\ua,q)$ to write
	\begin{align} \label{eq:delta0} 
			\delta_{\nn}
		&=\sum_{1\leq q\leq Q}\,\,\starsum_{\ua \bmod q}\int_{\mathbb{R}^2}p_{1,q}(\uw)e((\ua/q+\uw)\cdot \nn)d\uw\\
\MoveEqLeft[4]
+\sum_{\substack{d,k\in \NN\\ \uc \in \ZZ^2 \mathrm{ primitive }\\ \ur=dk\uc}}\omega\big(\frac{\ur}{Q^{1/2}}\big)\sum_{\substack{1\leq q\leq Q/k\\d\mid q\\ \gcd(q/d, k)=1}}\,\,\starsum_{\substack{\ua \bmod q\\q\mid d\uc \cdot\ua^\perp}}\int_{\R^2}p_{2,\ur,k,q}(\uw)e((\ua/q+\uw)\cdot \nn)d\uw.
	\end{align}
\end{theorem}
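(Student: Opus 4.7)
The plan is to construct a smooth partition of unity on the torus $\mathbb{T}^2=(\mathbb{R}/\mathbb{Z})^2$, indexed by Farey fractions $\ua/q$ with $\gcd(\ua,q)=1$ and $q\leq Q$, and to read off \eqref{eq:delta} by Fourier inversion from
$$\delta_\nn=\int_{\mathbb{T}^2}e(\ualf\cdot\nn)\,d\ualf.$$
Namely, if we build weights $\phi_{\ua,q}$ localized near $\ua/q$ satisfying $\sum_{(\ua,q)}\phi_{\ua,q}(\ualf)=1+O_N(Q^{-N})$ on $\mathbb{T}^2$, then after the translation $\ualf=\ua/q+\uw$ and setting $p_{\Lambda(\ua,q)}(\uw)=\phi_{\ua,q}(\ua/q+\uw)$, the identity \eqref{eq:delta} is immediate.

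The heart of the argument is the construction of $\phi_{\ua,q}$. Two Farey fractions $\ua/q,\ua'/q'$ are close precisely when the integer vector $q\ua'-q'\ua$ is short, and since such vectors lie in $\Lambda(\ua,q)$, the local density of Farey centers near $\ua/q$ at scale $Q^{1/2}$ is governed by the short vectors of $\Lambda(\ua,q)$. Thus $\Lambda(\ua,q)$ enters naturally. The construction takes a fixed smooth bump on $\mathbb{R}^2$, sums it along $\Lambda(\ua,q)$, and normalizes by the covolume $q$. Applying Poisson summation to this lattice sum yields two types of contributions: the zero vector of the dual lattice gives the main piece $p_{1,q}$, depending on $q$ only through the covolume; the nonzero duals, upon redualizing, correspond to the short primal vectors $\ur\in\Lambda(\ua,q)$ with $|\ur|\lesssim Q^{1/2}$ and give rise to the oscillatory pieces $p_{2,\ur,k,q}$, cut off by $\omega(\ur/Q^{1/2})$. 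Lemma~\ref{lem:def_of_p_i} will record the explicit formulas, and the decay bounds \eqref{eq:p1bound2}--\eqref{eq:p1bound3} follow by repeated integration by parts using the smoothness of $\omega$. The estimate \eqref{eq:p1bound1} exploits that for $q<Q^{1/2-\delta}$ the lattice $\Lambda(\ua,q)$ has no nonzero vectors of length $\lesssim Q^{1/2}$, so only $p_{1,q}$ contributes, and on the tiny region $|\uw|<(qQ^{1+\delta})^{-1}$ it is close to $1$.

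The rewriting in \eqref{eq:delta0} is combinatorial: interchange the sums over $\ua\bmod q$ and $\ur\in\Lambda(\ua,q)$. Using $\gcd(\ua,q)=1$, membership $\ur\in\Lambda(\ua,q)$ is equivalent to $q\mid\ur\cdot\ua^\perp$. Factorizing $\ur=dk\uc$ with $\uc$ primitive and $d,k\in\mathbb{N}$, and imposing the bookkeeping constraints $d\mid q$ and $\gcd(q/d,k)=1$ (which come from how $k$ is singled out in Lemma~\ref{lem:def_of_p_i}), the condition simplifies to $q\mid d(\uc\cdot\ua^\perp)$, yielding \eqref{eq:delta0}.

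The main obstacle I expect is producing the partition of unity with the claimed uniform $O_N(Q^{-N})$ error together with the precise Fourier-side decay estimates on $p_{1,q}$ and $p_{2,\ur,k,q}$ simultaneously. This requires a careful choice of the base bump and a detailed analysis of the overlap regions between neighbouring Farey cells, which is exactly where the geometry of short vectors in $\Lambda(\ua,q)$ plays the decisive role and explains the optimal size $Q^{1/2}$ of the denominators mentioned in the heuristic of Section~\ref{sec:heuristic}.
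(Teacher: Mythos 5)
Your proposal takes a genuinely different route from the paper's, but it has both a conceptual gap and a concrete error. The paper does not construct the partition of unity directly. Instead, its Lemma~\ref{lem:duality} starts from a ``duality of divisors'' identity: writing $\nn = \lambda\uc^\perp$ for a unique primitive $\uc$ and observing that $d\mapsto\lambda/d$ permutes divisors of $\lambda$, one gets
$\sum_{\uc}\delta_{\uc\cdot\nn}\sum_{d\mid\nn}\omega(d|\uc|/Q_1) = \sum_{d'\mid\nn}\omega(|\nn|/d'Q_1)$,
and then the \emph{one}-dimensional Heath-Brown delta symbol is applied to detect $\uc\cdot\nn/d=0$. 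The $p$-functions of Lemma~\ref{lem:def_of_p_i} are then read off by Fourier inversion, with $p_{1,q}$ arising from the $\sum_{d'\mid\nn}$ correction term, not from any Poisson zero-frequency. This algebraic identity is the key trick that circumvents the geometric obstructions of a two-dimensional Farey dissection over $\mathbb{R}$, and it is entirely absent from your proposal. Your plan amounts to saying ``build the thing the theorem says exists, by some lattice-sum-plus-Poisson construction,'' but the specific construction is left vague and your acknowledged ``main obstacle'' is in fact the entire content of the theorem.

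Beyond being a different strategy, two of your specific claims are wrong. First, you assert that for $q<Q^{1/2-\delta}$ the lattice $\Lambda(\ua,q)$ has no nonzero vectors of length $\lesssim Q^{1/2}$; this is false, since $\Lambda(\ua,q)\supset q\ZZ^2$ always contains $(q,0)$, and in fact its shortest vector can be as small as $\asymp\sqrt{q}$. The condition actually used in the paper (Lemma~\ref{lem:1}) is a lower bound on the shortest vector of the \emph{image} lattice $M(\uw)\Lambda(\ua,q)$ for the $3\times 2$ matrix $M$ of \eqref{def:Mmatrdef_0}, which depends on $\uw$ as well as $q$. Second, your identification of $p_{1,q}$ with the zero vector of the dual lattice after Poisson summation does not match the actual structure: in Lemmas~\ref{lem:p1_sum_asymptotic}--\ref{lem:1}, Poisson summation is applied to $\sum_{\ur\in\Lambda(\ua,q)}\omega(|\ur|/Q^{1/2})p_{2,\ur,k,q}(\uw)$ yielding an integral of size $\asymp Q/q$, and $p_{1,q}$ (also of size $\asymp Q/q$) is a separate correction that combines with this integral, via the auxiliary function $h_2$ and \cite[Lemma~9]{Heath-Brown96}, to leave $1+O(Q^{-N})$. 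Your description of the combinatorial rewriting in \eqref{eq:delta0} is essentially correct, but the core analytic construction and the verification of \eqref{eq:p1bound1}--\eqref{eq:p1bound3} are missing or misdescribed.
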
  

\begin{remark}\label{rem:decay}
	With an additional application of the geometry of numbers, one can show that the bound in \eqref{eq:p1bound2} is also satisfied by $p_{\Lambda(\ua, q)}$. However, we will not delve into this detail, as 
    we use a slightly rephrased version of \eqref{eq:delta0} stated in Proposition~\ref{prop:delta2} below, together with Lemma~\ref{lem:nonstationary} on the derivatives of  $p_{2,\ur,k,q} $, in the application to Theorem~\ref{thm:application}.  
\end{remark}

\begin{remark}
    \label{rem:HBQ}
    One might expect the functions $p_{\Lambda(\ua, q)}$ to be independent of $\ua$.  In fact, the first sum on the right hand side of \eqref{eq:delta0} is in this {\em ideal} form. However, due to geometry of numbers in $\RR^2$, this gives rise to accumulation around rationals which lie on {\em affine lines} of smaller height, which needs to be effectively canceled by the second sum on the right hand side of \eqref{eq:delta0}. See 
\cite[comments after (1.17)]{HBP} for another perspective on this difficulty.

\end{remark}

Let us briefly discuss the implications of Theorem~\ref{thm:delta}. 
In light of the expression
\begin{equation*}
\delta_{\nn}=\int_{[0,1]^2}e(\uw\cdot\nn)d\uw,
\end{equation*} 
we see that \eqref{eq:delta} can be viewed as a smooth partition of unity on $[0,1]^2$, with smooth functions $p_{\Lambda(\ua, q)}$ placed around reduced fractions $\ua/q$ for $q\leq Q$. This should be compared to \eqref{Marmon_Vishe} in the one-dimensional setting. The function $p_{\Lambda(\ua, q)}(\ualf-\ua/q)$ is roughly supported in the ball $\{\ua/q+\uw:|\uw|\ll  q^{-1}Q^{-1/2+\delta}\}$, where we will choose $\delta $ to be very small. Note that this support is as small as possible, in the sense that if we reduced the radius by $Q^\delta$, these balls would no longer cover $[0,1]^2$.

Using properties \eqref{eq:p1bound1} and \eqref{eq:p1bound3}, one may obtain an asymptotic formula for the number of integral points on the non-singular intersection of two quadratic forms in at least $13$ variables. To improve this result, we observe that the dependence of the function $p_{\Lambda(\na, q)}(\uw)$ on $(\na, q)$ is only through the lattice $\Lambda(\ua,q)$. In particular, as shown in Lemma~\ref{lem:lattice} below, for any $\gcd(\ua,q)=1$ and $(\lambda, q)=1$, we have $\Lambda(\ua, q)=\Lambda(\lambda\ua, q)$. Thus we can write 
\begin{equation*}
    \delta_{\nn}
			=
			\sum_{1\leq q\leq Q}
			\underset{\hphantom{\ua\bmod{q}}}{\starsum_{\ua\bmod{q}}}\frac{1}{\phi(q)}		
			\int_{\R^2}
			p_{\Lambda(\ua,q)}(\uw)\starsum_{\lambda\bmod q}	
			e((\lambda\ua/q+\uw)\cdot\nn)\,d\uw
			+O_{N}(Q^{-N}).
\end{equation*}
This is the key property that allows us to obtain  extra cancellations by averaging exponential sums on a set of the form
$\{\frac{\lambda\ua}{q}+\uw:\gcd(\lambda,q)=1\}$ for fixed $\uw$, thereby
 carrying out a version of Kloosterman refinement in dimension two. The function $p_{\Lambda(\ua,q)}$  is essentially supported on $|\uw|\asymp \frac{1}{qQ^{1/2}}$ and thus we can choose $Q\asymp \max\{|\nn|^{2/3}\}$ for applications. This size of $Q$ is smaller than previous results and would typically be more advantageous when applying dual summation formulae. It is worth noting that Theorem \ref{thm:delta} has applications beyond Diophantine problems. In particular, a recent work of Leung--Pandey~\cite{LP} employs our Theorem \ref{thm:delta} to obtain power saving asymptotic formula for the divisor function along binary quartic forms.

\subsection{Outline of this paper}  We begin by setting some further notation which will be used throughout this paper in Section~\ref{sec:notation}.
With the key duality Lemma~\ref{lem:duality} and analytic properties of the $p$-functions in Lemma~\ref{lem:def_of_p_i} established in Section~\ref{sec:setup}, we prove Theorem~\ref{thm:delta} in Section~\ref{sec:delta proof} using properties of the lattice $\Lambda(\ua, q)$. We provide a heuristic comparison of Theorem \ref{thm:delta} with existing $\delta$-methods in Section~\ref{sec:heuristic}.

\begin{figure}[h]
    \centering
\begin{tikzpicture}[scale=0.85, every node/.style={transform shape},
	plain/.style={rectangle, draw=black!80, fill=white, rounded corners, minimum width=2cm, minimum height=0.5cm, align=center},
	arrow/.style={-latex, thick},
	every path/.style={rounded corners=8pt}
	]
	
	\node[plain] (thm1) at (7,0) {Theorem \ref{thm:application}};
	\node[plain] (thm12) at (0,0) {\S\ref{sec:delta proof}/Theorem \ref{thm:delta}};

	\node[plain] (lem52) at (3.5,0) {Lemma \ref{lem:N1P}\\\(\text{circle method}\)};
	\node[plain] (lem53) at (7,-2) {Lemma \ref{lem:Ni-bounds}, \(\text{$N_{i}{\ll} 
    \dots$}\)};
	\node[plain] (lem112) at (7,-8) {\S\ref{sec:Qdelta-and-Sigma}/Lemma \ref{N2splitbounds}\\$N_{2,i}\ll\dots$\\Definition of $\mathcal Q_\delta$, $ \Sigma$};
    
	\node[plain] (s2) at (-3.5,0) {\S\ref{sec:notation}\\notation, $h$};
	\node[plain] (s3) at (-3.5,-2) {\S\ref{sec:setup}\\duality, $p_i$};
	\node[plain] (s8) at (3.5,-2) {\S\ref{sec:major arc}, $N_0$\\\(\text{major arcs}\)};
	\node[plain] (s67) at (0,-2) {\S\ref{sec:exp integral}, \S\ref{sec:exp sum}\\\(\text{exponential}\)\\\(\text{sums and}\)\\\(\text{integrals}\)};
	\node[plain] (s10) at (7,-4) {\S\ref{sec:N1}\\\(\text{``Dirichlet"}\)\\minor arcs $N_1$};
	\node[plain] (s132) at (3.5,-4) {\S\ref{n2prelim}\\\(\text{averages}\)};
	\node[plain] (s9) at (7,-6) {\S\ref{sec:counting}\\\(\text{counting}\)};
	\node[plain] (s13) at (3.5,-6) {\S\ref{sec:minor N21}\\\(\text{``nonstandard''}\)\\arcs $N_2$: $\uu\neq \vecnull$};
	\node[plain] (s12) at (0,-8) {\S\ref{sec:V<1}\\\(\text{``nonstandard''}\)\\arcs $N_2$: $\uu=\vecnull$};

	\draw[arrow] (thm12) -- (lem52);
    \draw[arrow] (lem52) -- (thm1);
    \draw[arrow] (s8) -- (lem53);
    \draw[arrow] (s10) -- (lem53);
    \draw[arrow] (lem53) -- (thm1);
    \draw[arrow] (s13) -- ($(lem112.north west)+(1.5pt,-1pt)$);
    \draw[arrow] ($(lem112.north east)+(-2pt,-1pt)$) [sharp corners] -| ($(lem53.south east)+(-5pt,0pt)$);

    \draw[arrow] (s67) -- (s8);
    \draw[arrow] (s67) -- (s10);
    \draw[arrow] (s67) -- ($(s132.north west)+(1.5pt,-1pt)$);
    \draw[arrow] (s67) -- (s12);

    \draw[arrow] ($(s9.north west)+(2pt,-1pt)$) -- ($(s132.south east)+(-1.5pt,1pt)$);
    \draw[arrow] (s9) -- (s10);
    \draw[arrow] (s9) -- (s13);
    
	\draw[arrow] (s2) -- (s3);
    \draw[arrow] (s2) -- (thm12);
    \draw[arrow] (s3) -- ($(thm12.south west)+(2pt,1pt)$);
    \draw[arrow] ($(s12.east)$) -- (lem112);
    \draw[arrow] (s132) -- (s13);
    \draw[arrow] (s3) -- (s67) node[midway,below,align=center] { $\bigg|$\\\\Lemma~\ref{lem:nonstationary}};

\end{tikzpicture}
 \caption{Structure of the paper}
    \label{fig:summary-graph}
\end{figure}

The rest of the paper is dedicated to proving Theorem~\ref{thm:application}. In Section~\ref{sec:the-Ni}, we reduce the proof of Theorem~\ref{thm:application} to Lemma~\ref{lem:Ni-bounds} after an application of Proposition~\ref{prop:delta2}, which is essentially a restatement of Theorem~\ref{thm:delta}. We also recall some known geometric properties of a non-singular complete intersection variety defined by two quadrics which provide guidelines for our estimations for the minor arc contributions. Section~\ref{sec:exp integral} is dedicated to the exponential integral bounds. 
We begin by re-interpreting known bounds for quadratic exponential integrals to our setting and finish with a key result Lemma~\ref{p1derivative}, which plays a critical role in carrying out a \emph{double} Kloosterman refinement in the case $s=9$. Section~\ref{sec:exp sum} is dedicated to the exponential sum estimates, most of which are either known or are direct analogues of their known function field counterparts. An asymptotic formula for the major arcs contribution is obtained in Section~\ref{sec:major arc}. After the preparations in Section~\ref{sec:counting}, we prove sufficient bounds for the minor arcs contributions coming from the $p_1$ and $p_2$ functions in Section~\ref{sec:N1} and Sections~\ref{sec:preparing-N2}-\ref{sec:minor N21}. See Figure \ref{fig:summary-graph} for the connections between these sections. While the final optimization in the extreme cases follows closely that in~\cite{V19}, the growth of our functions, for instance in \eqref{eq:p1bound2}, requires special care in non-extreme cases (small/medium $q$ and $\uw$).

\section{Basic notation and \texorpdfstring{$h$}{h}-function}\label{sec:notation}
 We first begin by setting notation that will be throughout. We also draw the reader's attention to Section~\ref{sec:quad-forms-basic-notation}, where much additional terminology is introduced which is used from that point on, and to Section~\ref{sec:Qdelta-and-Sigma}, where the expressions $\mathcal Q_\delta$ and $\Sigma(\cdots)$ are defined.
 
 The notation  \(|\,\cdot\,|\), $\delta_{\nn}$, \(A\ll B\), $A\asymp B$, $e(x)$, $e_q(x)$, \(\sum_{\ua\mod q}^\ast\) and \(\Lambda(\ua,q)\) will always be as defined before Theorem~\ref{thm:delta}.
 We use $\ua, \ub, \uc, \ur $ to denote vectors in $\mathbb Z^2$ and $\mathbf{x}, \mathbf{u}, \dots $ to denote vectors in $\mathbb Z^s$, where $s$ will denote the number of variables needed to define the quadratic forms appearing in the statement of Theorem~\ref{thm:application}.
Set $\mathds 1_{S}=1$ if $ S$ is true and 
$0$ otherwise. In order to reserve the symbol $\cdot$ for the scalar product of two vectors, we use $\times$ where necessary to denote multiplication in a field. 

Recall that we fix smooth functions $\omega$ and $\omega_0$ on $\RR$ as described before Theorem \ref{thm:delta}.
 Additionally, from Section~\ref{sec:the-Ni} onward, we fix a weight \(w\in C_c^\infty(\mathbb R^s)\), fix a pair of quadratic forms \(F_1,F_2\) in \(s\) variables, and use \(\ve\) to represent an arbitrarily small positive constant whose value may vary from line to line. All implicit constants in \(\ll, O(\,\cdot\,)\) and \(\asymp\) notation are then allowed to depend on \(s,w,F_1,F_2\) and \(\ve\).

We define
 \begin{equation}
 	\label{eq:cdef}
 	c = (\int_{\R^2} \omega(|\ux|)\,d\ux)^{-1}=(2\pi\int_\R r\omega(r)\,dr)^{-1}.
 \end{equation}
Throughout this paper \(c\) always denotes this particular constant in \eqref{eq:cdef}.

 For each 2-vector \(\ux=(x,y)\) we write \(\ux^\perp = (y,-x)\). We write \(\partial_{\vec{\xi}} = \frac{\xi_1}{|\vec\xi|}\frac{\partial}{\partial w_1}+ \frac{\xi_2}{|\vec\xi|}\frac{\partial}{\partial w_2}\) to denote the normalized directional derivative (with respect to $\uw$) along \(\vec{\xi}\).

 Let $\Lambda\subset \RR^2$ be a lattice, where  we  will   typically take \(\Lambda=\Lambda(\ua,q)\). We shall study the shortest non-zero vector of the lattice \(M\Lambda=\{M\uv:\uv\in \Lambda\}\). More precisely, if \(M\) is an \(m \times 2\) real matrix with full rank, we define
 \begin{equation*}
 	\mu_{M}:=\mu_M(\Lambda)=\min\{|M\ux|:\ux\in \Lambda\setminus\{\vec{0}\}\}.
 \end{equation*}
 In other words \(\mu_M(\Lambda)\) is the Euclidean norm of the shortest nonzero vector of the lattice \(M\Lambda\).

Given a function \(\omega_1\in C^\infty (\RR)\) with support on $(1/2,1)$, we define a function
\begin{equation}
	\label{eq:hyzdef}
	h_{\omega_1}(y,z)
	=
	\sum_{j\in\N}
	\frac{1}{yj}\left(\omega_1(yj)-\omega_1\left(\frac{|z|}{yj}\right)\right).
\end{equation}
We will usually take $\omega_1=\omega$, and so we write $h(y,z)$ as an abbreviation of $h_{\omega}(y,z)$. This function
appears in the one dimensional version of the delta symbol method in~\cite{Heath-Brown96} and will appear in the definition of $p_{\Lambda(\ua, q)}$. 
However we use $\omega_1(x)=x\omega(x)$ in the proof of Lemma~\ref{lem:1}, where we write $h_{2}(y,z)=h_{x\omega(x)}(y,z)$. 

We need a variant of~\cite[Lemmas 4 and 9]{Heath-Brown96} on the properties of the $h$-function. 

\begin{lemma}\label{lem:HB}
	Let $\omega_1$ be a
    smooth function supported on $(1/2, 1)$. Recall the function $h_{\omega_1}(y,z)$ from \eqref{eq:hyzdef}.
    \begin{enumerate}
        \item 
    The value of $h_{\omega_1}(y,z)$ is non-zero only if  $y\leq \max (1, 2|z|)$. Moreover $h_{\omega_1}(y,z)\ll_{\omega_1} 1/y$ for all $z$.
    \item
    Suppose that \(A,B, \delta>0\) and that \(f:\R\to \R\) satisfies \(|f^{(k)}(z)|\leq C_k A^k\) for all \(z\in \R\) and some sequence of positive real numbers \(\boldsymbol{C}=(C_k)_{k\geq 0}\). 
    Then for \(0<y\leq Q^{-\delta}\min\{B/A,1\}\) we have
	\[
	\int_{\R} f(z)h_{\omega_1}(y,Bz)\,dz
	= B^{-1}f(0)\int_{\R}\omega_1(z)\,dz
	+O_{N,\delta,\boldsymbol{C},\omega_1}(B^{-1}Q^{-N}).
	\]
    \end{enumerate}
\end{lemma}
\begin{proof}
    We use the results from Heath-Brown~\cite[Section~4]{Heath-Brown96} with $\omega_1$ in place of the function $\omega$ appearing there. This deserves a word of justification, as Heath-Brown states his results for a specific function $\omega$ which he constructs explicitly. However, it only matters that this function is fixed, nonnegative, infinitely differentiable (hence in particular with bounded derivatives of all orders), $L^1$-normalized, and supported on $(1/2,1)$. One can inspect the proofs and see that only bounds for some finitely many derivatives of $\omega$ enter into the error terms; one can also consult the comments prior to \cite[Lemma 6]{Heath-Brown96} where it is mentioned that any other choice for $\omega$ would work as well. Furthermore, the conclusions of our lemma are invariant if we multiply $\omega_1$ by a constant. Thus for $L^1$-normalization, we can assume that \(\int_{\R}\omega_1(z)\,dz=1\), as required. The other properties already hold, and the implicit constants in the lemma in the lemma are permitted (as they must be) to depend on the choice of function $\omega_1$.

    The first part of the lemma follows at once from~\cite[Lemma 4]{Heath-Brown96}. The rest of the proof concerns the second part of the lemma.
    
	Making a change of variable \(z'=z/B\), it is enough to consider the case when \(B=1\).
	Note that for  any \(R\in\N\),~\cite[Lemma 5]{Heath-Brown96} gives
	\[
	\int_{|z|>yQ^{\delta/2}}
	f(z)h_{\omega_1}(y,z)\,dz
	\ll_R
	C_0
	\int_{|z|>yQ^{\delta/2}}
	y^{-1}(y^R+(y/|z|)^R)\,dz,
	\]
	which is \(O_{N,\delta,\boldsymbol{C}}(Q^{-N})\) if we choose \(R\) sufficiently large when $y\leq Q^{-\delta}$. It remains to show
	\begin{equation}\label{eq:HB}
\int_{-yQ^{\delta/2}}^{yQ^{\delta/2}}
	f(z)h_{\omega_1}(y,z)\,dz
	=	f(0)	+O_{N, \delta, \boldsymbol{C}} ( Q^{-N}).
	\end{equation}
	Using the Taylor expansion 
	\(
	f(t) = f(0)+\sum_{n=1}^{2M} f_n t^n+O_{\boldsymbol{C}}  (
	(At)^{2M+1}
	),
	\)
	we obtain
	\begin{align*}
&\int_{-yQ^{\delta/2}}^{yQ^{\delta/2}}
	f(z)h_{\omega_1}(y,z)\,dz
	\\=&
	\sum_{n=0}^{2M}
	f_n\int_{-yQ^{\delta/2}}^{yQ^{\delta/2}}
	z^n
	h_{\omega_1}(y,z)\,dz
	+O_{ \boldsymbol{C}}  \left( f_{2M+1}
	\int_{-yQ^{\delta/2}}^{yQ^{\delta/2}}
	|z|^{2M+1}
	y^{-1}\,dz\right)
	\\
	=
	{}&
	f(0)+O_{ R,\boldsymbol{C}} \bigg(
	\sum_{n=0}^{2M}
	(AyQ^{\delta/2})^n
	\big(
	y^RQ^{\delta/2}+
	Q^{-R\delta/2}
	\big)
	+\frac{A^{2M+1}
	(yQ^{\delta/2})^{2M+2}}{y}
	\bigg),
	\end{align*}
	by~\cite[Lemmas 6, 8]{Heath-Brown96} and the assumption $f_n\ll_n A^n C_n$. 
	By the condition on the size of $y$, this becomes
	\begin{align*}
	f(0)+	O_{ R,\boldsymbol{C}} \bigg(
	\sum_{n=0}^{2M}
	(Q^{-\delta/2})^n
	(Q^{-R\delta+\delta/2}+
	Q^{-R\delta/2})
	+
	(Q^{-\delta/2})^{2M+1}
	Q^{\delta/2}
	\bigg),
	\end{align*}
	and \eqref{eq:HB} follows.
\end{proof}

\section{Two-dimensional delta symbol: duality and \texorpdfstring{$p$}{p}-functions}\label{sec:setup}
In this section, we make some preparations to prove Theorem~\ref{thm:delta}.
We begin by elaborating the first and key step that facilitates the two-dimensional delta symbol in Lemma~\ref{lem:duality} below. This can be seen as a higher dimensional analogue of the equality used in the one dimensional delta symbol method of Duke--Friedlander--Iwaniec~\cite[(2.3)]{DFI}.

\subsection{Detecting the delta symbol by duality of divisors}

We observe that if $\nn\not=\vec{0}$, then there is a unique primitive vector $\uc$ such that $\nn=\lambda\uc^\perp$ for some $\lambda>0$. By symmetry, the sets 
$\{d:d\mid \lambda\}$ and $\{\lambda/d: d\mid \lambda\}$ are the same as long as $\lambda>0$. The vector $\uc$ can be determined using the condition $\uc\cdot \nn=0$, which can be detected with the one dimensional delta symbol. It remains to determine the size of $d, \uc$ that we shall use. 
  The  two-dimensional Dirichlet approximation states that, given any real numbers $\alpha_1, \alpha_2$ and a natural number $Q$, there exist integers $a_1, a_2$ and $1\leq q\leq Q$ such that 
  \begin{equation}\label{eq:Dirichletsupport}
         \Big|\alpha_i-\frac{a_i}{q}\Big|\leq \frac{1}{q Q^{1/2}}, \quad i=1,2.
  \end{equation}
  We would like to choose $Q^{1+1/2}\asymp |\nn|$ so that the error terms of the approximation do not oscillate when $q\asymp Q$. 
  The size of the $d, \uc$ parameters will be chosen so that the support of the resulting functions roughly matches \eqref{eq:Dirichletsupport}.   

\begin{lemma}\label{lem:duality} 
	Let $\omega(x),$ $ h(y,z),$ and $c$ be as in Section~\ref{sec:notation}.
	Let $Q\geq1$ be a parameter and let $ \nn\in \mathbb Z^2$ such that $|\nn|\leq \frac 12Q^{3/2}$. Then for any $N>0$, we have 
	\begin{align}\label{eq:duality}
	\delta_{\nn}&=\frac{c}{Q^3}\sum_{\ur\in\ZZ^2}\omega\Big(\frac{|\ur|}{Q^{1/2}}\Big)\sum_{q\leq Q}\,\,\starsum_{\substack{\ua \bmod q\\ q\mid \ur \cdot \ua^\perp}}e_q(\ua \cdot \nn)h\Big(\frac{\gcd(\ur)q}{\gcd(q,\ur)Q}, \frac{\ur \cdot \nn}{Q^2}\Big)\\& \quad -\frac{2c}{Q}\sum_{d'\in \NN}\mathds 1_{d'\mid \nn}\omega\Big(\frac{|\nn|}{d'Q^{1/2}}\Big)+O_N(Q^{-N}).\nonumber
	\end{align}
\end{lemma}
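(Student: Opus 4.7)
The plan is to adapt the one-dimensional Duke--Friedlander--Iwaniec/Heath-Brown delta symbol formula \cite{DFI,Heath-Brown96} to two dimensions. The basic idea is to detect the scalar equation $\ur\cdot\nn=0$ for $\ur\in\ZZ^2\setminus\{\vec 0\}$, weighted by $\omega(|\ur|/Q^{1/2})$ so that $\ur$ ranges effectively over lattice points of size $\asymp Q^{1/2}$. The divisor duality $d\leftrightarrow\lambda/d$ on divisors of $\lambda=\gcd(\nn)>0$ (when $\nn\ne\vec 0$), encoded in $h$ via the difference $\omega(yj)-\omega(|z|/(yj))$, is what will make the right-hand side of~\eqref{eq:duality} collapse to $\delta_\nn$.

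The starting point is the one-dimensional identity: for $m\in\ZZ$ and $Q\ge 1$,
\[
\delta_m = \frac{1}{Q^2}\sum_{q\ge 1}\sum_{a\bmod q}^* e_q(am)\,h(q/Q,m/Q^2) + O_N(Q^{-N}).
\]
I would apply this with $m=\ur\cdot\nn$ for each $\ur\in\ZZ^2\setminus\{\vec 0\}$, weight by $\omega(|\ur|/Q^{1/2})$, and sum. The key rewriting is the substitution $\ua\equiv a\ur\pmod q$: when $\gcd(\ur,q)=1$, the orbit $\{a\ur\bmod q:a\in(\ZZ/q)^*\}$ equals $\{\ua\bmod q:\gcd(\ua,q)=1,\,q\mid\ur\cdot\ua^\perp\}$, using the equivalence $\ur\in\Lambda(\ua,q)\Leftrightarrow q\mid\ur\cdot\ua^\perp$ implicit in \eqref{eq:Lambdadef}, and $e_q(a\,\ur\cdot\nn)=e_q(\ua\cdot\nn)$. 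This converts the scalar $a$-sum into the lattice-indexed $\ua$-sum of~\eqref{eq:duality}.

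For $\nn=\vec 0$, every $\ur\ne\vec 0$ gives $\delta_{\ur\cdot\nn}=1$, so the sum collapses to $\sum_{\ur\ne\vec 0}\omega(|\ur|/Q^{1/2})=Q/c+O_N(Q^{-N})$ by Poisson summation; combined with the remaining $q$-sum (which telescopes to a constant via the DFI machinery) and the normalisation $c/Q^3$, this yields $\delta_{\vec 0}=1$. The correction term automatically vanishes because $\omega(0)=0$. For $\nn\ne\vec 0$, write $\nn=\lambda\uc_0^\perp$ with $\uc_0$ primitive and $\lambda\in\N$. The 1-D delta $\delta_{\ur\cdot\nn}$ is $1$ only for $\ur=\pm d\uc_0$, $d\in\N$; the corresponding contributions, together with the complementary contribution from the $\omega(|z|/(yj))$ half of $h$ (applied to $\uc\ne\pm\uc_0$), pair up under the involution $d\leftrightarrow\lambda/d$ on $\{d\mid\lambda\}$ to produce exactly the boundary residue $\frac{c}{Q}\sum_{d'\mid\nn}\omega(|\nn|/(d'Q^{1/2}))$ that is cancelled by the explicit correction term in \eqref{eq:duality}, giving $\delta_\nn=0$.

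The principal technical obstacle is the case $\gcd(\ur,q)>1$: the substitution $\ua\equiv a\ur\pmod q$ then produces $\ua$ with $\gcd(\ua,q)=\gcd(\ur,q)>1$, which falls outside the starred sum, while the sum $\sum_\ua^*\mathds 1_{q\mid\ur\cdot\ua^\perp}$ contains $\ua$'s outside the naive orbit. Reconciling this asymmetry is exactly what produces the argument $\gcd(\ur)q/(\gcd(q,\ur)Q)=\mathrm{lcm}(q,\gcd(\ur))/Q$ in the first slot of $h$ in~\eqref{eq:duality}, via a re-indexing $q\leftrightarrow q'=\mathrm{lcm}(q,\gcd(\ur))$ between the moduli in \eqref{eq:duality} and in the 1-D formula. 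Carrying this out requires a prime-by-prime analysis of $(\gcd(\ur),q)$ and a reparametrization of $\ua=s\uc+t\ub$ in the basis $\{\uc,\ub\}$ of $\ZZ^2$ with $\uc\cdot\ub^\perp=1$ (where $\uc=\ur/\gcd(\ur)$), reducing $q\mid\ur\cdot\ua^\perp$ to $q/\gcd(q,\gcd(\ur))\mid t$; once this combinatorial weight is handled, the divisor cancellation of the previous paragraph yields~\eqref{eq:duality}.
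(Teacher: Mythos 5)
The proposal identifies a different starting point from the paper's, and that starting point does not in fact reproduce the right-hand side of \eqref{eq:duality}. You propose to sum the one-dimensional delta symbol applied to $\ur\cdot\nn$ over all $\ur\in\ZZ^2\setminus\{\vec 0\}$, weighted by $\omega(|\ur|/Q^{1/2})$. After matching normalizations, your plan amounts to asserting that
\[
\frac{c}{Q}\sum_{\ur\in\ZZ^2\setminus\{\vec 0\}}\omega\Big(\frac{|\ur|}{Q^{1/2}}\Big)\delta_{\ur\cdot\nn}
\;=\;
\delta_{\nn}\;+\;\frac{c}{Q}\sum_{d'\in\NN}\mathds 1_{d'\mid\nn}\,\omega\Big(\frac{|\nn|}{d'Q^{1/2}}\Big)\;+\;O_N(Q^{-N}),
\]
since the middle term of \eqref{eq:duality} should be exactly your detector sum after expanding by the 1-D formula. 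But this identity is false. Writing $\nn=\lambda\uc_0^\perp$ with $\uc_0$ primitive and $\lambda\in\NN$, the condition $\ur\cdot\nn=0$ says $\ur=\pm j\uc_0$ for \emph{all} $j\geq 1$, so the left side equals $\frac{2c}{Q}\sum_{j\geq 1}\omega(j|\uc_0|/Q^{1/2})=\frac{2c}{|\uc_0|Q^{1/2}}+O_N(Q^{-N})$ whenever $|\uc_0|\ll Q^{1/2}$, while on the right side the sum is restricted to divisors $d'\mid\lambda$ and is $O(\tau(\lambda)/Q)$ or even $O_N(Q^{-N})$. The involution $d\leftrightarrow\lambda/d$ on $\{d\mid\lambda\}$ that you invoke only makes sense once the $d$-sum is restricted to $d\mid\lambda$ from the outset; this is an explicit \emph{two}-dimensional divisor-duality identity of the form $\sum_{\uc\text{ prim.}}\delta_{\uc\cdot\nn}\sum_{d\mid\nn}\omega(d|\uc|/Q_1)=\sum_{d'\mid\nn}\omega(|\nn|/(d'Q_1))$, which is the paper's genuine starting identity, and it is \emph{not} the same thing as the divisor duality internal to the one-variable $h$. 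Your unrestricted sum over all $j$ does not see this cancellation.

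A second, connected gap concerns the first argument of $h$. The paper applies the one-dimensional delta symbol to $\uc\cdot(\nn/d)$ (legitimate because $d\mid\nn$) with a \emph{$d$-dependent} modulus parameter $Q_2=Q/d$; after unfolding $d\mid\nn$ by additive characters and substituting $\ur=d\uc$, this $d$-dependence is exactly what produces $\gcd(\ur)q/(\gcd(q,\ur)Q)$ in the first slot of $h$. Your approach applies the one-dimensional delta symbol to $\ur\cdot\nn$ with a \emph{fixed} parameter $Q$, which gives $h(q/Q,\ur\cdot\nn/Q^2)$. The re-indexing $q\leftrightarrow\mathrm{lcm}(q,\gcd(\ur))$ that you propose cannot repair this, because $q\mapsto\mathrm{lcm}(q,\gcd(\ur))$ is many-to-one whenever $\gcd(\ur)>1$ and so is not a change of summation variable. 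Some intermediate facts you do identify correctly, and they do appear in a correct proof — the orbit description $\{a\ur\bmod q:(a,q)=1\}=\{\ua:(\ua,q)=1,\,q\mid\ur\cdot\ua^\perp\}$ for $\gcd(\ur,q)=1$, the Poisson evaluation $\sum_{\ur\neq\vec 0}\omega(|\ur|/Q^{1/2})=c^{-1}Q+O_N(Q^{-N})$, and the reduction of $q\mid\ur\cdot\ua^\perp$ to $q/\gcd(q,\gcd(\ur))\mid t$ in the $\{\uc,\ub\}$ coordinates — but the governing identity must be the divisor-restricted one, the detection must be of $\uc\cdot(\nn/d)=0$ rather than $\ur\cdot\nn=0$, and the one-dimensional modulus must depend on $d$.
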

\begin{proof}
	If $\nn\not=\vec{0}$, then there exist \emph{exactly two} primitive vectors $\uc$ such that  $\nn=\lambda\uc^\perp$ for some $\lambda \neq 0$. Therefore, by writing $\nn =dd'\uc^\perp$, whenever $\nn\not=\vec{0}$,  we have 
	\begin{align}\label{eq:first identity}&\sum_{\uc}\delta_{\uc\cdot \nn}\sum_{d\mid \nn}\omega\Big(\frac{d|\uc|}{Q_1}\Big)-2\sum_{d'\mid \nn}\omega\Big(\frac{|\nn|}{d'Q_1}\Big)\\=&\sum_{\uc}\sum_{d\mid \nn}\delta_{\uc\cdot \frac{\nn}{d}}\omega\Big(\frac{d|\uc|}{Q_1}\Big)-2\sum_{d'\mid \nn}\omega\Big(\frac{|\nn|}{d'Q_1}\Big)=0,\nonumber
	\end{align}
	where $Q_1$ is some parameter to be chosen later and the sum over $\uc$ is over primitive integer vectors.

    We now apply the one dimensional delta symbol~\cite[Theorem 1]{Heath-Brown96} with parameter $Q_2$ to detect $\uc\cdot \frac{\nn}{d}=0$. We may allow $Q_2$ to depend on $d$  and $\uc$;  we can write this as $Q_2=Q_2(d\uc)$ without loss of generality, since $d=\gcd(d\uc)$.
    This gives us 
	\begin{align}\label{cnd}
	\delta_{\uc \cdot \frac{\nn}{d}}=  \frac{c_{Q_2}}{Q_2^2}\sum_{q}\ \starsum_{a\bmod q}e_q(a\uc\cdot \frac{\nn}{d})h\Big(\frac{q}{Q_2},\frac{\uc\cdot\nn}{dQ_2^2} \Big),
	\end{align}
	where $h$ is defined below \eqref{eq:hyzdef} and 
	$c_{Q_2}=1+O_N(Q_2^{-N})$ for any $N$. Using the support of $h(y,z)$ as recorded in Lemma~\ref{lem:HB}, together with the assumption on the size of $\nn$, we can only have non-zero contributions from terms satisfying
    \begin{equation}
        \label{eq:support-h}
    q\leq \max\{ Q_2, {Q_1 d^{-2} Q^{3/2}}{Q_2^{-1}}\}.
    \end{equation}
    Since eventually we would seek a bound of the type $qd\leq Q$, it is natural for us to choose $Q_2=Q/d$ so that \eqref{eq:support-h} becomes
       \begin{equation*}
    q\leq \max\{ Q/d, Q_1 Q^{1/2}/d\}=Q/d,
    \end{equation*}
    upon further choosing $Q_1=Q^{1/2}$.
   With the choice $Q_2=Q/d$ and $Q_1=Q^{1/2}$, we see that \eqref{cnd} becomes
    \begin{align*}
	\delta_{\uc \cdot \frac{\nn}{d}}=  \frac{d^2}{Q^2}\sum_{q\leq Q/d}\ \starsum_{a\bmod q}e_q(a\uc\cdot \frac{\nn}{d})h\Big(\frac{qd}{Q},\frac{d\uc\cdot\nn}{Q^2} \Big)+O_N( Q^4 Q_2^{-N}),
	\end{align*} for any $N>1$. As $Q^{1/2}\leq Q/Q_1\leq Q/d=Q_2$, the error term above is $O_N(Q^{4-N/2})$. Therefore, introducing the sum over $\uc$ and $d$ yields
 \begin{align*}
	 &\sum_{\uc}\sum_{d\mid \nn}\delta_{\uc \cdot \frac{\nn}{d}}\omega\Big(\frac{d|\uc|}{Q^{1/2}}\Big)\\=&  \sum_{\uc}\sum_{d\mid \nn}\omega\Big(\frac{d|\uc|}{Q^{1/2}}\Big)\frac{d^2}{Q^2}\sum_{q\leq Q/d}\ \starsum_{a\bmod q}e_q(a\uc\cdot \frac{\nn}{d})h\Big(\frac{qd}{Q},\frac{d\uc\cdot\nn}{Q^2} \Big)+O_N(Q^{5-N/2}),
	\end{align*}
	for any $N>1$. After using the additive characters to replace the condition $d\mid \nn$, we obtain for any primitive $\uc$,
	\begin{align*}
	&\sum_{d\mid \nn}\starsum_{a\bmod q}e_q(a\uc\cdot \frac{\nn}{d})=\sum_{d}\frac{1}{d^2}\starsum_{\substack{a\bmod q\\ (a, q)=1}}\sum_{\ub\bmod d}e_{qd}(a\uc\cdot \nn+q\ub\cdot\nn)
	\\=&\sum_{d}\frac{1}{d^2}\sum_{\substack{\ua \bmod {qd}\\ q\mid \uc \cdot \ua^\perp\\ (\ua, q)=1}}e_{qd}(\ua \cdot \nn) =\sum_{d}
    \frac{1}{d^2}
\sum_{\substack{d_1,d_2\\d=d_1d_2\\ (d_1, q)=1}}
\sum_{\substack{\ua \bmod qd_2\\ q\mid \uc\cdot d_1\ua ^\perp\\ (\ua, qd_2)=1}}e_{qd_2}(\ua\cdot \nn),
	\end{align*}
	upon writing $d_1=\gcd(\ua, d)$ and then replacing $\ua$ by $d_1\ua$.
	We then re-name $qd_2$ as $q$ and $\ur =d\uc$ so that $d=\gcd(\ur)$, $Q_2=Q_2(\ur)$, and $d_2=\gcd(q, d)=\gcd(q,\ur)$,  to get 
	\begin{align*}
	&\sum_{\uc}\sum_{d\mid \nn}\delta_{\uc \cdot \frac{\nn}{d}}\omega\Big(\frac{d|\uc|}{Q^{1/2}}\Big)\\=&\frac{1}{Q^2}\sum_{\ur=d \uc}\omega\Big(\frac{|\ur|}{Q^{1/2}}\Big)\sum_{q\leq Q/d_1}\;\starsum_{\substack{\ua \bmod q\\ q\mid \ur\cdot \ua^\perp}}e_{q}(\ua\cdot \nn)h\Big(\frac{qd_1}{Q}, \frac{\ur\cdot \nn}{Q^2}\Big)+O_N(Q^{5-N/2})\\
    =&\frac{1}{Q^2}\sum_{\ur=d \uc}\omega\Big(\frac{|\ur|}{Q^{1/2}}\Big)\sum_{\substack{q\leq Q}}\;\starsum_{\substack{\ua \bmod q\\ q\mid \ur\cdot \ua^\perp}}e_{q}(\ua\cdot \nn)h\Big(\frac{\gcd(\ur)q}{\gcd(q,\ur)Q}, \frac{\ur\cdot \nn}{Q^2}\Big)+O_N(Q^{5-N/2}).
	\end{align*}
    Here in the last equality, we have used $d_1=d/d_2=\gcd(\ur)/\gcd(q,\ur)$ and relaxed the upper bound for $q$ using the fact that the terms with $q>Q/d_1$ are all zero from Lemma \ref{lem:HB}. 
    To summarize, we have
	\begin{align}
    \label{eq:second-identity}
	&\quad \sum_{\uc}\sum_{d\mid \nn}\delta_{\uc \cdot \frac{\nn}{d}}\omega\Big(\frac{d|\uc|}{Q^{1/2}}\Big)\\=&\frac{1}{Q^2}\sum_{\ur=d \uc}\omega\Big(\frac{|\ur|}{Q^{1/2}}\Big)\sum_{\substack{q\leq Q}}\;\starsum_{\substack{\ua \bmod q\\ q\mid \ur\cdot \ua^\perp}}e_{q}(\ua\cdot \nn)h\Big(\frac{\gcd(\ur)q}{\gcd(q, \ur)Q}, \frac{\ur\cdot \nn}{Q^2}\Big)+O_N(Q^{-N}),\nonumber
	\end{align} 
    for any $N>1$.

    For $\nn\not=\vec{0}$, we see that \eqref{eq:duality} follows from \eqref{eq:first identity} and \eqref{eq:second-identity}. 
    When $\nn=\vec{0}$, the left-hand side of \eqref{eq:first identity} becomes 
	\begin{align*}
	\sum_{\uc}\sum_{d\mid \nn}\delta_{\uc \cdot \frac{\nn}{d}}\omega\Big(\frac{d|\uc|}{Q^{1/2}}\Big)=\sum_{d,\uc}\omega\Big(\frac{d|\uc|}{Q^{1/2}}\Big)=c^{-1}Q+O_N(Q^{-N}), 
	\end{align*}
	for any $N>0$ with $c$ is defined in \eqref{eq:cdef}, and so by \eqref{eq:second-identity} we see that \eqref{eq:duality} holds in this case as well. 
\end{proof}

\subsection{Defining the \(p\)-functions}
Next, we use Fourier inversion to connect the expression on the right hand side of \eqref{eq:duality} to the $p$-functions appearing in Theorem~\ref{thm:delta}. The first sum in \eqref{eq:duality} can be interpreted as placing rectangular arcs around rationals $\ua/q$ with $q\leq Q$ and $\gcd(\ua,q)=1$ \emph{along the line} $q\mid  \ur\cdot \ua^\perp$ for every $|\ur|\asymp Q^{1/2}$, which results in the term $p_{2,\ur,k,q}$ where $k=\gcd(\ur)/\gcd(\ur, q)$. The second sum in \eqref{eq:duality} corresponds to placing symmetric square arcs
around each rational $\ua/q$ with $q\leq Q$ and $\gcd(\ua,q)=1$, which gives rise to the term $p_{1,q}$. 
 
\begin{lemma}\label{lem:def_of_p_i}
	Let $\omega_0, \omega$ be as in Section~\ref{sec:notation}. Let $\nn\in \mathbb Z^2$ and let $Q\geq 1$ be a large parameter. 
	Then we have
		\begin{equation}\begin{split}\label{eq:p1p2}
		\delta_{\nn}
	=& 
	\sum_{1\leq q\leq Q}
	\underset{\hphantom{\ua\bmod{q}}}{\starsum_{\ua\bmod{q}}}
	e_q(\ua\cdot \nn)
	\int_{\R^2}
	\bigg(p_{1,q}(\uw)\\ & \quad +
	\sum_{\substack{q\mid \ur\cdot \ua^\perp\\ k=\gcd(\ur)/\gcd(\ur, q)}}
	\omega\bigg(
	\frac{ |\ur| }{ Q^{1/2}}
	\bigg)
	p_{2,\ur,k,q}(\uw)
	\bigg)
	e(\uw\cdot\nn)\,d\uw
 +O_N(Q^{-N})
 \end{split}
	\end{equation}
	for any $N>0$	where 
		\begin{align}
		\label{defp2}\quad
			p_{1,q}(\uw)
			&=
			-\frac{2c}{Q}
			\int_{\R^2} 
			\omega_0\left(\frac{|\ux|}{Q^{3/2}}\right)
			\sum_{j\in\NN}
			\frac{1}{q^2j^2}
			\omega
			\left(\frac{|\ux|}{jqQ^{1/2}}
			\right)
			e(-\uw\cdot \ux)
			\,d\ux,\\
				\label{defp1}\quad
				p_{2,\ur,k,q}(\uw)
				&=\frac{c}{Q^3}
				\int_{\R^2} 
				\omega_0\left(\frac{|\ux|}{Q^{3/2}}\right)
				h\left(
				\frac{kq}{Q},
				\frac{\ur\cdot \ux }{ Q^2}
				\right)
				e(-\uw\cdot \ux)
				\,d\ux.
	\end{align}
	Here $h(y,z)$ is defined by \eqref{eq:hyzdef}. 
\end{lemma}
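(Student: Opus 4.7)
The plan is to apply Fourier inversion on $\R^2$ to each of the two terms on the right-hand side of Lemma~\ref{lem:duality}. First I would multiply both sides of \eqref{eq:duality} by $\omega_0(|\nn|/Q^{3/2})$; since $\omega_0(0)=1$ and $\delta_{\nn}=0$ for $\nn\ne\vec 0$, this leaves the left-hand side unchanged, while furnishing the right-hand side with the compact $\ux$-support needed to invoke Fourier inversion.

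For the first term of \eqref{eq:duality}, for each fixed $q\leq Q$ and $\ur\in\mathbb Z^2$ with $\omega(|\ur|/Q^{1/2})\ne 0$, the function
$$G_2(\ux):=\omega_0(|\ux|/Q^{3/2})\cdot\tfrac{c}{Q^3}\,h\bigl(kq/Q,\,\ur\cdot\ux/Q^2\bigr)$$
is smooth and compactly supported on $\R^2$. By \eqref{defp1} its Fourier transform is exactly $p_{2,\ur,k,q}(\uw)$, and Fourier inversion $G_2(\nn)=\int_{\R^2}p_{2,\ur,k,q}(\uw)\,e(\uw\cdot\nn)\,d\uw$ together with a harmless reordering of the (finite) sums over $\ur,q,\ua$ yields the $p_2$-part of \eqref{eq:p1p2}.

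For the second term, I would first detect the divisibility condition using
$$\mathds 1_{d'\mid\nn}=\frac{1}{(d')^2}\sum_{\ua\bmod d'}e_{d'}(\ua\cdot\nn),$$
then parametrize each residue $\ua\bmod d'$ by writing $\ua=j\ua'$ with $j=\gcd(\ua,d')$ and $\gcd(\ua',q)=1$, where $q:=d'/j$. This yields $\sum_{\ua\bmod d'}e_{d'}(\ua\cdot\nn)=\sum_{j\mid d'}\starsum_{\ua'\bmod q}e_q(\ua'\cdot\nn)$, and switching to $(q,j)$ as summation variables via $d'=qj$ rewrites the term as a sum over $q,j\in\NN$ and primitive $\ua\bmod q$. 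For each fixed $q$, I then recognize the remaining factor $-\tfrac{c}{Q}\,\omega_0(|\nn|/Q^{3/2})\sum_j(qj)^{-2}\omega(|\nn|/qjQ^{1/2})$ as the value at $\ux=\nn$ of the inverse Fourier transform of $p_{1,q}$ defined in \eqref{defp2}, a second Fourier inversion which is valid because the corresponding $\ux$-space function is smooth with compact support.

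Finally, the $q$-sum must be truncated at $q\leq Q$. The supports $\{|\ux|<Q^{3/2}/2\}$ of $\omega_0(|\ux|/Q^{3/2})$ and $\{qjQ^{1/2}/2<|\ux|<qjQ^{1/2}\}$ of $\omega(|\ux|/qjQ^{1/2})$ are disjoint whenever $qj\geq Q$, so $p_{1,q}\equiv 0$ for $q\geq Q$ (given $j\geq 1$) and the truncation is automatic. I expect the most delicate step to be the reparametrization $\ua\bmod d'\leftrightarrow(j,\ua'\bmod q)$ together with the change of summation variables $d'\mapsto(q,j)$; the interchange of the remaining sums and Fourier integrals is routine thanks to the smoothness and compact $\ux$-support of the pre-Fourier functions $G_1,G_2$.
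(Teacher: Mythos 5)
Your proposal is correct and matches the paper's proof in essentially every particular: both multiply by $\omega_0(|\nn|/Q^{3/2})$ to enable Fourier inversion, recognize the first term of Lemma~\ref{lem:duality} directly as $\sum\int p_{2,\ur,k,q}$ after interchanging sums, detect $\mathds 1_{d'\mid\nn}$ with additive characters modulo $d'$ and reparametrize via $d'=qj$ to expose $\sum_q\starsum_{\ua\bmod q}e_q(\ua\cdot\nn)$ times the profile that is the inverse Fourier transform of $p_{1,q}$, and observe that the support of $\omega_0,\omega$ forces $q\leq Q$. The only difference is presentational: you spell out the bijection $\ua\bmod d'\leftrightarrow(j,\ua'\bmod q)$ and the truncation argument, which the paper compresses into a single remark.
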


\begin{proof}
	We will introduce a smooth weight on $\nn$ in \eqref{eq:duality} by observing
	\[
	\delta_{\nn}
	= 
	\omega_0\left(\frac{|\nn|}{Q^{3/2}}\right)
	\delta_{\nn}.
	\]
After using Fourier inversion in the $\nn$ variable, we see that the first term on the right side in \eqref{eq:duality} becomes 
\begin{align}\nonumber
\MoveEqLeft\frac{c}{Q^3}
\sum_{\substack{ 1\leq q\leq Q}}
\;\starsum_{\ua \bmod q }
e_{q}(\ua\cdot \nn)
\sum_{\substack{ q\mid \ur\cdot \ua^\perp}}
\omega\left(
\frac{ |\ur| }{ Q^{1/2}}
\right)
h\left(
\frac{\gcd(\ur)q}{\gcd(\ur,q)Q},
\frac{\ur\cdot \nn }{ Q^2}
\right)\omega_0\left(\frac{|\nn|}{Q^{3/2}}\right)\\\nonumber
&=\frac{c}{Q^3}\sum_{\substack{ 1\leq q\leq Q}}
\;\starsum_{\ua \bmod q }
e_{q}(\ua\cdot \nn)
\sum_{\substack{q\mid \ur\cdot \ua^\perp\\\mathclap{k=\frac{\gcd(\ur)}{\gcd(\ur, q)}}}}
\omega\left(
\frac{ |\ur| }{ Q^{1/2}}
\right)
h\left(
\frac{kq}{Q},
\frac{\ur\cdot \nn }{ Q^2}
\right)\omega_0\left(\frac{|\nn|}{Q^{3/2}}\right)\\
&=
\sum_{1\leq q \leq Q}
\underset{\hphantom{\ua\bmod{q}}}{\starsum_{\ua\bmod{q}}}
e_q(\ua\cdot \nn)
\int_{\R^2}
\sum_{\substack{q\mid \ur\cdot \ua^\perp\\ \mathclap{k=\frac{\gcd(\ur)}{\gcd(\ur, q)}}}}
\omega\bigg(
\frac{ |\ur| }{ Q^{1/2}}
\bigg)
p_{2,\ur,k,q}(\uw)
e(\uw\cdot\nn)\,d\uw,
\label{eq:delta3}
\end{align}
where $p_{{2, \ur, k, q}}$ is defined in \eqref{defp1}.
Similarly, the second term on the right side in \eqref{eq:duality} becomes 
\begin{align}\nonumber
	&\frac{2c}{Q}
	\omega_0\left(\frac{|\nn|}{Q^{3/2}}\right)
	\sum_{d'\in\N}
\mathds{1}_{d'\mid \nn}\,
	\omega
	\left(\frac{|\nn|}{d'Q^{1/2}}
	\right)
	\\=&\frac{2c}{Q}\nonumber
	\omega_0\left(\frac{|\nn|}{Q^{3/2}}\right)
	\sum_{1\leq q\leq Q}
	\frac{1}{q^2}
	{\sum_{\ua\,(q)}}^*
	e_q(\ua\cdot\nn)
	\sum_{j\in\NN}
	\frac{1}{j^2}
	\omega
	\left(\frac{|\nn|}{jqQ^{1/2}}
	\right)
	\nonumber
	\\
	=&
	-\sum_{1\leq q\leq Q}
	\underset{\hphantom{\ua\bmod{q}}}{\starsum_{\ua\bmod{q}}}
	e_q(\ua\cdot \nn)
	\int_{\R^2}
	p_{1,q}(\uw)
	e(\uw\cdot\nn)\,d\uw,
	\label{eq:delta2}
\end{align}
where $p_{1,q}$ is defined as in \eqref{defp2}.
 The condition $q\leq Q$ is introduced due to support conditions on $\omega_0$ and $\omega$.
And the result follows from Lemma~\ref{lem:duality}, \eqref{eq:delta3} and \eqref{eq:delta2}.
\end{proof}

\subsection{The function \(p_{1, q}\)}
We give an estimate of of $p_{1,q}(\uw)$ defined in \eqref{defp2} in the following lemma. 
\begin{lemma}\label{lem:p2bound} Let $1\leq q\leq Q$ be an integer, $\uw\in\RR^2$ and $p_{1,q}(\uw)$ be as defined in \eqref{defp2}. Then for any $N>0 $,
	\[
	p_{1,q}(\uw)
	\ll_N \frac{Q}{q(1+|\uw|Q^{3/2})}(1+|\uw|qQ^{1/2})^{-N}.
	\]
\end{lemma}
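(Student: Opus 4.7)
The plan is to treat $p_{1,q}(\uw)$ term-by-term. Writing
\[
I_j(\uw) := \int_{\R^2}\omega_0(|\ux|/Q^{3/2})\,\omega(|\ux|/(jqQ^{1/2}))\,e(-\uw\cdot\ux)\,d\ux,
\]
we have $p_{1,q}(\uw) = -\tfrac{c}{Q}\sum_{j\in\NN}\tfrac{1}{q^2j^2}I_j(\uw)$. The support conditions on $\omega_0$ and $\omega$ force $|\ux|\asymp jqQ^{1/2}$ and $|\ux|<Q^{3/2}/2$, so only the finite range $1\leq j\lesssim Q/q$ contributes nontrivially to the sum.

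Writing $s=jqQ^{1/2}$; in the relevant range $s\leq Q^{3/2}$, so derivatives of $\omega_0(\,\cdot\,/Q^{3/2})$ at scale $Q^{-3/2}$ are dominated by those of $\omega(\,\cdot\,/s)$ at scale $s^{-1}$, and the product has $N$-th order derivatives of size $O(s^{-N})$ on the annulus of area $\asymp s^2$ where it is supported. Integrating by parts $N$ times in the direction $\uw/|\uw|$ (with no boundary terms, since the integrand is compactly supported), and comparing with the trivial bound $|I_j(\uw)|\ll s^2$, gives
\[
|I_j(\uw)| \ll_N s^2(1+|\uw|s)^{-N}=(jqQ^{1/2})^2\bigl(1+|\uw|jqQ^{1/2}\bigr)^{-N}.
\]
Substituting this into the formula for $p_{1,q}(\uw)$, the prefactors $(jqQ^{1/2})^2/(q^2j^2 Q)$ telescope to $1$, reducing the claim to
\[
\sum_{1\leq j\lesssim Q/q}(1+jA)^{-N}\ll_N \frac{Q/q}{1+AQ/q}(1+A)^{-N},\quad A:=|\uw|qQ^{1/2}.
\]

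I would then handle this dyadic sum in three regimes. If $A\geq 1$, taking $N{+}1$ derivatives in the integration by parts step yields $\sum_j(jA)^{-N-1}\ll A^{-N-1}$, which matches since $1+|\uw|Q^{3/2}\asymp AQ/q$. If $A<1$ but $AQ/q\geq 1$, splitting the sum at $j_0=\lceil 1/A\rceil$ gives a contribution $\asymp 1/A$ from $j\leq j_0$, while $j>j_0$ contributes $\ll A^{-N}\sum_{j>1/A}j^{-N}\ll 1/A$ for $N\geq 2$, again matching $Q/(q(1+|\uw|Q^{3/2}))\asymp 1/A$. Finally if $A<1$ and $AQ/q<1$, every term is $\asymp 1$ and there are $\asymp Q/q$ of them, matching $Q/q$. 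The integration by parts is routine and the main source of subtlety is just keeping careful track of these three regimes in the final sum over $j$.
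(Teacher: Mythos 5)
Your proof is correct and follows essentially the same approach as the paper: the support of $\omega$ and $\omega_0$ truncates the $j$-sum at $j\ll Q/q$, integration by parts on the $j$-th term gives the factor $(1+|\uw|jqQ^{1/2})^{-N}$, and the remaining geometric sum over $j$ is estimated by separating the cases according to the sizes of $|\uw|qQ^{1/2}$ and $|\uw|Q^{3/2}$. The paper first rescales $\ux$ so that the $\omega$-factor becomes $\omega(|\ux|)$ and then records a trivial bound $Q/q$ together with the $j$-summed decay bound $(|\uw|qQ^{1/2})^{-1}(1+|\uw|qQ^{1/2})^{-N}$ and takes the minimum, whereas you carry out the $j$-sum with the cutoff in place and split into three regimes; these are the same computation organized slightly differently. (One small presentational remark: your displayed claim $\sum_{j\lesssim Q/q}(1+jA)^{-N}\ll \tfrac{Q/q}{1+AQ/q}(1+A)^{-N}$ is false with the same $N$ on both sides when $A\geq 1$; as you note immediately afterwards, one must take $N+1$ derivatives to get it, so the final statement should really be stated with $N$ allowed to change --- the paper's \eqref{p2bound2} has the identical imprecision and is likewise repaired by relabelling $N$.)
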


\begin{proof}
	Making a change of variables, we write
	\begin{align*}
	p_{1,q}(\uw)
	&=
	-
	2c
	\sum_{j\in\NN}\int_{\R^2} 
	\omega_0\left(\frac{jq|\ux|}{Q}\right)
	\omega
	\left(|\ux|
	\right)
	e(-qjQ^{1/2}\uw\cdot \ux)
	\,d\ux.
	\end{align*}
	Since $\omega$ is supported in $(1/2,1)$ and $\omega_0$ is supported in $(-1/2,1/2)$, the sum over $j$ becomes $1\leq j\ll Q/q$. We therefore reach a trivial bound
	\begin{align}\label{p2bound1}
	|p_{1,q}(\uw)|\ll Q/q.
	\end{align}
	Alternatively, upon repeated integration by parts, for a fixed $j\ll Q/q$, we obtain
	\begin{align*}
	\left|\int_{\R^2} 
	\omega_0\left(\frac{jq|\ux|}{Q}\right)
	\omega
	\left(|\ux|
	\right)
	e(-qjQ^{1/2}\uw\cdot \ux)
	\,d\ux\right|\ll_N (1+qjQ^{1/2}|\uw|)^{-N}, 
	\end{align*}
	which leads to an alternate bound
	\begin{equation}\label{p2bound2}
	|p_{1,q}(\uw)|\ll_N \sum_{j\in \NN} (1+qQ^{1/2}j|\uw|)^{-N}\ll_N (qQ^{1/2}|\uw|)^{-1}(1+qQ^{1/2}|\uw|)^{-N}.
	\end{equation}
	The lemma then follows by combining \eqref{p2bound1} and \eqref{p2bound2}.
\end{proof}
\subsection{The function $p_{2, \ur, k, q}$}
We first give various estimates for the function $p_{2, \ur, k,q}$ defined in \eqref{defp1}. In particular, these estimates show that $p_{2, \ur, k, q}$ is supported in a rectangle around the origin with side length $O(1/Q^{3/2})$ in the direction of $\ur^\perp$ and side length $O(1/kqQ^{1/2})$ in the direction of $\ur$ in Lemma \ref{lem:nonstationary}. We then provide an asymptotic evaluation of $p_{r, \ur, k, q}$ for certain range of $\uw$ in Lemma~\ref{lem:stationary}. 
 
\begin{lemma}\label{lem:nonstationary}
Write \(\partial_{\vec{\xi}} = \frac{\xi_1}{|\vec \xi|}\frac{\partial}{\partial w_1}+ \frac{\xi_2}{|\vec \xi|}\frac{\partial}{\partial w_2}\) for the normalized directional derivative (with respect to \(\uw \)). For $Q^{1/2}/2\leq |\ur|\leq Q^{1/2}$,  we have the following for \(a,b,j\geq 0\):
	\begin{align*}
		&\quad q^j\frac{\partial^j}{\partial q^j}
		\partial_{\ur }^a
		\partial_{\ur ^\perp}^b
		 p_{2,\ur,k,q}(\uw)
		\\&\ll_N
		\mathds 1_{\frac{kq}{Q}< 1}
		\big(kqQ^{1/2}\big)^a Q^{3b/2} 
		(1+|\uw|kqQ^{1/2}+|\uw\cdot \ur^\perp|Q)^{-N}.
	\end{align*}
\end{lemma}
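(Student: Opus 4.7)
The plan is to rescale the integral, use an orthogonal coordinate system aligned with $\ur$, and exploit cancellation in the defining sum for $h$ to beat the trivial bound. First I substitute $\ux = Q^{3/2}\uv$ to obtain
\[
  p_{2,\ur,k,q}(\uw) = c\int_{\R^2} \omega_0(|\uv|)\, h(y,\, \gamma\, \ur\cdot\uv/|\ur|)\, e(-Q^{3/2}\uw\cdot \uv)\, d\uv,
\]
with $y = kq/Q$ and $\gamma = |\ur|/Q^{1/2}\in [1/2,1]$. Decomposing $\uv = \tau\, \ur/|\ur| + \sigma\, \ur^\perp/|\ur|$ gives $\uw\cdot\uv = \tau w_\ur + \sigma w_{\ur^\perp}$ where $w_\ur = \uw\cdot\ur/|\ur|$ and $w_{\ur^\perp} = \uw\cdot\ur^\perp/|\ur|$. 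Each derivative $\partial_\ur$ of $e(-Q^{3/2}\uw\cdot\uv)$ in $\uw$ pulls down $-2\pi iQ^{3/2}\tau$, each $\partial_{\ur^\perp}$ pulls down $-2\pi i Q^{3/2}\sigma$, and $q\,\partial/\partial q = y\,\partial/\partial y$ acts only on $h$. The indicator $\mathds{1}_{kq/Q < 1}$ follows from the support of $h$: for $y\geq 2$ and $|z|\leq 1$, neither $\omega(yj)$ nor $\omega(|z|/(yj))$ admits any $j\geq 1$, so $h\equiv 0$.

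For the $\ur^\perp$-direction, the integrand depends on $\sigma$ only through the smooth compactly supported factor $\sigma^b\omega_0(\sqrt{\tau^2+\sigma^2})$. Integration by parts $N$ times in $\sigma$ against $e(-Q^{3/2}w_{\ur^\perp}\sigma)$ yields decay $(Q^{3/2}|w_{\ur^\perp}|)^{-N} = (Q|\uw\cdot\ur^\perp|)^{-N}$, using $|w_{\ur^\perp}|\asymp |\uw\cdot\ur^\perp|/Q^{1/2}$. The prefactor $(Q^{3/2}\sigma)^b = O(Q^{3b/2})$ on $|\sigma|\leq 1/2$ accounts for the $\partial_{\ur^\perp}^b$ derivative factor in the claim.

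The $\ur$-direction is the hard case: the trivial estimate $|h(y,z)|\ll 1/y$ would only give $|p_{2,\ur,k,q}|\ll Q/(kq)$, far weaker than the claimed $O(1)$. To recover $O(1)$ I will exploit the cancellation $h = F(y) - G(y, \gamma\tau)$, with $F(y) = \sum_{j\geq 1}\omega(yj)/(yj)$ constant in $\tau$ and $G(y,z) = \sum_{j\geq 1}\omega(|z|/(yj))/(yj)$. The $F$-piece contributes $F(y)$ times the Schwartz-decaying Fourier transform of $\omega_0(|\uv|)$ at frequency $Q^{3/2}\uw$. For each $j$-summand of the $G$-piece I substitute $u = \gamma|\tau|/(yj)$, in direct analogy with the substitution used in the proof of Lemma~\ref{lem:p2bound}; this localizes the $\omega$-factor to unit scale and reveals that the $\tau$-integrand varies on the scale $y/\gamma\asymp kq/Q$. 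Poisson summation in $j$ then produces a zero-frequency main term whose leading $1/y$ contribution cancels precisely against $F(y)$ (up to Schwartz corrections), yielding the needed $O(1)$ bound. The resulting phases $e(-(kqQ^{1/2}j/\gamma)uw_\ur)$, combined with iterated integration by parts, give the decay $(1 + |w_\ur|kqQ^{1/2})^{-N}\geq (1 + |\uw|kqQ^{1/2})^{-N}$; the prefactor $(Q^{3/2}\tau)^a$ is bounded by $(kqQ^{1/2})^a$ on the effective $\tau$-support $|\tau|\lesssim y/\gamma$. The main obstacle is executing the $F$--$G$ cancellation rigorously: one must track the Euler--Maclaurin corrections to the Poisson main term, including the Jacobian factor $yj/\gamma$ from the $u$-substitution and the prefactor $c/\gamma$, and propagate the cancellation uniformly through all $a$ derivatives in $\ur$ and $j$ applications of $y\partial_y$ without losing the claimed decay.
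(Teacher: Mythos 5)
Your skeleton matches the paper's proof — change variables along the $\ur,\ur^\perp$ frame, iterate by parts to get the decay, and bound the derivative prefactors via the effective support of $h$ — but the step you flag as ``the main obstacle'' is precisely the content of \cite[Lemma~5]{Heath-Brown96}, quoted in the paper as \eqref{HBhbound}:
\begin{equation*}
\frac{\partial^m}{\partial y^m}\frac{\partial^n}{\partial z^n}h(y,z)\ll_{n,m,R} y^{-1-m-n}\min\{1,(y/|z|)^R\}\qquad (|z|\le 1,\ y<1).
\end{equation*}
The paper invokes this as a black box; the $\min\{1,(y/|z|)^R\}$ factor is exactly the encoded $F$--$G$ cancellation you are trying to exhibit by hand. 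Applied in the longitudinal variable it gives $\int |z_1|^a |h(y,z_1)|\,dz_1\ll y^a$, which is the $(kqQ^{1/2})^a$ prefactor, and it also controls the $z_1$-derivatives that appear when you integrate by parts in the $\ur$-direction. You propose instead to re-derive this via Poisson summation in $j$ together with Euler--Maclaurin, while propagating the cancellation through $a$ derivatives in $\ur$ and $j$ applications of $y\,\partial_y$; that is essentially Heath-Brown's original proof of his Lemma~5, and you do not carry it out. As written, the argument for the $\ur$-direction — the genuinely hard part — is incomplete, and no error term is actually produced.

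A smaller but real gap: your argument for the indicator $\mathds 1_{kq/Q<1}$ only establishes $h\equiv 0$ for $y\geq 2$ under $|z|\leq 1$, whereas the claim requires vanishing for all $y\geq 1$. The paper closes this using \cite[Lemma~4]{Heath-Brown96}: $h(y,z)=0$ whenever $y\geq 1$ and $|z|\leq y/2$, and in $p_{2,\ur,k,q}$ one has $|z|=|\ur\cdot\ux|/Q^2\leq (Q^{1/2})(Q^{3/2}/2)/Q^2=1/2$ from the support of $\omega_0$, so $|z|\leq y/2$ for all $y\geq 1$. You need that sharper support bound, not $|z|\leq 1$, to cover $1\leq y<2$.

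Your treatment of the $\ur^\perp$-direction is fine and mirrors the paper's integration by parts in the transverse variable. If you replace your Poisson-summation sketch with a direct appeal to the Heath-Brown estimate \eqref{HBhbound} and fix the indicator argument as above, your proof reduces to essentially the same argument as the paper's (the paper's substitution $\ux=z_1\ur+z_2\ur^\perp$ and your normalized $\ux=Q^{3/2}\uv$ differ only by inessential rescaling).
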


\begin{proof}
	If \(\frac{kq}{Q}\geq 1\), the result follows from the fact that
	\(
	h(y,z)=0\) when \(y\geq 1\) and \(|z|\leq y/2\) (see~\cite[Lemma~4]{Heath-Brown96}) together with the support of $\omega$ and $\omega_{0}$.  
	
	Next we consider \(\frac{kq}{Q}< 1\).
We make a change of variables $\ux=z_1\ur+z_2\ur^\perp$ to write 
\begin{align}
p_{2,\ur,k,q}(\uw)&=\frac{c}{Q^3}
\int_{\R^2} 
\omega_0\left(\frac{|\ux|}{Q^{3/2}}\right)
h\left(
\frac{kq}{Q},
\frac{\ur\cdot \ux }{ Q^2}
\right)
e(-\uw\cdot \ux)
\,d\ux
\nonumber
\\
&=\frac{c|\ur|^2}{Q^3}
\int_{\R^2}
\omega_0\left(\frac{|\ur||\uz|}{Q^{3/2}}\right)
h\left(
\frac{kq}{Q},
\frac{z_1|\ur|^2 }{ Q^2}
\right)
e(-\uw\cdot (z_1\ur+z_2\ur^\perp))
\,d\uz.
\label{eq:exposing_h}
\end{align}		
Since $|\ur|\asymp Q^{1/2}$ and $z_1\ur+z_2\ur^\perp=|\ur|(z_1\ur/|\ur|+z_2\ur^\perp/|\ur|)$, we obtain
\begin{align*}
&\quad |
q^j\frac{\partial^j}{\partial q^j}
\partial_{\ur }^a
\partial_{\ur ^\perp}^b
p_{2,\ur,k,q}(\uw)|\\&\ll Q^{-2} \int_{|\uz|< Q^{3/2}/|\ur|}
(Q^{1/2} z_1)^a
(Q^{1/2} z_2)^b
\left|
q^j\frac{\partial^j}{\partial q^j}
h\left(
\frac{kq}{Q},
\frac{z_1 |\ur|^2}{ Q^2}
\right)\right|dz_1dz_2
\\
&\ll  \int_{|\uz|< 1}
(Q^{3/2} z_1)^a
(Q^{3/2} z_2)^b
\left|
q^j\frac{\partial^j}{\partial q^j}
h\left(
\frac{kq}{Q},z_1
\right)\right|dz_1dz_2
\\&\ll
Q^{3b/2}
\int_{|\uz|< 1}
(Q^{3/2} z_1)^a
\left|
q^j\frac{\partial^j}{\partial q^j}
h\left(
\frac{kq}{Q},z_1
\right)\right|dz_1
\\&\ll 
\left(kqQ^{1/2}\right)^a Q^{3b/2} 
,
\end{align*}
where in the last inequality we invoked ~\cite[Lemma~5]{Heath-Brown96}, which gives for each \(R,n\in\Z_{\geq 0}\),
\begin{align}\nonumber
	\frac{\partial^m}{\partial y^m}	\frac{\partial^n}{\partial z^n}h(y,z)
	&\ll_{n,m,R}y^{-1-m-n}\Big(y^R\mathds 1_{n=0}+
	\min\{1,y^R|z|^{-R}\}\Big)
	\\
	&\ll
	y^{-1-m-n}
	\min\{1,y^R|z|^{-R}\},
	\quad \quad \text{if }|z|\leq 1, y<1.
	\label{HBhbound}
\end{align}

It is therefore enough to check the decay properties of $p_{2,\ur,k,q}$, that is, for any $A\gg 1$ such that either $kqQ^{1/2}|\uw|\gg A$ or $Q|\uw\cdot\ur^\perp|\gg A$ we have
\begin{equation*}
|
q^j\frac{\partial^j}{\partial q^j}
\partial_{\ur }^a
\partial_{\ur ^\perp}^b
p_{2,\ur,k,q}(\uw)|\ll_N
\left(kqQ^{1/2}\right)^a Q^{3b/2} 
A^{-N}.
\end{equation*}

The condition $kqQ^{1/2}|\uw|\gg A$ holds if and only if  $\max\{|\uw\cdot\ur|, |\uw\cdot\ur^\perp|\} \gg \frac{A}{kq} $. Since $ \frac{1}{kq} \geq Q^{-1}$, it is enough to consider the two cases
\begin{equation*}
|\uw\cdot\ur|\gg \frac{A}{kq}\,\,\,\,\,\,\textrm{or} \,\,\,\,\,\,|\uw\cdot\ur^\perp|\gg A/Q.
\end{equation*}
First suppose that $|\uw\cdot\ur|\gg \frac{A}{kq}$. 
After rewriting $p_{2,\ur,k,q}$ in \eqref{eq:exposing_h}  and applying integration by parts in the \(z_1\) variable  we obtain 
	\begin{align*}		
		&\quad \frac{c|\ur|^2}{Q^3}
		q^j\frac{\partial^j}{\partial q^j}
		\partial_{\ur }^a
		\partial_{\ur ^\perp}^b
		\int_{\R^2}
\omega_0\left(\frac{|\ur||\uz|}{Q^{3/2}}\right)
h\left(
\frac{kq}{Q},
\frac{z_1|\ur|^2 }{ Q^2}
\right)
e(-\uw\cdot (z_1\ur+z_2\ur^\perp))
\,d\uz
		\\
	&=\pm\frac{c|\ur|^2}{Q^3}
	\int_{\R^2}
	\frac{e(-\uw\cdot (z_1\ur+z_2\ur^\perp))
	}{(\uw\cdot\ur)^N}
	\frac{\partial^N}{\partial z_1^N}
	\Big(
	z_1^a z_2^b|\ur|^{a+b}
	\omega_0\big(\frac{|\ur||\uz|}{Q^{3/2}}\big)	\\ & \quad \quad \quad \quad \quad \quad\quad \quad \quad  \quad \quad \quad\quad \quad \quad\quad \quad \quad\quad \quad 	\times q^j\frac{\partial^j}{\partial q^j}
	h\left(
	\frac{kq}{Q},
	\frac{z_1|\ur|^2 }{ Q^2}
	\right)\Big)
	\,d\uz
	\\
		&\ll 
Q^{(a+b)/2-2}
	\int_{\R^2}
	\left(
	\frac{kq}{A}
	\right)^N\Big|
	\frac{\partial^N}{\partial z_1^N}
	\big[
	z_1^a z_2^b
	\omega_0\left(\frac{|\ur||\uz|}{Q^{3/2}}\right)	q^j\frac{\partial^j}{\partial q^j}
	h\left(
	\frac{kq}{Q},
	\frac{z_1|\ur|^2 }{ Q^2}
	\right)\big]\Big|
	\,d\uz.
	\end{align*}
We  invoke \eqref{HBhbound} again to find that above is
	\begin{align*}
		&
		\ll_{N,R}
	Q^{(a+3b)/2-2}
	\sum_{\substack{ n_1,n_2,n_3\geq 0 \\ n_1\leq a \\N=n_1+n_2+n_3 }}
	\int_{|\uz|\ll Q}
	\Big(
	\frac{kq}{A}
	\Big)^N
		z_1^{a-n_1}
	Q^{-n_2}
	\left(
	kq
	\right)^{-n_3}\\ & \quad \quad \quad \quad \quad \quad\quad \quad \quad  \quad \quad \quad\quad \quad \quad\quad \quad \quad\quad \quad \quad	\times \big(\frac{Q}{kq}\big)
	\min
	\Big\{1,
	\frac{kq}{ z_1}
	\Big\}^R	
	\,d\uz\\
	&\ll_{N,R} Q^{(a+3b)/2}A^{-N}\sum_{\substack{0\leq n_{1}\leq a\\ 0\leq n_{2}\leq N}}\int_{|\uz|\ll 1}(kq)^{n_{1}+n_{2}}z_{1}^{a-n_{1}}Q^{a-n_{1}-n_{2}}\\ & \quad \quad \quad \quad \quad \quad\quad \quad \quad  \quad \quad \quad\quad \quad \quad\quad \quad \quad\quad \quad \quad	\times \big(\frac{Q}{kq}\big)\min\Big\{1, \frac{kq}{Qz_{1}}\Big\}^{R}d\uz\\
&\ll_{N,R} 
\left(kqQ^{1/2}\right)^a Q^{3b/2}  A^{-N}
	\int_{\R^2}
	\omega_0(|\uz|)
	\big(
	\frac{Q}{kq}
	\big)
	\min
	\Big\{1,
	\frac{kq}{ Qz_1}
	\Big\}^R	
	\,d\uz,
	\end{align*}
	where in the last line we use the fact that the term with \(n_{1}=n_2=0\) dominates the sum,
	\(\frac{kq}{Q}<1\) and that \(R\) can be arbitrarily large. The result follows in the case $|\uw \cdot \ur|\gg \frac{A}{kq}$, since the integral over $\uz$ is $O(1)$. 
	
	The case \(
	|\uw\cdot \ur^\perp|
	\gg A/Q\) is similar. Integration by parts in $z_2$ gives 
	\begin{align*}
	&\pm\frac{c|\ur|^2}{Q^3}
	q^j\frac{\partial^j}{\partial q^j}
	\partial_{\ur }^a
	\partial_{\ur ^\perp}^b	\int_{\R^2}
		\omega_0\left(\frac{|\ur||\uz|}{Q^{3/2}}\right)
		h\left(
		\frac{kq}{Q},
		\frac{z_1|\ur|^2 }{ Q^2}
		\right)
		e(-\uw\cdot (z_1\ur+z_2\ur^\perp))
		\,d\uz
		\\
		&=\pm Q^{(a+b)/2-2}
		\int_{\R^2}
		\frac{e(-\uw\cdot (z_1\ur+z_2\ur^\perp))
		}{(\uw\cdot\ur^\perp)^N}
		\frac{\partial^N}{\partial z_2^N}
		\Big(
		z_1^a z_2^b
		\omega_0\big(\frac{|\ur||\uz|}{Q^{3/2}}\big)
		\\ & \quad \quad \quad \quad \quad \quad\quad \quad \quad  \quad \quad \quad\quad \quad \quad\quad \quad \quad\quad 	\times q^j\frac{\partial^j}{\partial q^j}
		h\left(
		\frac{kq}{Q},
		\frac{z_1|\ur|^2 }{ Q^2}
		\right)\Big)
		\,d\uz\\
		&\ll_{N,R}Q^{(a+b)/2-2}A^{-N}\sum_{\substack{0\leq n_{1}\leq b, n_{2}\geq 0\\ N=n_{1}+n_{2}}}\int_{|\uz|\ll Q}Q^{N}z_{1}^{a}z_{2}^{b-n_{1}}Q^{-n_{2}}\\ & \quad \quad \quad \quad \quad \quad\quad \quad \quad  \quad \quad \quad\quad \quad \quad\quad \quad \quad\quad \quad \times\big(\frac{Q}{kq}\big)\min\Big\{1, \frac{kq}{z_{1}}\Big\}^{R}d\uz,
	\end{align*}
	and we can complete the argument in the same way as before.
\end{proof}

To further understand the function $p_{2,\ur,k,q}$, we use Lemma~\ref{lem:HB} to obtain an asymptotic formula for $p_{2,\ur,k,q}(\uw)$ for certain ranges of $\uw$. 
\begin{lemma}\label{lem:stationary}
	Let $\ur\in \ZZ^2, k, q\in \NN$ and $\delta>0$. Suppose \(|\ur|\asymp Q^{1/2}\), $kq\leq Q$, and $kq ( Q|\uw\cdot \ur|+1)\leq Q^{1-\delta}$, then we have 
	\begin{align*}
	p_{2,\ur,k,q}(\uw)=\frac{cQ^{1/2}}{|\ur|}\hat{\omega}_0\Big(\frac{Q^{3/2}}{|\ur|} \uw\cdot\ur^\perp\Big)+O_{N,\delta}(Q^{-N}).
	\end{align*}
\end{lemma}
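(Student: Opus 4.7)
The plan is to convert the defining integral \eqref{defp1} into an iterated integral aligned with the lattice directions $\ur$ and $\ur^\perp$, apply Lemma~\ref{lem:HB} in the $\ur$-direction (where the $h$-function lives), and recognize what remains as a one-dimensional Fourier transform in the orthogonal direction. Concretely, substituting $\ux=z_1\ur+z_2\ur^\perp$ (with Jacobian $|\ur|^2$) in \eqref{defp1} yields
\begin{align*}
p_{2,\ur,k,q}(\uw)
= \frac{c|\ur|^2}{Q^3}\int_\R\!\!\int_\R
\omega_0\!\Big(\tfrac{|\ur|\sqrt{z_1^2+z_2^2}}{Q^{3/2}}\Big)
h\!\Big(\tfrac{kq}{Q},\tfrac{|\ur|^2 z_1}{Q^2}\Big)
e(-\uw\cdot(z_1\ur+z_2\ur^\perp))\,dz_1\,dz_2,
\end{align*}
and crucially the factor containing $h$ depends only on $z_1$.

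For each fixed $z_2$ with $|z_2|\ll Q^{3/2}/|\ur|$, I would set $f(z_1)=\omega_0(|\ur|\sqrt{z_1^2+z_2^2}/Q^{3/2})\,e(-(\uw\cdot\ur)z_1)$, $y=kq/Q$ and $B=|\ur|^2/Q^2$, so that the inner integral takes precisely the form $\int_\R f(z_1)h(y,Bz_1)\,dz_1$ that Lemma~\ref{lem:HB} handles. Direct differentiation (using $|\ur|\asymp Q^{1/2}$, as dictated by the factor $\omega(|\ur|/Q^{1/2})$ in \eqref{eq:p1p2}, together with the support of $\omega_0$) gives $|f^{(m)}(z_1)|\ll_m A^m$ with $A\asymp |\ur|Q^{-3/2}+|\uw\cdot\ur|\asymp Q^{-1}(1+Q|\uw\cdot\ur|)$. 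Consequently $B/A\asymp 1/(1+Q|\uw\cdot\ur|)\leq 1$, and the hypothesis $y\leq Q^{-\delta}\min\{B/A,1\}$ of Lemma~\ref{lem:HB} reduces exactly to the standing assumption $kq(1+Q|\uw\cdot\ur|)\leq Q^{1-\delta}$.

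Applying Lemma~\ref{lem:HB} replaces the inner integral by $B^{-1}f(0)\int_\R\omega(z)\,dz+O_{N,\delta}(B^{-1}Q^{-N})$. Using the normalization $\int_\R\omega=1$ from Section~\ref{sec:notation} and $B^{-1}=Q^2/|\ur|^2$, the resulting integrand is $\omega_0(|\ur||z_2|/Q^{3/2})\,e(-(\uw\cdot\ur^\perp)z_2)$ up to acceptable error. The remaining one-dimensional Fourier integral in $z_2$ is evaluated by rescaling $v=|\ur|z_2/Q^{3/2}$ together with the evenness of $\omega_0$, producing $(Q^{3/2}/|\ur|)\,\hat\omega_0(Q^{3/2}(\uw\cdot\ur^\perp)/|\ur|)$. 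Multiplying through by the outer prefactor $c|\ur|^2/Q^3$ yields the asserted main term $cQ^{1/2}|\ur|^{-1}\hat\omega_0(\cdots)$; the error, after integration in $z_2$ over its support of length $\ll Q^{3/2}/|\ur|$ and reinstating the prefactor, is $\ll Q^{-N}$ upon inflating $N$.

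The main obstacle is unpacking the hypothesis of Lemma~\ref{lem:HB} with the correct dependence on $\uw$. The oscillation from $e(-(\uw\cdot\ur)z_1)$ must be absorbed into the derivative bound $A$, which is exactly what forces the factor $1+Q|\uw\cdot\ur|$ to appear in the hypothesis. By contrast, the oscillation in the perpendicular direction $e(-(\uw\cdot\ur^\perp)z_2)$ need not be small: it is harmlessly encoded in the Fourier transform factor $\hat\omega_0$ appearing in the conclusion, which is precisely why no analogous bound on $|\uw\cdot\ur^\perp|$ is imposed.
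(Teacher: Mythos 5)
Your proof is correct and follows the same route as the paper: change variables to $\ux=z_1\ur+z_2\ur^\perp$, apply Lemma~\ref{lem:HB} in the $z_1$-direction with the identical choices $y=kq/Q$, $A=Q^{-1}(1+Q|\uw\cdot\ur|)$, $B=|\ur|^2/Q^2$, $f(z_1)=\omega_0(|\ur|(z_1^2+z_2^2)^{1/2}/Q^{3/2})e(-z_1\uw\cdot\ur)$, and then integrate out $z_2$. The only difference is that you spell out the final $z_2$-rescaling and note the (evident) typo $|\ur|\asymp Q$ should read $|\ur|\asymp Q^{1/2}$; neither affects correctness.
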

\begin{proof}
	We have from \eqref{eq:exposing_h} that
	\begin{align*}
	p_{2,\ur,k,q}(\uw)=\frac{c|\ur|^2}{Q^3}\int_{\R}\int_{\R}\omega_0\Big(\frac{|\ur||\uz|}{Q^{3/2}}\Big)h\Big(\frac{k q}{Q}, \frac{z_1|\ur|^2}{Q^2}\Big)e(-\uw\cdot (z_1\ur+z_2\ur^{\perp}))dz_1 dz_2.
	\end{align*}
	The integral over $z_1$ can be estimated using Lemma~\ref{lem:HB} with 
	\begin{align*}
	y= 
	\frac{kq}{Q},\quad
	z=z_1,\quad
	A=Q^{-1}(Q|\uw\cdot \ur|+1), \quad\quad \quad\\
	B= \frac{|\ur|^2}{ Q^2}, \quad 
	f(z_1)=\omega_0\Big(\frac{|\ur|(z_1^2+z_2^2)^{1/2}}{Q^{3/2}}\Big)e(-z_1\uw\cdot \ur),
	\end{align*}
	since the hypothesis in Lemma~\ref{lem:HB}  is guaranteed because we chose $\omega_0$ in such a way that $\omega_0(|x|^{1/2})$ is smooth, and because of our assumption in the present lemma that
	\[
	\frac{kq}{Q}
	\leq Q^{-\delta}( Q|\uw\cdot \ur|+1)^{-1}\asymp Q^{-\delta} B/A. 	\]
	Then the result follows after integrating over $z_2$.
\end{proof}

\section{Two-dimensional delta symbol: Theorem~\ref{thm:delta}} \label{sec:delta proof}\subsection{The lattice \( \Lambda(\ua,q) \)} 
To prove Theorem~\ref{thm:delta}, we show various properties of the lattice $\Lambda(\ua, q)$ defined in \eqref{eq:Lambdadef}, using the geometry of numbers. We first give some more detailed description of the lattice $\Lambda(\ua, q)$.

\subsubsection{Description of $\Lambda(\ua, q)$}  
 Let $q\in\NN, \ua \in \Z^2$ with $\gcd(\ua,q)=1$. Recall the lattice $\Lambda(\ua,q)$ defined in \eqref{eq:Lambdadef}. 
 It is easy to see that
\begin{equation*}
\mathrm{covol}(\Lambda(\ua,q))=q,
\end{equation*}
since the lattice $\Lambda(\ua,q)$ is of index $q$ in $\ZZ^2$.
 We begin by investigating relations among $\Lambda(\ua, q)$ for different $\ua$.
\begin{lemma}\label{lem:lattice}
	Given $q\in\NN, \ua \in \Z^2$ with \(\gcd(\ua,q)=1\), we have
\begin{enumerate}[$(1)$]
	\item 	$\{\ub \bmod q : \ub \in\Lambda(\ua,q),\, \gcd(\ub,q)=1 \}
	=\{k\ua \bmod q : \gcd(k,q)=1\}$;
	\item If $\ub\in \Lambda(\ua,q)$ with $\gcd(\ub,q)=1$ then
	$\Lambda(\ub,q)
	= \Lambda(\ua,q)$;
	\item $\Lambda(\ua,q)=\{\ur\in \Z^2 : q\mid \ur\cdot \ua^\perp\}.$
\end{enumerate}
		
\end{lemma}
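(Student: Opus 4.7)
The plan is to establish the three claims in sequence. Parts (1) and (2) will reduce to direct manipulations of the definition $\Lambda(\ua,q) = \{k\ua + q\uy : k\in\ZZ,\,\uy\in\ZZ^2\}$ together with the assumption $\gcd(\ua,q)=1$, whereas part (3) is the key structural statement and will require a slightly more careful argument.

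For part (1), I will first observe that any $\ub \in \Lambda(\ua,q)$ has the form $\ub = k\ua + q\uy$, hence $\ub \equiv k\ua \pmod{q}$. Since $\gcd(\ua,q) = 1$, the elementary identity $\gcd(k\ua + q\uy,\, q) = \gcd(k\ua,q) = \gcd(k,q)$ shows that the primitivity condition $\gcd(\ub,q) = 1$ is equivalent to $\gcd(k,q) = 1$, and both inclusions then follow at once.

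For part (2), I will apply part (1) to write $\ub \equiv k\ua \pmod{q}$ with $\gcd(k,q)=1$. The inclusion $\Lambda(\ub,q) \subseteq \Lambda(\ua,q)$ is then immediate from the definitions. For the reverse inclusion, take $k' \in \ZZ$ with $kk' \equiv 1 \pmod q$; then $k'\ub \equiv \ua \pmod{q}$, so $\ua \in \Lambda(\ub,q)$, which yields $\Lambda(\ua,q) \subseteq \Lambda(\ub,q)$.

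For part (3), the forward inclusion is trivial since $\ua \cdot \ua^\perp = 0$ gives $(k\ua + q\uy)\cdot\ua^\perp = q\uy\cdot\ua^\perp$. For the reverse inclusion, my preferred approach is a covolume comparison. The set $L := \{\ur \in \ZZ^2 : q \mid \ur\cdot\ua^\perp\}$ is the kernel of the group homomorphism $\ZZ^2 \to \ZZ/q\ZZ$ defined by $\ur \mapsto \ur\cdot\ua^\perp \bmod q$. Writing $\ua = (a_1,a_2)$, the image of this map is the cyclic subgroup of $\ZZ/q\ZZ$ generated by $\gcd(a_1,a_2)$; the hypothesis $\gcd(\ua,q) = \gcd(a_1,a_2,q) = 1$ ensures this generator is a unit modulo $q$, so the map is surjective. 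Hence $L$ has index $q$ in $\ZZ^2$, and combining this with the forward inclusion and the known covolume~\eqref{eq:Lambdaaqvol} forces $\Lambda(\ua,q) = L$. An equivalent explicit route would be to fix a B\'ezout relation $ua_1 + va_2 + wq = 1$ and verify by direct computation that $k := ur_1 + vr_2$ satisfies $\ur \equiv k\ua \pmod q$ whenever $q \mid r_1 a_2 - r_2 a_1$. The only step with any real content is the surjectivity claim in the covolume approach, but this is a routine unwinding of the gcd hypothesis, so I do not anticipate any serious obstacle.
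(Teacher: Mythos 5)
Your proposal is correct, and parts (1) and (2) follow essentially the same route as the paper: write $\ub \equiv k\ua \pmod q$, translate the primitivity condition $\gcd(\ub,q)=1$ into $\gcd(k,q)=1$ via the hypothesis $\gcd(\ua,q)=1$, and then for (2) use the invertibility of $k$ modulo $q$.

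Part (3) is where you genuinely diverge. The paper first reduces WLOG to the case $\gcd(\ua)=1$ (justified because $\gcd(\ua)$ is a unit mod $q$, so $\Lambda(\ua,q)=\Lambda(\ua/\gcd(\ua),q)$), then extends $\ua$ to a $\ZZ$-basis $\{\ua,\ua'\}$ of $\ZZ^2$ and observes that $\ua^\perp\cdot\ua'=\pm 1$, so the condition $q\mid\ur\cdot\ua^\perp$ picks out exactly the coordinate-$q$-divisibility in the $\ua'$-direction, giving $\Lambda(\ua,q)=\ZZ\ua+\ZZ q\ua'$ directly. Your covolume argument instead identifies $L=\{\ur:q\mid\ur\cdot\ua^\perp\}$ as the kernel of $\ZZ^2\to\ZZ/q\ZZ$, $\ur\mapsto\ur\cdot\ua^\perp$, shows this map is surjective because the image is generated by $\gcd(a_1,a_2)$ which is a unit mod $q$ (using $\gcd(a_1,a_2,q)=1$), and then compares the equal indices $[\ZZ^2:L]=[\ZZ^2:\Lambda(\ua,q)]=q$ against the obvious inclusion $\Lambda(\ua,q)\subseteq L$. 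Your approach buys a cleaner argument with no WLOG normalization and no basis extension, at the modest cost of invoking \eqref{eq:Lambdaaqvol} and the index-forcing principle; the paper's approach buys an explicit coordinate description of $\Lambda(\ua,q)$, which it puts to work again immediately in the proof of part (1). Your alternative B\'ezout computation is also valid and fully explicit, and is perhaps closest in spirit to what one would do if forced to avoid all geometry-of-numbers reasoning.
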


\begin{proof}
Without loss of generality, we can assume that \(\gcd(\ua)=1\) since otherwise we can replace \(\ua\) by $\ua/\gcd(\ua)$ since $\gcd(\ua)$ and $q$ are co-prime. 
Write \(\Z^2 = \Z\ua + \Z\ua'\). Then we have  \(\Lambda(\ua,q) = \Z\ua + \Z q\ua'\).

If \(\gcd(k,q)=1 \) then \(\gcd(k\ua,q)=1\) and it follows that 
\[
\{k\ua \bmod q : \gcd(k,q)=1\}
\subseteq
\{\ub \bmod q : \ub \in  \Z\ua + \Z q\ua',\, \gcd(\ub,q)=1 \}.
\]
Conversely if \(\ub  \in \Z\ua + \Z q\ua'\) then \(\ub\equiv k\ua \bmod q \) for some \(k\), and if \(\gcd(\ub,q)=1\) then \(\gcd(k,q)=1\). This proves the first equality, and the second equality follows since for $\gcd(k, q)=1$,
\[
\{r\ua \bmod q :r\in \Z\}
=
\{rk\ua \bmod q: r\in \Z \}.
\]
For the third equality, we observe that \(\ua^\perp \cdot \ua' =- \det(\ua|\ua') = \pm 1\) and therefore
\begin{align*}
 \Lambda(\ua, q)=\Z\ua + \Z q\ua'
&=
\{\ur \in  \Z\ua + \Z \ua': q\mid\ur\cdot \ua^\perp\}.
\end{align*}
\end{proof}

\subsubsection{Geometry of numbers}

We state a standard result from the geometry of numbers, which can be seen as a special case of Lemma~4.1 of Maynard~\cite{Maynard}. 
\begin{lemma}\label{lem:minkowski_basis}
	Let $\Lambda$ be a lattice in $\RR^2$.
	Let \(M\) be an \(m \times 2\) real matrix with full rank. Further let 
	\( \ux_1 \in \Lambda \) with \(|M\ux_1| = \mu_M(\Lambda )=\min\{ |M\ux| : \ux \in \Lambda, \ux \neq\vec{0}\}\). Then there is an \(\ux_2\in \Lambda \) satisfying the following
	\begin{align*}\
	\Lambda & = \Z \ux_1 + \Z \ux_2,
	\\
	| r_1 M\ux_1 + r_2 M\ux_2 |
	&\asymp_m |r_1||M\ux_1| + |r_2| |M\ux_2|,
	\quad (\ur\in\R^2),
	\\
	|  M\ux_1 || M\ux_2 |
	&\asymp_m
	\mathrm{covol}(\Lambda)\meas\{ M [0,1]^2 \}.
	\end{align*}
\end{lemma}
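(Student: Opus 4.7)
The plan is to construct $\ux_2$ via Minkowski-reduction applied to a completion of $\ux_1$ to a $\Z$-basis of $\Lambda$, and then to read off all three claimed bounds from the orthogonal decomposition of $M\ux_2$ along and perpendicular to $M\ux_1$ inside $\R^m$. The crucial first observation is that $\ux_1$ must be primitive in $\Lambda$: if $\ux_1=c\uv$ with $\uv\in\Lambda$ and $c\geq 2$, then $|M\uv|=|M\ux_1|/c<|M\ux_1|$, contradicting the definition of $\mu_M(\Lambda)$. Hence $\ux_1$ extends to a $\Z$-basis $\{\ux_1,\uv\}$ of $\Lambda$, and I would choose $k\in\Z$ minimizing $j\mapsto|M(\uv+j\ux_1)|$ and set $\ux_2:=\uv+k\ux_1$. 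With this choice $\{\ux_1,\ux_2\}$ is still a $\Z$-basis of $\Lambda$, which establishes the first claim.

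Write now $M\ux_2=aM\ux_1+P$ with $P\perp M\ux_1$ in $\R^m$. The minimality defining $k$ forces $|a|\leq 1/2$, while $|M\ux_2|\geq|M\ux_1|$ by the definition of $\mu_M(\Lambda)$; combining these gives
\[
|P|^2=|M\ux_2|^2-a^2|M\ux_1|^2\geq\tfrac{3}{4}|M\ux_1|^2,
\]
so $|P|\asymp|M\ux_2|$. The third claim then follows from
\[
|M\ux_1|\cdot|P|=|M\ux_1\wedge M\ux_2|=|\det(\ux_1\mid\ux_2)|\cdot|M\e_1\wedge M\e_2|=\mathrm{covol}(\Lambda)\cdot\meas(M[0,1]^2),
\]
combined with $|M\ux_2|\asymp|P|$. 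For the second claim I would use the orthogonal expansion
\[
r_1 M\ux_1+r_2 M\ux_2=(r_1+r_2 a)M\ux_1+r_2 P,
\]
which immediately yields $|r_1 M\ux_1+r_2 M\ux_2|\geq|r_2||P|\asymp|r_2||M\ux_2|$ as well as $|r_1+r_2 a||M\ux_1|\leq|r_1 M\ux_1+r_2 M\ux_2|$; combining the latter bound with $|a|\leq 1/2$ and $|M\ux_1|\leq|M\ux_2|$ and then applying the previous bound gives $|r_1||M\ux_1|\lesssim|r_1 M\ux_1+r_2 M\ux_2|$. The reverse direction is the triangle inequality.

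I expect the main point requiring care to be the inequality $|P|\gtrsim|M\ux_1|$: this quantifies the fact that the angle between $M\ux_1$ and $M\ux_2$ in their common two-plane inside $\R^m$ is bounded away from zero, and it is precisely what prevents the implied constants in the second and third claims from blowing up. Beyond this the argument is routine two-dimensional geometry of numbers, and since the entire computation takes place inside the two-plane spanned by $M\ux_1$ and $M\ux_2$, the implied constants are in fact absolute.
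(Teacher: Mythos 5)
Your argument is correct. The paper gives no proof here, instead citing Maynard's more general Lemma~4.1, which treats lattices in $\R^n$ with $m\times n$ matrices; your proof is a self-contained, elementary argument specialized to $n=2$. The key steps are all sound: primitivity of $\ux_1$ follows from minimality; the reduction $|a|\le 1/2$ via choosing $k$ to minimize $|M(\uv+j\ux_1)|$ is the right normalization; and the lower bound $|P|^2\ge\tfrac34|M\ux_1|^2$ (hence $|P|\asymp|M\ux_2|$ after combining with $|M\ux_1|\le|M\ux_2|$) is exactly what makes the wedge-product identity and the orthogonal expansion of $r_1 M\ux_1+r_2 M\ux_2$ yield the second and third claims with controlled constants. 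Your closing remark that the implied constants are in fact absolute (since the computation lives entirely in the two-plane spanned by $M\ux_1$ and $M\ux_2$) slightly sharpens the $\asymp_m$ in the statement; this is harmless and worth noting, since the paper's citation of Maynard would in principle allow $m$-dependence. The one place to phrase carefully is the bound $|r_1||M\ux_1|\lesssim|r_1 M\ux_1+r_2 M\ux_2|$: you should make explicit that you split $|r_1|\le|r_1+r_2a|+\tfrac12|r_2|$, multiply by $|M\ux_1|\le|M\ux_2|$, and then bound each resulting term by $|r_1 M\ux_1+r_2 M\ux_2|$ via the Pythagorean expansion and the already-established $|r_2||M\ux_2|\lesssim|r_1 M\ux_1+r_2 M\ux_2|$. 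With that spelled out the proof is complete.
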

The key to bounding the contributions from the $p_{2,\ur,k,q}$ terms is to apply Lemma~\ref{lem:minkowski_basis} with $\Lambda=\Lambda(\ua,q)$ and a $3\times 2$ matrix $M$ defined as below:
\begin{definition}		
Let $M=M(\uw)$ be the $3\times 2$ matrix such that for any $\ur\in \Z^2$ 
	\begin{equation}\label{def:Mmatrdef_0}
		M\ur
		=
		\left(\frac{r_1}{Q^{1/2}},
		\frac{r_2}{Q^{1/2}},
		Q\uw\cdot \ur^\perp\right)^T.
	\end{equation}
	
\end{definition}
\begin{lemma}
	\label{lem:matrix_M}
	 For $M$ in defined in \eqref{def:Mmatrdef_0}, we have 
	\begin{equation*}
		\meas \big\{ M [0,1]^2 \big\} \asymp  Q^{1/2}|\uw|+Q^{-1}.
	\end{equation*}
\end{lemma} 
\begin{proof}
Let $e_1,e_2,e_3$ denote the standard basis for $\RR^3$. The measure of the parallelogram $M[0,1]^2$ is equal to the norm of the vector \begin{align*}
&(Q^{-1/2}e_1 + Q w_2 e_3)\wedge(Q^{-1/2}e_2 - Q w_1 e_3)\\ =& Q^{-1} \, e_1 \wedge e_2 - Q^{1/2} w_1 \, e_1 \wedge e_3 + Q^{1/2} w_2 \, e_3 \wedge e_2,
\end{align*}
which is
\begin{equation*}
\sqrt{Q^{-2} + Q(w_1^2+w_2^2)}\asymp Q^{-1}+Q^{1/2}|\uw|.
\end{equation*}

\end{proof}

\subsection{The function $p_{\Lambda(\ua,q)}$.}

 We are now ready to state the formula for $p_{\Lambda(\ua, q)}$ and its properties. 

\subsubsection{Definition of $p_{\Lambda(\ua,q)}$}
After replacing the sum over \(\ur\) in \eqref{eq:p1p2} by a sum over the lattice \(\Lambda(\ua,q)\) using Lemma~\ref{lem:lattice}, we can define $p_{\Lambda(\ua, q)}$ as 
	\begin{align}
		\label{eq:p2_p_2}
		p_{\Lambda(\ua,q)}(\uw)&:=p_{1,q}(\uw)+
		\sum_{\substack{\ur\in\Lambda(\ua,q)\\ k = \gcd(\ur)/\gcd(\ur,q)}}
		\omega\bigg(
		\frac{ |\ur| }{ Q^{1/2}}
		\bigg)
		p_{2,\ur,k,q}(\uw),
		\end{align}
		which proves \eqref{eq:delta}.

\subsubsection{Asymptotic of $p_{\Lambda(\ua,q)}$}
To estimate $p_{\Lambda(\ua, q)}$, we first consider the sum over $\ur$ in \eqref{eq:p2_p_2}.
\begin{lemma}\label{lem:p1_sum_asymptotic}
	Let $\delta>0$ and $M=M(\uw)$ be as in \eqref{def:Mmatrdef_0} and let $\mu_{M}=\mu_M(\Lambda(\ua, q))$ the norm of the smallest non-zero vector in $M (\Lambda (\ua, q))$. If
	\begin{equation}\label{eq:muM assump}
	\mu_M\geq
	Q^{\delta}(qQ^{-1}+|\uw|qQ^{1/2}),
	\end{equation}
	then
	\begin{equation*}
    \begin{split}
	&\sum_{{\ur\in\Lambda(\ua,q)}}
	\omega\left(
	\frac{ |\ur| }{ Q^{1/2}}
	\right)
	p_{2,\ur,k,q}(\uw)+O_{N,\delta}(Q^{-N})
	\\=&
	\frac{cQ^{1/2}}{q}
	\int_{\R^2}\frac{1}{|\ur|}
	\omega\left(
	\frac{ |\ur| }{ Q^{1/2}}
	\right)
	\hat{\omega}_0
	\Big(
	\frac{Q^{3/2}}{|\ur|} \uw\cdot\ur^\perp
	\Big)\,d\ur
    \end{split}
	\end{equation*}
	for any $N>0$.
\end{lemma}

\begin{proof}
	Throughout, we write $\ur=dk\uc$, where $\uc$ is a primitive integer vector, \(d=\gcd(\ur,q)\) and $\gcd(\ur)=kd$ with $\gcd(q/d,k)=1$.
	First we prove that
	\begin{equation}\begin{split}
    \label{4.4}
		&\sum_{{\ur\in\Lambda(\ua,q)}}
		\omega\left(
		\frac{ |\ur| }{ Q^{1/2}}
		\right)
		p_{2,\ur,k,q}(\uw)+O_{N,\delta}(Q^{-N})
		\\=&
		\sum_{{\ur\in\Lambda(\ua,q)}}
		\frac{cQ^{1/2}}{|\ur|}
		\omega\left(
		\frac{ |\ur| }{ Q^{1/2}}
		\right)
		\hat{\omega}_0
		\Big(
		\frac{Q^{3/2}}{|\ur|} \uw\cdot\ur^\perp
		\Big).
        \end{split}
	\end{equation}
	By Lemma~\ref{lem:stationary}, the terms on each side with \(
	k\leq  \frac{Q^{-\delta/2}}{|\uw|qQ^{1/2}+qQ^{-1}}\) agree up to negligible error. We show that on each side of \eqref{4.4}, there is only a negligible contribution from values of \(\ur\) with
	\begin{equation}\label{eq:negligible_r}
	k\geq  \frac{Q^{-\delta/2}}{|\uw|qQ^{1/2}+qQ^{-1}}.
\end{equation}
	Suppose that \(\ur\) satisfies
	  \eqref{eq:negligible_r}. It is easy to see that \(	\frac{\ur} {k}=d\uc \in{\Lambda(\ua, q)}\), and therefore
	\[
	\left\lvert M(d\uc)\right\rvert \geq \mu_M.
	\]
	This yields the penultimate inequality in
	\[
	\frac{ |\ur| }{ Q^{1/2}}
+
	Q |\uw\cdot\ur^\perp|
	\asymp |M\ur|\geq \mu_Mk
	\geq
	{Q^{\delta/2}},
	\]
    where the first equivalence holds by definition of $M$ and the last inequality holds by
	 \eqref{eq:negligible_r} and the assumption on \(\mu_M\) in the lemma.
	Now using Lemma~\ref{lem:nonstationary}, one can check that for any such \(\ur\), we have
	\begin{align*}
		\omega\left(
		\frac{ |\ur| }{ Q^{1/2}}
		\right)
		p_{2,\ur,k,q}(\uw)
		&\ll_{N,\delta}Q^{-N},
		\\\
		\omega\left(
		\frac{ |\ur| }{ Q^{1/2}}
		\right)
		\hat{\omega}_0
		\Big(
		\frac{Q^{3/2}}{|\ur|} \uw\cdot\ur^\perp
		\Big)
		&\ll_{N,\delta}Q^{-N},
	\end{align*}
as required, verifying \eqref{4.4}.

We next use the Poisson summation formula to prove that  the sum on the right hand side of \eqref{4.4} may be replaced by an integral up to an admissible error. Let \(X = (\ux_1|\ux_2)\) be the matrix with columns given by the vectors  \(\ux_i\) from Lemma~\ref{lem:minkowski_basis} for the lattice $\Lambda=\Lambda(\ua,q)$ and $M$. By \eqref{4.4} it suffices to prove
\begin{align*}
&\sum_{{\ur\in\Lambda(\ua,q)}}
\frac{cQ^{1/2}}{|\ur|}
\omega\left(
\frac{ |\ur| }{ Q^{1/2}}
\right)
\hat{\omega}_0
\Big(
\frac{Q^{3/2}}{|\ur|} \uw\cdot\ur^\perp
\Big)
\\=&
\frac{cQ^{1/2}}{q}
\int_{\R^2}\frac{1}{|\ur|}
\omega\left(
\frac{ |\ur| }{ Q^{1/2}}
\right)
\hat{\omega}_0
\Big(
\frac{Q^{3/2}}{|\ur|} \uw\cdot\ur^\perp
\Big)\,d\ur
+O_{N,\delta}(Q^{-N}).
\end{align*}
 We can re-write the left-hand side above as
\begin{align*}
	\sum_{\uy\in \Z^2}
	W(M X\uy ),  \text{ where }
	W((x,y,z)^T)=\frac{c}{|({}^x_y)|}\omega\big( \lvert({}^x_y)\rvert\big) \hat{\omega}_0\big(|({}^x_y)|^{-1}z\big).
\end{align*}
By Lemma~\ref{lem:minkowski_basis} and the fact that $\mathrm{covol}(\Lambda(\ua, q))=q$, we have
\begin{align*}
	|M\ux_1||\uy|
	\ll
	|M\ux_1||y_1|
	+
	|M\ux_2||y_2|
	\asymp
	|MX\uy|\\\ll |M\ux_2||\uy|\ll
	\frac{
		q\meas\{ M [0,1]^2 \}}
	{
		|M\ux_1|}|\uy|.
\end{align*}
From the assumption \(\mu_M = |M\ux_1| \geq  Q^{\delta} ( qQ^{1/2}|\uw|  + qQ^{-1}) \) together with Lemma~\ref{lem:matrix_M}, we obtain
\[
|MX\uy|\ll
Q^{-\delta}|\uy|.
\]
Using this bound, a routine application of Poisson summation and integration by parts (or ``non-stationary phase'') shows that
\begin{equation*}
	\sum_{\uy\in \Z^2}
W(MX\uy )
=
\int_{\R^2}
W(MX\uy )\,d\uy
+O_{N,\delta}(Q^{-N}),
\end{equation*}
which concludes the result. 
\end{proof}
Next we apply Lemma~\ref{lem:p1_sum_asymptotic} to prove that under the assumption \eqref{eq:muM assump}, the function $p_{\Lambda(\ua,q)}$ is equal to $1$, up to a very small error, which implies \eqref{eq:p1bound1}.
\begin{lemma}\label{lem:1}
Let $M=M(\uw)$ and $\mu_{M}=\mu_M (\Lambda(\ua, q))$ be as in Lemma~\ref{lem:p1_sum_asymptotic}. Let $1\leq q\leq Q$ and  $\delta>0$. If
	\[
	\mu_M\geq
	Q^{\delta}(qQ^{-1}+|\uw|qQ^{1/2}),
	\]
	then
	\[
	p_{\Lambda(\ua,q)}(\uw)
	=1+O_{N,\delta}(Q^{-N}),
	\]
	for any $N>0$.
\end{lemma}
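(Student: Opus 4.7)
The plan is to start by invoking Lemma~\ref{lem:p1_sum_asymptotic}, whose hypothesis coincides exactly with ours, to replace the sum $\sum_{\ur\in\Lambda(\ua,q)}\omega(|\ur|/Q^{1/2})p_{2,\ur,k,q}(\uw)$ appearing in the definition \eqref{eq:p2_p_2} of $p_{\Lambda(\ua,q)}(\uw)$ by the continuous integral
\[
I(\uw) := \frac{cQ^{1/2}}{q}\int_{\R^2}\frac{1}{|\ur|}\omega\Big(\frac{|\ur|}{Q^{1/2}}\Big)\hat{\omega}_0\Big(\frac{Q^{3/2}}{|\ur|}\uw\cdot\ur^\perp\Big)\,d\ur,
\]
at the cost of an error $O_{N,\delta}(Q^{-N})$. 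The task then reduces to verifying the pointwise identity $p_{1,q}(\uw) + I(\uw) = 1 + O_{N,\delta}(Q^{-N})$ under the assumption $\mu_M\geq Q^{\delta}(qQ^{-1}+|\uw|qQ^{1/2})$.

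Next I would rewrite $I(\uw)$ as a two-dimensional Fourier integral of the same form as $p_{1,q}(\uw)$, so that the two can be combined. Specifically, applying Fourier inversion to $\hat{\omega}_0$, swapping the order of integration, and carrying out a polar change of variable (writing the new integration variable $\ux$ along the direction $\ur^\perp$ and using $\int_0^\infty \omega(R/Q^{1/2})\,dR=Q^{1/2}$) turns $I(\uw)$ into an integral of the shape $\frac{c}{Q}\int_{\R^2}\omega_0(|\ux|/Q^{3/2})\,\Psi_I(\ux)\,e(-\uw\cdot\ux)\,d\ux$ for an explicit function $\Psi_I$. Combining with \eqref{defp2}, we obtain a single integral $\frac{c}{Q}\int_{\R^2}\omega_0(|\ux|/Q^{3/2})\,\Psi(\ux)\,e(-\uw\cdot\ux)\,d\ux$, where $\Psi(\ux)=\Psi_I(\ux)-\sum_j\omega(|\ux|/(jqQ^{1/2}))/(q^2j^2)$. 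After the radial substitution analogous to the one producing \eqref{eq:exposing_h}, the function $\Psi$ is recognized as the one-dimensional $h$-function from \eqref{eq:hyzdef}, but with the weight $x\omega(x)$ in place of $\omega(x)$; this is the function $h_2$ alluded to in Section~\ref{sec:notation}.

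The identity $p_{1,q}(\uw)+I(\uw)=1+O_{N,\delta}(Q^{-N})$ then follows from a variant of Lemma~\ref{lem:HB} applied to this reformulated integral. With the substitution $x\omega(x)\leftrightarrow\omega(x)$, the normalization constant $c=\bigl(\int_{\R^2}\omega(|\ux|)d\ux\bigr)^{-1}$ interacts correctly with the $h_2$-identity so that the leading contribution in Lemma~\ref{lem:HB}, coming from the value of the amplitude at the origin, is exactly $1$. The parameters needed in Lemma~\ref{lem:HB} are $y=q/Q$, $B\sim Q^{-1}$, $A\sim Q^{-1}(1+Q^{3/2}|\uw|)$, and the hypothesis $y\leq Q^{-\delta}\min\{B/A,1\}$ translates, after using $|\ur|\asymp Q^{1/2}$ coming from the support of $\omega$, precisely into the condition $\mu_M\geq Q^\delta(qQ^{-1}+|\uw|qQ^{1/2})$.

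The main obstacle will be the careful identification of $\Psi$ as the $h_2$-function after the change of variables, and verifying that the constants conspire so that the amplitude at $\ux=\vec{0}$ multiplied by $\omega_0(0)=1$ and by $c/Q$ produces exactly $1$; this is the delicate accounting that forces the paper to use $x\omega(x)$ rather than $\omega(x)$ in the 1D identity. Once this bookkeeping is done, everything else is an application of the already established Lemmas~\ref{lem:p1_sum_asymptotic} and~\ref{lem:HB}.
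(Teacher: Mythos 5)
Your proposal follows the paper's argument quite closely: invoke Lemma~\ref{lem:p1_sum_asymptotic}, convert the resulting integral to a Fourier integral of the same shape as $p_{1,q}$, merge the two and recognize the amplitude as the $h$-function built from $x\omega(x)$ (the $h_2$-function), and finish with Lemma~\ref{lem:HB}. The bookkeeping of constants and the intended roles of $\omega_0(0)=1$ and $c$ are also correctly identified.

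There is, however, one genuine gap in how you translate the hypotheses. You assert that the Lemma~\ref{lem:HB} condition $y\leq Q^{-\delta}\min\{B/A,1\}$ ``translates precisely into'' the stated hypothesis $\mu_M\geq Q^{\delta}(qQ^{-1}+|\uw|qQ^{1/2})$. It does not. With $y=q/Q$ and $B/A\asymp(1+Q^{3/2}|\uw|)^{-1}$ the Lemma~\ref{lem:HB} requirement is an \emph{upper} bound $q(Q^{-1}+|\uw|Q^{1/2})\ll Q^{-\delta}$, while the lemma only hands you a \emph{lower} bound on $\mu_M$. The missing ingredient is the Minkowski-type \emph{upper} bound
\[
\mu_M\ll \bigl(qQ^{-1}+|\uw|qQ^{1/2}\bigr)^{1/2},
\]
i.e.\ \eqref{muM} in the paper, obtained from Lemma~\ref{lem:minkowski_basis} together with the covolume $\operatorname{covol}(\Lambda(\ua,q))=q$ and the measure estimate of Lemma~\ref{lem:matrix_M}. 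Only by combining this upper bound with the lemma's hypothesis do you get $qQ^{-1}+|\uw|qQ^{1/2}\ll Q^{-2\delta}$, which is what Lemma~\ref{lem:HB} needs (and also what guarantees $q\ll Q^{1-\delta}$ so the sum over $j$ behaves). Without invoking this geometry-of-numbers inequality, the hypothesis on $\mu_M$ alone does not control $q(Q^{-1}+|\uw|Q^{1/2})$, and the proof does not close.
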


\begin{proof}
By Lemma~\ref{lem:p1_sum_asymptotic}, and making a change of variables using \(\ur=z'\uv_{\theta}, \ \uv_{\theta}=(\cos \theta, \sin \theta),\  z'=|\ur|\), we obtain
	\begin{align*}
		&\quad p_{\Lambda(\ua, q)}(\uw)-
		p_{1,q}(\uw)\\&=\frac{cQ^{1/2}}{q}
	\int_{\R^2}\frac{1}{|\ur|}
	\omega\left(
	\frac{ |\ur| }{ Q^{1/2}}
	\right)
	\hat{\omega}_0
	\Big(
	\frac{Q^{3/2}}{|\ur|} \uw\cdot\ur^\perp
	\Big)\,d\ur
	+O_{N,\delta}(Q^{-N})\\
    &=\frac{cQ^{1/2}}{q}\int_{0}^\infty \omega\Big(\frac{z'}{Q^{1/2}}\Big) dz' \int_{0}^{2\pi}\hat{\omega}_0\Big(
     Q^{3/2}\uw \cdot \uv_{\theta}^\perp \Big)d\theta+O_{N, \delta}(Q^{-N})\\
     &=\frac{cQ}{q}\int_{0}^{2\pi}\int_{\RR}\omega_0(z)e\Big(-zQ^{3/2}\uw 
     \cdot \uv_{\theta}^\perp\Big)dz d\theta +O_{N, \delta}(Q^{-N}).
    \end{align*}
By Lemmas~\ref{lem:minkowski_basis} and~\ref{lem:matrix_M}, we have 
\begin{align}\label{muM}
\mu_M \ll (qQ^{-1}+|\uw|qQ^{1/2})^{1/2}.
\end{align} Therefore, by the assumption on \(\mu_M\) we have
	 \(q\ll Q^{1-\delta}\), and therefore upon substituting  \(z=Q^{3/2}z\) we get 
	\begin{align*}
		&\quad p_{\Lambda(\ua, q)}(\uw)-
		p_{1,q}(\uw)+O_{N,\delta}(Q^{-N})
		\\&=
		\frac{c}{Q^{3/2}}
		\left(\sum_{j\in\N} \omega\left(\frac{qj}{Q}\right)\right)
		\int_{0}^{2\pi}\int_{\RR}
		{\omega}_0\left(\frac{z}{Q^{3/2}}\right)
		e\left(-zQ^{3/2}\uw\cdot \uv_\theta^\perp\right)dzd\theta
		.
	\end{align*}
    
	Note that by making a change of variable $\ux \mapsto \ux^\perp$ in $p_{1, q}(\uw)$ defined in \eqref{defp2}, and further changing to polar coordinates, we can write
    \begin{equation}
    \begin{split}
    &p_{1,q}(\uw)
			=
			-\frac{2c}{Q}
			\int_{\R^2} 
			\omega_0\left(\frac{|\ux|}{Q^{3/2}}\right)
			\sum_{j\in\NN}
			\frac{1}{q^2j^2}
			\omega
			\left(\frac{|\ux|}{jqQ^{1/2}}
			\right)
			e(-\uw\cdot \ux^\perp)
			\,d\ux\\
            &=-\frac{2c}{Q}\int_{0}^{2\pi}\int_{0}^\infty z\omega_0\Big(\frac{z}{Q^{3/2}}\Big) \sum_{j\in\NN}
			\frac{1}{q^2j^2}
			\omega
			\left(\frac{z}{jqQ^{1/2}}
			\right)
			e\Big(-z\uw\cdot \uv_{\theta}^\perp\Big)
			\,dz d\theta.\label{eq:p1qmanipulation}
            \end{split}
            \end{equation}

It is straight-forward to see, by replacing $\theta$ with $\theta+\pi$, that
\begin{align*}
    &\int_{0}^{2\pi}\int_{0}^\infty z\omega_0\Big(\frac{z}{Q^{3/2}}\Big) \sum_{j\in\NN}
			\frac{1}{q^2j^2}
			\omega
			\left(\frac{z}{jqQ^{1/2}}
			\right)
			e\Big(-z\uw\cdot \uv_{\theta}^\perp\Big)
			\,dz d\theta\\&=\int_{0}^{2\pi}\int_{0}^\infty z\omega_0\Big(\frac{z}{Q^{3/2}}\Big) \sum_{j\in\NN}
			\frac{1}{q^2j^2}
			\omega
			\left(\frac{z}{jqQ^{1/2}}
			\right)
			e\Big(z\uw\cdot \uv_{\theta}^\perp\Big)
			\,dz d\theta.
\end{align*}
As $\omega_0$ is even, we may therefore re-write \eqref{eq:p1qmanipulation} as
\begin{align*}p_{1,q}(\uw)
			=-\frac{c}{Q}\int_{0}^{2\pi}\int_{\mathbb R} |z|\omega_0\Big(\frac{z}{Q^{3/2}}\Big) \sum_{j\in\NN}
			\frac{1}{q^2j^2}
			\omega
			\left(\frac{|z|}{jqQ^{1/2}}
			\right)
			e\big(-z\uw\cdot \uv_{\theta}^\perp\big)
			\,dz d\theta.
            \end{align*}

 Therefore, we have 
	\begin{align*}
		&\quad \quad p_{\Lambda(\ua, q)}(\uw)+O_{N, \delta}(Q^{-N})\\&=\frac{c}{Q^{\frac{3}{2}}}
		\int_{0}^{2\pi}\int_{\RR}\Big(\sum_{j\in\N} \omega\big(\frac{qj}{Q}\big)-\frac{Q^{\frac{1}{2}}|z|}{j^2q^2} \omega\big(\frac{|z|}{jqQ^{1/2}}\big)\Big)
		\omega_0\big(\frac{z}{Q^{3/2}}\big)
		e(-z\uw\cdot \uv_{\theta}^\perp)
		dzd\theta,	
	\\	&=
		\frac{c}{Q^{\frac{3}{2}}}
		\int_{0}^{2\pi}\int_{\RR} 
	\omega_0\big(\frac{z}{Q^{3/2}}\big) 
		h_2\big(
		\frac{q}{Q}
		,
		\frac{z}{Q^{3/2}}
		\big)
		e\big(-z\uw\cdot \uv_{\theta}^\perp\big)
		dzd\theta,
	\end{align*}
	where
	\begin{align*}
		h_2(y,z)
		&=
		\sum_{j\in\N}
		\frac{1}{yj}
		\left(
		{yj}
		\omega\left(yj\right)
		-
		\frac{|z|}{yj}
		\omega\left(\frac{|z|}{yj}\right)
		\right).
	\end{align*}
	Using the assumption in the lemma and \eqref{muM},  we have 
	\(qQ^{-1}+|\uw|qQ^{1/2} \ll Q^{-2\delta}\), so that we can apply Lemma~\ref{lem:HB} to the $z$-integral with
	\begin{align*}
		\omega_1(x)={x\omega(x)},\quad
		y=\frac{q}{Q},\quad
		A=Q^{-3/2}+|\uw|,\quad
		\\B=Q^{-3/2},\quad
		f(z)=\omega_0\Big(\frac{z}{Q^{3/2}}\Big)e\Big(-z\uw\cdot \uv_{\theta}^\perp\Big),
	\end{align*}
	and conclude that
	\begin{equation*}\begin{split}
		p_{\Lambda(\ua, q)}(\uw)
		&=c \cdot \omega_0(0)e(0)
		\int_{0}^{2\pi} 
		\int_0^\infty x
		\omega(x)\,dx
		\,d\theta+O_{N,\delta}(Q^{-N})
		\\&=1+O_{N,\delta}(Q^{-N}),
        \end{split}
	\end{equation*}
 which proves the lemma.
\end{proof}

\subsection{Proof of Theorem~\ref{thm:delta}}\label{sec:proof-delta}
 Equation \eqref{eq:delta} follows from \eqref{eq:p2_p_2}. Applying Lemma~\ref{lem:1} and noting that $\mu_M\geq Q^{-1/2}$, we obtain \eqref{eq:p1bound1}. The bound
\eqref{eq:p1bound2} follows from Lemma~\ref{lem:p2bound} and \eqref{eq:p1bound3} follows from Lemma~\ref{lem:nonstationary}. \eqref{eq:delta0} follows from splitting $p_{\Lambda(\ua.q)}(\uw)$ into two sums using \eqref{eq:p2_p_2}, switching the order of summation, separating the two sums and in the second sum, upon further writing $\ur=kd\uc$ with $\uc\in \mathbb Z^2$ primitive, $\gcd(\ur, q)=d$, so that $\gcd(q/d, k)=1$.

\section{Rational points: preparing for Theorem~\ref{thm:application}} 
\label{sec:quad-forms-basic-notation}

All the notation introduced in the next three subsections will be used throughout the remainder of this paper.
In particular the reader will frequently encounter a distinction between four kinds of primes: bad primes, good primes for bad $\uc$, type I primes, and type II primes. These terms are defined here, in Section~\ref{sec:geometry}. Other conventions and definitions are found in sections~\ref{sec:notation} and~\ref{sec:Qdelta-and-Sigma}.

\subsection{The major and minor arcs and a version of Theorem~\ref{thm:delta}}\label{sec:delta2}
  Let $\delta>0$ be fixed but sufficiently small, to be chosen later. 
 In view of the properties \eqref{eq:p1bound1} and \eqref{eq:p1bound3}, we define the major arcs as 
 \begin{align*}
 \mathfrak M_q&=\mathfrak M_{q}(\delta) =\Big\{\uw\in \mathbb R^2: |\uw|< q^{-1}Q^{-1-\delta}\Big\}
 &\text{ if }1\leq q\leq Q^{1/2-\delta},
 \end{align*}
 {and the minor arcs as}
  \begin{align}
 \label{minorarc}
 &\quad \mathfrak m_q=\mathfrak m_{q}(\delta)\\&=\begin{cases}
 \big\{\uw \in \mathbb R^2: q^{-1}Q^{-1-\delta}\leq |\uw| <q^{-1}Q^{-1/2+\delta}\big\} & \text{ if } 1\leq q\leq Q^{1/2-\delta},\\
 \big\{\uw\in \mathbb R^2: |\uw|< q^{-1}Q^{-1/2+\delta}\big\} & \text{ if }Q^{1/2-\delta}< q\leq Q.
 \end{cases}
 \nonumber
 \end{align}
Most authors do not take major and minor arcs in the \(\delta\)-method, and indeed they are a little different to the classical case.
Note that for us the arcs \(\mathfrak M_q,\mathfrak m_q\) are all centred at $\vec{0}$. The traditional (shifted) major and minor arcs would be \(\ua/q+\mathfrak M_q\) and \(\ua/q+\mathfrak m_q\) for \((\ua,q)=1\). Contrary to the classical circle method, here the union of the shifted major arcs $\mathfrak M = \bigcup_{q=1}^{Q^{1/2-\delta}} \bigcup_{\ua}^*(\ua/q+{\mathfrak M}_q) $ is not disjoint from the union of the shifted minor arcs $\mathfrak m = \bigcup_{q=1}^{Q} \bigcup_{\ua}^*(\ua/q+{\mathfrak m}_q) $. This is acceptable because, on each arc, we integrate  with respect to a different kernel function \(p_{\Lambda(\ua,q)}\).

We begin by noting that using \eqref{eq:p1bound1}, on major arcs the function \(p_{\Lambda(\ua,q)}\) is equal to $1$ with a very small error. On the minor arcs, it would be preferable for us to not work with the kernel function \(p_{\Lambda(\ua,q)}\) directly. We will instead apply the following proposition which is an easy corollary of Theorem \ref{thm:delta}. In particular, it follows from a combination of \eqref{eq:p1bound1} and \eqref{eq:delta0}. This would allow us to work with known exponential sums and also draw parallels with~\cite[Theorem 1.1]{V19} and further allow us to re-use some of the bounds obtained in~\cite{V19} over minor arcs. More precisely, the sum $E_2$ appearing in Proposition~\ref{prop:delta2} will lead to the term $N_2(P,\delta)$ appearing in \eqref{eq:N2def}, which corresponds closely with~\cite[Lemma 5.1]{V19}.  
\begin{proposition}\label{prop:delta2}
	Let $\nn \in \mathbb Z^2 $ and let $Q\geq 1$ be a large parameter. 	Then for any $\delta, N>0$ with \(\delta\) sufficiently small, we have
	\begin{align*}
	\delta_{\nn}=M+E_1+E_2+O_{N, \delta}(Q^{-\delta N}),
	\end{align*}
where 
	\begin{equation*}
    \begin{split}
	M&=\sum_{1\leq q\leq Q^{1/2-\delta}}\,\,\starsum_{\ua \bmod q}\int_{\uw\in \mathfrak M_q }e((\ua/q+\uw)\cdot \nn)d\uw,\\
	E_1&=\sum_{1\leq q\leq Q}\,\,\starsum_{\ua \bmod q}\int_{\uw\in \mathfrak m_q}p_{1,q}(\uw)e((\ua/q+\uw)\cdot \nn)d\uw,\\
	E_2&=\sum_{\substack{d,k\in \NN\\ \uc \in \ZZ^2 \mathrm{ primitive }\\ \ur=dk\uc}}\omega\big(\frac{\ur}{Q^{1/2}}\big)\sum_{\substack{1\leq q\leq Q/k\\d\mid q\\ \gcd(q/d, k)=1}}\,\,\starsum_{\substack{\ua \bmod q\\q\mid d\uc \cdot\ua^\perp}}\\
    & \quad \quad  \times \int_{\uw \in \mathfrak m_q }p_{2,\ur,k,q}(\uw)e((\ua/q+\uw)\cdot \nn)d\uw,
    \end{split}
	\end{equation*}
with $\omega$ as in Theorem \ref{thm:delta} and the functions $p_{1, q}$ and $p_{2,\ur,k,q}$ as defined as in Lemma~\ref{lem:def_of_p_i}, satisfying decay properties \eqref{eq:p1bound2} and \eqref{eq:p1bound3}.
\end{proposition}

\subsection{Reduction of Theorem~\ref{thm:application} to the sums \(N_i\)}\label{sec:the-Ni}
For the rest of this paper we adopt the following conventions. We
write $\nF=(F_1, F_2)$ for a pair of forms of degree $d\geq 2$. We let $P\geq 1$ be a large parameter and we let $w\in C^\infty_c(\RR^s) $ be a smooth function with a compact support. We work with the counting function
\begin{equation*}
N(P):=N_{\nF, w}(P):=\sum_{\substack{\x\in \ZZ^s\\ \nF(\x)=\vec{0}}}w(\x/P)=\sum_{\x\in \ZZ^s}w(\x/P)\delta_{\uF(\x)}.
\end{equation*}
We choose $Q$ such that $Q^{1+1/2}=P^{d}$. Starting with Lemma~\ref{lem:Ni-bounds} we only consider the case $d=2$, and so we set $Q=P^{4/3}$ from then on.

\begin{lemma}\label{lem:N1P}
	With the notation above, we have, for any sufficiently small $\delta>0$, that 
    \begin{equation}\label{eq: N split}
N(P)=N_0(P, \delta)+N_1(P, \delta)+N_2(P, \delta)+O_{N,\delta}(Q^{-N\delta}),
\end{equation}
where
\begin{align} \label{eq:N0def}
N_0(P, \delta)&=\sum_{1\leq q< Q^{1/2-\delta}}q^{-s}
			\sum_{\substack{\uu\in \Z^s}}D_q(\uu)			
			\int_{\uw\in \mathfrak M_q(\delta)} I_q(\uw,\uu)
			\,d\uw,\\\label{eq:N1def}
		N_1(P, \delta)&=\sum_{1\leq q< Q}q^{-s}
			\sum_{\uu\in \Z^s}D_q(\uu)			
			\int_{\scrm_q(\delta)} p_{1,q}(\uw) I_q(\uw,\uu)
			\,d\uw,\\\label{eq:N2def}
			N_2(P, \delta)&=\sum_{\substack{d,k\in\NN\\ \uc\in\ZZ^2 \mathrm{primitive} \\ \ur=dk\uc}}\omega\big(\frac{\ur}{Q^{1/2}}\big)\sum_{\substack{1\leq q\leq Q/k\\ d\mid q\\ \gcd(q/d,k)=1}} q^{-s}				\\ &\quad \quad \times \sum_{\uu\in \Z^s}S_{q,d\uc}(\uu)
			\int_{\uw\in\scrm_q(\delta)}
			p_{2,\ur,k,q}(\uw)I_q(\uw, \uu)
			\,d\uw,			
            \nonumber
\end{align}
where the exponential sums $D_q, S_{q,d\uc}$ and the exponential integral $I_q$ are defined as
\begin{align}
D_q(\uu)&=\starsum_{\ua \bmod q}\sum_{\bb \bmod q}e_{q}(\ua\cdot \uF(\bb)+\bb\cdot \uu),\label{eq:Dq def}\\
S_{q,d\uc}(\uu)&=\starsum_{\substack{\ua \bmod q\\ q\mid d\uc\cdot\ua^\perp}}\sum_{\bb \bmod q}e_{q}(\ua\cdot \uF(\bb)+\bb\cdot \uu),\label{eq:sq def}\\
I_q(\uw, \uu)&=\int_{\R^s}w\left(\frac{\x}{P}\right)e(\uw\cdot \nF(\x))e_q(-\x\cdot\uu)d\x.\label{eq:Izu def}
\end{align}
\end{lemma}

\begin{proof}
Applying Proposition~\ref{prop:delta2}, we obtain \eqref{eq: N split}
where 
\begin{align*}
\quad N_0(P, \delta)&:=\sum_{1\leq q< Q^{1/2-\delta}}
			\,\,{\starsum_{\ua\bmod{q}}}\sum_{\x\in \Z^s}e_q(\ua \cdot \nF(\x))			
			\\\ &\quad \quad \times\int_{\uw\in \mathfrak M_q(\delta)}
			w(\x/P)e(\uw\cdot \nF(\x))\,d\uw,\\
	\quad N_1(P, \delta)&:=\sum_{1\leq q\leq Q}
			\underset{\hphantom{\ua\bmod{q}}}{\starsum_{\ua\bmod{q}}}	\sum_{\x\in \Z^s}e_q(\ua \cdot \nF(\x))					
			\\ &\quad \quad \times \int_{\uw\in\scrm_q(\delta)}
			p_{1,q}(\uw)
			w(\x/P)e(\uw\cdot \nF(\x))\,d\uw,\\
			\quad N_2(P, \delta)&:=\sum_{\substack{d,k\in \NN\\ \uc \text{ primitive }\\ \ur=dk\uc}}\omega\big(\frac{\ur}{Q^{1/2}}\big)\sum_{\substack{1\leq q\leq Q/k\\ d\mid q\\ \gcd(q/d,k)=1}}
			\underset{\hphantom{\ua\bmod{q}}}{\starsum_{\substack{\ua\bmod{q}\\ q\mid d\uc\cdot \ua^\perp}}}	\sum_{\x\in \Z^s}e_q(\ua \cdot \nF(\x))\\
   & \quad \quad  \times
			\int_{\uw\in\scrm_q(\delta)}w(\x/P)
			p_{2,\ur,k,q}(\uw)
			e(\uw\cdot \nF(\x))\,d\uw.
\end{align*}

The lemma follows by applying Poisson summation in the $\x$ variable with modulus $q$. 
\end{proof}

The process of proving the estimates for $N_1(P, \delta)$ and $N_2(P, \delta)$ from the minor arcs in Lemma~\ref{lem:Ni-bounds}
 is similar to that in~\cite{V19}. However, the current situation is more complicated due to the different weight functions $ p_{1,q},p_{2,\ur,k,q}$ appearing in $N_1(P, \delta), N_2(P, \delta)$ in \eqref{eq:N1def}-\eqref{eq:N2def}. Our bounds for the $p$-functions get increasingly worse as the sizes of $q$ and $|\uw|$ decrease. However, in the extreme case  \[q\asymp Q,\  |\uw|\asymp q^{-1}Q^{-1/2+\delta},\]
our bounds match those in~\cite{V19}.

In the rest of the paper our principal task will be to 
 prove the following lemma by investigating the exponential sum $D_q, S_{q, d\uc}$ and the exponential integral $I_q$. This lemma immediately proves Theorem~\ref{thm:application}. 
\begin{lemma} \label{lem:Ni-bounds} Let $\varepsilon>0$ be any constant. With the notation from Lemma~\ref{lem:N1P}, for all sufficiently small  \(\delta\) (depending on $s$ and $\varepsilon$)
we have
\begin{enumerate}[$(a)$]
    \item \label{HB-delta-part}
$
N_0(P, \delta)=P^{s-4}\mathfrak S \mathfrak J  +O\Big( P^{s-4-1/3+\ve}\Big)$;
\item \label{HB-delta-part2} $
N_1(P, \delta)\ll P^{s-4-(s-8)/3+\ve};$
    \item \label{new-delta-part}
    $
N_2(P, \delta)\ll \begin{cases} P^{s-4-(s-10)/3-1/6+\ve} &\text{ for even $s$ and $s\geq 10$},\\ P^{s-4-(s-9)/3-1/15+\ve} & \text{ for odd $s$ and $s\geq 9$ under GLH,}\\
 P^{s-4-(s-11)/3-1/15+\ve} & \text{ for odd $s$ and $s\geq 11$.}\end{cases}$
\end{enumerate}
Here all the implicit constants may depend on \(s,w,F_1,F_2\) and \(\ve\).
\end{lemma}

The proof of parts \eqref{HB-delta-part} and \eqref{HB-delta-part2} resembles the circle method with Kloosterman refinement and arcs of the usual ``Dirichlet'' size~\cite{Heath-Brown96,HBP}, whereas part \eqref{new-delta-part} has more in common with the nested delta-method~\cite{BM} or the function field two-dimensional delta-method~\cite[Lemma 5.1, section~6]{V19}. The structure of the proof of Lemma \ref{lem:Ni-bounds} is summarized in Figure \ref{fig:Proof structure}.  

\begin{proof}[Proof of Theorem~\ref{thm:application}]
	Combining \eqref{eq: N split} with Lemma~\ref{lem:Ni-bounds}, we obtain the result by choosing $\delta$ sufficiently small. 
\end{proof}

\subsection{Geometry of pairs of quadratic forms, and four sorts of primes}\label{sec:geometry}

To estimate the exponential sums and exponential integrals in Lemma~\ref{lem:N1P}, we need some properties of the geometry of non-singular intersection of two quadrics. The reader may consult the summary of Browning--Munshi~\cite[\S2.2]{BM} as well as Heath-Brown--Pierce~\cite[Section 2]{HBP}. Suppose $\nF=(F_1,F_2)$ is a pair of quadratic forms in $s$ variables such that the projective variety \footnote{By a variety over \(K\), we mean a \emph{reduced} separated scheme of finite type over \(\Spec K\). In fact, all varieties occurring here will be quasiprojective; thus the reader is free to interpret \emph{variety} as an \emph{open subset of a reduced, not necessarily irreducible, projective variety}.} defined by $F_1(\x)=F_2(\x)=0$ is non-singular over $\bar\QQ$. Let $M_1$ and $M_2$ denote integer matrices defining the quadratic forms $F_1$ and $F_2$. This projective variety has a dual variety, which is an absolutely irreducible hypersurface of degree $4(s-2)$ when $s\geq 4$. (See ~\cite[\S2.2]{BM} for more details;  the proof implicitly requires the absolute irreducibility of the smooth complete intersection \(F_1=F_2=0\), which holds for \(s\geq 4\), and the result of Aznar referred to there is a composite of~\cite[Theorems~2-3]{Aznar}.)

In what follows we will therefore take the dual variety to be defined by $\scrF^*(\uu)=0$, where  \(\scrF^*\) is a homogeneous polynomial with integer coefficients of degree $4(s-2)$ which is irreducible over \(\bar{\QQ}\).
Given a primitive integer vector $\uc$, let $F_\uc$ be the quadratic form
\begin{equation*}
F_\uc:=c_1F_1+c_2F_2,
\end{equation*}
defined by the integer matrix \[M_\uc:=c_1M_1+c_2M_2.\] Let $F(x,y)=\det (xF_1+yF_2)$ and
$D_F=2 \text{Disc}(F)$
where $\text{Disc}(F)$ denotes the discriminant of $F(x,y)$ as a binary form.

Over the splitting field $K$ we have 
\begin{align*}
\det (xF_1+yF_2)=h^{-1}\prod_{i=1}^s (\lambda_i x+\mu_i y),
\end{align*}
for some $h\in \NN, \lambda_i, \mu_i\in \mathcal O_K$.

Let $S,T\in \GL(s,\ZZ)$ be invertible integral matrices appearing in a Smith normal form of the matrix $M_\uc=TDS$ where $D$ is a diagonal integer matrix with entries $\rho_1|\cdots|\rho_s$.  For \(\det(M_{\uc})\neq 0\), let $F^*_\uc$ denote the dual form for the quadratic form $F_\uc$ defined by 
\begin{equation}\label{F^*cdef}
F^*_\uc(\uu):=\det(M_\uc)\uu^T M_\uc^{-1}\uu.
\end{equation}
For a fixed $\uc$, this is a quadratic polynomial in the variable $\uu$.  For a fixed $\uu$, this is a polynomial of degree $s-1$ in the projective variable $\uc$ whose discriminant is given by the equation of the dual variety \(\scrF^*(\uu)\).
 
From~\cite[Section 2]{HBP}, we know that $\operatorname{rank}M_\uc\geq s-1$ by the non-singular condition on $\nF$. A primitive integer pair $\uc$ is called bad if the matrix $M_\uc$ is singular, otherwise it is called good. Note that there are at most $s$ pairs of bad $\uc$.  We divide the primes into the following categories:
\begin{enumerate}
	\item \textbf{bad primes:} $p\mid D_F$,
	\item \textbf{good primes for bad $\uc$:} $p\nmid D_F$,
	\item \textbf{good primes of Type I for good $\uc$:} $p\nmid D_F, p\nmid \det(M_{\uc})$,
	\item \textbf{good primes of Type II for good $\uc$:} $p\nmid D_F, p\mid \det(M_{\uc})$.
\end{enumerate}
When $\uc$ is bad, the form $F_\uc$ is singular and in particular $\rho_s=0$. Similarly when $p$ is a good prime of type II for good $\uc$, we have $p\mid \det (M_\uc)$ and thus $p\mid \rho_s$. Let $\y_j=S^{-1}\e_j$ where $\{\e_j\}_{j=1}^s$ denotes the standard basis of $\ZZ^s$ and $S$ is the invertible integer matrix appearing in the Smith normal form for $M_\uc$. Note that $\{\y_i\}_{j=1}^s$ also forms a basis of $\mathbb Z^s$. Let $Q_\uc$ denote the restriction of $F_\uc$ to the $s-1$ dimensional subspace spanned by $\y_1,...,\y_{s-1}$. Namely, 
\begin{equation*}
Q_\uc(y_1,...,y_{s-1}):=F_\uc(y_1\y_1+...+y_{s-1}\y_{s-1}).
\end{equation*}
Similarly, $Q^*_\uc$ denotes the dual form to the quadratic form $Q_\uc$. Given $\uu\in \RR^s$, let $\uu'$ denote the $s-1$ dimensional vector by deleting the $s$-th entry of $(S^{-1})^T\uu$. 
Since $\{\y_j\}_{j=1}^s$ forms a basis for $\ZZ^s$, if $\uu\in\ZZ^s$, so is the vector $\uu'$. 

We remark that for bad $\uc$ and good $p$, if we write $\Delta(Q_\uc)$  for the determinant of \(Q_\uc\), then we have $p\nmid \Delta(Q_\uc)$. Indeed if $p\mid  \Delta(Q_\uc)$ then the rank of \(M_\uc\) over \(\overline{\FF_p}\) is \(\leq s-2\), and so \(F_1=F_2=0\) is singular over \(\overline{\FF_p}\) and $D_F=0$ over $\overline \FF_p$ by~\cite[Proposition 2.1]{HBP}, hence \(p \mid D_F\). 

 For a non-zero vector $\uw$, define
 \begin{equation}
 \label{eq:lambdaucdef}
 \lambda_\uw:=\min_{1\leq j\leq s}\{|(\lambda_j,\mu_j)\cdot\uw|/|\uw|\},
 \end{equation}
 which measures the distance of the unit vector $\uw/|\uw|$ from the lines $\lambda_j x+\mu_j y=0$. 
Note $\lambda_\uc=0$ if and only if $\uc$ is bad.

Before going into the details, we summarize in Table \ref{tab:limits} the conditions on the number of variables $s$ required for our method. Although $s$ must be a natural number of the indicated parity, the lower bound formally required for the last step in the argument can serve as a measure of the difficulty in making further improvements.
\begin{table}[htbp]
\renewcommand{\arraystretch}{1.5}
\centering
\begin{tabular}{ llllll }
Section         &$\uc$  &$\uu$                      &$s$    &GLH    &Condition
\\
\hline
\ref{sec:V<1}   &any      &$\uu=\vecnull$             &any      &no     &$s>8$
\\
\ref{sec:bad-c-Q-S-nonzero}
                &bad    &$Q_\uc^*(\uu')\neq 0 \text{ or }((S^{-1})^T\uu)_s\neq 0$
                                                    &any     &no     &$s\geq 9$
\\
\ref{sec:bad-c-Q-S-zero}
                &bad    &$Q_\uc^*(\uu')=(S^{-1})^T\uu)_s=0$
                                                    &any      &no     &$s\geq 9$
                                                    \\
\ref{sec:good-c-even-s-F-nonzero}
                &good   &$F_\uc^*(\uu)\neq 0$       &even   &no     &$s>8\frac 45$
\\
\ref{sec:good-c-even-s-F-zero}
                &good   &$F_\uc^*(\uu)=0$           &even   &no     &$s>9\frac 12$
\\
\ref{sec:odd_s_nonsquare_Fstar_N211}
                &good   &$F_\uc^*(\uu)\neq\square$  &odd    &yes    &$s>8\frac 45$
\\                
\ref{sec:odd_s_nonsquare_Fstar_N211}
                &good   &$F_\uc^*(\uu)\neq\square$  &odd    &no    &$s>10\frac 45$
\\
\ref{sec:Fcusq} &good   &$F_\uc^*(\uu)=\square$     &odd    &no   &$s>8\frac 12$.

\end{tabular}
\vspace{\baselineskip}
\caption{Limits of the argument. }
\label{tab:limits}
\end{table}

\section{Bounds for quadratic exponential integrals}\label{sec:exp integral}

Recall the exponential integral defined in \eqref{eq:Izu def}. After a change of variables, we see that
\[I_q(\uw, \uu)=P^s\int_{\R^s}w\left(\x\right)e(P^2\uw\cdot \nF(\x))e(-P\x\cdot\uu/q)d\x.\]
We have the following pointwise bound for $I_{q}(\uw, \uu)$. This will be used to study  $N_0(P,\delta)$ and also to prove some sharper estimates later in this section.

 \begin{lemma}\label{lem:IQ}
 If we let 
  $\delta>0$, then for $|\uu|\geq \frac{q}{P}(1+P^2|\uw|)P^\delta$, we have 
 \[I_q(\uw,\uu)\ll_N P^s(P|\uu|/q)^{-N}\ll P^{s-N\delta}\]
 for any $N>0$. 
 Moreover for  any $\uu\in\R^s$,
\begin{align*}|I_q(\uw,\uu)|&\ll P^s\prod_{j=1}^s(1+P^2|\lambda_j w_1+\mu_j w_2|)^{-1/2}\\ &\ll P^s(1+P^2|\uw| )^{-(s-1)/2}(1+P^2|\uw|\lambda_\uw)^{-1/2},\end{align*}
where $\lambda_\uw$ is defined in \eqref{eq:lambdaucdef}.
 \end{lemma}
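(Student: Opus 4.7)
The plan is to prove the two inequalities separately, using non-stationary phase for the first and the classical pencil-reduction argument for the second.

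First I would handle the pointwise non-stationary phase bound. After substituting $\x\mapsto P\x$, the integral becomes $I_q(\uw,\uu)=P^s\int_{\R^s}w(\x)e(\phi(\x))\,d\x$ with phase $\phi(\x)=P^2\uw\cdot\nF(\x)-P\uu\cdot\x/q$ and gradient $\nabla\phi(\x)=2P^2(w_1 M_1+w_2 M_2)\x-(P/q)\uu$. On the compact support of $w$ the quadratic part of $\nabla\phi$ has size $O(P^2|\uw|)$, while the linear part has size $\asymp P|\uu|/q$. Under the hypothesis $|\uu|\ge(q/P)(1+P^2|\uw|)P^{\delta}$ the linear term dominates by at least a factor $P^{\delta}$, forcing $|\nabla\phi|\asymp P|\uu|/q$, with all higher derivatives of $\phi$ bounded by $O(P^2|\uw|)\le P^{-\delta}|\nabla\phi|$. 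Standard $N$-fold integration by parts in the direction $\uu/|\uu|$ then produces $|I_q(\uw,\uu)|\ll P^s(P|\uu|/q)^{-N}$, and the hypothesis immediately gives the further estimate $\ll P^{s-N\delta}$.

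For the second, product-type bound I would use the classical oscillatory-integral estimate for a pencil of quadratic forms. The strategy is to simultaneously reduce the symmetric pencil $(M_1,M_2)$ to canonical block-diagonal form via a fixed real congruence $\x=U\y$, with $1\times 1$ blocks for each real root of $\det(xF_1+yF_2)$ and real $2\times 2$ blocks for each pair of complex-conjugate roots. Such a reduction is available because the pencil is regular: non-singularity of $F_1=F_2=0$ forces $\det(xF_1+yF_2)$ to have $s$ distinct roots, and one can pick $\uw_0$ with $M_{\uw_0}$ non-degenerate to initialize the diagonalization. In the new coordinates the phase decouples block by block, and in each $1\times 1$ block the integral reduces to a one-variable oscillatory integral
\[
\int_{\R}\eta(y)\,e(\alpha y^2+\beta y)\,dy\ll_\eta (1+|\alpha|)^{-1/2},
\]
uniform in $\beta$, with $\alpha$ proportional to $P^2(\lambda_j w_1+\mu_j w_2)$. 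A two-variable analogue on each complex-pair $2\times 2$ block contributes $(1+P^2|\lambda_j w_1+\mu_j w_2|)^{-1}$, exactly matching the product over the two conjugate indices. Multiplying these factors yields
\[
|I_q(\uw,\uu)|\ll P^s\prod_{j=1}^s(1+P^2|\lambda_j w_1+\mu_j w_2|)^{-1/2}.
\]

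For the final inequality I would argue geometrically. Non-singularity of $F_1=F_2=0$ guarantees that the points $(\lambda_j:\mu_j)\in\PP^1(\bar\QQ)$ are pairwise distinct, so for each non-zero $\uw$ at most one index $j_0$ can attain the minimum in \eqref{eq:lambdaucdef}. For that distinguished index $|\lambda_{j_0}w_1+\mu_{j_0}w_2|=\lambda_\uw|\uw|$, while for each of the $s-1$ remaining indices $|\lambda_j w_1+\mu_j w_2|\asymp|\uw|$ with implicit constants depending only on the fixed $(\lambda_j:\mu_j)$. Inserting these estimates into the product bound produces the claimed $P^s(1+P^2|\uw|)^{-(s-1)/2}(1+P^2|\uw|\lambda_\uw)^{-1/2}$.

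The principal obstacle I would anticipate is the real block-reduction of the symmetric pencil when $\det(xF_1+yF_2)$ has complex-conjugate roots, together with the justification of the two-dimensional oscillatory bound on each such block. Both are classical: the former is Weierstrass-type canonical form theory for regular symmetric pencils, and the latter follows from a further real orthogonal diagonalization of the block's symmetric Hessian combined with the one-variable estimate displayed above.
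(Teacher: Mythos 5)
The paper proves this lemma in one line by citing Heath-Brown--Pierce \cite{HBP} (their Lemmas~3.1 and~2.3), so there is no proof in the present paper to compare against; you are effectively reconstructing the arguments behind those cited results, and your route is the standard one.

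Your non-stationary phase argument for the first bound is correct.

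For the product bound, the block-diagonal reduction of the regular symmetric pencil is the right strategy, but two steps are a little loose. First, after you orthogonally diagonalize a complex-conjugate $2\times 2$ block, the two-variable estimate yields $(1+P^2|\nu_1|)^{-1/2}(1+P^2|\nu_2|)^{-1/2}$ in terms of the block's real \emph{eigenvalues} $\nu_1,\nu_2$, which satisfy $\nu_1\nu_2 = c\,|\lambda_j w_1 + \mu_j w_2|^2$ but are not individually comparable to $|\lambda_j w_1 + \mu_j w_2|$. To get your claimed block contribution one needs the elementary chain $(1+|\nu_1|)(1+|\nu_2|)\geq 1+|\nu_1\nu_2|\geq \tfrac12\big(1+|\nu_1\nu_2|^{1/2}\big)^2$; this shows the eigenvalue bound \emph{implies} $(1+P^2|\lambda_jw_1+\mu_jw_2|)^{-1}$ but is generally sharper, so ``exactly matching'' overstates what happens. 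Second, for the final inequality the claim that ``at most one index $j_0$ can attain the minimum'' is not literally true (a vector $\uw$ may be equidistant from two of the lines $\lambda_j x+\mu_jy=0$). The correct point is that these lines, being pairwise distinct by non-singularity, are separated by some fixed $c_0>0$; hence either $\lambda_{\uw}\gg 1$, in which case all $|\lambda_jw_1+\mu_jw_2|\gg|\uw|$, or $\lambda_{\uw}<c_0/2$ and $\uw/|\uw|$ is close to exactly one line, whence $|\lambda_jw_1+\mu_jw_2|/|\uw|\gg 1$ for every non-minimizing $j$. Either way $|\lambda_jw_1+\mu_jw_2|\asymp|\uw|$ for all $j$ outside the minimizer, and the inequality follows.
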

 \begin{proof}
The lemma follows from~\cite[Lemma 3.1, Lemma 2.3]{HBP}.
 \end{proof}

 We also need some estimates of averaged bounds for $I_q(\uw, \uu)$ integrated against $p_{1,q}$ and $p_{2,\ur,k,q}$. The following bound will be used to bound $N_1(P,\delta)$.
	\begin{lemma}\label{p2integral}
		For any $\uu\in \R^s$ we have 
			\begin{align*}
				\int _{|\uw |\asymp W} 
			|	p_{1, q}(\uw)||I_q(\uw, \uu)|d\uw\ll \frac{Q}{q}W^2P^{s}(1+P^2W)^{-s/2-1}(1+|\uw| qQ^{1/2})^{-N}
			\end{align*}
			for any $N>0$. 	\end{lemma}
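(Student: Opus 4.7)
The plan is to combine the pointwise bound on \(p_{1,q}\) from Lemma~\ref{lem:p2bound} with the second bound on \(I_q(\uw,\uu)\) from Lemma~\ref{lem:IQ}, and then perform the resulting integral over the annulus \(|\uw|\asymp W\) in polar coordinates. (I also note that the factor \((1+|\uw|qQ^{1/2})^{-N}\) on the right-hand side of the statement should read \((1+WqQ^{1/2})^{-N}\), since \(\uw\) has been integrated out.) The crucial numerical fact that makes the exponents match is that in the application we have \(Q^{3/2}=P^2\) (since \(Q=P^{4/3}\)), so the factor \((1+|\uw|Q^{3/2})^{-1}\) appearing in Lemma~\ref{lem:p2bound} becomes \((1+P^2|\uw|)^{-1}\) and supplies one of the powers of \((1+P^2 W)^{-1}\) that we need beyond what \(I_q\) alone gives.

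First, for \(|\uw|\asymp W\) the bounds of Lemmas~\ref{lem:p2bound} and~\ref{lem:IQ} give
\[
|p_{1,q}(\uw)||I_q(\uw,\uu)|
\ll \frac{Q}{q}\,(1+P^2W)^{-1}(1+WqQ^{1/2})^{-N}\cdot P^s(1+P^2W)^{-(s-1)/2}(1+P^2W\lambda_{\uw})^{-1/2}.
\]
Writing \(\uw=r(\cos\theta,\sin\theta)\) with \(r\asymp W\) and \(d\uw=r\,dr\,d\theta\), all factors above are independent of \(\theta\) except the last one. The remaining task is to estimate
\[
J:=\int_{0}^{2\pi}\bigl(1+P^2W\lambda_{(\cos\theta,\sin\theta)}\bigr)^{-1/2}d\theta.
\]

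For the angular integral, recall from \eqref{eq:lambdaucdef} that \(\lambda_{\uw}=\min_{j}|\lambda_j\cos\theta+\mu_j\sin\theta|\), so \(\lambda_{\uw}\) vanishes only at the at most \(2s\) directions \(\theta_j\) where \(\lambda_j\cos\theta+\mu_j\sin\theta=0\) (and is bounded away from zero elsewhere, with an \(F_1,F_2\)-dependent implied constant). Near each such \(\theta_j\), we have \(\lambda_{\uw}\asymp|\theta-\theta_j|\), hence
\[
\int_{|\theta-\theta_j|\ll 1}\bigl(1+P^2W|\theta-\theta_j|\bigr)^{-1/2}d\theta\ll(1+P^2W)^{-1/2},
\]
and summing over the finitely many \(\theta_j\) yields \(J\ll (1+P^2W)^{-1/2}\). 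The radial integral contributes \(\int_{r\asymp W}r\,dr\asymp W^2\).

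Multiplying these estimates together gives
\[
\int_{|\uw|\asymp W}|p_{1,q}(\uw)||I_q(\uw,\uu)|\,d\uw
\ll \frac{Q}{q}W^2P^s(1+P^2W)^{-(s-1)/2-1/2-1}(1+WqQ^{1/2})^{-N},
\]
which is the required bound. The only non-routine step is the angular estimate for \(J\); the key observation is that the directions where the phase of \(I_q\) is degenerate form a finite set determined by the eigenvalue pairs \((\lambda_j,\mu_j)\) of the pencil, so a standard one-variable integration by parts (or direct computation) near each such direction produces the extra saving \((1+P^2W)^{-1/2}\).
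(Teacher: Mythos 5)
Your argument is correct and follows essentially the same route as the paper: multiply the pointwise bound for \(p_{1,q}\) from Lemma~\ref{lem:p2bound} (using \(Q^{3/2}=P^2\)) by the pointwise bound for \(I_q\), and gain the extra factor \((1+P^2W)^{-1/2}\) from integrating the directional factor \((1+P^2W\lambda_\uw)^{-1/2}\) over the annulus. The paper simply outsources that last integration step to \cite[Lemma~3.3]{HBP}, whereas you carry it out directly in polar coordinates; your computation is a correct reconstruction of that cited lemma (and your observation that \(|\uw|\) should be replaced by \(W\) after integrating is accurate, though harmless since \(|\uw|\asymp W\)). The only place where your write-up is slightly informal is the claim that the zero set of \(\lambda_\uw\) consists of finitely many directions with linear vanishing: the pairs \((\lambda_j,\mu_j)\) lie in a splitting field and need not be real, so some factors contribute no zero at all (which only helps); this is precisely the case analysis that \cite[Lemma~2.3]{HBP} encapsulates in the second inequality of Lemma~\ref{lem:IQ} that you invoked.
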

	\begin{proof}
	The lemma follows from~\cite[Lemma 3.3]{HBP} together with \eqref{eq:p1bound2} on the decay properties of $p_{1,q}$ and the relation $P^2=Q^{3/2}$.  
		\end{proof}

 Given $k\in \mathbb N$ and $\ur\in \mathbb Z^2 $, we write for short
	\begin{align*}
	p_2(q, \uw)=:p_{2,\ur,k,q}(\uw).
	\end{align*}
    The rest of the section will be dedicated to bounds which will be useful towards bounding $N_2(P,\delta)$.
		\begin{lemma}\label{p1integral} 
		 Let $k\in \mathbb N$ and $\ur \in \mathbb Z^2$ with $|\ur|\asymp Q^{1/2}$ and $\uc=\ur/\gcd(\ur)$.  Let \(\mu_{S^1}\) be the Lebesgue measure (arc-length measure) on the unit circle \(S^1\subset \R^2\).
		For any $W>0$, we have for $j=0,1$ 
			\begin{align*}
			&
			\int _{|\uw |\asymp W} 
			|q^j\frac{\partial^j}{\partial q^j}p_{2}(q, \uw)||I_q(\uw, \uu)|d\uw\\&\ll \mathds 1_{\frac{kq}{Q}<1}
			WQ^{-3/2} P^{s}
			( 1+kqQ^{1/2}W)^{-N}
			(1+P^2W)^{-\frac{(s-1)}{2}}(1+P^2W\lambda_\uc)^{-1/2},
			\end{align*}
   and, where $\lambda_\uc$ is as in \eqref{eq:lambdaucdef}, 
			\begin{align*}
			&
			W^2\int _{S^1} 
			|W^j\frac{\partial^j}{\partial W^j}p_{2}(q, W\uv)||I_q(W\uv, \uu)|d\mu_{S^1}(\uv)\\&\ll \mathds 1_{\frac{kq}{Q}<1}
			WQ^{-3/2} P^{s}
			( 1+kqQ^{1/2}W)^{-N}
			(1+P^2W)^{-\frac{(s-1)}{2}}(1+P^2W\lambda_\uc)^{-1/2},
			\end{align*}
			the right hand side being the same in both of these two estimates.
		\end{lemma}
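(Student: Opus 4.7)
The strategy is to combine the pointwise bounds from Lemmas~\ref{lem:nonstationary} and~\ref{lem:IQ} in a coordinate system adapted to the direction of $\ur$, and to exploit the fact that $p_{2,\ur,k,q}$ is essentially concentrated in a thin tube around the line through $\ur$ of width $\asymp Q^{-3/2}$. This gains a factor of $Q^{-3/2}$ relative to the naive estimate that uses only the area $\asymp W^2$ of the support.

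For the first bound, I would parametrise $\uw = u_1 \ur/|\ur| + u_2 \ur^\perp/|\ur|$, so that $\uw\cdot\ur^\perp = u_2|\ur|$ and hence $Q|\uw\cdot\ur^\perp|\asymp Q^{3/2}|u_2|$ since $|\ur|\asymp Q^{1/2}$. Applying Lemma~\ref{lem:nonstationary} with $a=b=0$ gives, for $j\in\{0,1\}$ and $|\uw|\asymp W$,
\[
|q^j\partial_q^j p_{2,\ur,k,q}(\uw)|\ll_N \mathds 1_{kq/Q<1}\bigl(1+kqQ^{1/2}W+Q^{3/2}|u_2|\bigr)^{-N}.
\]
The decay in $u_2$ restricts the effective support to $|u_2|\lesssim Q^{-3/2}$. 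On this support, either $W\gg Q^{-3/2}$, in which case $\uw/|\uw|$ lies within angular distance $O(Q^{-3/2}/W)$ of $\pm\uc/|\uc|$ (since $\ur=dk\uc$ with $dk>0$), or $W\lesssim Q^{-3/2}$, in which case $P^2W\lesssim 1$ via $P^2=Q^{3/2}$. A short computation with the definition~\eqref{eq:lambdaucdef} of $\lambda_\uw$ yields $\lambda_\uw\geq \lambda_\uc-O(Q^{-3/2}/W)$ in the first regime, and so
\[
(1+P^2W\lambda_\uw)^{-1/2}\ll (1+P^2W\lambda_\uc)^{-1/2}
\]
uniformly on the effective support. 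Combining with Lemma~\ref{lem:IQ}, pulling the $\uw$-independent factors outside the integral, and using $(1+A+B)^{-N}\leq (1+A)^{-N/2}(1+B)^{-N/2}$, reduces matters to
\[
\int_{u_1^2+u_2^2\asymp W^2}(1+Q^{3/2}|u_2|)^{-N/2}\,du_1\,du_2 \ll W\min(W, Q^{-3/2}) \leq WQ^{-3/2},
\]
where the last inequality uses $W^2\leq W\cdot Q^{-3/2}$ in the case $W\leq Q^{-3/2}$. Replacing $N$ by $2N$ delivers the first claimed bound.

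For the $S^1$ integral, I would parametrise the circle by $\uv = \cos\theta\cdot \ur/|\ur|+\sin\theta\cdot \ur^\perp/|\ur|$, so that $W\uv$ corresponds to $u_2 = W\sin\theta$ in the previous change of coordinates. The chain rule gives
$W\partial_W p_{2,\ur,k,q}(q,W\uv) = u_1\partial_\ur p_{2,\ur,k,q}+u_2\partial_{\ur^\perp}p_{2,\ur,k,q}$,
and Lemma~\ref{lem:nonstationary} applied with $(a,b)\in\{(1,0),(0,1)\}$, combined with the elementary inequality $X(1+X+Y)^{-N}\leq (1+X+Y)^{-N+1}$, gives essentially the same pointwise bound with $|u_2|$ replaced by $W|\sin\theta|$. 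The one-dimensional integral $\int_0^{2\pi}(1+Q^{3/2}W|\sin\theta|)^{-N}\,d\theta\asymp \min(1, 1/(Q^{3/2}W))$, and $W^2\cdot \min(1, 1/(Q^{3/2}W)) \leq WQ^{-3/2}$ by the same case analysis as before. The remainder of the argument (factoring out $|I_q|$, using the bound on $\lambda_\uw$) is identical.

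The main technical point to verify is the replacement of $\lambda_\uw$ by $\lambda_\uc$ on the effective support of $p_{2,\ur,k,q}$, which requires the linear-algebra estimate on $\lambda_\uw$ outlined above; I expect this to be the most delicate part. Everything else reduces to tracking the measure estimate $W\min(W, Q^{-3/2})\leq WQ^{-3/2}$ and routine splitting of the decay bounds.
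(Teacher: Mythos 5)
Your proof is correct and follows essentially the same route as the paper's: work in coordinates aligned with $\ur$, invoke Lemma~\ref{lem:nonstationary} (and the chain rule for the radial derivative) to see that $p_{2,\ur,k,q}$ is concentrated in a tube of width $\asymp Q^{-3/2}$ about the line $\RR\,\ur$, invoke Lemma~\ref{lem:IQ} for $I_q$, and integrate over the transverse variable to gain $Q^{-3/2}$. The presentational differences are as follows. First, where you estimate $\lambda_\uw\geq\lambda_\uc-O(Q^{-3/2}/W)$ on the effective support and then deduce $(1+P^2W\lambda_\uw)^{-1/2}\ll(1+P^2W\lambda_\uc)^{-1/2}$, the paper avoids that comparison altogether: it keeps the full product form $\prod_j(1+P^2|(\lambda_j,\mu_j)\cdot\uw|)^{-1/2}$ from Lemma~\ref{lem:IQ}, integrates out the transverse coordinate $y$ (during which the product only changes by a bounded factor since $P^2|y|\ll 1$), and only then applies \cite[Lemma~2.3]{HBP} at the point $x\uc/|\uc|$, where $\lambda_\uc$ appears by definition --- this is cleaner than your argument and avoids the case split on the effective support. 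Second, the paper treats both displayed estimates at once by letting $D$ denote either $q\partial_q$ or $W\partial_W$, bounding $\int_{W\leq|\uw|\leq\tau W}$ for $\tau\in(1,2]$, and obtaining the $S^1$ bound by dividing by $(\tau-1)$ and letting $\tau\to 1$, whereas you prove the two bounds separately; both versions of this step are fine. Your argument, as sketched, is complete and would give the result.
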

\begin{proof}
 For \(\uw=(w_1,w_2)\in \mathbb R^2\), we define its radial coordinates by \(\uw=W\uv\) with \(W\geq 0\) and \(\uv\in S^1\).

We begin by considering  the derivative $\frac{\partial}{\partial W}p_{2}(q, W\uv)$ for $\uv \in S^1$.
For fixed \(\uv\), we have the identity \(\frac{d}{dW}f(x(W),y(W)) = \frac{dx}{dW} \frac{\partial f}{\partial x}+ \frac{dy}{dW} \frac{\partial f}{\partial y}\) with \(x(W) = \frac{\ur\cdot \uw}{|\ur|}\), \(y(W) = \frac{\ur^\perp\cdot \uw}{|\ur|}\) and \(f(x,y)=p_{2,\ur,k,q}(\uw)\), which gives
\begin{align*}
		\frac{\partial}{\partial W}
		p_2(q,\uw)
&=
\left(\frac{\ur}{|\ur|} \cdot \frac{d x(W)}{d W}\right)
 \partial_{\ur}p_2(q,\uw)
 +
 \left(\frac{\ur^\perp}{|\ur|}  \cdot\frac{d y(W)}{d W}\right)
 \partial_{\ur^\perp}p_2(q,\uw)
 \\
&=
\frac{\uv\cdot \ur}{|\ur|}	
 \partial_{\ur}p_2(q,\uw)
 +
\frac{\uv\cdot \ur^\perp}{|\ur|}	
 \partial_{\ur^\perp}p_2(q,\uw),
\end{align*}
 upon writing \(\partial_{\vec{\xi}} = \frac{\xi_1}{|\vec\xi|}\frac{\partial}{\partial w_1}+\frac{ \xi_2}{|\vec\xi|}\frac{\partial}{\partial w_2}\). 
 This gives $$
		W\frac{\partial}{\partial W}
		p_2(q,\uw) 
		=
		O(W)
		\partial_{\ur }
		p_2(q,\uw)
		+
		O\left(\frac{|W\uv \cdot \ur^\perp|}{Q^{1/2}}\right)
		\partial_{\ur ^\perp}
		p_2(q,\uw).
$$
 By Lemma~\ref{lem:nonstationary}, it follows that
 \begin{align*}
			&W\frac{\partial}{\partial W}
		p_2(q,\uw) \\&\ll (kqQ^{1/2}W+ {|W\uv \cdot \ur^\perp|} Q)(1+kqQ^{1/2}W)^{-N} (1+|W\uv \cdot \ur^\perp|Q)^{-N} \\
  &\ll (1+kqQ^{1/2}W+{|W\uv \cdot \ur^\perp|} Q)(1+kqQ^{1/2}W)^{-N} (1+|W\uv \cdot \ur^\perp|Q)^{-N} \\
	\end{align*}
 for any $N>0$. Thus for any $N>0$,
 \begin{equation}
     	\label{eq:nonstationary_radial}
		W\frac{\partial}{\partial W}
		p_2(q,\uw)\ll
		( 1+kqQ^{1/2}W )^{-N+1}(1+|W\uw\cdot \ur^\perp| Q)^{-N+1}.
 \end{equation}

Now let the differential operator $D$ be defined either by \(D=q\frac{\partial}{\partial q}\), or by \(D=W\frac{\partial}{\partial W}\).
From  Lemma~\ref{lem:IQ} and from Lemma~\ref{lem:nonstationary} for $D=q\frac{\partial}{\partial q}$ and \eqref{eq:nonstationary_radial} for \(D=W\frac{\partial}{\partial W}\), we obtain the following estimate for $j=0, 1$, $kq<Q$ and $\tau \in (1,2]$:
\begin{align*}
&\int _{W\leq |\uw |\leq  \tau W}|D^j p_{2}(q,\uw)||I_q(\uw, \uu)|d\uw\\&\ll_N P^s\int _{W\leq |\uw |\leq  \tau W}(1+kqQ^{1/2}|\uw|)^{-N}(1+Q\gcd(\ur)|\uc^\perp\cdot\uw|)^{-N}\\ & \quad \quad \quad \quad \quad \quad \times\prod_{j=1}^s(1+P^2|\lambda_i w_1+\mu_i w_2|)^{-1/2}d\uw.
\end{align*}
We change variables to write $\uw=x\frac{\uc}{|\uc|}+y\frac{\uc^\perp}{|\uc|}$ so that 
\begin{align*}
\MoveEqLeft \int _{W\leq |\uw |\leq  \tau W}|D^j p_{2}(q,\uw)||I_q(\uw, \uu)|d\uw\\&\ll_N P^s\int _{W\leq |\uw |\leq  \tau W}(1+kqQ^{1/2}x)^{-N}(1+Q^{3/2}y)^{-N}\\ & \quad \quad \times \prod_{j=1}^s\Big(1+P^2|(\lambda_j,\mu_j)\cdot (x\frac{\uc}{|\uc|}+y\frac{\uc^\perp}{|\uc|})|\Big)^{-1/2}dx dy,
\intertext{which is}
&\ll_N P^sQ^{-3/2}\int _{W\leq x\leq \tau W}(1+kqQ^{1/2}x)^{-N}\\ &\quad \quad \times \prod_{j=1}^s(1+P^2|(\lambda_j,\mu_j)\cdot (x\uc/|\uc|)|)^{-1/2}dx,
\end{align*} 
{and using \cite[Lemma 2.3]{HBP} this is}$$
\ll (\tau -1)WQ^{-3/2}P^{s}
( 1+kqQ^{1/2}W )^{-N}
(1+P^2W )^{-\frac{(s-1)}{2}}(1+P^2W\lambda_\uc)^{-1/2}.
$$
Taking 
\(D=q\frac{\partial}{\partial q}\), 
this concludes the first part of the lemma. Alternatively, dividing both sides by $(\tau -1)$, letting $\tau$ go to $1$, and taking \(D=W\frac{\partial}{\partial W}\), we obtain the second part of the lemma.
\end{proof}
When $|\uu|\leq \frac{q}{P}(1+P^2W)P^\varepsilon,$ we will also need a bound on average value of the derivative $\frac{\partial}{\partial q}I_q(\uw,\uu) $ integrated against $p_2(q, \uw)$ when we consider a double Kloosterman refinement to make use of cancellations from the $q$-sum. 
\begin{lemma}\label{p1derivative}
	Under the same assumptions as in Lemma~\ref{p1integral}, we have the following bound for any $N\geq 0$ and any interval $I\subseteq[1,2]$
	\begin{align*}
		&
		\int _{|\uw |/ W\in I} 
		p_2(q,\uw)\frac{\partial}{\partial q}I_q(\uw, \uu)d\uw\\&\ll
		\mathds 1_{\frac{kq}{Q}<1}
		q^{-1}WQ^{-\frac{3}{2}} P^{s}( 1+kqQ^{1/2}W)^{-N}
		(1+P^2W)^{-\frac{(s-1)}{2}}(1+P^2W\lambda_\uc)^{-\frac{1}{2}}.
	\end{align*}
\end{lemma}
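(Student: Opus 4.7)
The strategy is to convert $\partial_q I_q(\uw,\uu)$ into terms that can be bounded by Lemma~\ref{p1integral}, using integration by parts twice: once in $\x$ to replace $\partial_q$ by a combination of $I_q$ and $\uw\cdot\nabla_\uw I_q$, and once in $\uw$ to eliminate the $\nabla_\uw$ against the weight $p_2(q,\uw)$.

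Set $\Phi(\x)=\uw\cdot\uF(\x)-\x\cdot\uu/q$. Since $\uF$ is quadratic and $\x\cdot\uu/q$ is linear, Euler's identity gives $\x\cdot\nabla_\x\Phi=2\uw\cdot\uF(\x)-\x\cdot\uu/q$. Rewriting $2\pi i(\x\cdot\nabla_\x\Phi)e(\Phi)=\sum_{j=1}^s x_j\partial_{x_j}e(\Phi)$ and integrating by parts against $w(\x/P)$ (no boundary terms, as $w$ is compactly supported) gives
\[
\int_{\R^s}w(\x/P)\bigl(2\uw\cdot\uF(\x)-\x\cdot\uu/q\bigr)e(\Phi(\x))\,d\x=-\frac{1}{2\pi i}\bigl(sI_q(\uw,\uu)+\tilde I_q(\uw,\uu)\bigr),
\]
where $\tilde I_q$ is the analog of $I_q$ with weight $\uy\mapsto\uy\cdot\nabla w(\uy)$, itself smooth and compactly supported. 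Combined with $q\,\partial_q I_q=2\pi i\int w(\x/P)(\x\cdot\uu/q)e(\Phi)\,d\x$ and $\uw\cdot\nabla_\uw I_q=2\pi i\int w(\x/P)(\uw\cdot\uF)e(\Phi)\,d\x$, rearranging yields the key identity
\[
q\,\partial_q I_q(\uw,\uu)=2\,\uw\cdot\nabla_\uw I_q(\uw,\uu)+sI_q(\uw,\uu)+\tilde I_q(\uw,\uu).
\]

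Now let $\Omega=\{\uw\in\R^2:|\uw|/W\in I\}$. For the first term on the right, the divergence theorem (in $\R^2$) gives
\[
\int_\Omega p_2(q,\uw)\,\uw\cdot\nabla_\uw I_q\,d\uw=-\int_\Omega\bigl(2p_2(q,\uw)+\uw\cdot\nabla_\uw p_2(q,\uw)\bigr)I_q\,d\uw+\int_{\partial\Omega}p_2(q,\uw)(\uw\cdot\vec n)I_q\,dS.
\]
The volume integrals of $p_2 I_q$ and $p_2\tilde I_q$ are directly bounded by the first form of Lemma~\ref{p1integral} with $j=0$, since the implicit constants may depend on the weight function. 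For $\int_\Omega(\uw\cdot\nabla_\uw p_2)I_q\,d\uw$, polar coordinates $\uw=W'\uv$ convert $\uw\cdot\nabla_\uw p_2$ into $W'\partial_{W'}p_2$; integrating over $W'\in[aW,bW]\subseteq[W,2W]$ reduces this to the second form of Lemma~\ref{p1integral} with $j=1$ applied fiberwise in $W'$, using $W'\asymp W$ so the decay factors agree up to constants. The boundary integral is a sum of integrals over the two circles $|\uw|=rW$ ($r\in\{a,b\}$), each handled by the second form of Lemma~\ref{p1integral} with $j=0$. Dividing by $q$ and collecting these estimates yields the stated bound. The only nontrivial step is Step~1, which crucially uses the quadratic nature of $\uF$ to match $\x\cdot\uu/q$ with $\uw\cdot\uF$ via the Euler identity; after that, the rest reduces routinely to Lemma~\ref{p1integral}.
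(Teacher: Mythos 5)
Your proof is correct and takes essentially the same route as the paper: you replace $\partial_q I_q$ by $I_q$-type integrals via an integration by parts in $\x$ (the paper computes this as a divergence; your Euler-identity framing $q\partial_q I_q = 2\uw\cdot\nabla_\uw I_q + sI_q + \tilde I_q$ is the same step expressed more transparently), and then handle the $\uw\cdot\nabla_\uw I_q$ contribution by a radial integration by parts against $p_2$ (the paper passes to polar coordinates and integrates by parts in $t$; your divergence theorem in the $\uw$-plane produces the identical three pieces — annular, radial-derivative, and boundary circles — each matched to the appropriate part of Lemma~\ref{p1integral}).
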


\begin{proof}
	We note that
	\begin{equation*}
		\frac{\partial}{\partial q}I_q(\uw,\uu)=q^{-1}P^s\int_{\R^s}\frac{2\pi i P\x\cdot\uu}{q}w\left(\x\right)e(P^2\uw\cdot \nF(\x))e(-P\x\cdot\uu/q)d\x.
	\end{equation*}
	Treating the factor $\frac{P\x\cdot\uu}{q}$ trivially will result in an extra factor of $1+P^2 |\uw|$ in case $
		\frac{P|\uu\cdot\x|}{q}\ll P^\ve(1+P^2|\uw|)$, as Lemma~\ref{lem:IQ} suffices otherwise. 
	The goal is to remove the factor $P\x \cdot \uu/q$ and we achieve this by integration by parts. 
 We first write 
	\begin{align*}
		q \frac{\partial}{\partial q}I_q(\uw, \uu)
		\nonumber
		&=-P^s  \int_{\R^s}
		w\left(\x\right)e(P^2\uw\cdot \nF(\x)) \x \cdot \nabla e(-P\x\cdot\uu/q)
		 \,
		d\x\,\\
		\nonumber
		&=
		P^s  \int_{\R^s}
		\text{div}\left(
		w\left(\x\right)e(P^2\uw\cdot \nF(\x))\x
		\right)
		e(-P\x\cdot\uu/q)\,
		d\x,
	\end{align*}
	where we applied the divergence theorem in the second equality. Now we use the fact that $\nF$ is homogeneous, so that $\sum_{j=1}^s x_j w(\x)\frac{\partial}{\partial x_j}  e(P^2 \uw\cdot \nF(\x))= 4\pi i P^2 w(\x) \uw \cdot \nF(\x).$ It follows that
\begin{align}\nonumber
q \frac{\partial}{\partial q}I_q(\uw, \uu)={}&
-P^s  \int_{\R^s}
		\bigg(\sum_{j=1}^s \frac{\partial}{\partial x_j} x_j w(\x)\bigg)
		e(P^2\uw\cdot \nF(\x))
		e(-P\x\cdot\uu/q)\,
		d\x 
		\\    \label{eq:penultimate}		
		&
		+P^s  \int_{\R^s} 4\pi i P^2
		w(\x)  (\uw \cdot \nF(\x)) e(P^2\uw \cdot \nF(\x)) e(-P\x\cdot\uu/q)\,
		d\x,
\end{align}
	The first term in \eqref{eq:penultimate} has the same form as \(I_q\) with the weight \(w\) replaced by $\sum_{j=1}^s \frac{\partial}{\partial x_j} x_j w(\x)$, whose contribution in the integral against $p_2(q, \uw)$ can be dealt with using first part of Lemma~\ref{p1integral}.
	To handle the second term in \eqref{eq:penultimate}, we change to radial coordinates and then remove the factor $P^2 \uw \cdot \nF(\x)$ by integration by parts. 
	
	As in Lemma~\ref{p1integral}, let \(\mu_{S^1}\) be the Lebesgue measure (arc-length measure) on the unit circle \(S^1\subset \R^2\). Then, multiplying \eqref{eq:penultimate} by \( p_2(q,\uw)\)  and integrating over \(|\uw|\asymp W\), we must bound
	\begin{align*}
		&
        \int_{C_1W \leq |\uw|\leq C_2W} p_2(q,\uw)
		P^s  \int_{\R^s}
		4\pi i  P^2 \uw \cdot \nF(\x) w(\x) 
		e(-P\x\cdot\uu/q)\,
		d\x\,
		d\uw&
		\\
		&
        =
		\int_{S^1}
		\int_{C_1W}^{C_2W}
		tP^s p_2(q,t\uw) \int_{\R^s}
		4\pi i  t P^2 \uw \cdot \nF(\x) w(\x) &
		\\ & \hspace{4cm}
        \times e(P^2t\uw\cdot \nF(\x))
		e(-P\x\cdot\uu/q)\,
		d\x\, dt \,d\mu_{S^1}(\uw).&
	\end{align*}
        Using integration by parts with respect to \(t\) (and the product rule on the resulting derivative), this is equal to
$2P^st^2(B-I_1-I_2)$, where
		$$
		B=
		\int_{S^1}
		\int_{\RR^s} 
		p_2(q,t\uw)
		 w(\x) 
		e(P^2t\uw\cdot \nF(\x))
		e(-P\x\cdot\uu/q)\,
		d\x\
		\Big \vert_{C_1W}^{C_2W}d\mu_{S^1}(\uw),$$
        and
		\begin{align*}
		    I_1&=  2
		\int_{S^1}
		\int_{C_1W}^{C_2W}\int_{\RR^s}
		t^{-1}
		p_2(q,t\uw)
		w(\x) 
		\\ & \quad \quad \quad \quad \quad \quad \times e(P^2t\uw\cdot \nF(\x))
		e(-P\x\cdot\uu/q)\,
		d\x\, dt \,d\mu_{S^1}(\uw),
		\end{align*}
		and 
		$I_2$ is given by $$
		\int_{S^1}
		\int_{C_1W}^{C_2W}
		\int_{\RR^s}
		\left(
		\frac{\partial}{\partial t}
		p_2(q,t\uw)
		\right)
		w(\x)  e(P^2t\uw\cdot \nF(\x))
		e(-P\x\cdot\uu/q)\,
		d\x\, dt \,d\mu_{S^1}(\uw)
		\label{eq:Iq-change-vars}.
	$$
	The second term is satisfactory by the first part of Lemma~\ref{p1integral}, and the other terms are satisfactory by the second part of Lemma~\ref{p1integral}.
\end{proof}

\section{Bounds for quadratic exponential sums}\label{sec:exp sum}	In this section, we obtain bounds for two types of quadratic exponential sums: the sums $D_q(\uu)$ in \eqref{eq:Dq def} and $S_{q,d\uc}(\uu)$ in \eqref{eq:sq def}.  The exponential sums $D_q(\uu)$ are closely related to $S_{d,1}(\uu)$, defined in~\cite[eq. (1.2)]{BM}. The exponential sums $S_{q,d\uc}(\uu)$ are closely related to the exponential sums studied in~\cite{V19} in the function field setting. We will adapt the methods in~\cite{BM} and~\cite{V19} to study these sums in our setting.  

Using the Chinese remainder theorem, we have the following multiplicativity properties for $D_q$ and $S_{q, d\uc}$. 
\begin{lemma}\label{lem:multiplicative}
Let $s, q_1,q_2, d_1, d_2\geq 1$ be integers satisfying $\gcd(q_1,q_2)=1$ and $d_1\mid q_1, d_2\mid q_2$. Then for any primitive integer vector $\uc\in\ZZ^2$, and any integer vector $\uu\in \ZZ^s$, we have the following multiplicativity relations:
\begin{align*}
D_{q_1q_2}(\uu)&=D_{q_1}(\uu)D_{q_2}(\uu),
\\
S_{q_1q_2,d_1d_2\uc}&=S_{q_1,d_1\uc}(\uu)S_{q_2,d_2\uc}(\uu)
\end{align*}
\end{lemma}

\subsection{Bounds for $D_q(\uu)$} In this subsection, our aim is to prove Lemma~\ref{lem:Dqfinal}. This is essentially a well-known estimate for for $D_q$. In addition to handling $N_0(P,\delta)$ and $N_1(P,\delta)$, a special case of this result is gives Lemma~\ref{badpairgoodp} in the next subsection, which plays an important role for the contributions from bad paris of $\uc$ to $N_2(P,\delta)$.

Using the multiplicativity in Lemma \ref{lem:multiplicative}, it is enough to consider the case when $q=p^k$ where $p$ is prime and $k\in \NN$.
  
\begin{lemma}\label{lem:Dp bound}
For $\uu \in \ZZ^s$ we have
\begin{align*}
D_p(\uu)\ll \begin{cases}
p^{(s+2)/2} & p\nmid \scrF^*(\uu),\\
p^{(s+3)/2} & p\mid \scrF^*(\uu), p\nmid \uu.
\end{cases}
\end{align*}
Here, $\scrF^*(\uu)$ denotes the dual variety for the intersection of quadrics $F_1$ and $F_2$ defined in Section~\ref{sec:geometry}.
\end{lemma}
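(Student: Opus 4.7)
The plan is to parametrize the sum over $\ua$ projectively. Writing $\ua = t\uc$ where $\uc$ runs over primitive representatives of $\PP^1(\FF_p)$ and $t\in\FF_p^\times$, we obtain
\[
D_p(\uu) = \sum_{\uc\in \PP^1(\FF_p)}\sum_{t\in\FF_p^\times}\sum_{\bb\bmod p}e_p(tF_\uc(\bb)+\bb\cdot\uu).
\]
I would first isolate the contribution from those $\uc$ with $p\nmid\det M_\uc$. For each such $\uc$ the inner $\bb$-sum is a standard quadratic Gauss sum: completing the square and diagonalising $M_\uc$ over $\FF_p$ yields, in terms of the dual form $F_\uc^*$ of \eqref{F^*cdef} and the standard quadratic Gauss sum $\tau_p$ of modulus $\sqrt p$,
\[
\sum_{\bb\bmod p}e_p(tF_\uc(\bb)+\bb\cdot\uu) = \tau_p^s\left(\tfrac{t^s\det M_\uc}{p}\right)e_p\!\left(-\overline{4t\det M_\uc}\,F_\uc^*(\uu)\right).
\]

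Next, summing over $t\in\FF_p^\times$ gives either a Ramanujan-type sum (for $s$ even) or a Salié sum (for $s$ odd). The former contributes $-1$ or $p-1$ according as $p\nmid F_\uc^*(\uu)$ or $p\mid F_\uc^*(\uu)$; the latter evaluates to $\tau_p\bigl(\tfrac{-F_\uc^*(\uu)}{p}\bigr)$, vanishing when $p\mid F_\uc^*(\uu)$. In both parities the outstanding sum over $\uc\in\PP^1(\FF_p)$ reduces, up to explicit terms of acceptable size, to a Legendre-symbol character sum over $\PP^1(\FF_p)$ of a polynomial in $\uc$: for $s$ odd the polynomial is $F_\uc^*(\uu)$, of degree $s-1$ in $\uc$ with discriminant proportional to $\scrF^*(\uu)$ (by the definition of the dual variety recalled in Section \ref{sec:quad-forms-notation}); for $s$ even the relevant polynomial is $\det M_\uc$, of degree $s$ in $\uc$ with discriminant $D_F/2$.

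Under the hypothesis $p\nmid\scrF^*(\uu)$ (the finitely many bad primes $p\mid D_F$ contribute a bounded factor absorbed in the implicit constant), these polynomials have only simple roots modulo $p$, and in particular are not constant multiples of perfect squares modulo $p$. Weil's bound for Legendre-symbol character sums then saves $\sqrt p$, which combined with the prefactor $\tau_p^{s+1}$ (resp.\ $\tau_p^s$) delivers the desired $D_p(\uu)\ll p^{(s+2)/2}$. In the range $p\mid\scrF^*(\uu)$, $p\nmid\uu$, I would simply bound the character sum trivially by $O(p)$, losing a single factor of $\sqrt p$ relative to Weil and producing the stated $O(p^{(s+3)/2})$. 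The $O(1)$ values of $\uc$ with $p\mid\det M_\uc$ are handled separately: at a good prime the rank of $M_\uc\bmod p$ is at least $s-1$, and a Gauss sum in $s-1$ variables contributes $O(p^{(s+1)/2})$, which is absorbed in the main estimate. The principal obstacle is verifying that Weil's bound genuinely applies, i.e.\ that the relevant polynomial in $\uc$ is not a constant times a perfect square modulo $p$; this is precisely what the hypothesis on $\scrF^*(\uu)$ (and the goodness of $p$) secures.
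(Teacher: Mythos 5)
The paper itself does not prove this from first principles; it reduces to the complete sum \(\sum_{\ua\bmod p}\sum_{\bb\bmod p}\) by peeling off the \(\ua\equiv\vec 0\) term (which vanishes unless \(p\mid\uu\)) and then cites Browning--Munshi \cite[Lemma~19]{BM}. Your argument is a reconstruction of that cited lemma, using the classical parametrization \(\ua=t\uc\), evaluating the \(\bb\)-sum as a Gauss sum, the \(t\)-sum as a Ramanujan or Sali\'e sum, and finally estimating the \(\uc\)-sum by Weil. That is indeed the route underlying the citation, and the bulk of your argument is sound.

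There is, however, one genuine gap, and it is exactly where you wave your hands. You claim that the \(O(1)\) values of \(\uc\) with \(p\mid\det M_\uc\) contribute only \(O(p^{(s+1)/2})\) by way of ``a Gauss sum in \(s-1\) variables.'' This is not the right accounting: the degenerate direction of \(M_\uc\) contributes a full factor of \(p\) (not \(1\)) whenever \(\uu\) is orthogonal to \(\ker M_\uc\bmod p\), and the \(t\)-sum can contribute another factor as large as \(p\) when \(s-1\) is even (i.e.\ \(s\) odd) and \(p\mid Q_\uc^*(\uu')\), since the \(t\)-sum is then a Ramanujan sum equal to \(p-1\). In that scenario a single degenerate \(\uc_0\) already gives a contribution of size \(p\cdot p^{(s-1)/2}\cdot p = p^{(s+3)/2}\), which would exceed the claimed bound in the case \(p\nmid\scrF^*(\uu)\). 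To close this, one has to show that the configuration \(p\mid\det M_{\uc_0}\), \(\uu\perp\ker M_{\uc_0}\), \(p\mid Q_{\uc_0}^*(\uu')\) forces \(p\mid\scrF^*(\uu)\). This is true---near such a \(\uc_0\) one has \(\det M_\uc\) vanishing to first order (since \(p\nmid D_F\)) and \(F_\uc^*(\uu)=\det M_\uc\cdot\uu^{T}M_\uc^{-1}\uu\) acquiring an additional zero from \(\uu^{T}M_{\uc_0}^{-1}\uu\propto Q_{\uc_0}^*(\uu')\), so \(\uc_0\) is a double root of \(F_\uc^*(\uu)\) and hence divides its discriminant \(\scrF^*(\uu)\)---but this geometric input must be supplied; it is precisely where the hypothesis \(p\nmid\scrF^*(\uu)\) enters for the degenerate \(\uc\), and your sketch omits it entirely. (Two smaller imprecisions: in the Sali\'e evaluation the factor \(\bigl(\tfrac{\det M_\uc}{p}\bigr)\) from the \(t\)-sum is silently dropped, though it cancels against the one from the Gauss sum; and for \(s\) even the dominant \(O(p^{(s+2)/2})\) term comes not from Weil applied to \(\bigl(\tfrac{\det M_\uc}{p}\bigr)\) but from the \(O(1)\) values of \(\uc\) where the Ramanujan sum equals \(p-1\), which your phrase ``explicit terms of acceptable size'' glosses over.)
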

\begin{proof}
	We write 
    \begin{align*}D_p(\uu)=\sum_{\ua\bmod p}\sum_{\bb\bmod p}e_p(\ua \cdot \nF(\bb)+\bb\cdot \uu)-\sum_{\bb\bmod p}e_p(\bb\cdot \uu).\end{align*} 
    The lemma follows from~\cite[Lemma 19]{BM} since the second sum vanishes unless $p\mid \uu$. 
\end{proof}
\begin{lemma}\label{lem:Dpk bound}
	Let $k\geq 2, \uu \in \ZZ^s$, we have $D_{p^k}(\uu)=0$ unless $p\mid D_F\mathcal F^*(\uu)$.
\end{lemma}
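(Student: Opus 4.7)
The plan is to perform two successive ``Hensel-style'' averaging manoeuvres—one on $\bb$ and one on $\ua$—to force two $\bmod\,p$ constraints on the support of the sum, and then rule out solutions using smoothness of the variety and the definition of the dual variety.

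First I would exploit that for any fixed $\mathbf{y}\in\ZZ^s$, the map $\bb\mapsto \bb+p^{k-1}\mathbf{y}$ is a bijection on $(\ZZ/p^k\ZZ)^s$, so $D_{p^k}(\uu)$ is invariant under this substitution. Averaging over $\mathbf{y}\bmod p$ and Taylor-expanding $\uF$ (using that $k\geq 2$ gives $2(k-1)\geq k$, so the term $p^{2(k-1)}\ua\cdot\uF(\mathbf{y})$ is killed mod $p^k$, and using symmetry of the $M_i$) produces a factor $\sum_{\mathbf{y}\bmod p}e_p\big(\mathbf{y}\cdot(2M_\ua\bb+\uu)\big)$. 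By orthogonality this cuts the summation down to pairs with $2M_\ua\bb+\uu\equiv 0\pmod p$. I would then repeat the manoeuvre on $\ua$, writing $\ua\mapsto\ua+p^{k-1}\mathbf{z}$ with $\mathbf{z}\bmod p$: coprimality of $\ua$ with $p$ and the constraint from the first step are both preserved (since $p^{k-1}M_\mathbf{z}\bb\equiv 0\pmod p$), and orthogonality now imposes $\uF(\bb)\equiv 0\pmod p$. At this stage we have
\[
D_{p^k}(\uu)=\starsum_{\ua\bmod p^k}\sum_{\substack{\bb\bmod p^k\\ F_1(\bb)\equiv F_2(\bb)\equiv 0\,(p)\\ 2M_\ua\bb\equiv -\uu\,(p)}}e_{p^k}\big(\ua\cdot\uF(\bb)+\bb\cdot\uu\big).
\]

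Next I would show this summation set is empty when $p\nmid D_F\mathcal{F}^*(\uu)$. The hypothesis forces $p$ odd (since $D_F=2\operatorname{Disc}(F)$) and $p\nmid\uu$ (by homogeneity of $\mathcal{F}^*$). If $\bb\equiv 0\pmod p$, then $2M_\ua\bb\equiv 0$ contradicts $p\nmid\uu$, so $\bb$ is a nontrivial zero of $(F_1,F_2)$ over $\FF_p$. Since $p\nmid D_F$, the variety $F_1=F_2=0$ is smooth over $\FF_p$ as discussed in Section~\ref{geometry}, hence $M_1\bb$ and $M_2\bb$ are $\FF_p$-linearly independent. The constraint then exhibits $\uu$ as an $\FF_p$-linear combination of $M_1\bb$ and $M_2\bb$, i.e.\ $\uu$ lies in the normal space to the variety at $\bb$. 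By the very definition of the dual variety this gives $\mathcal{F}^*(\uu)\equiv 0\pmod p$, contradicting $p\nmid\mathcal{F}^*(\uu)$.

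I expect the arithmetic manipulations in Steps~1--2 to be routine once the Taylor expansion is written out carefully (the only subtlety being to keep track of the factor $2$, which is why the argument tolerates the $p=2$ case only through the observation that $2\mid D_F$ automatically). The main conceptual step is Step~3, where one must correctly identify the $\bmod\,p$ constraints $\uF(\bb)\equiv 0$ and $\uu\in\operatorname{span}(M_1\bb,M_2\bb)$ with the condition that $\uu$ lies on the reduction of the dual variety; this is where the non-singularity hypothesis and the precise definition of $\mathcal{F}^*$ must be invoked.
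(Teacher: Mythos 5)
Your strategy is essentially the same Hensel-lifting argument that the paper (via the citation to \cite[Lemma~20]{BM}) relies on; the minor structural difference is that the paper first rewrites $D_{p^k}$ as the unrestricted complete sum over all $\ua\bmod p^k$ minus a $p\mid\uu$ correction term, whereas you run the lift $\ua\mapsto\ua+p^{k-1}\mathbf{z}$ directly on $D_{p^k}$ and observe that it preserves the coprimality constraint, a small simplification that avoids the decomposition. The one step you state too casually is ``by the very definition of the dual variety this gives $\mathcal F^*(\uu)\equiv 0\pmod p$'': here $\mathcal F^*$ is a fixed integer polynomial whose vanishing locus over $\bar{\QQ}$ is $X^\vee$, and the assertion that its reduction modulo $p$ vanishes whenever $\uu$ is conormal to $X\bmod p$ does require an argument, since the dual variety need not \emph{a priori} reduce well modulo $p$. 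For $p\nmid 2D_F$ this can be closed either by Hensel-lifting the $\FF_p$-point of the conormal variety (a $\PP^1$-bundle over the smooth reduction $X\bmod p$, hence itself smooth) to a $\ZZ_p$-point and then evaluating $\mathcal F^*$, or by checking directly that $\ua$ is a double zero modulo $p$ of $\uc\mapsto F_\uc^*(\uu)$ using $F_\ua^*(\uu)\equiv 4\det(M_\ua)F_\ua(\bb)$ and the analogous computation for its directional derivative, so that the discriminant $\mathcal F^*(\uu)$ reduces to zero. This is exactly the detail the paper outsources to \cite[Lemma~20]{BM}, so your sketch matches the paper's level of rigour, but the phrase ``by the very definition'' understates what must be checked.
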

\begin{proof}
	We write 
    \begin{align*}D_{p^k}(\uu)&=\sum_{\ua \bmod {p^{k}}}\sum_{\bb \bmod p}e_{p^k}(\ua \cdot \nF(\bb)+\bb\cdot \uu)\\
    & \quad -p^{s}\mathds 1_{p\mid \uu}\Big(\sum_{\ua \bmod {p^{k-1}}}\sum_{\bb \bmod p^{k-1}}e_{p^{k-1}}(\ua \cdot \nF(\bb)+\bb \cdot \frac{\uu}{p})\Big).\end{align*} 
    If $p\nmid D_F\mathcal F^*(\uu)$, then $p\nmid \uu$ and the lemma then follows from an application of Hensel's lemma akin to~\cite[Lemma 20]{BM}. 
\end{proof}
\begin{lemma}\label{lem:Dq-crude}

For any $q\geq 1$ and  $\uu \in \ZZ^s$, we have
	$D_q(\uu)\ll q^{s/2+2+\varepsilon}$ for any $\varepsilon >0$.
\end{lemma}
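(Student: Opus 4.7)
My plan is to use the multiplicativity \eqref{eq:Dqmult} to reduce the claim to a uniform prime-power bound $|D_{p^k}(\uu)| \leq C\, p^{k(s/2+2)}$ with $C$ independent of $p,k,\uu$; the stated $q^{\varepsilon}$ factor then arises from $C^{\omega(q)} \ll q^{\varepsilon}$ together with the usual estimate $\omega(q) \ll \log q / \log\log q$. For $k=1$, Lemma \ref{lem:Dp bound} gives the bound directly, since both alternatives there are $\leq p^{(s+3)/2} \leq p^{s/2+2}$. For $k\geq 2$, Lemma \ref{lem:Dpk bound} restricts attention to the $O(\log(D_F|\uu|))$ primes $p\mid D_F\scrF^{*}(\uu)$, whose combined contribution is $O(q^{\varepsilon})$ provided we have the above uniform estimate.

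It remains to establish the uniform bound on $|D_{p^k}(\uu)|$ for $k\geq 2$. I would expand
\[
D_{p^k}(\uu) = \sum_{\ua \bmod p^k}^{*} T_{\ua,p^k}(\uu),\qquad
T_{\ua,p^k}(\uu) = \sum_{\bb \bmod p^k} e_{p^k}\!\bigl(F_{\ua}(\bb)+\bb\cdot\uu\bigr),
\]
recognising the inner sum as a quadratic Gauss sum in $s$ variables attached to the symmetric matrix $M_{\ua} = a_1 M_1 + a_2 M_2$. The bulk of the outer sum, coming from $\ua$ with $p\nmid \det M_{\ua}$ (so that $F_{\ua}$ is non-degenerate modulo $p^k$), admits the standard Gauss sum evaluation $|T_{\ua,p^k}(\uu)| \leq p^{ks/2}$ after completing the square, so its contribution is at most $p^{2k}\cdot p^{ks/2}=p^{k(s/2+2)}$ as desired. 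The remaining ``degenerate'' $\ua$ lie on the vanishing locus modulo $p$ of the binary form $\det(a_1 M_1+a_2 M_2)$ of degree $s$ in $\ua$; stratifying by $j = v_p(\det M_{\ua})$ and applying the Gauss sum formula for degenerate quadratic forms modulo $p^k$ (equivalently, diagonalising $M_{\ua}$ over $\Z_p$ via Smith normal form and computing the resulting product of one-variable Gauss sums) yields a geometric sum in $j$ whose total is also $\ll p^{k(s/2+2)}$.

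The main obstacle I expect is this last step of bounding the degenerate contribution: the individual Gauss sums grow as the corank of $M_{\ua}$ modulo $p$ (or $p^k$) increases, and this growth must be offset against the smaller number of $\ua$ in each degenerate stratum. The bookkeeping is delicate but standard, and can be modelled on analogous arguments for the quadratic exponential sums appearing in Heath-Brown--Pierce~\cite{HBP} and Browning--Munshi~\cite{BM} (whose $S_{d,1}(\uu)$ plays the role of our $D_q(\uu)$), or alternatively deduced by a Hensel-lifting reduction to the case $k=1$ analogous to the one used in the proof of Lemma~\ref{lem:Dpk bound}.
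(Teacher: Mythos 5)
Your approach departs from the paper's proof. The paper does not invoke Lemma~\ref{lem:Dp bound} or Lemma~\ref{lem:Dpk bound} at all: instead, after reducing to prime powers via multiplicativity, it applies the standard squaring bound (as in \cite[Lemma~2.5]{V19}) to get
\[
\Bigl|\sum_{\bb\bmod p^k}e_{p^k}\bigl(\ua\cdot\uF(\bb)+\bb\cdot\uu\bigr)\Bigr|\ll p^{ks/2}Z(\ua;p^k)^{1/2},\qquad Z(\ua;p^k)=\#\{\z\bmod p^k: p^k\mid\z^{t}M_{\ua}\},
\]
then sums over $\ua$ by Cauchy--Schwarz and invokes the second-moment estimate $\sideset{}{^*}\sum_{\ua\bmod p^k}Z(\ua;p^k)\ll p^{2k+\varepsilon}$ from \cite[Lemma~5.4]{HBP}. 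That moment estimate is precisely where the stratification by the $p$-adic valuation of $\det M_{\ua}$ is carried out, so the ``delicate bookkeeping'' you defer to the end is exactly the crux of the argument, and your proposal does not actually supply it.

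There are two further issues. First, your $k=1$ step has a gap: Lemma~\ref{lem:Dp bound} covers only the cases $p\nmid\scrF^*(\uu)$ and $p\mid\scrF^*(\uu),\,p\nmid\uu$; when $p\mid\uu$ it says nothing, so you would still need a separate argument there (note $p\mid\uu$ forces $p\mid\scrF^*(\uu)$, so this case is genuinely excluded from the lemma's dichotomy). Second, your plan to split the nondegenerate $\ua$ (contributing $\ll p^{2k}\cdot p^{ks/2}$) from the degenerate ones and then handle the latter via degenerate Gauss sum formulas is sound in spirit, but it is not obviously simpler than the paper's route: the squaring plus Cauchy--Schwarz bound $\sideset{}{^*}\sum_{\ua}Z^{1/2}\leq p^k(\sideset{}{^*}\sum_{\ua}Z)^{1/2}$ packages that stratification into a single moment estimate that is already available in \cite{HBP}, avoiding a case analysis of elementary divisors of $M_{\ua}$ modulo $p^k$. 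If you want to make your version work, you would need to reproduce, in effect, the content of \cite[Lemma~5.4]{HBP} directly at the level of individual Gauss sums.
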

\begin{proof}
By Lemma \ref{lem:multiplicative}, it is enough to check the bound for $q=p^k$ for some integer $k$. After a standard squaring argument such as~\cite[Lemma 2.5]{V19},
\begin{equation*}
|\sum_{\bb \bmod q}e_{q}(\ua\cdot \uF(\bb)+\bb\cdot \uu)|\ll q^{s/2}Z(\ua;q)^{1/2},
\end{equation*}
where $Z(\ua;q)=\#\{\z\bmod q: q\mid \z^T (a_1M_1+a_2M_2),\}$
where $M_1$ and $M_2$ are matrices defining the quadratic forms. As a result,
\begin{align*}
|D_{p^r}(\uu)|&\ll p^{ks/2}\starsum_{\ua\bmod p^k}Z(\ua;p^k)^{1/2}\ll p^{ks/2+k}(\starsum_{\ua\bmod p^k}Z(\ua;p^k))^{1/2}\\ &\ll p^{ks/2+2k+\varepsilon}=q^{s/2+2+\varepsilon},
\end{align*}
for any $\varepsilon>0$
upon using~\cite[Lemma 5.4]{HBP}.
\end{proof}

Combining Lemmas~\ref{lem:Dp bound}--\ref{lem:Dq-crude}, we obtain the following upper bound for $D_q(\uu)$.
\begin{lemma}\label{lem:Dqfinal}
	Suppose \(q=q_1q_2\) where $q_1$ is the square-free part of $q$ and $q_2$ is the square-full part of $q$.  Then for any $\varepsilon>0$
	\[
	D_q(\uu)\ll q^{s/2+1+\varepsilon}
	\gcd(q_1,\scrF^*(\uu))^{1/2}
	\gcd(q_1, \uu)^{1/2}\gcd(q_2,(D_F\scrF^*(\uu))^\infty)
	.
	\]
\end{lemma}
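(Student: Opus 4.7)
The approach is to combine the Chinese Remainder Theorem multiplicativity of $D_q$ in \eqref{eq:Dqmult} with the three prime-power estimates already established: Lemma~\ref{lem:Dp bound} (for the case $q=p$), Lemma~\ref{lem:Dpk bound} (vanishing for $q=p^k$, $k\geq 2$), and the crude bound Lemma~\ref{lem:Dq-crude}. Since $q_1$ and $q_2$ are coprime, \eqref{eq:Dqmult} gives $D_q(\uu) = D_{q_1}(\uu)\,D_{q_2}(\uu)$, and further factorisations reduce the task to bounding $D_{p}(\uu)$ for $p\mid q_1$ and $D_{p^k}(\uu)$ for $p^k\| q_2$, $k\geq 2$.

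For a prime $p\mid q_1$ I would distinguish three sub-cases: (a) $p\nmid \scrF^*(\uu)$, where Lemma~\ref{lem:Dp bound} gives $D_p(\uu)\ll p^{(s+2)/2}$; (b) $p\mid\scrF^*(\uu)$ but $p\nmid\uu$, where Lemma~\ref{lem:Dp bound} gives $p^{(s+3)/2}$; and (c) $p\mid\uu$ (which forces $p\mid\scrF^*(\uu)$, since $\scrF^*$ is a homogeneous polynomial of positive degree in $\uu$), where the crude bound in Lemma~\ref{lem:Dq-crude} gives $p^{s/2+2+\varepsilon}$. In each of (a)--(c) the estimate can be written uniformly as
\[
D_p(\uu)\ll p^{s/2+1+\varepsilon}\gcd(p,\scrF^*(\uu))^{1/2}\gcd(p,\uu)^{1/2}.
\]
Taking the product over $p\mid q_1$, using the multiplicativity of the gcds and absorbing the factors $p^\varepsilon$ into a slightly larger $\varepsilon$ via the divisor bound $\prod_{p\mid q_1}p^{\varepsilon}=q_1^{o(1)}$, one obtains
\[
D_{q_1}(\uu)\ll q_1^{s/2+1+\varepsilon}\gcd(q_1,\scrF^*(\uu))^{1/2}\gcd(q_1,\uu)^{1/2}.
\]

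For the square-full part, fix a prime power $p^k\| q_2$ with $k\geq 2$. Lemma~\ref{lem:Dpk bound} makes $D_{p^k}(\uu)$ vanish unless $p\mid D_F\scrF^*(\uu)$, in which case $\gcd(p^k,(D_F\scrF^*(\uu))^\infty)=p^k$. On this locus Lemma~\ref{lem:Dq-crude} gives
\[
D_{p^k}(\uu)\ll p^{k(s/2+2+\varepsilon)}=(p^k)^{s/2+1+\varepsilon}\cdot p^k,
\]
which is exactly $(p^k)^{s/2+1+\varepsilon}\gcd(p^k,(D_F\scrF^*(\uu))^\infty)$. Multiplying over the prime powers dividing $q_2$ yields $D_{q_2}(\uu)\ll q_2^{s/2+1+\varepsilon}\gcd(q_2,(D_F\scrF^*(\uu))^\infty)$, and combining with the $D_{q_1}$ bound finishes the proof.

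I do not anticipate any serious obstacle: the argument is a piece of bookkeeping that funnels the three already-proved prime-power estimates through multiplicativity. The only minor point requiring care is the uniform handling of case (c) above, where the weaker crude bound has to be compensated by \emph{both} gcd factors simultaneously; the observation that $p\mid\uu$ forces $p\mid\scrF^*(\uu)$ (by homogeneity of $\scrF^*$ in $\uu$) is exactly what allows that compensation, and is the only non-mechanical step.
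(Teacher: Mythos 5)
Your proof is correct and is essentially the argument the paper intends: the paper itself simply states ``Combining Lemmas~\ref{lem:Dp bound}--\ref{lem:Dq-crude}, we obtain\dots'' with no further detail, and your write-up supplies exactly the bookkeeping (multiplicativity, three sub-cases at each prime of $q_1$, and the vanishing/crude-bound dichotomy at prime powers of $q_2$) that makes that one-liner work. In particular you correctly flag the one non-trivial point, that $p\mid\uu$ forces $p\mid\scrF^*(\uu)$ by homogeneity, which is what lets the crude bound $p^{s/2+2+\varepsilon}$ be absorbed by the product $\gcd(p,\scrF^*(\uu))^{1/2}\gcd(p,\uu)^{1/2}=p$.
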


\subsection{Bounds for $S_{q,d\uc}(\uu)$}

Recall  that the sum $S_{q, d\uc}(\uu)$ was defined in  \eqref{eq:sq def}.
The function field analogue of similar exponential sums defined in~\cite[eq. (6.1)]{V19} have been extensively studied there.
When $(d,q)=1$, the exponential sum $S_{q, \uc}(\uu)$ coincides with that considered in~\cite[Section 6]{V19} in the function field setting and analogous estimates can be proved similarly. By Lemma \ref{lem:multiplicative}, it is enough to consider sums $S_{p^k,\uc}(\uu)$. In the next four lemmas, we will obtain estimates for $S_{p^k, \uc}$ when $p$ is a good prime.
\begin{lemma}[Type I primes for good $\uc$ ]\label{lem:type 1} Let $\uc$ be good. 
For $p\nmid 2\det M_{\uc}$, we have
	\begin{align*}
	|S_{p^k, \uc}(\uu)|\leq\begin{cases}
	p^{sk/2}\gcd(F_\uc^*(\uu), p^{k}) & 2\mid s \text{ or } 2\nmid s, 2\mid k,\\ 
	p^{k(s+1)/2}\gcd (F_\uc^*(\uu), p^{k})^{1/2}\mathds {1}_{F_{\uc}^*(\uu)\not=0} & 2\nmid s, 2\nmid k.
	\end{cases}
	\end{align*}
\end{lemma}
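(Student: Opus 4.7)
The plan is to reduce $S_{p^k,\uc}(\uu)$ to an outer sum over a single parameter times a quadratic Gauss sum, and then evaluate each piece explicitly using the hypothesis $p\nmid 2\det M_\uc$.

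First I would parametrise the outer sum. Since $\uc$ is primitive and $\gcd(\ua,p^k)=1$, the condition $p^k\mid \uc\cdot\ua^\perp$ forces $\ua\equiv\lambda\uc\pmod{p^k}$ for a unique $\lambda\in(\Z/p^k\Z)^\times$. Using $\uc\cdot\uF(\bb)=F_\uc(\bb)$, this yields
\[
S_{p^k,\uc}(\uu)=\starsum_{\lambda\bmod p^k}\sum_{\bb\bmod p^k}e_{p^k}\bigl(\lambda F_\uc(\bb)+\bb\cdot\uu\bigr).
\]
Because $p\nmid 2\det M_\uc$, the matrix $M_\uc$ is invertible modulo $p^k$. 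Completing the square via $\bb\mapsto\bb-\overline{2\lambda}\,M_\uc^{-1}\uu$ separates the variables, giving
\[
S_{p^k,\uc}(\uu)=\starsum_{\lambda\bmod p^k}e_{p^k}\bigl(-\overline{4\lambda\det M_\uc}\,F_\uc^*(\uu)\bigr)\,G(\lambda),\qquad G(\lambda):=\sum_{\bb\bmod p^k}e_{p^k}(\lambda F_\uc(\bb)),
\]
where $F_\uc^*$ is as in \eqref{F^*cdef}.

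Next I would evaluate $G(\lambda)$. Diagonalising $F_\uc$ over $\ZZ_p$ (permissible as $p\nmid 2\det M_\uc$) reduces $G(\lambda)$ to an $s$-fold product of one-variable quadratic Gauss sums. The classical formula $\sum_{b\bmod p^k}e_{p^k}(\mu b^2)=p^{k/2}$ for $k$ even and $=p^{(k-1)/2}\bigl(\tfrac{\mu}{p}\bigr)\tau_p$ for $k$ odd (with $|\tau_p|=p^{1/2}$) then gives
\[
G(\lambda)=\varepsilon\cdot p^{ks/2}\cdot\begin{cases}1,&2\mid k\ \text{or}\ 2\mid s,\\ \left(\tfrac{\lambda}{p}\right),&2\nmid k\ \text{and}\ 2\nmid s,\end{cases}
\]
with $|\varepsilon|=1$ and $\varepsilon$ independent of $\lambda$.

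Setting $\alpha\equiv-\overline{4\det M_\uc}\,F_\uc^*(\uu)\pmod{p^k}$, so that $\gcd(\alpha,p^k)=\gcd(F_\uc^*(\uu),p^k)$, two cases remain. In the first (even $s$ or even $k$), the outer sum is a Ramanujan sum $c_{p^k}(\alpha)$, bounded by $\gcd(F_\uc^*(\uu),p^k)$, which combined with $|G(\lambda)|=p^{ks/2}$ yields the claim. In the second case (both $s$ and $k$ odd), substituting $\mu=\bar\lambda$ produces the Salié-type sum $\starsum_{\mu\bmod p^k}\bigl(\tfrac{\mu}{p}\bigr)e_{p^k}(\alpha\mu)$; splitting $\mu$ by its residue mod $p$ shows this sum vanishes unless $p^{k-1}\mid\alpha$, and when $p^{k-1}\|\alpha$ reduces to $p^{k-1}$ times a Gauss sum of modulus $p^{1/2}$, while for $p^k\mid\alpha$ (which includes $F_\uc^*(\uu)=0$) the resulting $\starsum_\nu\bigl(\tfrac{\nu}{p}\bigr)$ vanishes. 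This produces the bound $p^{k/2}\gcd(F_\uc^*(\uu),p^k)^{1/2}\mathds{1}_{F_\uc^*(\uu)\ne 0}$, which combined with the $p^{ks/2}$ from $G(\lambda)$ gives the stated bound.

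The main obstacle will be the careful bookkeeping of the parity-dependent character $\bigl(\tfrac{\lambda}{p}\bigr)$ in the odd-odd regime, both in consolidating the $s$ one-dimensional Gauss sums (where $\bigl(\tfrac{\lambda}{p}\bigr)^s=\bigl(\tfrac{\lambda}{p}\bigr)$ survives only because $s$ is odd) and in extracting the vanishing of the Salié sum when $F_\uc^*(\uu)=0$ that supports the indicator $\mathds{1}_{F_\uc^*(\uu)\ne 0}$. All other parities collapse cleanly to Ramanujan sums, for which standard bounds suffice.
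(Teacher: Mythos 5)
Your argument is correct and follows the same route as the paper: reduce to $S_{p^k,\uc}(\uu)=\starsum_{a\bmod p^k}\sum_{\bb\bmod p^k}e_{p^k}(aF_\uc(\bb)+\bb\cdot\uu)$, complete the square, and then bound the resulting Ramanujan and Gauss sums. The only difference is that the paper simply cites \cite[Lemma 15]{BM} for the intermediate exact formula \eqref{Spkucuu}, whereas you rederive it by hand via diagonalisation and evaluation of the quadratic Gauss sum; the content is identical.
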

\begin{proof}
Since $\uc$ is primitive, we see that $p^k\mid \uc \cdot \ua^{\perp}$ in \eqref{eq:sq def} implies that $\ua=a\uc$ for some $(a,p)=1$ so that \eqref{eq:sq def} can be written as 
\begin{align*}
S_{p^k, \uc}(\uu)&=\starsum_{a\bmod p^k}\sum_{\bb\bmod {p^k}}e_{p^k}(a\uc\cdot \uF  (\bb)+\bb\cdot \uu)\\&=\starsum_{a\bmod p^k}\sum_{\bb\bmod {p^k}}e_{p^k}(a F_{\uc}(\bb)+\bb\cdot \uu).
\end{align*}
From~\cite[Lemma 15]{BM}, we have 
for $p\nmid 2\det M_{\uc}$ 
	\begin{align}\label{Spkucuu}S_{p^k, \uc}(\uu)=\begin{cases}
p^{sk/2}\varepsilon(p)^{sk}\chi_p(\det M_\uc)^k c_{p^k}(F_\uc^*(\uu)) & 2\mid s,\\
p^{sk/2} c_{p^k}(F_\uc^*(\uu)) & 2\nmid s, 2\mid k,\\
p^{sk/2} \varepsilon(p)^{s}\chi_p(-1)g_{p^k}(F_\uc^*(\uu)) & 2\nmid s, 2\nmid k,
\end{cases}
\end{align}
	where $$\varepsilon(p)=\begin{cases}
	1   &p\equiv 1 \bmod 4\\ 
	i & p \equiv 3\bmod 4,
	\end{cases}$$  $\chi_p(\cdot)=(\frac{\cdot}{p})$ is the Legendre Symbol, $F_{\uc}^*$ defined in \eqref{F^*cdef} is the adjoint quadratic form of $F_{\uc}$, $c_{p^k}(a)=\starsum_{x\bmod{p^k}}e_{p^k}(ax) $ is the Ramanujan sum, and
	$g_{p^k}(a)=\sum_{x\bmod {p^k}}\chi_p(x)e_{p^k}(ax)$ is the Gauss sum.
The lemma follows from the bounds $|c_{p^k}(a)|\leq (a, p^k)$ and $|g_{p^k}(a)|\leq p^{k/2}(a, p^k)^{1/2}\mathds 1_{a\not=0}$.
\end{proof}
While dealing with a pair of quadrics over $s=9$ variables, we will need to make use of cancellations over averages of the exponential sums $S_{q,\uc}(\uu)$ over $q$,  which is the content of the following lemma.
\begin{lemma}\label{hypo:GRH}
Let $s$ be an odd integer, $\uc\in\ZZ^2$ be a good pair, and let $\uu\in\ZZ^s$ be such that $F_\uc^*(\uu)$ is not a perfect square. Assume the generalized Lindel\"of Hypothesis for Dirichlet $L$-functions. Then for any integer $\mathfrak B$ such that $2\det M_{\uc}\mid \mathfrak B$ and any $\varepsilon>0$, we have that 
\begin{equation*}
|\sum_{\substack{1\leq q\leq x\\ \gcd(q,\mathfrak B)=1 }}S_{q,\uc}(\uu)|\ll (\mathfrak B|\uc| |\uu|)^\varepsilon x^{s/2+1+\varepsilon}.
\end{equation*}
\end{lemma}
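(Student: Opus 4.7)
The plan is to reduce the estimation of the character sum to a contour integral of a twisted Dirichlet $L$-function, then apply GLH to shift the contour past the critical line. Since \(\gcd(q,\mathfrak B)=1\) and \(2\det M_{\uc}\mid \mathfrak B\), every prime divisor of \(q\) will be a good Type I prime for \(\uc\), so we may use the explicit evaluation in Lemma \ref{lem:type 1}.

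First, by \eqref{eq:Sqmult} the Dirichlet series
\[
D(s') \;=\; \sum_{\gcd(q,\mathfrak B)=1} \frac{S_{q,\uc}(\uu)}{q^{s'}}
\]
admits an Euler product \(\prod_{p\nmid \mathfrak B} G_p(s')\). For \(p\nmid \mathfrak B\) and \(p\nmid F_{\uc}^*(\uu)\), the Gauss sum computation shows \(g_{p^k}(F_{\uc}^*(\uu))=0\) and \(c_{p^k}(F_{\uc}^*(\uu))=0\) for all \(k\geq 2\), so that
\[
G_p(s') \;=\; 1 + \chi(p)\,p^{(s+1)/2 - s'},
\]
where \(\chi(\cdot)=\bigl(\tfrac{F_{\uc}^*(\uu)}{\cdot}\bigr)\) is the Kronecker symbol associated to \(F_{\uc}^*(\uu)\); the identity \(\varepsilon(p)^{s+1}\chi_p(-1)=1\) for odd \(s\) absorbs the sign factors. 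For the finitely many primes \(p\nmid \mathfrak B\) with \(p\mid F_{\uc}^*(\uu)\), the local factor \(G_p(s')\) is a finite Dirichlet polynomial of size \(O(p^{-\varepsilon})\) in the strip \(\Re s' \geq s/2 + 1 + \varepsilon\), thanks to the pointwise bounds in Lemma \ref{lem:type 1}.

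Second, I would factor
\[
D(s') \;=\; L\!\bigl(s' - \tfrac{s+1}{2},\,\chi\bigr)\cdot H(s'),
\]
where \(H(s')\) collects the correction factors at primes dividing \(\mathfrak B F_{\uc}^*(\uu)\) together with the ratio of \(D(s')\) to the putative \(L\)-factor. A direct prime-by-prime comparison, together with the vanishing of \(g_{p^k},c_{p^k}\) noted above, shows that \(H(s')\) is holomorphic and bounded by \(O\bigl((\mathfrak B|\uc||\uu|)^\varepsilon\bigr)\) in the half-plane \(\Re s' \geq s/2 + 1 + \varepsilon\); here one uses that \(|F_{\uc}^*(\uu)|\ll (|\uc||\uu|)^{O(1)}\), so that the conductor of \(\chi\) satisfies \(q(\chi)\ll (\mathfrak B|\uc||\uu|)^{O(1)}\) as well. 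Crucially, because \(F_{\uc}^*(\uu)\) is not a perfect square, \(\chi\) is a primitive real non-principal Dirichlet character, and so \(L(\sigma,\chi)\) is entire.

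Finally, I would apply Perron's formula at the line \(\Re s'=(s+3)/2+\varepsilon\) (which lies to the right of absolute convergence by Lemma \ref{lem:Dq-crude}-type estimates specialized to Type I primes), giving
\[
\sum_{\substack{1\leq q\leq x\\ \gcd(q,\mathfrak B)=1}}S_{q,\uc}(\uu)
=\frac{1}{2\pi i}\int_{c-iT}^{c+iT} D(s')\,\frac{x^{s'}}{s'}\,ds' + \text{(error)}.
\]
Under GLH, \(L(\sigma+it,\chi)\ll (q(\chi)(1+|t|))^\varepsilon\) uniformly for \(\sigma\geq 1/2+\varepsilon\); equivalently, \(D(s')\) enjoys the same subconvex bound for \(\Re s'\geq s/2+1+\varepsilon\). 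I would shift the contour to \(\Re s'=s/2+1+\varepsilon\): no poles are crossed, the integral on the new line contributes \(\ll x^{s/2+1+\varepsilon}(\mathfrak B|\uc||\uu|)^\varepsilon\log T\), and the horizontal pieces contribute \(\ll x^{(s+3)/2+\varepsilon}/T\). Choosing \(T=x\) and adjusting \(\varepsilon\) gives the claimed bound.

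The main obstacle is the bookkeeping for the correction factor \(H(s')\): tracking polynomial dependence on \(\mathfrak B,|\uc|,|\uu|\) through the bad Euler factors and through the passage between \(\chi\) and the associated primitive character requires care, but is routine and produces only the \((\mathfrak B|\uc||\uu|)^\varepsilon\) loss in the final estimate.
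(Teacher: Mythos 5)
Your proposal follows essentially the same path as the paper: express the sum as a Dirichlet series with Euler product (using the Type~I evaluation from Lemma~\ref{lem:type 1}), factor out the quadratic Dirichlet $L$-function $L(\cdot-\tfrac{s+1}{2},\chi)$ with a bounded correction factor over the bad primes, apply Perron's formula, and shift the contour to $\Re =\tfrac{s}{2}+1+\varepsilon$ so that GLH may be applied on the line $\Re=\tfrac12+\varepsilon$. This is precisely what the paper does with $\xi_{\mathfrak B}(z,\uu)=L(z-\tfrac{s+1}{2},\psi_\uu)E_{\mathfrak B}(z)$.

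One small inaccuracy to flag: the identity you invoke, $\varepsilon(p)^{s+1}\chi_p(-1)=1$ for odd $s$, is false in general. Since $\varepsilon(p)^2=\chi_p(-1)$, one has $\varepsilon(p)^{s+1}\chi_p(-1)=\chi_p(-1)^{(s+3)/2}$, which equals $1$ when $s\equiv 1\pmod 4$ (so in particular for $s=9$) but equals $\chi_p(-1)$ when $s\equiv 3\pmod 4$. The fix is exactly what the paper does, namely to absorb the sign into the character by using the Kronecker symbol of $(-1)^{j}F_\uc^*(\uu)$ for the appropriate $j$; this changes neither the conductor bound nor the non-principality you need (note that if $F_\uc^*(\uu)<0$ it is automatically a non-square, so $-F_\uc^*(\uu)$ being a perfect square is not excluded by the hypothesis, and one should be a little careful here). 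With that bookkeeping corrected, your argument reproduces the paper's proof.
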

\begin{proof}
This can be viewed as a conditional version of~\cite[Lemma 18]{BM} and can be proved in the same way. Let $$\xi_{\mathfrak B}(z, \uu)=\prod_{p\nmid \mathfrak B}\Big(\varepsilon(p)^s\chi_p(-1)\sum_{2\nmid k}\frac{g_{p^k}(F_\uc^*(\uu))}{p^{k(z-s/2)}}+\sum_{2\mid k}\frac{c_{p^k}(F_{\uc}^*(\uu)}{p^{k(z-s/2)}}\Big).$$
Since $g_p(a)=\chi_p(a)\varepsilon(p)p^{1/2}$, we see that 
$$\xi_{\mathfrak B}(z, \uu)=L(z-\frac{s+1}{2}, \psi_{\uu})E_{\mathfrak B}(z),$$
where $\psi_{\uu}(\cdot)=\Big(\frac{(-1)^{\frac{s+1}{2}}F_{\uc}^*(\uu)}{\cdot}\Big)$ is a Dirichlet character with conductor of size $O(|\uc|^{s-1}|\uu|^2)$ and $E_{\mathfrak B}(z)$ is an Euler product, which is absolutely convergent when $\Re(z)>s/2+1$ and satisfies the bound $E_{\mathfrak B}(z)\ll {\mathfrak B}^\varepsilon$, for any $\varepsilon>0$. Thus $\xi_{\mathfrak B}(z, \uu)$ can be analytic continued to the region $\Re(z)> s/2+1$. Using Perron's formula, and the identity \eqref{Spkucuu} which we quoted from~\cite[Lemma 15]{BM}, we have for $c>s/2+2$
\begin{align*}
\sum_{\substack{1\leq q\leq x\\ (q, \mathfrak B)=1}}S_{q,\uc}(\uu)=\frac{1}{2\pi i} \int_{c-iT}^{c+iT} \xi_{\mathfrak B}(z, \uu)\frac{x^z}{z}dz+O(\frac{x^c}{T}).
\end{align*}
Moving the line of integration to $s/2+1+\varepsilon$, using the bound $\xi_{\mathfrak B}(z, \uu)\ll (|\uc|^{s-1}|\uu|^2\mathfrak B T)^\varepsilon$, and taking $T$ to be a sufficiently large power of $x$, we obtain the result. 
\end{proof}

\begin{lemma}[Type II primes for good $\uc$]\label{lem:type II}Let $\uc$ be good. Let $M_{\uc}=TDS$ where $T, S\in GL_s(\ZZ)$ and $D=\diag(\rho_i)$ with $\rho_i\mid \rho_{i+1}$.
	Suppose $p\nmid D_F$ and $p\mid \det(M_{\uc})$ and let $k_1=\min(k, \nu_p(\rho_s))$, where for any integer $n$, $\nu_p(n)$ is the largest integer $k$ such that $p^k\mid n$. Then
	$S_{p^k}(\uc, \uu)$
	is non-zero unless $p^{k_1}\mid ((S^{-1})^T\uu)_s$, and we also have
	\begin{align*}
	|S_{p^k,\uc}( \uu)|\ll p^{k(s/2+1)}\gcd(p^{k_1},Q_{\uc}^*(\uu') ,((S^{-1})^T\uu)_s)^{1/2},
	\end{align*}
	where $\uu'$ denotes the $s-1$ dimensional vector by deleting the $s$-th entry of $(S^{-1})^T\uu$.
\end{lemma}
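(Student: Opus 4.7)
The approach follows Vishe's function-field treatment of the analogous exponential sum \cite[Section~6]{V19}. By primitivity of $\uc$, the conditions $\gcd(\ua,p)=1$ and $p^k\mid\uc\cdot\ua^\perp$ force $\ua\equiv a\uc\pmod{p^k}$ for a unique invertible $a\bmod p^k$, so
\begin{equation*}
S_{p^k,\uc}(\uu)=\starsum_{a\bmod p^k}\sum_{\bb\bmod p^k}e_{p^k}(aF_\uc(\bb)+\bb\cdot\uu).
\end{equation*}
I then change variables via $\bb=\sum_{j=1}^s y_j\y_j=S^{-1}\y$, a bijection on $(\Z/p^k)^s$. The linear term becomes $\y\cdot\tilde\uu$ with $\tilde\uu=(S^{-1})^T\uu$; in particular $(\tilde\uu)_s=((S^{-1})^T\uu)_s=\uu\cdot\y_s$, while the first $s-1$ coordinates $\tilde\uu'$ play the role of $\uu'$ in the dual-form evaluation below.

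The key geometric input is that the bottom-row cross-terms $L_j=\y_j^T M_\uc\y_s$ (for $j<s$) satisfy $\nu_p(L_j)\geq k_1$. Indeed, $M_\uc\y_s=\rho_s T\e_s$ gives $L_j=\rho_s(S^{-T}T)_{js}$, so $\rho_s\mid L_j$. Since $p\nmid D_F$ and the rank of $M_\uc$ modulo $p$ is $s-1$, the argument of Section~\ref{geometry} gives $p\nmid\Delta(Q_\uc)$, so $Q_\uc^{-1}$ exists over $\Z_p$. I then complete the square by substituting $\z=y'+Q_\uc^{-1}L^T y_s$, a bijection on $(\Z/p^k)^{s-1}$ for each fixed $y_s$, to obtain $F_\uc=Q_\uc(\z)+\gamma y_s^2$ with $\nu_p(\gamma)=\nu_p(\rho_s)$. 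The linear term becomes $\tilde\uu'\cdot\z+v\cdot y_s$ where $v=(\tilde\uu)_s-\tilde\uu'\cdot Q_\uc^{-1}L^T\equiv(\tilde\uu)_s\pmod{p^{k_1}}$ by the divisibility of $L$ just established.

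The sum now factors as $\starsum_a(\text{$\z$-sum})(\text{$y_s$-sum})$. The $\z$-sum is a standard non-degenerate Gauss sum in $s-1$ variables mod $p^k$, evaluating to $p^{k(s-1)/2}\chi_1(a)e_{p^k}(\bar a\kappa_1 Q_\uc^*(\tilde\uu'))$ for an explicit $p$-adic unit $\kappa_1$ and a unit-modulus character $\chi_1(a)$. For the $y_s$-sum, a splitting $y_s=y_s^{(1)}+p^{k-k_1}y_s^{(2)}$ separates the effective moduli of the quadratic and linear parts: the $y_s^{(2)}$-sum produces $p^{k_1}\mathds 1_{p^{k_1}\mid v}$, which by the congruence above is equivalent to $p^{k_1}\mid((S^{-1})^T\uu)_s$, establishing the vanishing statement. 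The remaining $y_s^{(1)}$-sum is a complete quadratic Gauss sum mod $p^{k-k_1}$ of magnitude $p^{(k-k_1)/2}$, yielding an overall $y_s$-contribution of size $p^{(k+k_1)/2}$ times a character $\chi_2(a)$ and a phase $e_{p^{k-k_1}}(\bar a\kappa_2(v/p^{k_1})^2)$.

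Combining everything, $S_{p^k,\uc}(\uu)$ reduces to
\begin{equation*}
p^{k(s-1)/2+(k+k_1)/2}\,\mathds 1_{p^{k_1}\mid((S^{-1})^T\uu)_s}\,\starsum_{a\bmod p^k}\chi(a)e_{p^k}(\bar a\beta)
\end{equation*}
for a character $\chi$ and a specific $\beta\in\Z/p^k$ combining $Q_\uc^*(\tilde\uu')$ and $v^2/p^{k_1}$, the two summands sitting at different $p$-adic levels. Standard Sali\'e/Kloosterman bounds for this $a$-sum, together with the trivial estimate $|\starsum_a|\leq p^k$, give the claimed bound $p^{k(s/2+1)}\gcd(p^{k_1},Q_\uc^*(\uu'),((S^{-1})^T\uu)_s)^{1/2}$. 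The main obstacle is this last step: one must show that the gcd in the final estimate is controlled by the local modulus $p^{k_1}$ arising from the degenerate direction, rather than the ambient $p^k$. This relies on the two-level structure of $\beta$, which supplies additional cancellation in the $a$-sum whenever $\nu_p(Q_\uc^*(\tilde\uu'))$ is large, compensating for the apparent loss in the crude Kloosterman bound.
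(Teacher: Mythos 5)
Your proof is a correct reconstruction of the argument that the paper delegates to the function-field analogue \cite[Lemma~6.3]{V19}: namely, reduce to $\ua\equiv a\uc$, diagonalise via $\bb=S^{-1}\y$, separate the non-degenerate $(s-1)$-block from the near-degenerate $y_s$-direction using the divisibility $\rho_s\mid L_j$ (whence $p^{k_1}\mid L_j$), complete the square, factor into a non-degenerate Gauss sum of modulus $p^k$, a $y_s^{(2)}$-sum producing the vanishing condition $p^{k_1}\mid((S^{-1})^T\uu)_s$, a $y_s^{(1)}$-Gauss sum of modulus $p^{k-k_1}$, and finally a Kloosterman--Salié sum in $a$. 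The key structural point you correctly isolate is that the two phase contributions to $\beta$ sit at different $p$-adic levels, and the bound is closed by the Ramanujan/Gauss sum estimates $|c_{p^k}(\beta)|\leq\gcd(p^k,\beta)$ and $|g_{p^k}(\beta)|\leq p^{k/2}\gcd(p^k,\beta)^{1/2}\mathds 1_{\beta\neq 0}$ together with the observation that $\nu_p(v^2/p^{k_1})\geq k_1$. The only slip is in your concluding sentence: the extra cancellation in the $a$-sum (beyond the trivial bound $p^k$) is what is needed and supplied precisely when $\nu_p(Q_\uc^*(\tilde\uu'))$ is \emph{small} --- in that regime $\nu_p(\beta)=\nu_p(Q_\uc^*(\tilde\uu'))$ because the $v^2/p^{k_1}$ term has strictly larger valuation, forcing the Salié sum to shrink --- whereas when $\nu_p(Q_\uc^*(\tilde\uu'))\geq k_1$ the trivial bound $p^k$ already suffices since the target gcd factor is then $p^{k_1/2}$. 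With that sign of the inequality corrected, the argument is complete and matches the paper's intended route.
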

\begin{proof}
This can be proved in the same way as the function field analogue~\cite[Lemma 6.3]{V19}.
\end{proof}

We now move on to bad pairs $\uc$. In this case, the matrix $M_\uc$ is singular and therefore the last diagonal entry $\rho_s=0$ in the Smith normal decomposition of $M_\uc$ as stated before. We also have $$F_\uc(\x)=Q_\uc(\x'),$$
where as before $\x'$ denotes the $s-1$ dimensional vector by deleting the $s$-th entry of $(S^{-1})^T\uu$.
\begin{lemma}[Good primes for bad $\uc$]\label{badc}
	Let $\uc$ be bad. Then for a good prime $p$, we have $S_{p^k,\uc}( \uu)$ vanishes unless $p^k\mid ((S^{-1})^T\uu)_s$, and we have
	\begin{align*}
	&|S_{p^k,\uc}(\uu)|\\ &\ll p^k\mathds{1}_{p^k\mid ((S^{-1})^T\uu)_s}\begin{cases}
	p^{k(s/2-1/2)}\gcd(p^{k},Q_{\uc}^*(\uu'))&\textrm{ if }2\mid k \text{ or } 2\nmid k, 2\nmid s,\\
	p^{ks/2}&\textrm{ if }2\nmid k, 2\mid s.
	\end{cases}
	\end{align*}
\end{lemma}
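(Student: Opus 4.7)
The plan is to exploit the degeneracy of $F_\uc$ for bad $\uc$ to reduce $S_{p^k,\uc}(\uu)$ to a non-degenerate quadratic exponential sum in $s-1$ variables for the form $Q_\uc$, and then invoke the same closed-form evaluation already used in the proof of Lemma~\ref{lem:type 1}, applied with $s$ replaced by $s-1$.

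First I would use the primitivity of $\uc$: the congruence $p^k\mid \uc\cdot\ua^\perp$ in \eqref{eq:sq def} forces $\ua\equiv a\uc \pmod{p^k}$ for some $a$ coprime to $p$, giving
\[
S_{p^k,\uc}(\uu) = \sumstar_{a\bmod p^k}\sum_{\bb\bmod p^k} e_{p^k}\bigl(a F_\uc(\bb) + \bb\cdot\uu\bigr).
\]
Next I would change variables by setting $\vec{b} = S\bb$, using the Smith normal form $M_\uc = TDS$; this is a bijection on $(\ZZ/p^k\ZZ)^s$, and in the $\y_j$-basis one has $\bb\cdot\uu = \sum_j b_j\,((S^{-1})^t\uu)_j$. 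The key observation is that $M_\uc\y_s = \rho_s T\e_s = 0$, since $\rho_s = 0$ for bad $\uc$; together with the symmetry of $M_\uc$, this implies $F_\uc(\bb)$ depends only on $b_1,\ldots,b_{s-1}$ in the $\y_j$-basis, namely $F_\uc(\bb) = Q_\uc(b_1,\ldots,b_{s-1})$. The resulting $b_s$-sum is then a complete additive character sum equal to $p^k\,\delta_{p^k\mid((S^{-1})^t\uu)_s}$, producing the claimed vanishing condition.

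What remains is a classical quadratic exponential sum in $s-1$ variables for the form $Q_\uc$, with linear coefficients read off from $\uu'$. Because $p$ is a good prime, we recalled in Section~\ref{geometry} that $p\nmid \Delta(Q_\uc)$, so $Q_\uc$ is non-degenerate modulo $p^k$, and I would apply \cite[Lemma~15]{BM} exactly as in the proof of Lemma~\ref{lem:type 1} but in dimension $s-1$. This expresses the sum as $p^{k(s-1)/2}$ times either a Ramanujan sum $c_{p^k}(Q_\uc^*(\uu'))$---when $s-1$ is even or $k$ is even---or a Gauss sum $g_{p^k}(Q_\uc^*(\uu'))$---in the remaining parity case. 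The elementary bounds $|c_{p^k}(a)|\leq\gcd(p^k,a)$ and $|g_{p^k}(a)|\leq p^{k/2}$ then yield the two cases of the lemma, after observing that $2\mid(s-1)$ is equivalent to $2\nmid s$.

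The only real obstacle is bookkeeping: one must verify that the $(s-1)$-dimensional vector of linear coefficients emerging after the change of variables matches what the paper calls $\uu'$, so that $Q_\uc^*(\uu')$ literally appears in the closed-form evaluation, and one must translate the parity conditions $2\mid(s-1)$ versus $2\nmid(s-1)$ into the parities of $s$ stated in the lemma. All the analytic inputs---the Gauss and Ramanujan sum bounds and the closed-form evaluation of \cite[Lemma~15]{BM}---are already in place from Lemma~\ref{lem:type 1}.
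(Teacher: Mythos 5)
Your proof takes the right approach, and it is exactly what the paper intends: the paper's ``proof'' is a citation to the function-field analogue \cite[Lemma 6.4]{V19}, which follows this outline. In particular: primitivity of $\uc$ reduces the $\ua$-sum to $\ua\equiv a\uc\pmod{p^k}$ with $(a,p)=1$, just as in the proof of Lemma~\ref{lem:type 1}; the unimodular change of variables $\bb = S^{-1}\vec{b}$, together with $M_\uc\y_s = \rho_s T\e_s = 0$ and the symmetry of $M_\uc$, eliminates $b_s$ from $F_\uc(\bb)$; the $b_s$-sum yields the factor $p^k\mathds 1_{p^k\mid ((S^{-1})^T\uu)_s}$; and since $p$ is good we have $p\nmid 2\Delta(Q_\uc)$ (Section~\ref{geometry}), so \cite[Lemma 15]{BM} applies to $Q_\uc$ in dimension $s-1$, with your parity translation $2\mid(s-1)\Leftrightarrow 2\nmid s$ being correct.

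The one genuine slip is the Gauss sum bound $|g_{p^k}(a)|\leq p^{k/2}$, which fails for $k\geq 2$. Since $\chi_p$ has conductor $p$ rather than $p^k$, one has $g_{p^k}(a)=p^{k-1}g_p(a/p^{k-1})$ when $\gcd(a,p^k)=p^{k-1}$ and $g_{p^k}(a)=0$ otherwise, so the correct bound is $|g_{p^k}(a)|\leq p^{k/2}\gcd(a,p^k)^{1/2}\mathds 1_{a\neq 0}$ --- precisely what the proof of Lemma~\ref{lem:type 1} invokes --- and $|g_{p^k}(a)|$ can be as large as $p^{k-1/2}$. Running your argument with the correct bound yields, in the case $2\nmid k$, $2\mid s$, the estimate $p^{ks/2}\gcd(p^k,Q_\uc^*(\uu'))^{1/2}\mathds 1_{Q_\uc^*(\uu')\neq 0}$ rather than the bare $p^{ks/2}$ asserted both by you and by the lemma; the two coincide for $k=1$ but not for odd $k\geq 3$. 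So either a sharper argument specific to higher odd prime powers is needed, or the lemma's second case should carry the same $\gcd(\,\cdot\,)^{1/2}\mathds 1_{\neq 0}$ decoration as Lemma~\ref{lem:type 1}. In the latter (more likely) event, the downstream use in the proof of Lemma~\ref{q1sum}(2) is unaffected, as that proof passes immediately to the crude bound $q_1^{s/2-1/2+\mathds 1_{2\mid s}/2}\gcd(q_1,Q_\uc^*(\uu'))\gcd(q_1,((S^{-1})^T\uu)_s)$ which dominates either version.
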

\begin{proof}
This can be proved in the same way as the function field analogue~\cite[Lemma~6.4]{V19}.
\end{proof}
Next, we consider general bounds for $S_{q, d\uc}$ with $(q, d)>1$. When $(d, q)>1$, the definition of $S_{q, d\uc}(\uu)$ differ slightly from that in~\cite[Section 6]{V19} due to the co-primality condition in $L(d\uc)$ defined in~\cite{V19}, but the same bounds apply by following the proofs in~\cite[Lemma 6.5, Lemma 6.7]{V19}. By Lemma \ref{lem:multiplicative}, it is again enough to consider $S_{p^k, p^m \uc}$ with $1\leq m \leq k.$ We start with the following lemma corresponding to~\cite[Lemma 6.7]{V19}, which will be used to obtain a general bound for $S_{q, d\uc}$.
\begin{lemma}\label{lem:Spkpm}
	Let $\uc\in \ZZ^2$ be primitive. For integers $1\leq m\leq k$, denote $k_1=\min(k-m, \nu_p(\det M_{\uc}))$, where $\min(k-m,\infty)=k-m$ by convention. Then we have 
	\begin{align*}
	|S_{p^k, p^m\uc}(\uu)|\ll_{D_F} p^{k(n/2+1)+m+k_1/2}\mathds{1}_{p^{k_1}\mid ((S^{-1})^T\uu)_s}.
	\end{align*}
\end{lemma}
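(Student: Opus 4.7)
The plan is to mimic the proof of Lemma~\ref{lem:type II}, carrying the extra parameter $m$ through the argument. Since $\uc$ is primitive I would choose $\uc''\in\ZZ^2$ so that $\{\uc,\uc''\}$ forms a $\ZZ$-basis of $\ZZ^2$, and parameterize $\ua=\alpha\uc+\beta\uc''$ modulo $p^k$. The condition $p^k\mid p^m\uc\cdot\ua^\perp=\pm p^m\beta$ then reduces to $p^{k-m}\mid\beta$, so that $\beta=p^{k-m}\gamma$ with $\gamma$ running modulo $p^m$. For $1\leq m<k$ the coprimality $\gcd(\ua,p)=1$ becomes simply $p\nmid\alpha$, while for $m=k$ it becomes $(\alpha,\gamma)\not\equiv\vec{0}\pmod p$ and is handled separately by a minor modification of what follows.

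Using the linearity $F_\ua(\bb)=\alpha F_\uc(\bb)+p^{k-m}\gamma F_{\uc''}(\bb)$, the sum becomes
\[
S_{p^k,p^m\uc}(\uu)=\starsum_{\substack{\alpha\bmod p^k\\ p\nmid\alpha}}\sum_{\gamma\bmod p^m}\sum_{\bb\bmod p^k}e_{p^k}\bigl(\alpha F_\uc(\bb)+p^{k-m}\gamma F_{\uc''}(\bb)+\bb\cdot\uu\bigr).
\]
I would then apply the Smith normal form $M_\uc=TDS$ and make the change of variables $\bb=S^{-1}\z$ to diagonalize $F_\uc$. In the nondegenerate directions $z_1,\dots,z_{s-1}$ the exponential $e_{p^k}(\alpha F_\uc(\bb))$ splits as a product of one-dimensional Gauss sums, which combined with the sum over $\alpha$ via a Ramanujan-type identity produces the factor $p^{k(s-1)/2+k}$. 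The singular direction $z_s$ (with diagonal entry $\rho_s$ of $p$-adic valuation $\nu_p(\det M_\uc)$) contributes a linear character $e_{p^k}(z_s((S^{-1})^T\uu)_s)$; summing over $z_s$ yields the divisibility $p^{k_1}\mid((S^{-1})^T\uu)_s$ with $k_1=\min(k-m,\nu_p(\det M_\uc))$ along with a residual factor $p^{k_1/2}$ from a truncated Gauss sum in the Type~II case. Finally the $\gamma$-sum supplies the promised factor $p^m$: performing it gives $p^m\cdot\mathds{1}[p^m\mid F_{\uc''}(\bb)]$, and since $F_{\uc''}$ is generically nondegenerate modulo $p$ this restriction only cuts the $\bb$-sum by a compensating factor $p^{-m}$, leaving the net $p^m$ intact.

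The main obstacle will be careful tracking of the $p$-adic valuations arising from the interaction of three phenomena: the singular direction of $F_\uc$ in the Type~II case ($\nu_p(\det M_\uc)\geq 1$), the constraint $p^m\mid F_{\uc''}(\bb)$ after the $\gamma$-sum, and the Hensel decomposition $\bb=\bb_0+p^{k-m}\bb_1$ that separates scales (which degenerates when $m$ is close to $k$ and so requires a finer $\ZZ_p$-diagonalization of $F_\uc$). Additionally $F_{\uc''}$ may itself be degenerate modulo $p$, in which case the restriction does not cut the $\bb$-sum cleanly; here one exploits that $\uc$ and $\uc''$ together span $\ZZ^2$, so that $F_\uc$ and $F_{\uc''}$ cannot share a common singular direction modulo $p$ without forcing $p\mid D_F$. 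As the text preceding the lemma notes, all of this parallels the function-field analog Lemma~6.7 of~\cite{V19}, and only notational adjustments are needed for the present number-field setting.
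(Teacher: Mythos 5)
Your parameterization of the set $\{\ua\bmod p^k:(\ua,p)=1,\ p^{k-m}\mid\uc\cdot\ua^\perp\}$ agrees with the paper's (there $\ua=a\uc+p^{k-m}\ud$ with $a\bmod p^{k-m}$ and $\ud\in(\ZZ/p^m\ZZ)^2$; same count), but beyond that your route has an internal inconsistency that the paper's argument is designed to sidestep. You propose to evaluate the $\bb$-sum by diagonalizing $F_\uc$ and splitting into one-dimensional Gauss sums, and then to perform the $\gamma$-sum afterwards. But the $\bb$-summand also carries the factor $e_{p^k}\bigl(p^{k-m}\gamma F_{\uc''}(\bb)\bigr)$, and $F_{\uc''}$ is not simultaneously diagonalizable with $F_\uc$: in $F_\uc$-adapted coordinates it has cross-terms coupling the $z_i$, so the $\bb$-sum does not split into one-dimensional Gauss sums as you claim. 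Conversely, doing the $\gamma$-sum first produces the constraint $p^m\mid F_{\uc''}(\bb)$, which destroys the Gauss-sum structure. A smaller point: the Smith normal form $M_\uc=TDS$ does not diagonalize the form under $\bb\mapsto S^{-1}\z$, since $(S^{-1})^TM_\uc S^{-1}=(S^{-1})^TTD$ is not diagonal in general; you gesture later at a ``finer $\ZZ_p$-diagonalization'', which is indeed what would be needed. Finally, the target bound already contains the full trivial loss of $p^m$ from the $\gamma$-variable, so no saving from the constraint is required, and the bookkeeping in ``compensating factor $p^{-m}$ \ldots\ leaving the net $p^m$ intact'' does not track.

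The paper avoids all of this by never separating the two components of $\ua$: for each fixed $\ua=a\uc+p^{k-m}\ud$ it bounds the $\bb$-sum by the standard squaring argument as $\ll p^{ks/2}\gcd(\det M_\ua,p^k)^{1/2}$ (no diagonalization at all), then establishes the key new estimate
\[
\sum_{\ud\bmod p^m}\gcd\bigl(\det M_{a\uc+p^{k-m}\ud},\,p^k\bigr)^{1/2}\ll p^{2m+k_1/2}
\]
uniformly for $(a,p)=1$, using only that $a\uc+p^{k-m}\ud$ is primitive mod $p$ whenever $\uc$ is primitive. Summing trivially over $a\bmod p^{k-m}$ then gives the stated bound, with the factor $\mathds 1_{p^{k_1}\mid((S^{-1})^T\uu)_s}$ inherited from running the argument of \cite[Lemma~6.7]{V19} rather than re-deriving the Gauss-sum decomposition from scratch.
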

\begin{proof}
	Akin to~\cite[Lemma 3.3]{V19}, we see that 
	\begin{align*}
	&\{\ua \bmod {p^k}:(\ua, p)=1, p^{k}\mid p^{m}\uc \cdot \ua \}\\ =&\{a\uc^\perp +p^{k-m}\ud\bmod {p^k}: (a, p)=1, a\bmod p^{k-m}, \ud \bmod{p^m} \},
	\end{align*}
	so that we can write \begin{align*}
	&\sum_{\substack{\ua \bmod {p^k}\\ (\ua, p)=1\\ p^{k-m}\mid \ua \cdot \uc }}\sum_{\bb \bmod {p^k}}e_{p^k}(\ua \cdot F(\bb)+\bb \cdot \uu)\\=&\sum_{\ud \bmod {p^{m}}} \sum_{\substack{a\bmod {p^{k-m}}\\ (a,p)=1}}e_{p^k}((a\uc^\perp +p^{k-m}\ud )\cdot F(\bb)+\bb\cdot \uu).
	\end{align*}
	Then the proof follows the same way as~\cite[Lemma 6.7]{V19}, using that uniformly for $(a, p)=1$, the following holds:
	\begin{align*}
	\sum_{\ud \bmod {p^{m}}}\operatorname{gcd}(F(a\uc^\perp+p^{k-m}\ud ), p^k)^{1/2}\ll p^{2m+k_1/2},
	\end{align*}
since $\uc$ is primitive so that  $(a\uc^\perp+p^{k-m}\ud, p)=1$ for all $(a, p)=1$ and $\ud\in \ZZ^2$.
\end{proof}
As a corollary of Lemma~\ref{lem:Spkpm}, we obtain the following analogue of~\cite[Lemma 6.8]{V19}.
\begin{lemma}[general bound]\label{weakbound} For $d\mid q$, we have
	\begin{align*}
	|S_{q, d\uc}( \uu)|&\ll_{D_F}dq^{s/2+1}\gcd(q/d, ((S^{-1})^T\uu)_s, \det M_{\uc})^{1/2}\\&\ll_{\nF} d^{1/2}q^{s/2+3/2}.
		\end{align*}
\end{lemma}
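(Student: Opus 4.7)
The proof plan is to leverage the multiplicativity property \eqref{eq:Sqmult} together with the prime-power bound in Lemma~\ref{lem:Spkpm}.

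First I would factor $q$ and $d$ into prime powers. Since $d \mid q$, write $q = \prod_p p^{k_p}$ and $d = \prod_p p^{m_p}$ with $0 \leq m_p \leq k_p$. By \eqref{eq:Sqmult} applied iteratively, we have
\begin{equation*}
    S_{q,d\uc}(\uu) = \prod_p S_{p^{k_p},\, p^{m_p}\uc}(\uu).
\end{equation*}
For each prime $p$, Lemma~\ref{lem:Spkpm} gives
\begin{equation*}
    |S_{p^{k_p}, p^{m_p}\uc}(\uu)| \ll_{D_F} p^{k_p(s/2+1) + m_p + k_{1,p}/2}\, \mathds{1}_{p^{k_{1,p}}\mid ((S^{-1})^T\uu)_s},
\end{equation*}
where $k_{1,p} = \min(k_p - m_p, \nu_p(\det M_\uc))$.

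Multiplying these bounds over all primes $p$, the first two exponents collect to $\prod_p p^{k_p(s/2+1)} = q^{s/2+1}$ and $\prod_p p^{m_p} = d$. For the remaining factor, set $e := \prod_p p^{k_{1,p}}$. By the definition of $k_{1,p}$, we have $e \mid q/d$ and $e \mid \det M_\uc$; combining this with the indicator constraints (which survive multiplicatively since the $p^{k_{1,p}}$ are coprime across different $p$) yields $e \mid \gcd(q/d, ((S^{-1})^T\uu)_s, \det M_\uc)$. Therefore
\begin{equation*}
    |S_{q,d\uc}(\uu)| \ll_{D_F} d\, q^{s/2+1}\, \gcd\!\big(q/d,\, ((S^{-1})^T\uu)_s,\, \det M_\uc\big)^{1/2},
\end{equation*}
which is the first inequality of the lemma. (For bad $\uc$ where $\det M_\uc = 0$, one interprets $\gcd(\,\cdot\,, 0)$ in the natural way, as $\det M_\uc = 0$ imposes no constraint.)

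For the second, weaker, bound I would simply use $\gcd(q/d, \ldots)^{1/2} \leq (q/d)^{1/2}$, which gives $|S_{q,d\uc}(\uu)| \ll_{D_F} d\, q^{s/2+1}\,(q/d)^{1/2} = d^{1/2} q^{s/2 + 3/2}$. The implicit constant, originally depending on $D_F$, is absorbed into a dependence on $\nF$ since $D_F$ is determined by $\nF$. There is no real obstacle here; the main point is the bookkeeping of making sure the $\gcd$ factor behaves multiplicatively across primes, which follows from coprimality of the prime-power divisors $p^{k_{1,p}}$.
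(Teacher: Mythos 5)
Your approach — CRT multiplicativity from \eqref{eq:Sqmult} combined with the prime-power bound of Lemma~\ref{lem:Spkpm}, then collecting the exponents and observing that the local divisibility conditions assemble into the stated gcd — is exactly what the paper intends when it calls Lemma~\ref{weakbound} ``a corollary of Lemma~\ref{lem:Spkpm}.'' The bookkeeping of the gcd factor across coprime prime powers, and the interpretation $\gcd(\cdot,0)=\cdot$ for bad $\uc$, are handled correctly.

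One small but genuine slip: you quote Lemma~\ref{lem:Spkpm} uniformly ``for each prime $p$,'' but that lemma is stated only for $1\leq m_p\leq k_p$, i.e.\ for primes dividing $d$. For primes with $p\mid q$ but $p\nmid d$ (so $m_p=0$), you should instead invoke Lemmas~\ref{lem:type 1}, \ref{lem:type II} and \ref{badc}: they give $|S_{p^{k_p},\uc}(\uu)|\ll p^{k_p(s/2+1)}\gcd(p^{k_p'},((S^{-1})^T\uu)_s)^{1/2}$ with $k_p'=\min(k_p,\nu_p(\rho_s))\leq\min(k_p,\nu_p(\det M_\uc))$, which is at least as strong as what you claimed. (One must also note that the bounds are absolute for $p\nmid D_F$ so that the product of implicit constants over all primes stays bounded in terms of $D_F$.) With that adjustment the argument is complete.
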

To estimate the contribution from bad pairs $\uc$, we need the following refined estimate, that saves a factor of order $O(d^{1/2})$ as compared to the bound in Lemma~\ref{weakbound} for square-free moduli, analogous to~\cite[Lemma 6.5]{V19}.
\begin{lemma}\label{badpairgoodp}
	Let $\uc$ be a bad pair. Then
	\begin{align*}
	|S_{p, p\uc}(\uu)|\ll p^{s/2+1}(p,\uu)^{1/2}\gcd(p, \scrF^*(\uu))^{1/2}.
	\end{align*}
\end{lemma}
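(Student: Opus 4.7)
The crucial observation is that when $q=p$ and $d=p$, the congruence $q\mid d\uc\cdot\ua^\perp$ reads $p\mid p\uc\cdot\ua^\perp$, which is automatically satisfied. Hence for any primitive $\uc$ (whether good or bad),
\[
S_{p,p\uc}(\uu)=D_p(\uu),
\]
and the claim reduces purely to a bound on $D_p(\uu)$. In particular, the dependence on the "bad pair" $\uc$ disappears.

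The case $p\nmid\uu$ is already covered by Lemma~\ref{lem:Dp bound}: both sub-cases there match the target bound precisely, since $\gcd(p,\uu)^{1/2}=1$. It remains to handle $p\mid \uu$. When $p\mid\uu$ the character $e_p(\bb\cdot\uu)$ is trivial, so $D_p(\uu)=D_p(\mathbf{0})$. Moreover $\gcd(p,\uu)^{1/2}=p^{1/2}$, and since $\scrF^*$ is homogeneous of positive degree we have $\gcd(p,\scrF^*(\uu))=p$; thus the target bound collapses to
\[
|D_p(\mathbf{0})|\ll p^{s/2+2}.
\]

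To prove this last inequality, I would apply Cauchy--Schwarz in $\ua$ followed by the standard squaring argument in $\bb$. The squaring step (valid since $p\nmid D_F$ forces $p>2$) yields
\[
\Bigl|\sum_{\bb\bmod p}e_p(F_\ua(\bb))\Bigr|^2=p^{s}Z(\ua;p),\qquad Z(\ua;p)=\#\{\h\bmod p:M_\ua\h\equiv\mathbf{0}\bmod p\},
\]
so it suffices to show $\starsum_{\ua\bmod p}Z(\ua;p)\ll_{s} p^{2}$. Here the hypothesis $p\nmid D_F$ enters decisively: the binary form $\det(xM_1+yM_2)$ factors into $s$ pairwise distinct linear factors over $\overline{\FF_p}$, so there are at most $s$ projective values $[\ua]\in\PP^{1}(\overline{\FF_p})$ with $\det M_\ua\equiv 0$, and at each such $[\ua]$ the rank of $M_\ua$ drops by exactly $1$ (since a simple zero of the determinant corresponds to a one-dimensional kernel). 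Thus $Z(\ua;p)=p$ for at most $s(p-1)$ non-zero $\ua$ mod $p$, and $Z(\ua;p)=1$ otherwise; summing gives the required bound on $\starsum_{\ua}Z(\ua;p)$, and Cauchy--Schwarz with $\#\{\ua:\gcd(\ua,p)=1\}\ll p^{2}$ produces $|D_p(\mathbf{0})|\ll p^{s/2+2}$.

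The main conceptual content of the argument is the identification $S_{p,p\uc}(\uu)=D_p(\uu)$; the remaining estimates are a mild refinement of Lemma~\ref{lem:Dp bound} to cover the missing case $p\mid \uu$. The only technical point requiring care is the assertion that at each zero of $\det(xM_1+yM_2)$ modulo $p$ the rank of $M_\ua$ drops by exactly one, which follows from distinctness of the $s$ linear factors (guaranteed by $p\nmid D_F$) together with the classical relationship between simple zeros of the determinant and kernel dimension.
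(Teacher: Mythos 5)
Your key observation $S_{p,p\uc}(\uu)=D_p(\uu)$ is precisely the one the paper uses; its entire proof reads ``note that $S_{p,p\uc}(\uu)=D_p(\uu)$ and apply Lemma~\ref{lem:Dqfinal}.'' You diverge from the paper in the second half: rather than quoting the packaged bound of Lemma~\ref{lem:Dqfinal}, you unwind it, using Lemma~\ref{lem:Dp bound} when $p\nmid\uu$ and a fresh squaring argument for $D_p(\vecnull)$ when $p\mid\uu$. Your version is in fact slightly sharper: Lemma~\ref{lem:Dqfinal} (via Lemma~\ref{lem:Dq-crude}) carries a factor $q^\varepsilon$, so taken literally the paper's one-line proof yields only $p^{s/2+1+\varepsilon}(p,\uu)^{1/2}\gcd(p,\scrF^*(\uu))^{1/2}$, whereas your direct count of kernel dimensions gives the stated exponent exactly. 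The price is an additional hypothesis you do not make explicit: your argument that $\operatorname{rank}M_\ua\geq s-1$ for all $\ua\not\equiv\vecnull\bmod p$ (via simple zeros of $\det(xM_1+yM_2)$), and the reduction of the squaring identity to counting $\ker(M_\ua)$, both require $p\nmid D_F$, hence in particular $p$ odd. You should add one sentence noting that the finitely many primes $p\mid D_F$ are absorbed into the implicit constant (which, per the paper's conventions, may depend on $F_1,F_2$), or equivalently observe that the lemma is only ever invoked with $p\nmid D_F$. With that caveat added, your argument is complete; as a side note, the triangle inequality applied to $|\sum_{\bb}e_p(\ua\cdot\uF(\bb))|\leq p^{s/2}Z(\ua;p)^{1/2}$ already gives $\sum_\ua p^{s/2}Z(\ua;p)^{1/2}\ll p^{s/2+2}$ without invoking Cauchy--Schwarz, though of course Cauchy--Schwarz also works.
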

\begin{proof}
Since
$S_{p,p\uc}(\uu)=D_p(\uu)$, this lemma follows from Lemma~\ref{lem:Dqfinal}.
\end{proof}

\section{Major arcs contribution: the main term \texorpdfstring{$N_0(P, \delta)$}{N0(P, δ)}}\label{sec:major arc}
In this section we prove the first part of Lemma~\ref{lem:Ni-bounds}\eqref{HB-delta-part}, the asymptotic formula for the main contribution $N_0(P, \delta)$ which was defined in \eqref{eq:N0def}. 

First we show that only $\uu=\vecnull$ contribute to the main term in $N_0(P, \delta)$.
\begin{lemma}\label{lem:major1}
	For any $\delta, N>0$, we have
\[N_0(P, \delta)=\sum_{1\leq q< Q^{1/2-\delta}}q^{-s}
			D_q(\vecnull)			
			\int_{\uw\in \mathfrak M_q(\delta) } I_q(\uw,\vecnull)
			\,d\uw+O_{N}(P^{-N}).\]
\end{lemma}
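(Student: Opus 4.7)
The plan is to show that the contribution from $\uu \neq \vecnull$ in $N_0(P,\delta)$ decays faster than any polynomial in $P$, by combining the rapid decay of $I_q(\uw,\uu)$ (Lemma~\ref{lem:IQ}) with the crude pointwise bound on $D_q(\uu)$ (Lemma~\ref{lem:Dq-crude}).

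First, I would verify that the hypothesis of the second part of Lemma~\ref{lem:IQ} is satisfied by every nonzero integer vector $\uu$ when $\uw\in \mathfrak M_q(\delta)$ and $1\leq q\leq Q^{1/2-\delta}$. Since $Q=P^{4/3}$ and $|\uw|<q^{-1}Q^{-1-\delta}$, both summands in
\[
\frac{q}{P}(1+P^2|\uw|)P^\delta \;\leq\; \frac{q}{P}\,P^\delta+\frac{P^{1+\delta}}{Q^{1+\delta}}
\]
are $\ll P^{-1/3-\delta/3}$, which is less than $1\leq |\uu|$ once $P$ is sufficiently large. Hence Lemma~\ref{lem:IQ} yields
\[
I_q(\uw,\uu)\ll_{N'} P^{s}\bigl(P|\uu|/q\bigr)^{-N'}
\]
uniformly over $\uu\neq\vecnull$, $\uw\in\mathfrak M_q(\delta)$ and $1\leq q\leq Q^{1/2-\delta}$, for any $N'>0$.

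Next, I would combine this with the crude bound $|D_q(\uu)|\ll q^{s/2+2+\varepsilon}$ from Lemma~\ref{lem:Dq-crude} and the trivial volume estimate $\meas(\mathfrak M_q(\delta))\ll q^{-2}Q^{-2-2\delta}$ to bound the contribution of $\uu\neq\vecnull$ to $N_0(P,\delta)$ by
\[
\ll P^{s-N'}Q^{-2-2\delta}\sum_{1\leq q\leq Q^{1/2-\delta}} q^{N'-s/2-2+\varepsilon}\sum_{\uu\neq \vecnull}|\uu|^{-N'}.
\]
For $N'>s$ the sum over $\uu$ converges absolutely; executing the $q$-sum and substituting $Q=P^{4/3}$ yields an overall bound of the form $P^{-M(N')}$ where the exponent $M(N')\to\infty$ as $N'\to\infty$. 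In particular, for any prescribed $N>0$, choosing $N'$ sufficiently large in terms of $N$, $s$ and $\delta$ gives the desired $O_{N}(P^{-N})$.

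The whole argument hinges on the first step, namely the verification that the threshold appearing in Lemma~\ref{lem:IQ} stays well below $1$ throughout the major arcs; once this is checked, the rest is routine book-keeping and I do not expect any genuine obstacle.
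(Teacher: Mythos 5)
Your proof is correct and follows essentially the same route as the paper: you verify that the threshold $\frac{q}{P}(1+P^2|\uw|)P^\delta$ is $O(P^{-1/3})$ on the major arcs and then apply the rapid-decay estimate of Lemma~\ref{lem:IQ} (note this is the \emph{first} part of that lemma, not the second as you label it) to show that the contribution of $\uu\neq\vecnull$ is $O_N(P^{-N})$. There is a harmless bookkeeping slip — combining $q^{-s}$, $q^{s/2+2+\varepsilon}$, $q^{-2}$ from the measure and $q^{N'}$ from $(P|\uu|/q)^{-N'}$ gives the exponent $N'-s/2+\varepsilon$, not $N'-s/2-2+\varepsilon$ — but this does not affect the conclusion.
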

\begin{proof}
Note that for $Q=P^{4/3}$, $q\leq Q^{1/2-\delta}$, and any $\uw\in \mathfrak M_q(\delta)$, we have \[\frac{q}{P}(1+P^2|\uw|)\ll \frac{q}{P}(1+P^2\frac{1}{qQ^{1+\delta}})\ll \frac{q}{P}+\frac{P}{Q}\ll P^{-1/3}.\]
Thus from the first part of Lemma~\ref{lem:IQ}, we conclude that the contributions from all non-zero $\uu$ are negligible in $N_0(P, \delta)$. \end{proof}
To further evaluate $N_0(P, \delta)$, we define the singular series
\begin{equation}\label{eq:SS}
\mathfrak{S}:=\sum_{q=1}^\infty q^{-s}D_q(\vecnull)
\end{equation}
and the singular integral $\mathfrak{J}$ 
\begin{equation}\label{eq:SI}
\mathfrak{J}:=\int J(\uw)d\uw, 	\text{ where } J(\uw)=\int w(\x)e(\uw\cdot \uF(\x))d\x.
\end{equation}
\begin{lemma}\label{N_0asymp}
The singular integral $\mathfrak{J}$ is absolutely convergent as soon as $s\geq 6$ and the singular series $\mathfrak{S}$ is absolutely convergent for any $s\geq 7$. Moreover, for any $s\geq 7$ and any $\ve>0$, we have
\begin{equation}\label{eq:2222}
N_0(P, \delta)=P^{s-4}\left(\mathfrak{SJ}+O(P^{-(s-6)/3+\ve})\right)
\end{equation}
for sufficiently small \(\delta\) (depending on $\varepsilon$ and $s$).
\end{lemma}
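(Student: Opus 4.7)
The plan is to start from Lemma \ref{lem:major1}, compare the truncated double sum to $P^{s-4}\mathfrak{S}\mathfrak{J}$ by completing both the integral over $\uw$ and the sum over $q$, and control the resulting tails. First I would substitute $\uv=P^{2}\uw$ in the integral in Lemma \ref{lem:major1}. Since $I_q(\uw,\vecnull)=P^{s}J(P^{2}\uw)$ by \eqref{eq:Izu def} and the homogeneity of $\nF$, and since $Q=P^{4/3}$ makes the condition $|\uw|<q^{-1}Q^{-1-\delta}$ equivalent to $|\uv|<q^{-1}P^{2/3-4\delta/3}$, this yields
\[
N_0(P,\delta)=P^{s-4}\sum_{1\leq q<Q^{1/2-\delta}}q^{-s}D_q(\vecnull)\int_{|\uv|<q^{-1}P^{2/3-4\delta/3}}J(\uv)\,d\uv+O_N(P^{-N}).
\]

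Next I would verify the two convergence claims. The bound extracted from Lemma \ref{lem:IQ}, namely $|J(\uv)|\ll\prod_{j=1}^{s}(1+|\lambda_j v_1+\mu_j v_2|)^{-1/2}$, combined with the fact that $\det(xF_1+yF_2)$ has simple roots when $F_1=F_2=0$ is non-singular (cf.\ \cite[Proposition 2.1]{HBP}), gives $s$ pairwise non-proportional linear factors. A dyadic annulus decomposition together with an angular split into a small neighbourhood of each of the $s$ bad directions and its complement then yields $\int_{|\uv|\asymp R}|J(\uv)|\,d\uv\ll R^{(4-s)/2}$, hence $\int_{|\uv|\geq Y}|J(\uv)|\,d\uv\ll Y^{(4-s)/2}$ and absolute convergence of $\mathfrak{J}$ for $s\geq 6$. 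For $\mathfrak{S}$, the crude bound $|D_q(\vecnull)|\ll q^{s/2+2+\varepsilon}$ from Lemma \ref{lem:Dq-crude} gives absolute convergence once $s\geq 7$.

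I would then estimate the two tail errors. Writing
\[
N_0(P,\delta)-P^{s-4}\mathfrak{S}\mathfrak{J}=-P^{s-4}\sum_{q<Q^{1/2-\delta}}q^{-s}D_q(\vecnull)\int_{|\uv|\geq q^{-1}P^{2/3-4\delta/3}}J(\uv)\,d\uv-P^{s-4}\mathfrak{J}\sum_{q\geq Q^{1/2-\delta}}q^{-s}D_q(\vecnull)+O_N(P^{-N}),
\]
the $q$-tail is bounded via $\sum_{q\geq Q^{1/2-\delta}}q^{-s/2+2+\varepsilon}\ll Q^{(1/2-\delta)(3-s/2+\varepsilon)}\ll P^{-(s-6)/3+O(\delta+\varepsilon)}$, using $Q^{1/2}\asymp P^{2/3}$. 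For the $\uv$-tail I would insert the dyadic bound $\int_{|\uv|\geq Y}|J(\uv)|\,d\uv\ll Y^{(4-s)/2}$ with $Y=q^{-1}P^{2/3-4\delta/3}$ to obtain $\sum_{q<Q^{1/2-\delta}}q^{-s/2+2+\varepsilon}(qP^{-2/3+4\delta/3})^{(s-4)/2}\ll P^{-(s-4)/3+O(\delta)}Q^{(1/2-\delta)(1+\varepsilon)}\ll P^{-(s-6)/3+O(\delta+\varepsilon)}$.

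The main obstacle is the balance between these two tails: widening the $q$-truncation would shrink the $\uv$-tail at the cost of inflating the $q$-tail, and the threshold $Q^{1/2-\delta}$ inherited from the width of $\mathfrak{M}_q$ is exactly where they equalize, producing a joint error of order $P^{-(s-6)/3+O(\delta+\varepsilon)}$. Choosing $\delta$ sufficiently small depending on $s$ and $\varepsilon$ absorbs the $\delta$-loss and delivers \eqref{eq:2222}.
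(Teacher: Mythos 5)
Your argument is correct and follows the same structure as the paper's proof: start from Lemma~\ref{lem:major1}, rescale \(\uw\) by \(P^2\), and compare the truncated double sum to \(P^{s-4}\mathfrak{S}\mathfrak{J}\) by estimating the two resulting tails using Lemma~\ref{lem:IQ} and Lemma~\ref{lem:Dq-crude}. The only point where you genuinely diverge is in the tail estimate for \(J\): you exploit the full product bound \(\prod_{j}(1+|\lambda_j v_1+\mu_j v_2|)^{-1/2}\) together with an angular split near the \(s\) non-proportional directions, giving \(\int_{|\uv|\geq Y}|J(\uv)|\,d\uv\ll Y^{(4-s)/2}\), whereas the paper discards the factor structure and uses the radial bound \(J(\uw)\ll(1+|\uw|)^{-(s-1)/2}\), which only gives \(W^{(5-s)/2}\). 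Your bound is sharper by \(Y^{-1/2}\), but once the \(\uw\)-tail is weighted by \(|D_q(\vecnull)|\ll q^{s/2+2+\varepsilon}\) and summed over \(q<Q^{1/2-\delta}\), the extra \(W^{1/2}\) in the paper's version is absorbed by a correspondingly decaying power of \(q\) (the summand becomes \(q^{-1/2+\varepsilon}\) rather than \(q^{\varepsilon}\)), and both routes produce exactly \(O(P^{-(s-6)/3+\varepsilon})\). Your bookkeeping of the \(\uw\)-tail is in fact cleaner than the paper's, which records an intermediate per-\(q\) error \(O(q^{(s-5)/2}Q^{-1+\varepsilon})\) that is only accurate when \(s\) is fairly large; the direct computation you carry out recovers the stated exponent \(-(s-6)/3\) uniformly for all \(s\geq 7\).
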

\begin{proof}
First we show the convergence of $\mathfrak J$ and $\mathfrak S$.
Using Lemma~\ref{lem:IQ}, we see that 
\begin{align*}
J(\uw)\ll (1+|\uw|)^{-(s-1)/2},
\end{align*}
and it follows that 
\begin{align}\nonumber
\int_{|\uw| \gg W}J(\uw)d\uw &\ll \int_{|\uw|\gg W}(1+|\uw|)^{-(s-1)/2}d\uw \\ &\ll \int_{r\gg W}r^{-(s-1)/2+1}dr\ll W^{-s/2+5/2}.
\label{singluarintegraltail}
\end{align}
The singular integral is therefore absolutely convergent for $s\geq 6$.
From Lemma~\ref{lem:Dqfinal},
we have the bound \begin{equation}
\label{Dq0}
|D_q(\vecnull)|\ll q^{s/2+2+\varepsilon},
\end{equation}
for any $\varepsilon>0$, which implies that 
\begin{align*}
\sum_{q\geq 1}q^{-s}|D_q(\vecnull)|\ll \sum_{q\geq 1}q^{-s/2+2+\varepsilon}
\end{align*}
for any $\varepsilon>0$.
Therefore, the singular series $\mathfrak{S}$ converges absolutely for all $s\geq 7$.

Next we evaluate $N_0(P, \delta)$. 
Using Lemma~\ref{lem:major1}
and a change of variables,
\begin{align*}
\int_{|\uw|<q^{-1}Q^{-1-\ve}} I_q(\uw,\vecnull)=P^{s-4}\int_{|\uw|<q^{-1}Q^{1/2-\delta}}J(\uw)d\uw,
\end{align*}	
we see that 
\begin{equation}
\label{eq:222}
N_0(P, \delta)=P^{s-4}\sum_{1\leq q< Q^{1/2-\delta}}q^{-s}
			D_q(\vecnull)			
			\int_{|\uw|<q^{-1}Q^{1/2-\delta}} J(\uw)
			\,d\uw+O_{N}(P^{-N}).
\end{equation}
Using \eqref{singluarintegraltail}, we can
we can replace the integral over $\uw$ by $\mathfrak J$ with an error $$O\big((q^{-1}Q^{1/2-\delta})^{-s/2+5/2}\big)=O(q^{(s-5)/2}Q^{-1+ \varepsilon}),$$ for any $s\geq 7$, provided \(\delta\) is sufficiently small depending on $\varepsilon$ and $s$.

Using \eqref{Dq0}, it then follows that for $s\geq 7$ and for sufficiently small \(\delta\) we have
\begin{align*}
&\sum_{1\leq q\leq Q^{1/2-\delta}}q^{-s} D_q(\vecnull) \int_{|\uw|<q^{-1}Q^{1/2-\delta}} J(\uw)\,d\uw\\
&=\mathfrak{J}\sum_{1\leq q< Q^{1/2-\delta}}q^{-s}
			D_q(\vecnull)			
			+O\Big(\sum_{1\leq q< Q^{1/2-\delta}}q^{-s/2+2}q^{(s-5)/2}Q^{-1+\varepsilon}\Big)\\
&=\mathfrak{JS}+O\Big(\sum_{q\geq Q^{1/2-\delta}} q^{-s/2+2+\varepsilon}+Q^{-1+\varepsilon}\sum_{q\leq Q^{1/2-\delta}}q^{-1/2} \Big)\\
&=\mathfrak{JS}+O(Q^{-(s-6)/4+\varepsilon}+Q^{-3/4+ \varepsilon}))=\mathfrak{JS}+O(P^{-(s-6)/3+\ve}),
\end{align*}
 which together with \eqref{eq:222} yields \eqref{eq:2222}.
\end{proof} 

\section{Counting estimates: dimension growth and congruences}\label{sec:counting}
In this section, we prepare some lemmas to estimate the contributions from the $\uu$-sum and $\uc$-sum. Our first estimate is essentially the uniform version of the dimension growth theorem of Salberger~\cite[Theorem~0.3]{Sal}, however we will use an affine version. 
\begin{lemma}[Dimension growth]\label{dimgrowth}
	Let $X\subset \PP^{s-1}$ be an irreducible variety over $\QQ$ satisfying $\dim X\geq 1$, $\deg X \geq 2$ and $s\geq 3$. Let
 \[
\eta(d) =
\begin{cases}
    0&(d \neq 3),\\
    \frac{2}{\sqrt{3}}-1&(d=3).
\end{cases}
 \]
 
 Let \(\mathcal{Y}\subset \AA_\ZZ^{s}\) be an integral model of the affine cone over \(X\). Then for $V\geq 1$ and any $\varepsilon>0$
	\begin{align*}
	\{\x \in \mathcal{Y}(\QQ)\cap\ZZ^{s}: |\x|\leq V \}\ll_{\deg X,s,\varepsilon} V^{\operatorname{dim}X+\eta(\deg X)+\varepsilon}.
	\end{align*}
\end{lemma}

We remark that at one point, namely \eqref{eq:more-counting}, we need very slightly more than the last proposition in the degree 2 case, and draw on~\cite[Lemma 3.7]{V19}.

\begin{proof}
Let \(d=\deg X\).
When \(d\geq 3\), Salberger proved~\cite[Theorem~0.3]{Sal} that
\begin{align}\label{eqnnn}
	\{\x \in \mathcal{Y}(\QQ)\cap\ZZ^{s}:  \gcd(\x)=1, |\x|\leq V \}\ll_{d,s,\varepsilon} V^{\operatorname{dim}X+\eta (d)+\varepsilon}.
	\end{align}
 Salberger states this for integral varieties; recall that for us all varieties are reduced, and thus integral and irreducible are interchangeable terms.
 
 We prove \eqref{eqnnn} in the case  \(d=2\). Suppose first that \(X\) is geometrically reducible. In that case \(X\) is a union of two hyperplanes conjugate over a quadratic extension of \(\QQ\). All the points in \(X(\QQ)\) lie on the intersection of these hyperplanes, which is a projective linear space of dimension \(\dim X-1\); thus \(\mathcal Y(\QQ)\) is contained in an affine linear space of dimension \(\dim X\) and  therefore makes an acceptable contribution to the counting function in the lemma. In the remaining case when \(X\) is geometrically irreducible, \eqref{eqnnn} is proved in Heath-Brown~\cite[Theorem~2]{HeathBrown02}.

  We now need to remove the $\gcd$-condition in the above estimate. Clearly, using \eqref{eqnnn},
 \begin{align*}
  \{\x \in \mathcal{Y}(\QQ)\cap\ZZ^{s}: |\x|\leq V \}\ll \sum_{1\leq m\leq V} (V/m)^{\dim X+\varepsilon}\ll V^{\dim X+\eta (\deg X)+\varepsilon},
 \end{align*}
 as soon as $\dim X\geq 1$. 
\end{proof}
We shall apply the following lemma to control the dimensions of a type of varieties appearing in our analysis. 
 \begin{lemma}\label{lem:intersection-of-dual-varieties}
  	Let \(\uc\in\mathbb{Z}^2\) be a good, primitive integer vector, suppose \(s\geq 4\) and let \(X\subseteq \PP^{s-1}\) be the variety defined by $\scrF^*(\uu)= F^*_\uc(\uu)=0$ . Then
  	\begin{enumerate}[(i)]
  		\item \(X\) is a complete intersection of dimension \(s-3\) defined over \(\QQ\).
  		\item \(X\) does not contain any linear space of projective dimension \(\geq \lfloor\frac{s}{2}\rfloor\).
    \item If \(s \geq 7\) then \(X\) does not have any component of degree~$3$.
  	\end{enumerate}
  \end{lemma}
  
  \begin{proof}
  	For part (i), it suffices to show that  \(F^*_\uc=0\) and \(\scrF^*=0\) do not have a common irreducible component over \({\QQ}\). But every component of  \(F^*_\uc=0\) has degree \(\leq 2\), while \(\scrF^*=0\) is irreducible with degree \(4(s-2)\geq4\), so we are done.
  	
  	For part (ii), observe that \(\uc\) is good, so \(F^*_\uc\) is a quadratic form of rank \(s\). By Witt's Theorem, the maximum possible index of isotropy of  \(F^*_\uc\) is \(\lfloor\frac{s}{2}\rfloor \), that is
   \(F^*_\uc=0\) cannot contain a  linear space of \emph{projective} dimension  \(>\lfloor\frac{s}{2}\rfloor -1\) and \textit{a fortiori} the same is true of \(X\).

    For (iii), suppose \(V\subset \mathbb P^{s-1}\) is such a component.
    Then \(\deg V = 1+\operatorname{codim} V\), making \(V\) a \emph{variety of minimal degree}. As \(V\) has degree 3 it is a cone over a rational normal scroll~\cite[Theorem~1]{EH}. In particular there are linear spaces \(L\subseteq V\) with codimension 1~\cite[p5]{EH}. Now by the previous part (ii), we have \(\dim(V)-1 <\lfloor\frac{s}{2}\rfloor\), while by part (i) we have \(\dim(V) = s-3\). This implies \(s <7\).
  \end{proof}

  \begin{remark}
  The proof of (iii) is inspired by the mathoverflow answer of Sasha~\cite{sasha}, which would actually permit us to weaken the hypothesis in (iii) to \(s\geq 6\) by a closer inspection of the possibilities for \(V\) at the end of our argument. Presumably (ii) is not optimal either, but it is enough for our purposes.
  \end{remark}

We will also need the following counting lemma generalizing~\cite[Lemma 8.1]{V19}.
\begin{lemma}[Counting lemma]\label{counting}
	For any $s\geq 2$, $q,d\in \NN$ with $d\mid q$, any primitive $\uc\in \ZZ^2$ and any $V\geq 1$, we have that 
	\begin{align}\nonumber
	&\#\{|\uu|\leq V: q\mid ((S^{-1})^T \uu)_s, d\mid Q_{\uc}^*(\uu')\}\\
    \label{eq:Vsum1}
    &\ll V^{s-2}\min\Big(V(1+\frac{V}{q}), (1+\frac{V}{\prod_{p\mid d}p})(1+\frac{V}{\prod_{p\mid q}p})\Big),
	\end{align}
	and for $N\geq 0$, $x\in\NN$ and any $\varepsilon>0$, 
	\begin{align}\label{eq:B1}
	\sum_{\substack{\uc\textrm{ good }\\|\uc|\asymp C\\ x\mid \det M_{\uc}}}(1+N\lambda_\uc)^{-1/2}\ll_{F, \varepsilon} x^\varepsilon C \Big(1+ \frac{C}{x^{1/2}(1+N)^{1/2}}\Big),
	\end{align}
	where $\lambda_\uc$ is as defined in \eqref{eq:lambdaucdef}.
\end{lemma}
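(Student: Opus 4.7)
The plan is to establish each inequality separately, combining elementary counting with Cauchy--Schwarz for the second.

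For \eqref{eq:Vsum1}, I would treat the two divisibility conditions as essentially independent, working in the $\ZZ$-basis $\{\y_1,\dots,\y_s\}$ from Section~\ref{sec:quad-forms-notation}. The condition $q\mid((S^{-1})^T\uu)_s=\uu\cdot\y_s$ defines a sublattice $\Lambda_1\subset\ZZ^s$ of covolume $q$ (using $\gcd(\y_s)=1$, which holds since $\{\y_j\}$ is a $\ZZ$-basis), so standard lattice-point counting with successive minima bounded below by $1$ gives $\#\{\uu\in\Lambda_1:|\uu|\leq V\}\ll V^{s-1}(1+V/q)$, yielding the first bound in the minimum. For the second bound, decompose $\uu=\sum_j v_j\y_j$, so that $\uu'=\sum_{j<s}v_j\y_j$ is parametrized by $(v_1,\dots,v_{s-1})\in\ZZ^{s-1}$ and the condition $d\mid Q_\uc^*(\uu')$ acts only on those coordinates. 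Since $Q_\uc^*$ has rank $s-1$ modulo every prime $p\nmid D_F$ (cf.\ the paragraph before Lemma~\ref{lem:intersection-of-dual-varieties}), the elementary density estimate $\#\{\uu'\bmod p:p\mid Q_\uc^*(\uu')\}\ll p^{s-2}$ holds, and by CRT together with the implication $d\mid Q_\uc^*(\uu')\Rightarrow\rad(d)\mid Q_\uc^*(\uu')$,
\[
\#\{\uu'\in\ZZ^{s-1}:|\uu'|\leq V,\ d\mid Q_\uc^*(\uu')\}\ll V^{s-2}\bigl(1+V/\rad(d)\bigr);
\]
multiplying by the $v_s$-count $\ll 1+V/q\leq 1+V/\rad(q)$ and absorbing the $O_F(1)$ loss from bad primes $p\mid D_F$ into the implicit constant completes the estimate.

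For \eqref{eq:B1}, I would combine two complementary bounds via the minimum. \emph{Trivial count:} since $\det M_\uc=F(c_1,c_2)$ is a binary form of degree $s$ whose discriminant is a nonzero rational multiple of $D_F$, standard divisibility estimates for values of binary forms (level-set counting for the bulk plus Thue-type control of the boundary) yield $\#\{|\uc|\asymp C:x\mid\det M_\uc\}\ll_F x^\varepsilon(C+C^2/x)$, giving $\Sigma:=\sum(1+N\lambda_\uc)^{-1/2}\ll x^\varepsilon(C+C^2/x)$. \emph{Cauchy--Schwarz:} coupling this trivial count (as the first factor) with the unconstrained second-moment bound
\[
\sum_{|\uc|\asymp C}(1+N\lambda_\uc)^{-1}\ll C^2\frac{\log(2+N)}{1+N}
\]
(via dyadic decomposition in $\lambda_\uc$: the level $\lambda_\uc\asymp 2^{-k}$ contributes $O(C(1+C\cdot 2^{-k}))$ integer points near the real lines $\lambda_jx+\mu_jy=0$, while complex roots of $F$ keep $\lambda_\uc$ bounded below on integers and contribute harmlessly) yields the alternative $\Sigma\ll x^\varepsilon\bigl(C^{3/2}(1+N)^{-1/2}+C^2(x^{1/2}(1+N)^{1/2})^{-1}\bigr)$. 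A short case analysis in the sizes of $N,C,x$ then shows the minimum of the two bounds is dominated by $x^\varepsilon C\bigl(1+C/(x^{1/2}(1+N)^{1/2})\bigr)$: the trivial bound suffices when $N\lesssim x$ (since then $C^2/x\leq C^2(x^{1/2}(1+N)^{1/2})^{-1}$), and Cauchy--Schwarz suffices when $N\gtrsim x$ (with $C^{3/2}(1+N)^{-1/2}\leq C$ when $C\leq 1+N$, and otherwise $C>1+N\gtrsim x$ forces $x<C$, letting $C^{3/2}(1+N)^{-1/2}$ be absorbed into $C^2(x^{1/2}(1+N)^{1/2})^{-1}$).

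The main obstacle I anticipate is establishing the trivial count $\#\{|\uc|\asymp C:x\mid\det M_\uc\}\ll_F x^\varepsilon(C+C^2/x)$ uniformly in $x$ for the binary form $\det M_\uc$ of degree $s\geq 4$: the bulk term $C^2/x$ is routine for $C\gg x^{1/s}$ via the density estimate $\#\{\uc\bmod x:x\mid\det M_\uc\}\ll x^{1+\varepsilon}$, but the boundary term $C$ requires care, combining the contribution from the at most $s$ rational linear factors of $\det M_\uc$ over $\QQ$ (each contributing $O(C)$ lattice points on the corresponding line) with Thue-type bounds for representations of small integers by the non-degenerate binary form $F$ in the range $C<x^{1/s}$.
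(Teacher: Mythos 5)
Your approach to \eqref{eq:B1} is fundamentally different from the paper's and has gaps. The claimed second-moment bound $\sum_{|\uc|\asymp C}(1+N\lambda_\uc)^{-1}\ll C^2\log(2+N)/(1+N)$ is not justified by the dyadic argument you sketch: the ``$+1$'' in the level count $O(C(1+C\cdot 2^{-k}))$ accumulates on summing over $k$ to a term of order $\gg C$ (and at the very least $\gg 1$ from the $\uc$ realizing the smallest $\lambda_\uc$), which is not dominated by $C^2\log(2+N)/(1+N)$ when $N$ is large compared with $C$. Moreover the factor $\log(2+N)$ cannot in general be absorbed into $x^\varepsilon$, since $N$ is unconstrained relative to $x$. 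Once these corrections are carried through Cauchy--Schwarz against the asserted count $\#\{|\uc|\asymp C:x\mid\det M_\uc\}\ll x^\varepsilon(C+C^2/x)$, the cross-term $(C^2/x)^{1/2}\cdot C^{1/2}=C^{3/2}/x^{1/2}$ appears, and this exceeds both terms of the target $x^\varepsilon C\bigl(1+C/(x^{1/2}(1+N)^{1/2})\bigr)$ throughout the regime $x<C<1+N$, which is nonempty in the regime $N\gg x$ where you reserve Cauchy--Schwarz; the trivial bound $x^\varepsilon(C+C^2/x)$ fails there as well since $C^2/x$ is then larger than both target terms. So as written the case analysis does not close. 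You have also, as you acknowledge, left the count $\#\{|\uc|\asymp C:x\mid\det M_\uc\}\ll_F x^\varepsilon(C+C^2/x)$ unproved.

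The paper's proof of \eqref{eq:B1} avoids all of this by a direct arithmetic argument rather than Cauchy--Schwarz. Using primitivity of $\uc$, for a divisor $x_1\mid x$ with $x_1\geq x^{1/2}$ one reduces to $\uc$ with $\gcd(c_1,x_1)=1$. For fixed such $c_1$, the constraint $x_1\mid\det M_\uc$ confines $c_2$ to $O_{D_F}(x_1^\varepsilon)$ residue classes modulo $x_1$. Along each arithmetic progression $c_2=b+kx_1$ one has $\lambda_{(c_1,b+kx_1)}\gg |k|x_1/C$, whence $\sum_{|k|\ll C/x_1}(1+N\lambda_{(c_1,b+kx_1)})^{-1/2}\ll 1+C/(x_1(1+N)^{1/2})$, with no logarithmic loss. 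Summing over $c_1\ll C$ and over the $O(x^\varepsilon)$ residue classes, and using $x_1\geq x^{1/2}$, yields \eqref{eq:B1} directly; neither a counting lemma for $\#\{\uc:x\mid\det M_\uc\}$ nor a second-moment estimate is needed. Your sketch of \eqref{eq:Vsum1}, on the other hand, is in essentially the same spirit as the function-field argument in \cite{V19} that the paper cites and omits.
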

\begin{proof}
Under the assumptions of the lemma,	equation \eqref{eq:Vsum1} can be proved the same way as its function field analogue in~\cite[eq. (8.6)]{V19} and we omit the details here. 

Equation \eqref{eq:B1} is a generalization of~\cite[eq. (8.7)]{V19}. We show that for any $x_1\mid x$ satisfying $x_1\geq x^{1/2}$
\begin{align}\label{eq:B1'}
	\sum_{\substack{\uc\textrm{ good }\\|\uc|\asymp C\\ x\mid \det M_{\uc}\\\gcd(x_1,c_1)=1}}(1+N\lambda_\uc)^{-1/2}\ll_{F, \varepsilon} x^\varepsilon C \Big(1+ \frac{C}{x_1(1+N)^{1/2}}\Big),
	\end{align}
	which implies \eqref{eq:B1}. 

	   Fix a value of $c_1$ co-prime to $x_1$. Following the proof of~\cite[eq. (8.7)]{V19}, one can show that there exist integers $0\leq b_1,...,b_K<x_1$ where $ K=O_{D_F}(x_1^\varepsilon)$ such that every $|c_2|\ll C$ satisfying $\gcd(c_1,c_2)=1$ and $x_1\mid \det(M_\uc)$ must be of the form $b_i+kx_1 $ for some $|k|\ll 1+C/x_1$. Therefore
	\begin{align*}
	\sum_{\substack{\uc\textrm{ good }\\|\uc|\asymp C\\ x\mid \det M_{\uc}\\ \gcd(c_1,x_1)=1}}(1+N\lambda_\uc)^{-1/2}\ll \sum_{\substack{|c_1|\ll C\\ \gcd(c_1,x)=1}}\sum_{j=1}^K \sum_{|k|\ll 1+C/x_1}(1+N\lambda_{(c_1,b_j+kx_1)})^{-1/2}.
	\end{align*}
	Now, we have uniformly for any $0\leq b<x_1$ and $c_1 $
	\begin{align*}
	&\sum_{|k|\ll C/x_1}(1+N\lambda_{(c_1,b+kx_1)})^{-1/2}
	\\&\ll 1+\sum_{0\not=|k|\ll C/x_1}(1+Nkx_1/C)^{-1/2}\\
	&\ll 1+\sum_{0\not=|k|\leq \frac{C}{x_1(1+N)}}1+\sum_{\frac{C}{x_1(1+N)}\ll |k|\ll \frac{C}{x_1}} C^{1/2}N^{-1/2}x_1^{-1/2}k^{-1/2}
	\\&\ll1+ \frac{C}{x_1(1+N)^{1/2}},
	\end{align*}
 which is enough to establish \eqref{eq:B1'}, upon recalling that $K=O_{D_F}(x^\varepsilon)$.
\end{proof}

	\section{Minor arcs contribution: the term \texorpdfstring{$N_1(P, \delta)$}{N1(P,δ)}} \label{sec:N1}

    The objective of this section is to prove the second bound claimed in Lemma~\ref{lem:Ni-bounds}\eqref{HB-delta-part2}, concerning the quantity
     $N_1(P, \delta)$ defined in \eqref{eq:N1def},
    when $\delta$ is sufficiently small depending on $\varepsilon$ and $s$.

 \subsection{First reductions.}
	From Lemmas~\ref{lem:N1P} and~\ref{lem:IQ}, we see that 
	\begin{align}\nonumber
	\MoveEqLeft N_1(P, \delta)+O_{N,\varepsilon}(P^{- N})\\
    & =\sum_{1\leq q\leq  Q}q^{-s}\sum_{\substack{\uu \in \ZZ^s\\ |\uu|\leq \frac{q}{P}(1+P^2|\uw|)P^\varepsilon}}D_q(\uu)\int_{\scrm_q(\delta)}p_{1, q}(\uw)I_q(\uw, \uu)d\uw,\label{eq:N1_prepare_to_split}
	\end{align}
 for any \(\varepsilon,N>0\). Although we write $O_{N,\varepsilon}$ in the line above for clarity, we recall before taking the next step that our implicit constants can always depend on \(s,w,F_1,F_2\) and \(\varepsilon\), and that \(\varepsilon\) can vary between occurrences.
		 Moreover we split $q, \uw$ into dyadic ranges $q\asymp Y,|\uw| \asymp W$, we see that $\uu$ can be truncated up to $V$, 
	 where 
	   \begin{equation}\label{defV}
	   V=\frac{Y}{P}(1+P^2W)P^\varepsilon.
	   \end{equation}
    In view of the minor arcs $\mathfrak m_q(\delta)$ defined in \eqref{minorarc}, we introduce the set $\mathcal C_\delta$ as
	     \begin{align}
  \label{YWcondition}
  \mathcal C_\delta=\Big\{ (Y, W):
  1\leq Y\leq Q^{1/2-\delta}, \frac{Q^{-\delta}}{YQ}\ll W\ll \frac{Q^\delta}{YQ^{1/2}} \\ \text{or } Q^{1/2-\delta}\leq  Y \leq Q, W\ll \frac{Q^\delta}{YQ^{1/2}}
  \Big\}.\nonumber
    \end{align}
    Every pair $(q,\uw)$ occurring in \eqref{eq:N1_prepare_to_split} satisfies $Y\leq q\leq 2 Y, W\leq |\uw|\leq 2 W$  for some $(Y,W)\in \mathcal C_\delta$. Therefore  we have
    \begin{align*}
        &N_1(P, \delta)\\ &\ll   P^{\varepsilon} \max_{(Y, W)\in \mathcal C_\delta}\bigg\lvert \sum_{q\asymp Y}q^{-s} \sum_{\substack{\uu\in \mathbb Z^s\\ |\uu|\leq V}}D_q(\uu) \int_{|\uw| \asymp W} p_{1, q}(\uw) I_q(\uw, \uu)d\uw\bigg\rvert+P^{-N},
	 \end{align*}
  where, by a slight abuse of notation, we embrace both of the conditions $W\leq |\uw|\leq 2 W, \uw\in \mathfrak m_q(\delta)$ in the notation $|\uw|\asymp W$, so that the implicit constants are always in $[1,2] $, and  are usually 1 and 2, but may differ when $W$ is almost  as large or as small as possible. 
  
 	   	   Writing $q=q_1q_2$ where $q_1$ is square free, $q_2$ is square-full with $q_1\asymp Y_1, q_2\asymp Y_2$ and $Y_1Y_2\asymp Y$, we can apply  
	  Lemma~\ref{p2integral} to the $\uw$-integral and Lemma~\ref{lem:Dqfinal} to $D_q(\uu)$ to obtain the bound
  \begin{align*}
  N_1(P, \delta)
  &\ll P^\varepsilon \max_{\substack{Y_1Y_2\asymp Y\\ (Y, W)\in \mathcal C_\delta}}Y^{-s/2}QW^2P^s(1+P^2W)^{-s/2-1}\\
  &\times\sum_{q_i\asymp Y_i}\sum_{|\uu|\leq V}\gcd(\scrF^*(\uu),q_1)^{1/2}\gcd(\uu,q_1)^{1/2}\gcd(q_2,(D_F\scrF^*(\uu))^\infty).
  \end{align*}
 To estimate the $\uu$-sum, we consider the contribution from $\scrF^*(\uu)\not=0$ and $\scrF^*(\uu)= 0$ separately. 
  \subsection{Case I: $\mathcal F^*(\uu)\not=0$}The contributions from $\mathcal F^*(\uu)\not=0$ can be bounded by
\begin{align}\nonumber
\MoveEqLeft P^\varepsilon \max_{\substack{Y_1Y_2\asymp Y\\ (Y, W)\in \mathcal C_\delta}} Y^{-s/2}QW^2P^s(1+P^2W)^{-s/2-1} \sum_{\substack{|\uu|\leq V\\ \scrF^*(\uu)\neq 0}}\sum_{q_i\asymp Y_i}\\\nonumber
\MoveEqLeft[1]\gcd(\scrF^*(\uu),q_1)^{1/2} \gcd(\uu,q_1)^{1/2}\gcd(q_2,(D_F\scrF^*(\uu))^\infty)\\ \nonumber
&\ll P^\varepsilon \max_{\substack{Y_1Y_2\asymp Y\\ (Y, W)\in \mathcal C_\delta}} Y^{-s/2}QW^2P^s(1+P^2W)^{-s/2-1} \sum_{\substack{|\uu|\leq V\\ \scrF^*(\uu)\neq 0}}Y_1Y_2\\ \nonumber
&\ll  P^\varepsilon \max_{\substack{ (Y, W)\in \mathcal C_\delta}} Y^{1-s/2}QW^2P^s(1+P^2W)^{-s/2-1}V^s\\
&\ll  P^\varepsilon \max_{\substack{(Y, W)\in \mathcal C_\delta}}  Y^{1+s/2}QW^2 (1+P^2W)^{s/2-1}.\label{eq:121}
\end{align}
The right hand side of \eqref{eq:121} is maximized when $W\asymp Y^{-1}Q^{-1/2+\delta}$ and $Y\asymp Q$, which is
\begin{align*}
    &\ll  P^{\varepsilon}   Y^{1+s/2}QW^2 (Q/Y)^{s/2-1}Q^{(s/2-1)\delta}\ll  P^\varepsilon  Q^{s/2-1+(s/2+1)\delta}.
\end{align*}
By choosing $\delta$ sufficiently small (depending on $\varepsilon$ and $s$),
this contribution is bounded by 
\begin{align*}
Q^{2+s/2+\ve}P^{-4}\ll P^{s-4-(s-8)/3+\ve},
\end{align*}
handing us the bound in Lemma~\ref{lem:Ni-bounds}\eqref{HB-delta-part2}. 
  \subsection{Case II: $\mathcal F^*(\uu)=0$}
To deal with the contribution from $\scrF^*(\uu)=0$ terms, we use Lemma~\ref{dimgrowth} to obtain
$$\#\{|\uu|\leq V:\scrF^*(\uu)=0,\uu\neq \vecnull\} \ll V^{s-2+\varepsilon}.$$
The contribution from $\mathcal F^*(\uu)=0$ terms is therefore bounded by 
\begin{align}\nonumber
&\max_{\substack{Y_1Y_2\asymp Y\\ (Y, W)\in \mathcal C_\delta}}Y^{-s/2}QW^2P^s(1+P^2W)^{-s/2-1}\\\nonumber
&\times\sum_{\substack{|\uu|\leq V\\ \scrF^*(\uu)= 0}}\sum_{q_i\asymp Y_i}\gcd(\scrF^*(\uu),q_1)^{1/2}\gcd(\uu,q_1)^{1/2}\gcd(q_2,(D_F\scrF^*(\uu))^\infty)\\
\nonumber
&\ll \max_{\substack{Y_1Y_2\asymp Y\\ (Y, W)\in \mathcal C_\delta}} 
Y^{-s/2+\ve}QW^2P^{s}(1+P^2W)^{-s/2-1}\\\nonumber
&\quad\quad \quad \quad\quad \quad \times\sum_{q_i\asymp Y_i}\Big(Y_1Y_2+\sum_{\substack{|\uu|\leq V\\ \scrF^*(\uu)= 0\\ \uu\neq\vecnull}}(\uu,q_1)^{1/2}Y_1^{1/2} Y_2\Big)\\
\nonumber
&\ll \max_{\substack{Y_1Y_2\asymp Y\\ (Y, W)\in \mathcal C_\delta}} Y^{-s/2+\ve}Y_1^{1/2}Y_2 QW^2 P^{s}(1+P^2W)^{-s/2-1}\\ \nonumber &\quad\quad \quad \quad\quad \quad \times  \Big(Y_1^{1/2}\sum_{q_i \asymp Y_i}1+\sum_{\substack{|\uu|\leq V\\ \scrF^*(\uu)= 0\\ \uu\neq\vecnull}}\sum_{q_i\asymp Y_i}(\uu,q_1)^{1/2}\Big)\\
\nonumber
&\ll \max_{\substack{Y_1Y_2\asymp Y\\ (Y, W)\in \mathcal C_\delta}} P^\ve Y^{-s/2}Y_1^{1/2}Y_2QW^2 P^s(1+P^2W)^{-s/2-1}\\ \nonumber &\quad\quad \quad \quad\quad \quad \times\Big(Y_1^{3/2}Y_2^{1/2}+V^{s-2}Y_1Y_2^{1/2}\Big)\\
&\ll \nonumber \max_{\substack{(Y, W)\in \mathcal C_\delta}}Y^{2-\frac{s}{2}}Q W^2 P^{s+\ve}(1+P^2W)^{-\frac{s}{2}-1}\\ &\quad\quad \quad \quad\quad \quad + Y^{\frac{s}{2}-1/2}QW^2P^{2+\ve}(1+P^2W)^{\frac{s}{2}-3}\label{E12bound}.
\end{align}
Since $s\geq 5$, the first term in \eqref{E12bound} is maximal when $Y\asymp 1, W\asymp  Q^{-1-\delta}$ or $Y\asymp Q^{1/2-\delta},W\asymp Q^{-3/2+\delta}\asymp P^{-2+\delta}$, which is bounded by
\begin{align*}
&\ll Q^{-1}P^{s+\ve} (P^2/Q)^{-s/2-1}Q^{(s/2+1)\delta}+Q^{2-s/4}P^{s-4+\ve}Q^{(s/2+1)\delta}\\&\ll P^{s-4-(s-8)/3+\ve}Q^{(s/2+1)\delta}
\end{align*}
The second term in \eqref{E12bound} is maximal when $W\asymp Y^{-1}Q^{-1/2+\delta}$ and further when $Y\asymp Q$, as the resulting power of $Y$ in this expression is positive. Therefore, we have, $Y\asymp Q,W\asymp Q^{-3/2+\delta}\asymp P^{-2+\delta}$, which gives a bound
\begin{align*}
\ll Q^{s/2+1/2+\ve }P^{-4+2+\ve}Q^{(s/2-1)\delta}
\ll P^{s-4-(s-8)/3+\ve}Q^{(s/2-1)\delta}.
\end{align*}
By choosing $\delta$ sufficiently small, the contributions from $\mathcal F^*(\uu)=0$ also satisfy the bound in Lemma~\ref{lem:Ni-bounds}\eqref{HB-delta-part2}. 

\section{Minor arcs contribution: reduction to \texorpdfstring{$N_{2,1}(P, \delta)$ and $N_{2,2}(P, \delta)$}{N21(P, δ) and N22(P, δ)}}\label{sec:preparing-N2}
In this section, we divide up the sum $N_2(P,\delta)$ (see \eqref{N2split} below) into several pieces in order to establish the bounds in Lemma~\ref{lem:Ni-bounds}\eqref{new-delta-part} for the term $N_2(P, \delta)$ from \eqref{eq:N2def}
when $\delta$ is sufficiently small depending on $\ve$ and $s$. 

\subsection{Splitting up $N_2(P,\delta)$}\label{sec:N2split}\label{sec:Qdelta-and-Sigma}
From Lemmas~\ref{lem:N1P} and~\ref{lem:IQ}, we see that 
	\begin{multline*}
N_2(P, \delta)
=\sum_{\substack{d,k\in\NN\\ \uc\in\ZZ^2 \mathrm{primitive}\\ \ur=dk|\uc|\asymp Q^{1/2} }}\omega(\frac{\ur}{Q^{1/2}})\sum_{\substack{1\leq q\leq Q/k\\ d\mid q\\ \gcd(q/d,k)=1}} q^{-s}				\sum_{\substack{\uu\in \Z^s\\|\uu|\leq \frac{q}{P}(1+P^2|\uw|)P^\ve}}S_{q,d\uc}(\uu)\\
\int_{\scrm_q}		
			p_{2,\ur,k,q}(\uw) I_q(\uw,\uu)
			\,d\uw+O_{\ve,s,N}(P^{-N}).
	\end{multline*}
After dividing $d, k, \uc, q, \uw$ into dyadic ranges, we obtain 
\begin{equation}\begin{split}\label{N2dyadic}
   N_2(P, \delta)+O_N( P^{-N})\ll{}& P^\varepsilon \max_{\substack{KDC\asymp Q^{1/2}\\ D\ll Y\ll Q/K\\ (Y, W)\in \mathcal C_\delta}}\bigg\lvert\sum_{\substack{d,k\in \NN \\ \uc\in\ZZ^2 \mathrm{primitive}\\ d\asymp D, k\asymp K, |\uc| \asymp C }}\sum_{\substack{q\in \mathbb N\\ d\mid q, q\asymp Y\\\gcd(q/d,k)=1}} q^{-s}	\\ &\times 			\sum_{\substack{\uu\in \Z^s\\|\uu|\leq V}}S_{q,d\uc}(\uu)
   \int_{|\uw| \asymp W}		
			p_{2,\ur,k,q}(\uw) I_q(\uw,\uu)d\uw\bigg\lvert,\end{split}
\end{equation}
where $\mathcal C_\delta$ is defined in \eqref{YWcondition} and $V$ is defined as \eqref{defV}. Here we again include the condition $\uw\in \mathfrak m_q(\delta)$ in the implicit constants in the notation $|\uw|\asymp W$.

\begin{remark}\label{rem:square}
    One could hope to take advantage of cancellations over the \(\uc\)-sum in the general style of Health-Brown--Pierce~\cite{HBP} and Northey-Vishe~\cite{NV} by using Cauchy's inequality  to write 
    \begin{align*}
    &\sum_{\substack{\uc\textrm{ primitive}\\ \ur = dk\uc}}
    \omega\big(\frac{\ur}{Q^{1/2}}\big)
    \sum_{\uu\in \Z^s}
    S_{q,d\uc}(\uu)
    \int_{\uw\in\scrm_q(\delta)}
			p_{2,\ur,k,q}(\uw)I_q(\uw, \uu)\,d\uw
        \\&\ll
 \bigg(       (\frac{Q^{\delta}}{dqk})^2
    \sum_{\substack{\uc\textrm{ primitive}\\ \ur = dk\uc \\ |\ur|\asymp Q^{1/2}}}
    \int_{\uw\in\scrm_q(\delta)}
    \bigg \lvert
\sum_{\uu\in \Z^s}
 S_{q,d\uc}(\uu)
			p_{2,\ur,k,q}(\uw)I_q(\uw, \uu)
    \bigg\rvert^2
    \,d\uw
    \bigg)^{1/2}.    
    \end{align*}
    After bounding the integral, one will have sums of type \(\sum_{\uc}  S_{q,d\uc}(\uu) S_{q,d\uc}(\v)\), in which some extra cancellations might be obtained. We do not explore this here since the saving does not seem to be enough for $s=9$.
\end{remark}
We write 
\begin{align}\label{N2split}
    N_2(P, \delta)\ll N_{2,0}(P, \delta)+N_{2,1}(P, \delta)+N_{2,2}(P, \delta)+O_N(P^{-N}),
\end{align}
where $N_{2,0}(P, \delta)$ denotes the contribution on the right hand side in \eqref{N2dyadic} from $V<1$, while $N_{2,1}(P, \delta)$ denotes the contribution on the right hand side in \eqref{N2dyadic} from $V\geq 1$ and bad pairs $\uc$, and $N_{2,2}(P, \delta)$ denotes the contributions on the right hand side in \eqref{N2dyadic} from $V\geq 1$ and good pairs $\uc$.

From the definitions of $V$ and $\mathcal C_\delta$ in \eqref{defV}, \eqref{YWcondition}, we see that  
the condition $V<1$ implies that the pairs $(Y, W)$ belong to the set $\mathcal C_\delta^-$ defined as follows:
\begin{align}\label{eq:V=0cond}
\mathcal C_\delta^-=\Big\{(Y, W): Y\ll Q^{1/2-\delta}, 
Y^{-1}Q^{-1-\delta}\ll W\ll Y^{-1}P^{-1-\delta} &\\\text{ or }Q^{1/2-\delta}\ll Y\ll P^{1-\delta}, W\ll Y^{-1}P^{-1-\delta}
\Big\}.\nonumber
\end{align}
We will deal with this contribution in the next subsection.

The situation for $V\geq 1$ requires more work. To handle this case, we introduce the notation $\mathcal Q_\delta$ to estimate the contributions from $\mathcal{C}_\delta\setminus\mathcal{C}_\delta^-$:
\begin{align}\label{eq:Qvedef}
\mathcal Q_\delta
=
\Big\{(K, D, C,R,Y, W)
: KDC\asymp Q^{1/2},\, D\ll R\ll  
Y \ll \frac{Q}{K}, \\ (Y, W)\in \mathcal C_\delta, K, D, C, V\geq 1\Big\}.\nonumber
\end{align}
We first relate $N_{2,1}(P,\delta), N_{2,2}(P, \delta)$ to certain averages involving $S_{r,d\uc}$ and $\Sigma$, depending on parameters drawn from the set $\mathcal Q_\delta$. 
\begin{lemma}\label{lem:N2maxbound}
Let $\ve>0$ and $\mathcal Q_\delta$ be as in \eqref{eq:Qvedef}. Then for $\delta$ sufficiently small (depending on $\ve$ and $s$) and for $i=1,2$, we have
\begin{multline}\label{N2maxbound}
N_{2,i}(P, \delta)
\ll P^{\ve}\max_{\substack{ \mathcal Q_\delta}}\sum_{k\asymp K}\sum_{d\asymp D}\sum_{\substack{|\uc| \asymp C\\ \uc \textup{ bad if }i=1\\ \uc \textup{ good if }i=2
}}\sum_{\substack{\uu \in \ZZ^s\\ |\uu|\leq V}}
\sum_{\substack{r\asymp R\\ \gcd(k,r/d)=1\\ d\mid r\mid \mathfrak P^\infty}}\\
r^{-s}|S_{r, d\uc}(\uu)\Sigma(r,Y/R, W, \mathfrak P;k,d,\uc, \uu)|, 
\end{multline}
where $\mathfrak P=\mathfrak P(d, \uc, \uF)$ is any non-zero integer satisfying $d\mid \mathfrak P$, and 
\begin{align*}
\MoveEqLeft\Sigma(r, Y_1, W, \mathfrak P;k,d, \uc, \uu)\\=&
 \int_{|\uw|\asymp W}\sum_{\substack{q_1\asymp Y_1\\ (q_1, k\mathfrak P)=1}}q_1^{-s} S_{q_1,\uc}(\uu)p_{2, dk\uc,k, q_1r}(\uw)I_{q_1r}(\uw, \uu) d\uw.
 \end{align*}
\end{lemma}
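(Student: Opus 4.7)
The plan is to start from the dyadic decomposition \eqref{N2dyadic} of $N_2(P,\delta)$. The contribution from parameter ranges with $V<1$, i.e.\ $(Y,W)\in\mathcal C_\delta^-$, has already been shown in Section~\ref{sec:V<1} to be $O(P^{s-4-(s-8)/3+\ve})$, which is absorbed into the error term of \eqref{N2maxbound}. It therefore suffices to bound the sum in \eqref{N2dyadic} under the additional condition $V\geq 1$, in which case the dyadic parameters $(K,D,C,R,Y,W)$ range over $\mathcal Q_\delta$.

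For such parameter ranges, and for any auxiliary integer $\mathfrak P=\mathfrak P(d,\uc,\uF)\in\NN$ with $d\mid \mathfrak P$, I would decompose each modulus $q$ uniquely as $q=q_1r$, with $r=\prod_{p\mid \mathfrak P}p^{\nu_p(q)}$ the $\mathfrak P$-part of $q$ and $q_1=q/r$ coprime to $\mathfrak P$. Since $d\mid \mathfrak P$ and $d\mid q$ force $d\mid r$, we have $r\mid \mathfrak P^\infty$ with $D\ll r\ll Y$; and since $\gcd(q_1,r/d)=1$, the original condition $\gcd(q/d,k)=1$ splits cleanly into $\gcd(q_1,k)=1$ and $\gcd(r/d,k)=1$.

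Applying the multiplicativity relation \eqref{eq:Sqmult} with the coprime factorisations $q_1\cdot r=q$ and $1\cdot d=d$ (valid because $d\mid r$ and $\gcd(q_1,r)=1$) then gives
\[
S_{q,d\uc}(\uu)=S_{q_1,\uc}(\uu)\,S_{r,d\uc}(\uu).
\]
I would insert this into \eqref{N2dyadic}, introduce a further dyadic parameter $R$ for $r\asymp R$ (so $q_1\asymp Y/R$), and reorganise the sums so that $r,k,d,\uc,\uu$ become the outer variables, while the $q_1$-sum (with combined condition $\gcd(q_1,k\mathfrak P)=1$) and the $\uw$-integral are bundled together as the inner quantity. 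By construction this inner object is precisely $\Sigma(r,Y/R,W,\mathfrak P;k,d,\uc,\uu)$, and pulling $r^{-s}|S_{r,d\uc}(\uu)|$ outside the absolute value, applying the triangle inequality to the outer variables, and absorbing the $O(\log P)$ factor from the dyadic decomposition of $R$ into $P^\ve$ yields \eqref{N2maxbound}.

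The main subtlety is bookkeeping: one must verify that the new constraints $d\mid r$, $r\mid \mathfrak P^\infty$, $\gcd(r/d,k)=1$, and $\gcd(q_1,k\mathfrak P)=1$ together recover exactly the original conditions $d\mid q$ and $\gcd(q/d,k)=1$ under the unique decomposition $q=q_1r$, and that the inner sum is precisely captured by $\Sigma$---only after taking the absolute value, so that cancellations among the $S_{q_1,\uc}(\uu)$ as $q_1$ varies (to be exploited in later sections, e.g.\ via Lemma~\ref{hypo:GRH} under GLH in the case $s=9$) are preserved. No analytic input is required here; the substantive analytic work is deferred to the estimation of $\Sigma$ itself.
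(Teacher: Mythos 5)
Your proposal is correct and follows essentially the same route as the paper: absorb the $V<1$ contribution (already bounded in Section~\ref{sec:V<1}) into the error term, uniquely decompose $q=q_1r$ into a $\mathfrak P$-part $r$ and a coprime part $q_1$, invoke the multiplicativity \eqref{eq:Sqmult} to factor $S_{q,d\uc}(\uu)=S_{q_1,\uc}(\uu)S_{r,d\uc}(\uu)$, and introduce a dyadic parameter $R$ for the $r$-sum so that the inner $q_1$-sum and $\uw$-integral become $\Sigma(r,Y/R,W,\mathfrak P;k,d,\uc,\uu)$. The only substantive check you add beyond the paper's terse proof — and it is a worthwhile one — is verifying that the condition $\gcd(q/d,k)=1$ together with $d\mid\mathfrak P$ guarantees every prime of $q$ lies either in $r$ (if it divides $\mathfrak P$) or is coprime to $\mathfrak P k$ (so it lands in $q_1$), which is exactly what makes the decomposition cover the whole sum; the paper states the decomposition without spelling this out.
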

\begin{proof}  

Using \eqref{N2dyadic} and \eqref{N2split}, we see that $N_{2,1}(P, \delta)$ (from contributions with $\uc $ bad and $V\geq 1$) can be bounded by 
 \begin{align*}
\ll P^{\ve}\max_{\mathcal{Q}_\delta}\sum_{k\asymp K}\sum_{d\asymp D}\sum_{\substack{|\uc| \asymp C\\ \uc \text{ bad}}}\sum_{\substack{\uu \in \ZZ^s\\ |\uu|\leq V}} N_2(Y, W; k,d, \uc, \uu),
 \end{align*}
 where 
 \begin{align*}
 N_2(Y, W; k, d, \uc, \uu):=\sum_{\substack{d\mid q\asymp Y\\  \gcd(q/d, k)=1}} q^{-s}S_{q, d\uc}(\uu) \int_{|\uw|\asymp W}p_{2, dk \uc, k, q}(\uw) I_{q}(\uw, \uu)d\uw.
 \end{align*}
 Here we again include the condition $\uw\in \mathfrak m_q(\delta)$ in the implicit constants in $|\uw|\asymp W$. 
We split $q=q_1r$, where $\gcd(q_1,\mathfrak{P}k)=1$ and $r\mid \mathfrak{P}^\infty$. Using $d\mid \mathfrak P$ and multiplicativity of the exponential sums in Lemma \ref{lem:multiplicative}, we can write $S_{q, d\uc}(\uu)=S_{r, d\uc}(\uu)S_{q_1, \uc}(\uu)$ with $(q_1,\mathfrak P)=1$ and $r\mid \mathfrak P^\infty$. Since $d\mid q$, we must have $d\mid r$ and the result follows by splitting $r$ into dyadic ranges $r\asymp R$ with $D\ll R\ll Y/K$. This completes the proof of \eqref{N2maxbound} when $i=1$. The proof for $i=2$ follows the same way by adjusting the conditions on $\uc$. 
 \end{proof}
 
In the case of $N_{2,1}$ we further split the sum over $\uu$ in on the right hand of \eqref{N2maxbound} into two cases:
\begin{equation}\label{E-one}
\sum_{\substack{|\uu|\leq V}} (\cdots)
=\sum_{\substack{|\uu|\leq V\\ Q_\uc^*(\uu')\neq 0\\ \textrm{ or }((S^{-1})^T\uu)_s\neq 0}}(\cdots)+\sum_{\substack{|\uu|\leq V\\ Q_\uc^*(\uu')=((S^{-1})^T\uu)_s= 0}}(\cdots).
\end{equation}
We define \(E_{1,1}(P, \delta)\) and \(E_{1,2}(P, \delta)\), corresponding to contributions to the right hand side in \eqref{N2maxbound} for bad $\uc$, from each of the two terms on the right hand side above. Note that when $\uc$ is bad, we have $\lambda_{\uc}=0$.

When  we are considering $N_{2,2}$, we split the sum over $\uu$ on the right hand side of \eqref{N2maxbound} into two cases in a way that depends on the parity of $s$. For odd $s$, we put
\begin{equation}\label{E-two}
\sum_{\substack{|\uu|\leq V}}(\cdots)=\sum_{\substack{|\uu|\leq V\\ F_\uc^*(\uu)\neq 0}}(\cdots)+\sum_{\substack{|\uu|\leq V\\ F_\uc^*(\uu)= 0}}(\cdots).
\end{equation}
We then define sums \(E_{2,1}(P, \delta)\) and \(E_{2,2}(P, \delta)\), corresponding to the contribution from the two terms on the right hand side above to the right hand side of \eqref{N2maxbound} for good $ \uc$, that is for $N_{2,2}$.
For even $s$,
we  instead split the sum over $\uu$ on the right hand side of \eqref{N2maxbound} into 
\begin{equation}\label{eq:123}
\sum_{\substack{\uu\in\ZZ^s\\ |\uu|\leq V}}(\cdots)=\sum_{\substack{\uu\in\ZZ^s\\ |\uu|\leq V\\ F_\uc^*(\uu)\neq\square }}(\cdots)+\sum_{\substack{\uu\in\ZZ^s\\ |\uu|\leq V\\ F_\uc^*(\uu)=\square }}(\cdots).
\end{equation}
 When $s$ is even, then we define the sums \(E_{2,1}(P, \delta)\) and \(E_{2,2}(P, \delta)\) to be the contribution to the bound \eqref{N2maxbound} for $N_{2,2}$, from the first and second terms on the right hand side of \eqref{eq:123} respectively.

We shall obtain the following estimates for $N_{2,i}(P, \delta)$ for $i=0,1,2$. 
\begin{lemma}\label{N2splitbounds}
    Let $\ve>0$ and adopt the notation defined above, so that 
$
N_{2,i}\ll E_{i,1}+E_{i,2}$ for $i=1,2$. Then, for sufficiently small $\delta$ depending on $\ve$ and $s$, we have
    \begin{enumerate}
        \item \label{N20bounds}
       $ N_{2,0}(P, \delta)\ll P^{s-4-(s-8)/3+\varepsilon}$,

    \item \label{N21bounds} $\max_{i=1,2}E_{1,i}(P, \delta)\ll\begin{cases}
      P^{(s-4)-(s-9)/3+\ve} & 2\mid s,\\
      P^{s-4-(s-8)/3+\ve} & 2\nmid s,
    \end{cases}$
    \item \label{N22bounds} $ \max_{i=1,2}E_{2,i}(P, \delta)\ll \begin{cases}
        P^{s-4-(s-10)/3-1/6+\ve} & 2\mid s,\\
        P^{s-4-(s-9)/3-1/15+\ve} & 2\nmid s, \text{ under GLH},\\
        P^{s-4-(s-11)/3-1/15+\ve} & 2\nmid s.
    \end{cases}$
    \end{enumerate}
\end{lemma}

Combining \eqref{N2split} and Lemma \ref{N2splitbounds}, we establish Lemma~\ref{lem:Ni-bounds}\eqref{new-delta-part} for the term $N_2(P, \delta)$ defined in \eqref{eq:N2def}.

In the following sections, we shall first prove Lemma \ref{N2splitbounds}\eqref{N20bounds} in Section~\ref{sec:V<1}. Then we provide some preliminary results in Section~\ref{n2prelim} and use them to prove Lemma \ref{N2splitbounds}\eqref{N21bounds} in Section~\ref{sec:bad-c} and Lemma \ref{N2splitbounds}\eqref{N22bounds} in Section~\ref{sec:good-pairs-unconditional} for even $s$ and Section~\ref{sec:good-pairs-GLH} for odd $s$.

\subsection{Contribution to $N_{2,0}(P,\delta)$ from $\uu=\vecnull$}\label{sec:V<1}
We consider $N_{2,0}(P, \delta)$ in \eqref{N2split} from $V<1$, where only the term $\uu=\vecnull$ contributes.
Recall from the comments preceding \eqref{eq:V=0cond} that  
the condition $V<1$ restricts the pairs $(Y, W)$ appearing in \eqref{N2dyadic} to the set $\mathcal C_\delta^-$.

Therefore, the contribution from $V<1$ terms on the right side of \eqref{N2dyadic} can be bounded by 
\begin{equation}\label{V<1}
\begin{split}
 \max_{\substack{KDC\asymp Q^{1/2}\\ D\ll Y\ll Q/K\\ (Y, W)\in \mathcal C_\delta^-}} Y^{-s} \sum_{k\asymp K, d\asymp D}\sum_{|\uc| \asymp C}\sum_{q\asymp Y}		|S_{q,d\uc}(\vecnull)|\\ \times \int_{|\uw|\asymp W}		
			|p_{2,\ur,k,q}(\uw) I_q(\uw,\vecnull)|
			\,d\uw.
\end{split}
\end{equation}
The integral $|I_q(\uw,\vecnull)|$ can be estimated by applying Lemma~\ref{p1integral} together with the trivial bound $(1+P^2W\lambda_\uc)^{-1/2}\leq 1$. The exponential sum $|S_{q,d\uc}(\vecnull)|$ can be estimated using the first inequality in Lemma~\ref{weakbound}. The sum in \eqref{V<1} can therefore be bounded by
\begin{equation*}
\begin{split}
&\ll W Q^{-3/2}P^{s} (1+P^2W)^{-(s-1)/2}\sum_{\substack{q\asymp Y\\ d\asymp D, d\mid q\\ \uc\asymp C, k \asymp K}}q^{-s}|S_{q, d\uc}(\mathbf{0})|\\
&\ll W Q^{-3/2}P^{s}(1+P^2W)^{-(s-1)/2}\sum_{\substack{d\asymp D\\ k\asymp K}}\sum_{\substack{q\asymp Y\\ d\mid q}}\sum_{\uc\asymp C}q^{-s}dq^{s/2+1}\gcd \big(\frac{q}{d},\det M_{\uc}\big)^{1/2}\\
&\ll W Q^{-3/2}P^{s}(1+P^2W)^{-(s-1)/2}\sum_{\substack{d\asymp D\\ k\asymp K}}\sum_{\substack{q\asymp Y\\ d\mid q}}q^{-s}q^{s/2+1}d\sum_{x\mid \frac{q}{d}}x^{1/2}\sum_{\substack{\uc\asymp C\\ x\mid \det(M_\uc)}}1\\
& \ll Y^\varepsilon WQ^{-3/2}P^{s}(1+P^2W)^{-(s-1)/2}Y^{-s/2+1}D\sum_{\substack{d\asymp D\\ k\asymp K}}\sum_{\substack{q\asymp Y\\ d\mid q}} \sum_{\substack{x\mid \frac{q}{d}}}C(1+\frac{C}{x^{1/2}})x^{1/2},\label{1235,}
\end{split}
\end{equation*}
where we used \eqref{eq:B1} in the last inequality. It follows that \eqref{V<1} is bounded by 
\begin{align}\nonumber
& \ll P^\varepsilon \max_{\substack{KDC\asymp Q^{1/2}\\ D\ll Y\ll Q/K\\ (Y, W)\in \mathcal C_\delta^-}}  WQ^{-3/2}P^{s}(1+P^2W)^{-(s-1)/2}DKY^{-s/2+1}Y\\
\nonumber&\hspace{5.5cm}\times\left(Y^{1/2}D^{-1/2}C+C^2\right)\\
\nonumber
& \ll P^\varepsilon  \max_{\substack{DC\ll Q^{1/2}\\ D\ll Y\ll Q\\ (Y, W)\in \mathcal C_\delta^-}}   WQ^{-1}P^{s}(1+P^2W)^{-(s-1)/2}Y^{-s/2+2}\\ &\hspace{5.5cm}\times\left(Y^{1/2}D^{-1/2}+C\right).\label{1235}
\end{align}
To estimate the maximal value of this expression for $(Y, W)\in \mathcal C_\delta^-$, we consider the case $Y\ll Q^{1/2-\delta}$ and $Y\gg Q^{1/2-\delta}$ separately. We also choose $\delta$ sufficiently small (depending on $\ve$ and $s$). 
If $Y\ll Q^{1/2-\delta}$, then $\mathcal C_\delta^-$ implies that $P^2W\gg Q^{-\delta}$. As a result, the maximal in \eqref{1235} is achieved when $W\asymp Y^{-1}Q^{-1-\delta}$ is the smallest (as $s\geq 2$), which hands us the bound (when $\delta$ is sufficiently small)
\begin{align*}
&\ll Y^{-1}Q^{-1} Q^{-	1}P^{s+\ve}(1+Q^{1/2}/Y)^{-(s-1)/2}Y^{-s/2+2}(Y^{1/2}D^{-1/2}+C)\\
&\ll P^{s+\ve}Y^{1/2}Q^{-s/4-7/4}(Y^{1/2}D^{-1/2}+C).
\end{align*}
Under the conditions $Y\ll Q^{1/2-\delta}$ and $DC\ll Q^{1/2}$, the maximal is achieved when $Y\asymp Q^{1/2-\delta},C\asymp Q^{1/2}$. Therefore, this contribution to \eqref{1235} can be estimated by 
\begin{align*}
    \ll P^{s+\ve}Q^{-s/4-1}\ll P^{s-(s+4)/3+\ve}\ll P^{s-4-(s-8)/3+\ve}
\end{align*}
when $\delta$ is sufficiently small.
If $Y\gg Q^{1/2-\delta}$,  then \eqref{1235} is maximal when $W\asymp Q^{-3/2}$ and $Y\asymp Q^{1/2-\delta}$ or $W\asymp Y^{-1}P^{-1}$ and $Y\asymp P$, whose contributes to  \eqref{1235} can be bounded by 
\begin{align*}
 &Q^{-5/2}Q^{-s/4+1}P^{s+\ve}(Q^{1/4}+Q^{1/2})+P^{-1}P^{-1} Q^{-1} P^{s+\ve} P^{-s/2+2} (P^{1/2}+Q^{1/2})\\
 &\ll Q^{-s/4-1}P^{s+\ve}+ Q^{-1/2} P^{s/2+\ve}\ll  P^{s-4-(s-8)/3+\ve},
\end{align*}
for $s\geq 9$ when $\delta$ is sufficiently small. We have thus established Lemma \ref{N2splitbounds}\eqref{N20bounds}.

\section{Auxiliary estimates for averages of
\texorpdfstring{$|S_{r,d\uc}|\times \Sigma$}{|S\{r,dc\}|×Σ}.}
\label{n2prelim}
Using Lemma \ref{lem:N2maxbound}, it is enough to consider averages of $S_{r,d\uc}$ and $\Sigma$. 
In this section we give upper bounds for these averages using the estimates for exponential sums and integrals from sections~\ref{sec:exp integral} and~\ref{sec:exp sum}. We provide these upper bounds together with the counting results from Section~\ref{sec:counting} for preparation of Lemma \ref{N2splitbounds}\eqref{N22bounds}, which  will ultimately suffice to prove Lemma~\ref{lem:Ni-bounds}\eqref{new-delta-part}. 

\subsection{Estimates for $\Sigma$}

We will choose an integer parameter $\mathfrak P$ differently depending on whether $\uc$ is a good pair or a bad pair, in order to estimate $\Sigma(r, Y_1,W, \mathfrak P;k,d, \uc, \uu)$.

\begin{lemma}[Type I primes for good $\uc$ and good primes for bad $\uc$]\label{q1sum}
Under the notation of Lemma~\ref{lem:N2maxbound}, we have the following:
\begin{enumerate}
\item \label{q1sumgoodc}Let $\uc$ is good and let $\mathfrak P=dD_F\det M_\uc$. Then unconditionally we have the following bound
\begin{align}
\nonumber
&\Sigma(r, Y_1,W, \mathfrak P; k, d, \uc, \uu)\\
\nonumber
& \ll 
WQ^{-3/2} P^{s+\ve}(1+P^2W)^{-(s-1)/2}(1+P^2W\lambda_\uc)^{-1/2}
\\
\label{eq:uncond}
&\quad\times\begin{cases}
Y_1^{-s/2+3/2} & 2\nmid s,
\\
Y_1^{-s/2+1}&2\mid s, F_\uc^*(\uu)\neq 0,
\\Y_1^{-s/2+2}
& 2 \mid s,  F_\uc^*(\uu)= 0.
\end{cases} 
\end{align}
If we further assume the generalized Lindel\"of hypothesis for Dirichlet L-functions, then for $2\nmid s$ and $F_\uc^*(\uu)\not=\square$, we may obtain
\begin{align}\label{eq:condi}
&\quad\Sigma(r, Y_1,W, \mathfrak P; k, d, \uc, \uu)\\  &\ll
WQ^{-3/2} P^{s+\ve}(1+P^2W)^{-(s-1)/2}(1+P^2W\lambda_\uc)^{-1/2}
Y_1^{-s/2+1}.\nonumber
\end{align}
\item \label{q1sumbadc}Suppose $\uc$ is bad and let $\mathfrak P=dD_F$. Then
we have
\begin{align*}
\MoveEqLeft\Sigma(r, Y_1,W, \mathfrak P; k, d, \uc, \uu)\\ &\ll 
WQ^{-3/2} P^{s+\ve}(1+P^2W)^{-(s-1)/2}
Y_1^{-s/2+3/2+\mathds 1_{2\mid s}/2},
\end{align*}if $Q_\uc^*(\uu')\not =0$ or $((S^{-1})^T\uu)_s\neq 0$, 
and otherwise, 
\begin{align*}
\MoveEqLeft\Sigma(r, Y_1,W, \mathfrak P; k, d, \uc, \uu)
\\ &\ll 
WQ^{-3/2} P^{s+\ve}(1+P^2W)^{-(s-1)/2}
Y_1^{-s/2+5/2+\mathds 1_{2\mid s}/2}.
\end{align*}
\end{enumerate}
\end{lemma}

\begin{proof} We write $\Sigma(r, Y_1)=\Sigma(r, Y_1, W, \mathfrak P;k,d, \uc, \uu)$ for short.

(1) Let $\uc$ be a good pair.
 Using Lemma~\ref{p1integral} with $j=0$, we have 
\begin{align}
\nonumber
 &|\Sigma(r,Y_1)|
 \\
 \nonumber
& \ll \sum_{\substack{q_1\asymp Y_1\\ (q_1, k\mathfrak P)=1}}q_1^{-s} |S_{q_1, \uc}(\uu)| \int_{|\uw| \asymp W}|p_{2, dk\uc, q_1r}(\uw)I_{q_1r}(\uw, \uu)|d\uw\\\nonumber
&\ll  WQ^{-3/2} P^{s}(1+P^2W)^{-(s-1)/2}(1+P^2W\lambda_\uc)^{-1/2}
\\&\qquad \times \sum_{\substack{q_1\asymp Y_1\\ (q_1, k\mathfrak P)=1}}q_1^{-s} |S_{q_1, \uc}(\uu)|.
\label{sigma1ry}
\end{align}
Using Lemma~\ref{lem:type 1}, for $2\mid s$ we have
\begin{align*}
\sum_{\substack{q_1\asymp Y_1\\ (q_1, k\mathfrak P)=1}}q_1^{-s} |S_{q_1, \uc}(\uu)|
&\ll \sum_{\substack{q_1\asymp Y_1\\ (q_1, k\mathfrak P)=1}} q_1^{-s/2}(F_\uc^*(\uu), q_1)\ll Y_1^{-s/2+1+\mathds 1_{F_\uc^*(\uu)=0}+\ve}.
\end{align*}
Similarly, for $2\nmid s, F_\uc^*(\uu)\not=0$ we have
\begin{align*}
\sum_{\substack{q_1\asymp Y_1\\ (q_1, k\mathfrak P)=1}}q_1^{-s} |S_{q_1, \uc}(\uu)|\ll  \sum_{\substack{q_1\asymp Y\\ (q_1, k\mathfrak P)=1}}Y_1^{-s/2+1/2}(F_\uc^*(\uu), q_1)^{1/2}\ll Y_1^{-s/2+3/2+\ve}.
\end{align*}
If $2\nmid s$ and $F_\uc^*(\uu)=0$, we see by Lemma~\ref{lem:type 1} that 
$S_{p^k, \uc}(\uu)=0$ for $2\nmid k$  and 
$S_{p^k, \uc}(\uu)\ll p^{ks/2+k}$ for $2\mid k$. Therefore,
\begin{align*}
\sum_{\substack{q_1\asymp Y_1\\ (q_1, k\mathfrak P)=1}}q_1^{-s} |S_{q_1, \uc}(\uu)|\ll   \sum_{\substack{q_1\asymp Y_1\\ p\mid q_1\Rightarrow p^2\mid q_1 \\(q_1, k\mathfrak P)=1}}Y_1^{-s/2+1+\ve}\ll P^{\ve} Y_1^{-s/2+3/2+\ve}.
\end{align*}
This completes the proof of the unconditional bound \eqref{eq:uncond} for $\Sigma(r, Y_1)$.
 
When $s$ is odd and $F_\uc^*(\uu)\not=\square$, we can improve the estimate for $\Sigma(r, Y_1)$ by using cancellations in the sum  $\sum_{q_1}S_{q_1, \uc}(\uu)$, using Lemma~\ref{hypo:GRH}. To be precise, we use integration by parts to write 
\begin{equation}
\begin{split}
 \Sigma(r, Y_1)={}&\int_{|\uw|\asymp W} 			\Big(\sum_{\substack{Y_1\leq q'\leq q_1\\ (q', k\mathfrak P)=1}}\left(S_{q',\uc}(\uu)\right)\Big)	q_1^{-s} p_{2}(q_1r,\uw) I_{q_1r}(\uw,\uu)\Big\vert_{Y_1}^{2Y_1}
\,d\uw \\ &
-\int_{|\uw|\asymp W} 			\int_{Y_1\leq q_1\leq 2Y_1}\Big(\sum_{\substack{Y_1\leq q'\leq q_1\\ (q',k\mathfrak P)=1}}\left(S_{q',\uc}(\uu)\right)\Big)	\\ &\hspace{4em}
\times \partial_{q_1}\big(q_1^{-s} p_{2}(q_1r,\uw) I_{q_1r}(\uw,\uu)\big)
\,dq_1 d\uw.\label{eqn}
\end{split}
\end{equation}
For any $q_1\ll Y_1$, using Lemma~\ref{hypo:GRH}, we have
\begin{equation}\label{eq:hypobo}
\Big|\sum_{\substack{Y_1\leq q'\leq q_1\\ (q_1, k \mathfrak P)=1}}S_{q',\uc}(\uu)\Big|\ll Y_1^{s/2+1+\ve}.
\end{equation}
The desired bound for the first term on the right hand side of \eqref{eqn} follows upon applying Lemma~\ref{p1integral} to estimate the integral over $\uw$ and combining it with \eqref{eq:hypobo}.
For the second term in \eqref{eqn}, we change the order of integrals and use \eqref{eq:hypobo} to obtain
\begin{equation}
\begin{split}
		\qquad&\int_{Y_1\leq q_1\leq 2Y_1}\Big(\sum_{\substack{Y_1\leq q'\leq q_1\\ (q',k\mathfrak P)=1}}\left(S_{q',\uc}(\uu)\right)\Big)	\\
        &\quad\times\int_{|\uw|\asymp W} 	\partial_{q_1}\big(q_1^{-s} p_{2}(q_1r,\uw)
        I_{q_1r}(\uw,\uu)\big)
\,d\uw\, dq_1\\
&\qquad \ll  Y_1^{\frac{s}{2}+1+\ve}\int_{q_1\asymp Y_1}\Big|    \int_{|\uw|\asymp W} 	\partial_{q_1}\big(q_1^{-s} p_{2}(q_1r,\uw) I_{q_1r}(\uw,\uu)\big)
\,d\uw\Big|\, dq_1.\label{q1w}
\end{split}
\end{equation}
From the chain rule with respect to $q_1$ for a fixed $\uw$, we have
\begin{equation}
\begin{split}\label{12.7}
&\quad \partial_{q_1}\big(q_1^{-s} p_{2}(q_1r,\uw) I_{q_1r}(\uw,\uu)\big)\\&=-q_1^{-s-1} p_{2}(q_1r,\uw) I_{q_1r}(\uw,\uu)+q_1^{-s}I_{q_1r}(\uw,\uu)r\partial_q  p_{2}(q_1r,\uw)\\ & \quad +q_1^{-s} p_{2}(q_1r,\uw)r\partial_{q}I_{q_1r}(\uw,\uu).
\end{split}
\end{equation}
By substituting \eqref{12.7} into \eqref{q1w}, we apply Lemma~\ref{p1integral} to estimate the contribution from the first two terms of \eqref{12.7}, and Lemma~\ref{p1derivative} to estimate the contribution from the final term in \eqref{12.7}. This yields the desired bound for the expression in \eqref{eqn}, under the hypothesis of Lemma~\ref{hypo:GRH}, allowing us to conclude \eqref{eq:condi}.

(2)
When $\uc$ is a bad pair, Lemma~\ref{badc} hands us
\begin{align*}
\MoveEqLeft \sum_{\substack{q_1\asymp Y_1\\ (q_1, dD_F)=1}}q_1^{-s}S_{q_1, \uc}(\uu) \\ & \ll \sum_{q_1\asymp Y_1}
q_1^{-s/2-1/2+\mathds 1_{2\mid s}/2}\gcd(q_1,Q_\uc^*(\uu'))\gcd(q_1,((S^{-1})^T\uu)_s).
\end{align*}
The result follows by summing over $q_1$ depending on whether $Q_\uc^*(\uu')=0$ or $((S^{-1})^T\uu)_s=0$,  along with an application of \eqref{sigma1ry} with $\lambda_\uc=0$.
\end{proof}

\subsection{Averages of $|S_{r,d\uc}|$}

Next, we need to consider the $r$-sum appearing in Lemma~\ref{lem:N2maxbound}. 
An easy bound for the $r$-sum is given in following lemma.\begin{lemma}[Trivial Bound]\label{rsumbound}
Let $A>0$ be some large fixed number. 
For any $d\asymp D, 1\ll R\ll Q$, we have uniformly for $\mathfrak P\ll Q^A$ 
\begin{align*}
\sum_{\substack{d\mid r\asymp R\\ r\mid \mathfrak P^\infty}}|S_{r, d\uc}(\uu)|\ll Q^\ve D^{1/2} R^{s/2+3/2}
\end{align*}
\end{lemma}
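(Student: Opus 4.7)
The plan is to prove this via a direct application of the weaker bound in Lemma~\ref{weakbound} combined with a count of $\mathfrak P$-smooth integers. First I would apply the second inequality of Lemma~\ref{weakbound}, giving
\begin{equation*}
|S_{r, d\uc}(\uu)|\ll d^{1/2} r^{s/2+3/2} \ll D^{1/2} R^{s/2+3/2}
\end{equation*}
uniformly for $r\asymp R, d\asymp D$ with $d\mid r$. This reduces the lemma to proving
\begin{equation*}
N(R,\mathfrak P):=\#\{r\leq 2R : r\mid \mathfrak P^\infty\}\ll Q^\varepsilon.
\end{equation*}

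To establish this, I would invoke Rankin's trick: for any $\eta>0$,
\begin{equation*}
N(R,\mathfrak P)\leq (2R)^\eta\sum_{r\mid \mathfrak P^\infty} r^{-\eta}= (2R)^\eta\prod_{p\mid \mathfrak P}(1-p^{-\eta})^{-1}\leq (2R)^\eta(1-2^{-\eta})^{-\omega(\mathfrak P)}.
\end{equation*}
Next I would use the Hardy--Ramanujan bound $\omega(n)\ll \log n /\log\log n$. Since $\mathfrak P\ll Q^A$ with $A$ fixed, this gives $\omega(\mathfrak P)\ll \log Q/\log\log Q$, and hence
\begin{equation*}
(1-2^{-\eta})^{-\omega(\mathfrak P)}\leq \exp\!\left( \tfrac{C_\eta \log Q}{\log\log Q}\right) = Q^{C_\eta/\log\log Q},
\end{equation*}
which for fixed $\eta$ is $\ll_\eta Q^{\varepsilon/2}$ once $Q$ is large. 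Choosing $\eta$ small enough relative to $\varepsilon$ and using $R\ll Q$ ensures $(2R)^\eta\ll Q^{\varepsilon/2}$ as well, giving $N(R,\mathfrak P)\ll Q^\varepsilon$. Multiplying by the per-term bound completes the proof.

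The argument is routine; the only mild subtlety to watch out for is that the trivial estimate $\omega(n) = O(\log n)$ would only yield $N(R,\mathfrak P) \leq Q^{O(1)}$, so the Hardy--Ramanujan refinement is genuinely needed. No cancellation in the sum over $r$ is exploited; all work comes from the sparsity of $\mathfrak P$-smooth integers together with the pointwise exponential sum bound.
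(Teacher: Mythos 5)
Your argument is correct and is the same as the paper's: apply the second bound of Lemma~\ref{weakbound} to get $|S_{r,d\uc}(\uu)|\ll D^{1/2}R^{s/2+3/2}$ pointwise, and observe that there are $O(Q^\varepsilon)$ values of $r\asymp R$ with $r\mid\mathfrak P^\infty$. The paper simply asserts the latter count; you supply a standard Rankin-trick justification for it, which is fine. One small caveat on your closing remark: the Hardy--Ramanujan refinement $\omega(n)\ll\log n/\log\log n$ is needed for \emph{your particular} chain of inequalities, but it is not strictly necessary for the count itself -- for instance one can write $\prod_{p\mid\mathfrak P}(1-p^{-\eta})^{-1}\ll_\eta 2^{\omega(\mathfrak P)}\leq d(\mathfrak P)\ll_\varepsilon\mathfrak P^\varepsilon\ll Q^{A\varepsilon}$ and combine with $(2R)^\eta\ll Q^\eta$, which hands the same conclusion via the divisor bound rather than the explicit normal-order estimate.
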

\begin{proof}
The lemma follows from Lemma~\ref{weakbound} and that the number of $r\asymp R$ with $r\mid \mathfrak P^\infty$ is $O(Q^\ve)$. 
\end{proof}

\subsubsection{Improved bound for bad $\uc$}

For bad $\uc$, the contributions from good primes are larger in Lemma \ref{q1sum} and our weak bound in Lemma~\ref{rsumbound} is insufficient due the loss of the $d^{1/2}$ factor in Lemma \ref{weakbound},  and we need to use the improved averaged bound from Lemma~\ref{lem:splitting-d}, making use of Lemma \ref{badpairgoodp} for the $d$-sum.

\begin{lemma}
    \label{lem:splitting-d}
Let $\ve>0$. 
    Suppose that $\uc$ is bad. Under the notation of Lemma~\ref{lem:N2maxbound}, for $\delta$ sufficiently small (depending on $\ve$ and $s$) and any choice of $(K, D,C,R, Y, W)\in \mathcal Q_\delta$, we have  
\begin{equation}
\begin{split}
&WQ^{-3/2} P^{s} (1+P^2W)^{(1-s)/2}(Y/R)^{-s/2+3/2+\mathds 1_{2\mid s}/2}\\
&\quad\qquad\times \quad \quad \sum_{d\asymp D}\sum_{k\asymp K}\sum_{\substack{\uu\in\ZZ^s,  |\uu|\leq V\\ Q_\uc^*(\uu')\not=0 \\\text{ or } ((S^{-1})^T\uu)_s\not=0 }}\sum_{\substack{d\mid r\asymp R\\ r\mid (dD_F)^\infty}}R^{-s}|S_{r, d\uc}(\uu)|
\\&
 \ll
 P^{s-4-(s-8-\mathds 1_{2\mid s})/3+\ve}
 .\label{E21}
 \end{split}
\end{equation}
\end{lemma}

\begin{proof}
We further split the $d$ and $r$-sum into $d=d_1d_2d_3$, where we have $\gcd(d_1d_2,D_F)=1$, the $d_i$ are pairwise co-prime, $d_1d_2$ is square-free with $\gcd(d_1,r/d_1)=1$, $d_2^2\mid r$ and $d_3$ consists of numbers whose prime divisors $p$ satisfies $p\mid D_F$ or $p^2\mid d_3$. We also need to write $r=d_1r_3$, with $d_2^2d_3\mid r_3$ and $r_3\mid (d_2d_3D_F)^\infty$. 
We will re-order the sums as 
\begin{equation*}
\sum_{d_2}\sum_{d_3}\sum_{r_3}\sum_{\uu}\sum_{d_1}(\cdots),
\end{equation*}
and split the sums $d_i\asymp D_i, r_3\asymp R_3$ into dyadic ranges.
From Lemma~\ref{badpairgoodp}, we see that for $\uu\not=\vecnull$
\begin{equation}\label{eq:10001}
\sum_{d_1\asymp D_1}|S_{d_1,d_1\uc}(\uu)|\ll D_1^{s/2+3/2}\sum_{d_1\asymp D_1}\gcd(d_1,\uu)^{1/2}\ll D_1^{s/2+5/2+\ve},
\end{equation}
and from Lemma~\ref{weakbound}, we have 
\begin{equation}\label{eq:10002}
|S_{r_3,d_2d_3\uc}(\uu)|\ll D_2D_3 R_3^{s/2+1}\gcd(r_3/d_2d_3,((S^{-1})^t\uu)_s))^{1/2}.
\end{equation} 
Moreover, there are $O(R_3^\ve)$ different values of $r_3$ for fixed $d_2,d_3$ and there are $O(D_2)$ numbers of $d_2\asymp D_2$ and $O(D_3^{1/2+\varepsilon})$ numbers of $d_3\asymp D_3$ for any $\varepsilon>0$. From \eqref{eq:10001} and \eqref{eq:10002} we have 
\begin{align*}
&\sum_{d_2}\sum_{d_3}\sum_{r_3}\sum_{\uu}\sum_{d_1}|S_{d_1r_1r_3,d_1d_2d_3\uc}(\uu)|\\&\ll P^{\ve}  D_1^{s/2+5/2}D_2D_3R_3^{s/2+1}\sum_{d_2,d_3}\sum_{r_3}\sum_{x_1\mid \frac{r_3}{d_2d_3}}\sum_{\substack{0\neq |\uu|\leq V\\ x_1\mid ((S^{-1})^T \uu)_s}}x_1^{1/2}.
\end{align*}
After an application of \eqref{eq:Vsum1} to estimate the $\uu$ sum, this  is 
\begin{align*}
&\ll P^{\ve} D_1^{s/2+5/2}D_2D_3R_3^{s/2+1}\sum_{d_2,d_3}\sum_{r_3}\sum_{x_1\mid r_3/d_2d_3}(x_1^{1/2}V^{s-1}+V^s)\\
& \ll P^{\ve}D_1^{s/2+5/2}D_2^{2}D_3^{3/2}R_3^{s/2+1}V^{s-1}(R_3^{1/2}D_2^{-1/2}D_3^{-1/2}+V)\\
&\ll P^{\ve}  R^{s/2+3/2} D_1D_2^{2}D_3^{3/2}V^{s-1}(D_2^{-1/2}D_3^{-1/2}+VR_3^{-1/2}).
\end{align*}
Substituting back in \eqref{E21}, the total contribution is
\begin{align*}
&\ll P^{\ve}Y^{-s/2+3/2+\mathds{1}_{2\mid s}/2}R^{-\mathds{1}_{2\mid s}/2}WQ^{-3/2}P^{s}(1+P^2W)^{(1-s)/2}\\ & \quad \quad \quad \quad \quad \ \ \times D_1D_2^{2}D_3^{3/2}KV^{s-1}(D_2^{-1/2}D_3^{-1/2}+VR_3^{-1/2})\\
&\ll P^{\ve} Y^{-s/2+3/2+\mathds{1}_{2\mid s}/2}R^{-\mathds{1}_{2\mid s}/2}WQ^{-1}P^{s}(1+P^2W)^{(1-s)/2}\\ & \quad \quad \quad \quad \quad \ \ \times D_2D_3^{1/2}V^{s-1}(D_2^{-1/2}D_3^{-1/2}+VR_3^{-1/2}),
\end{align*}
since $D_1D_2D_3K\asymp Q^{1/2}$.
Note that as $d_2^2d_3\mid r_3$, and also $d_2^2\mid r$ and $d_2\mid d$, this contribution is
\begin{align*}
&\ll P^{\ve} \max_{ \substack{D_2 \geq 1\\ D_2\leq R^{1/2}\\D_2 \leq D}}Y^{-s/2+3/2+\mathds{1}_{2\mid s}/2}R^{-\mathds{1}_{2\mid s}/2}WQ^{-1}P^{s}\\ & \quad \quad \quad \quad \quad \ \ \times (1+P^2W)^{(1-s)/2}V^{s-1}(D_2^{1/2}+V).
\end{align*}
When $2\nmid s$, the contribution is maximized when $K=D_1=D_3=1$ and $D_2\asymp Q^{1/2}$, and it is bounded by
\begin{align*}
&\ll P^{\ve}Y^{-s/2+3/2}WQ^{-1}P^{s}(1+P^2W)^{(1-s)/2}V^{s-1}(Q^{1/4}+V).
\end{align*}
The maximum value is reached when $W=Q^{-1/2+\delta}Y^{-1} $, and therefore $V=Q^{1/4+\ve+\delta}$. Therefore, when $\delta$ is sufficiently small (depending on $\ve$ and $s$) this contribution is
\begin{align*}
&\ll P^{\ve}Y^{-s/2+1/2}Q^{-3/2}P^{s}(Q/Y)^{(1-s)/2}Q^{s/4}
\\&\ll P^{\ve}P^{s}Q^{-s/4-1}\ll P^{s-4-(s-8)/3+\ve},
\end{align*}
which completes the case for odd $s$. When $2\mid s $, our contribution is
\begin{align*}
    &\ll P^{\ve} \max_{ D_2}Y^{-s/2+2}R^{-1/2}WQ^{-1}P^{s}(1+P^2W)^{(1-s)/2}V^{s-1}(D_2^{1/2}+V)\\
    &\ll P^{\ve} Y^{-s/2+2}R^{-1/2}WQ^{-1+1/4}P^{s}(1+P^2W)^{(1-s)/2}V^{s-1},
\end{align*}
upon bounding $D_2^{1/2}+V\ll Q^{1/4+\ve+\delta}$ and using that $\delta$ is sufficiently small. We next note that $d\mid r_3$ and $Y\leq Q/K$ therefore $Y/R\leq Q/(KD)\leq Q^{1/2}$.  Upon further substituting $V=YP^{-1+\ve}(1+P^2W)$, the expression is maximum when $W=Y^{-1}Q^{-1/2+\delta}$ and therefore $V\asymp Q^{1/4+\ve+\delta}$.
By choosing $\delta$ sufficiently small in terms of $\ve$ and $s$, we see that the above is 
\begin{align*}
    &\ll P^{\ve} Y^{-s/2+3/2}WQ^{-1/2}P^{s}(1+P^2W)^{(1-s)/2}V^{s-1},\\
    &\ll P^{\ve} Y^{-s/2+1/2}Q^{-1}P^{s}(Q/Y)^{(1-s)/2}Q^{s/4-1/4}
    \\&\ll P^{s+\ve}Q^{-s/4-3/4}\ll P^{s-4-(s-9)/3+\ve}
\end{align*}
which completes the proof for even $s$.
\end{proof}

\subsubsection{Improved bound for good $\uc$}

For good $\uc$, Lemma~\ref{rsumbound} turns out to be insufficient to compensate for our weaker bounds on exponential sums modulo powers of good primes of type II, as well as powers of bad primes. To address this, we combine Lemma~\ref{lem:type II} and Lemma~\ref{weakbound} with Lemma~\ref{counting} to obtain the following improved bound, which takes advantage of the average over $\uu$.
\begin{lemma}\label{cursum}
Let $\ve>0$. Under the notation of Lemma~\ref{lem:N2maxbound}, for $\delta$ sufficiently small (depending on $\ve$ and $s$) any $(K, D,C,R, Y, W)\in \mathcal Q_\delta$, we have  \begin{equation*}
\begin{split}
&Y^{-s/2+1}R^{-s/2-1}WQ^{-3/2}P^{s} (1+P^2W)^{-(s-1)/2}\\
 &\qquad\times \sum_{\substack{d\asymp D\\ k\asymp K}}\sum_{\substack{\uc\in \ZZ^2\\ \mathrm{good, primitive} \\|\uc|\asymp C}}\sum_{\substack{\uu\in \ZZ^s, |\uu|\leq V}}\sum_{\substack{d\mid r\asymp R\\  r\mid (dD_F\det M_\uc)^\infty}}|S_{r, d\uc}(\uu)| (1+P^2W\lambda_\uc)^{-1/2} \\
&\ll P^{s-4-(s-9)/3-1/15+\ve}.
\end{split}
\end{equation*}
\end{lemma}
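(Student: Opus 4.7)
We aim to bound the inner multiple sum
\[
\Sigma := \sum_{\substack{d\asymp D\\ k\asymp K}}\,\sum_{\substack{\uc \in \ZZ^2 \text{ good, primitive}\\|\uc|\asymp C}}\,\sum_{\substack{\uu\in \ZZ^s\\ |\uu|\leq V}}\,\sum_{\substack{r\asymp R,\, d\mid r\\ r\mid (dD_F\det M_\uc)^\infty}}|S_{r, d\uc}(\uu)|\,(1+P^2W\lambda_\uc)^{-1/2},
\]
so that, when multiplied by the prefactor $\mathcal P := Y^{-s/2+1}R^{-s/2-1}WQ^{-3/2}P^{s}(1+P^2W)^{-(s-1)/2}$, the product is $\ll P^{s-4-(s-9)/3-1/15+\ve}$ uniformly for $(K,D,C,R,Y,W)\in \mathcal Q_\delta$. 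The starting point is Lemma~\ref{weakbound}, which yields
\[
|S_{r,d\uc}(\uu)|\ll_{\nF,\ve} D\,r^{s/2+1+\ve}\,x^{1/2}, \qquad x:=\gcd\bigl(r/d,\,((S^{-1})^T\uu)_s,\,\det M_\uc\bigr).
\]
Since $r\mid (dD_F\det M_\uc)^\infty$ allows only $O(Q^\ve)$ integers $r\asymp R$, and the factor $r^{s/2+1}\asymp R^{s/2+1}$ cancels the $R^{-s/2-1}$ inside $\mathcal P$, we reduce to controlling $D\cdot K\cdot x^{1/2}\cdot(\#\uu)\cdot\bigl(\sum_{\uc}(1+P^2W\lambda_\uc)^{-1/2}\bigr)$, summed dyadically over the allowed divisors $x\mid \det M_\uc$.

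The $\uu$-sum is handled by \eqref{eq:Vsum1} (with the $Q_\uc^*(\uu')$ condition discarded), giving
\[
\#\{\uu\in\ZZ^s: |\uu|\leq V,\ x\mid((S^{-1})^T\uu)_s\} \ll V^{s-1}\bigl(1+V/x\bigr),
\]
while the $\uc$-sum is handled by \eqref{eq:B1}:
\[
\sum_{\substack{\uc \text{ good, primitive}\\|\uc|\asymp C,\ x\mid\det M_\uc}} (1+P^2W\lambda_\uc)^{-1/2}\ll x^\ve\, C\bigl(1+C\,x^{-1/2}(1+P^2W)^{-1/2}\bigr).
\]
Combining these with the trivial sums over $d,k$, the $O(Q^\ve)$ divisors $x$, and the restricted values of $r$, and using $V=(Y/P)(1+P^2W)P^\ve$ together with the constraint $KDC\asymp Q^{1/2}$, the product $\mathcal P\cdot\Sigma$ can be expressed purely in terms of $(Y,W,D,x)$ and powers of $Q$.

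The final step is to optimise over $(K,D,C,R,Y,W)\in \mathcal Q_\delta$, splitting into four cases according to whether $V\leq x$ or $V>x$ (governing which term in \eqref{eq:Vsum1} dominates) and whether $C\leq x^{1/2}(1+P^2W)^{1/2}$ or not (governing which term in \eqref{eq:B1} dominates). In each sub-case one uses $V\geq 1$, $D\ll R\ll Y\ll Q/K$, and the definition of $\mathcal C_\delta$ in \eqref{YWcondition}, reducing the resulting expression to a product of powers of $Y$, $W$, $D$, $x$, and $Q$. A direct computation then yields the target exponent $P^{s-4-(s-9)/3-1/15+\ve}$ provided $\delta=\delta(\ve,s)$ is sufficiently small.

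\emph{Main obstacle.} The crux of the argument is the four-way case analysis required to extract the precise saving $1/15$. A naive upper bound in either \eqref{eq:Vsum1} or \eqref{eq:B1} discards the balance between $V,x,C$ and $1+P^2W$, losing the key cancellation; one must instead propagate both minima through the optimisation. The tightest regime is the boundary of $\mathcal Q_\delta$ where $Y\asymp Q$ and $W\asymp Y^{-1}Q^{-1/2+\delta}$, at which all four quantities $V,x,C,1+P^2W$ are simultaneously near-balanced. This parallels the optimisation in the function field setting of \cite[\S 9]{V19}, but over $\QQ$ the $p_{2,\ur,k,q}$-weights grow in non-extreme regions of $\uw$, so we must verify the estimate across every dyadic $(Y,W)\in\mathcal C_\delta$ rather than only at the extreme boundary.
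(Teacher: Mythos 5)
There is a genuine gap. The crux of the paper's proof is a refined decomposition of the modulus which your proposal skips entirely. The paper writes $r=r_2r_3$ with $r_2\mid\det(M_\uc)^\infty$ coprime to $dD_F$ \emph{and free of fifth powers}, while $r_3$ absorbs the remaining (sparse) factors. For the $r_2$ piece one then invokes the Type~II bound, Lemma~\ref{lem:type II}, which controls $|S_{r_2,\uc}(\uu)|$ by $\gcd\bigl(r_2,((S^{-1})^T\uu)_s,\,Q^*_\uc(\uu')\bigr)^{1/2}$ -- crucially involving $Q^*_\uc(\uu')$ -- and one loses no factor of $d$ from this piece. The counting lemma \eqref{eq:Vsum1} is then applied with \emph{both} the $((S^{-1})^T\uu)_s$ and $Q^*_\uc(\uu')$ congruences active, and the fifth-power-free condition is exploited through the elementary observation $\prod_{p\mid x_2}p\gg x_2^{1/4}$. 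The cases $R_2\lessgtr V^2$, not $V\lessgtr x$ and $C\lessgtr x^{1/2}(1+P^2W)^{1/2}$, are what drive the final optimisation, and the exponent $R^{1/5}$ (rather than $R^{1/2}$ or $1$) showing up in the paper's bound is a direct consequence of this structure.

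You instead apply only Lemma~\ref{weakbound} to the full $S_{r,d\uc}(\uu)$ and then explicitly ``discard'' the $Q^*_\uc(\uu')$ condition from \eqref{eq:Vsum1}. This forfeits exactly the mechanism that produces the extra saving $P^{-1/15}$. A direct check at the extreme $W\asymp Y^{-1}Q^{-1/2+\delta}$, $Y\asymp Q$, $V\asymp Q^{1/4}$, using only your bounds $\ll V^{s-1}(1+V/x)$ and \eqref{eq:B1}, yields $\mathcal P\cdot\Sigma\ll P^{s-4-(s-9)/3+\ve}$ but no better; the target $P^{s-4-(s-9)/3-1/15+\ve}$ is missed by precisely $P^{1/15}$, which is the whole content of the lemma (and is what permits $s=9$ under GLH). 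So while your top-level structure (bound the exponential sum, count $\uu$, count $\uc$, optimise over dyadic parameters) is sound, the specific arithmetic inputs you use are too weak, and no amount of case-by-case optimisation can recover the missing factor without the $r_2r_3$ splitting, the Type~II bound, and the fifth-power-free trick.
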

\begin{proof}
For any fixed $k, d, \uc, \uu$, we write $r=r_2r_3$ where $r_2\mid \det(M_{\uc})^\infty$, $(r_2, dD_F)=1$ and $r_2$ is free of fifth powers, while $r_3$ consists of the remaining factors, i.e. $r_3$ consists of numbers whose prime factors divide $dD_F$ and all $5$-full numbers. Note that the condition $d\mid r$ implies $d\mid r_3$.
We further split the $r$-sum into $O(R^\ve)$ sums over $r_2,r_3$ with with $r_i\asymp R_i$ with $R_2R_3\asymp R$, so that it is enough to bound
\begin{multline*}
\mathcal S=Y^{-s/2+1}WQ^{-3/2}P^{s}(1+P^2W)^{-(s-1)/2}\\
\times\sum_{\substack{d\asymp D\\ k\asymp K}}\sum_{\substack{\uc\in \ZZ^2, |\uc|\asymp C\\ \text{primitive}\\ \uc \text{ good} }}\sum_{\substack{\uu\in \ZZ^s, |\uu|\leq V}}\sum_{\substack{d\mid r_3\asymp R_3\\ r_3\mid (dD_F\det M_\uc)^\infty}}\\ \sum_{\substack{d\mid r\asymp R\\  r_2\mid (\det M_\uc)^\infty\\ (r_2, r_3dD_F)=1\\ r_2 \text{ fifth power free}}}(R_2R_3)^{-s/2-1}|S_{r, d\uc}(\uu)| (1+P^2W\lambda_\uc)^{-1/2}.
\end{multline*}
By Lemma \ref{lem:multiplicative}, we write
$S_{r,d\uc}(\uu)=S_{r_2,\uc}(\uu)S_{r_3,d\uc}(\uu)$ and using 
Lemma \ref{lem:type II} and Lemma~\ref{weakbound} we obtain
\begin{align}
|S_{r_2,\uc}(\uu)|&\ll r_2^{1+s/2}\gcd(r_2,((S^{-1})^T\uu)_s,Q^*_\uc(\uu'))^{1/2}\label{eq:sr2}\\
|S_{r_3,d\uc}(\uu)| &\ll dr_3^{s/2+1}\gcd(r_3/d,((S^{-1})^T\uu)_s,\det(M_\uc))^{1/2}\label{eq:sr3}.
\end{align}
 We arrange the order of summation as  
\begin{equation*}
\sum_{k}\sum_{d}\sum_{r_3}\sum_{\text{$\uc$}}\sum_{r_2}\sum_{\uu}(\cdots).\end{equation*}
Combining our bounds in \eqref{eq:sr2} and \eqref{eq:sr3}, we see that 
$$
\sum_{\substack{d\mid r_3\\ r_3\asymp R_3}}\sum_{|\uc|\asymp C}\sum_{r_2\asymp R_2}\sum_{\uu}\sum_{\substack{\uu\in \Z^s\\|\uu|\leq V }}(R_2R_3)^{-s/2-1}|S_{r_2r_3,d\uc}(\uu)|(1+P^2W\lambda_\uc)^{-1/2}
$$
has an upper bound
\begin{align*}
    &\ll D\sum_{\substack{r_3\asymp R_3\\ d\mid r_3}}\sum_{|\uc|\asymp C}(1+P^2W\lambda_\uc)^{-1/2}\sum_{r_2\asymp R_2}\sum_{\substack{\uu\in \Z^s\\|\uu|\leq V}} \gcd(r_2,((S^{-1})^T\uu)_s,Q^*_\uc(\uu'))^{1/2}\\ &\hspace{5.5cm}\times \gcd(r_3/d,((S^{-1})^T\uu)_s,\det(M_\uc))^{1/2},
\end{align*}
and hence
\begin{align}\nonumber
&\sum_{\substack{d\mid r_3\\ r_3\asymp R_3}}\sum_{|\uc|\asymp C}\sum_{r_2\asymp R_2}\sum_{\uu}\sum_{\substack{\uu\in \Z^s\\|\uu|\leq V }}(R_2R_3)^{-s/2-1}|S_{r_2r_3,d\uc}(\uu)|(1+P^2W\lambda_\uc)^{-1/2}\\ 
\nonumber
& 
\ll D\sum_{\substack{d\mid r_3\asymp R_3\\ x_1\mid \frac{r_3}{d}}}\sum_{\substack{|\uc|\asymp C\\ x_1\mid \det(M_\uc) }}(1+P^2W\lambda_\uc)^{-1/2}\sum_{\substack{r_2\asymp R_2\\ x_2\mid r_2}}\sum_{\substack{\uu\in \Z^s, |\uu|\leq V\\x_1x_2\mid ((S^{-1})^T\uu)_s\\x_2\mid Q_\uc^*(\uu') }} (x_1x_2)^{1/2}.
\\
&~\quad\ \;\text{ }\text{$ $}
\label{eq:10}
\end{align}
We apply \eqref{eq:Vsum1} with $q=x_1x_2, d=x_2$ to the inner sum of \eqref{eq:10}, 
along with the observation that $\prod_{p\mid x_2}p\gg x_2^{1/4}$, since $x_2$ is free of fifth powers, to obtain
\begin{equation*}
\begin{split}
\sum_{\substack{\uu\in \Z^s, |\uu|\leq V\\x_1x_2\mid ((S^{-1})^T\uu)_s\\x_2\mid Q_\uc^*(\uu') }} 1&\ll V^{s-2}\min\Big(V+\frac{V^2}{x_1x_2}, (1+\frac{V}{x_2^{1/4}})^2\Big)\\ &\ll V^{s-2}+\frac{V^s}{(x_1x_2)^{1/2}}+\min\Big(V^{s-1}, \frac{V^s}{x_2^{1/2}}\Big).
\end{split}
\end{equation*}
We now consider the cases $R_2\leq V^2$ and $R_2\geq V^2$ separately. If $R_2\leq V^2$, we apply \eqref{eq:B1} with $N=P^2W$ to estimate the sum over $\uc$, and obtain
\begin{align*}
&\sum_{\substack{r_3\asymp R_3\\ x_1\mid \frac{r_3}{d}}}\sum_{\substack{|\uc|\asymp C\\ x_1\mid \det(M_\uc) }}(1+P^2W\lambda_\uc)^{-1/2}\sum_{\substack{r_2\asymp R_2\\ x_2\mid r_2}} (x_1x_2)^{1/2}(V^s/(x_1x_2)^{1/2}+V^{s-1})\\
&\ll P^{\ve} \sum_{\substack{r_3\asymp R_3\\ x_1\mid \frac{r_3}{d}}}C(1+C/(x_1(1+P^2W))^{1/2})(V^s+x_1^{1/2}R_2^{1/2}V^{s-1}),
\end{align*}
which is turn has the upper bound
\begin{multline*}
\ll P^{\ve} \Big(\sum_{\substack{r_3\asymp R_3\\ x_1\mid \frac{r_3}{d}}}C^2(V^s+R_2^{1/2}V^{s-1})(1+P^2W)^{-1/2}
\\  \hspace{2cm} +\sum_{\substack{r_3\asymp R_3\\ x_1\mid \frac{r_3}{d}}}C(V^s+x_1^{1/2}R_2^{1/2}V^{s-1})\Big),
\end{multline*}
which is
\begin{multline}
    \ll P^{\ve}\Big((R_3/D)^{1/5}C^2V^s(1+P^2W)^{-1/2} \\
    +(R_3/D)^{1/5}C(V^s+(R_3R_2/D)^{1/2}V^{s-1})\Big). \label{R2<V^2}
\end{multline}
If $R_2\geq V^2$, then the contribution from divisors $x_2\leq V^2$ can again be bound analogously as before. For the contribution from $x_2\geq V^2$, we apply \eqref{eq:B1} with $N=P^2W$ to obtain
 \begin{align*}
\MoveEqLeft\sum_{\substack{r_3\asymp R_3\\ x_1\mid \frac{r_3}{d}}}\sum_{\substack{|\uc|\asymp C\\ x_1\mid \det(M_\uc) }}(1+P^2W\lambda_\uc)^{-1/2}\sum_{\substack{r_2\asymp R_2\\ x_2\mid r_2\\ x_2\gg V^2}} ((x_1x_2)^{1/2}V^{s-2}+x_1^{1/2}V^{s})\\
&\ll\sum_{\substack{r_3\asymp R_3\\ x_1\mid \frac{r_3}{d}}}C(1+C/(x_1(1+P^2W))^{1/2})((x_1R_2)^{1/2}V^{s-2}+x_1^{1/2}V^{s}),
\end{align*}
which is
\begin{multline*}
    \ll P^{\ve}(R_3/D)^{1/5}\Big(C^2(V^s+R_2^{1/2}V^{s-2})(1+P^2W)^{-1/2}\\ +C((R_3/D)^{1/2}V^s+(R_3R_2/D)^{1/2}V^{s-2})\Big),
    \end{multline*}
which is finally
\begin{multline}
\ll P^{\ve} (R_3/D)^{1/5}\Big(C^2(V^s+R_2^{1/2}V^{s-2})(1+P^2W)^{-1/2}\\
+C(R_3R_2/D)^{1/2}V^{s-1}\Big).\label{R_2>V^2}
\end{multline}
In the last inequality, we have used 
\[(R_3/D)^{1/2}V^s\ll (R_3/D)^{1/2}R_2^{1/2}V^{s-1},\]
since $R_2\geq V^2$. Combining \eqref{R2<V^2} and \eqref{R_2>V^2} together, \eqref{eq:10} is bounded by
\begin{align*}
&\ll P^{\ve} D (R_3/D)^{1/5}\Big(C^2(V^s+(R_2R_3/D)^{1/2}V^{s-2})(1+P^2W)^{-1/2}\\& \hspace{4cm} \ \ +C(V^s+(R_3R_2/D)^{1/2}V^{s-1})\Big).
\end{align*} 
After summing over $d,k$, and using the fact that $DKC\asymp Q^{1/2}$, we obtain
\begin{align*}
& DK (R_3/D)^{1/5}\Big(C^2(V^s+(R_2R_3/D)^{1/2}V^{s-2})(1+P^2W)^{-1/2}\\& \hspace{3cm}
+C(V^s+(R_3R_2/D)^{1/2}V^{s-1})\Big)\\
&\ll Q^{1/2}\Big( C(V^s (R_3/D)^{1/5}+C R_2^{1/2}(R_3/D)^{7/10}V^{s-2})(1+P^2W)^{-1/2} \\& \quad \quad \quad \quad  +V^s(R_3/D)^{1/5}+R_2^{1/2}(R_3/D)^{7/10}V^{s-1}\Big),\\
\end{align*}
and this is
\begin{align*}
&\ll Q^{1/2}\Big((Q^{1/2}R^{1/5}V^s+Q^{1/2}R^{7/10}V^{s-2})(1+P^2W)^{-1/2}\\& \quad \quad \quad \quad  +R^{1/5}V^s+R^{7/10}V^{s-1}\Big).
\end{align*}
By taking $V=YP^{-1}(1+P^2W)P^\ve$, we see that the sum $\mathcal{S}$ we need to bound is
\begin{align*}
&\ll  Y^{-s/2+1}WQ^{-1} P^{s+\ve}(1+P^2W)^{-s/2}
\Big(Q^{1/2}R^{1/5}V^s+Q^{1/2}R^{7/10}V^{s-2}\\& \quad \quad \quad \quad +R^{1/5}V^s(1+P^2W)^{1/2}+R^{7/10}V^{s-1}(1+P^2W)^{1/2}\Big)
\\&\ll Y^{s/2+1}WQ^{-1} (1+P^2W)^{s/2}P^{\ve}\Big(Q^{1/2}R^{1/5}+Q^{1/2}R^{7/10}Y^{-2}P^2(1+P^2W)^{-2}\\& \quad \quad \quad \quad +R^{1/5}(1+P^2W)^{1/2}+R^{7/10}Y^{-1}P(1+P^2W)^{-1/2}\Big)\\&\ll Y^{s/2+1}WQ^{-1} (1+P^2W)^{s/2}P^{\ve}\Big(Q^{1/2}R^{1/5}+Q^{1/2}R^{7/10}Y^{-2}P^2(1+P^2W)^{-2}\\& \quad \quad \quad \quad +R^{7/10}Y^{-1}P(1+P^2W)^{-1/2}\Big),
\end{align*}
using the fact that $(1+P^2W)\ll P^\ve (Q/Y)\ll Q$ since $(Y, W)\in \mathcal C_\delta$ and $\delta$ is sufficiently small.  
As the exponents of $W$ and $R$ in the above expression are positive, the maximum over $\mathcal Q_\delta$ is attained when $W\asymp Y^{-1}Q^{-1/2}P^\delta$ which implies $P^2W\asymp (Q/Y)P^\delta$ and $R=Y$. In that maximal case, and when $\delta$ is sufficiently small, the  upper bound above becomes 
\begin{align*}
&\ll Y^{s/2}Q^{-3/2}(Q/Y)^{s/2}P^{\ve}\Big(Q^{7/10}+Q^{-3/2}Y^{7/10}P^2\\ &\hspace{5.5cm} +Y^{7/10}Y^{-1}P(Q/Y)^{-1/2}\Big);
\end{align*}
by noting that the maximum of the latter expression is attained when $Y=Q$, because the resulting exponents of $Y$ are positive. The above is
\begin{align*}
&\ll P^{s+\ve} Q^{-s/4-3/2}\Big(Q^{7/10}+Q^{7/10-1/4}\Big)&&\ll P^{s+\ve}Q^{-s/4-4/5}\\&\ll P^{s-(s/3+16/15)+\ve}&&\ll P^{s-4-(s-9)/3-1/15+\ve},
\end{align*}
by using the relation $Q=P^{4/3}$. 
\end{proof}

\section{Minor arcs contributions: \texorpdfstring{$N_{2,1}(P, \delta)$ and $N_{2,2}(P, \delta)$}{N21(P, δ) and N22(P, δ)}}\label{sec:minor N21}
In this section, we prove Lemma \ref{N2splitbounds}\eqref{N21bounds} and Lemma \ref{N2splitbounds}\eqref{N22bounds} by considering the contributions from bad pairs of $\uc$ in Section~\ref{sec:bad-c}, the contributions from good pairs of $\uc$ when $s$ is even in Section~\ref{sec:good-pairs-unconditional} and when $s$ is odd in Section~\ref{sec:good-pairs-GLH}. 
\subsection{$N_{2,1}(P, \delta)$ contribution from bad pairs \texorpdfstring{$\uc$}{c}}\label{sec:bad-c}

In this section we prove Lemma \ref{N2splitbounds}\eqref{N21bounds} for contributions from bad pairs $\uc$. 
In this case, we know $|\uc|\ll 1$. We henceforth fix a bad pair $\uc$ and estimate the minor arc contribution from this pair.  Note that when $\uc$ is bad, we have $\lambda_{\uc}=0$.
 Recall the terms \(E_{1,1}(P, \delta)\) and \(E_{1,2}(P, \delta)\) from \eqref{E-one}.

\subsubsection{Case: $Q_\uc^*(\uu')\not=0$ or $((S^{-1})^T\uu)_s\not=0$}
\label{sec:bad-c-Q-S-nonzero}
Using Lemma~\ref{q1sum}\eqref{q1sumbadc} which requires $Q_\uc^*(\uu')\not=0$ or $((S^{-1})^T\uu)_s\not=0$, we see that  
\begin{equation*}
\begin{split}
& E_{1,1}(P, \delta)\ll P^{\ve} \max_{ \mathcal Q_\delta, C\ll 1}\sum_{d\asymp D}\sum_{k\asymp K}\sum_{\substack{\uu\in\ZZ^s,  |\uu|\leq V\\ Q_\uc^*(\uu')\not=0 \\\text{ or } ((S^{-1})^T\uu)_s\not=0 }}\sum_{\substack{d\mid r\asymp R \\ r\mid (dD_F)^\infty}}\\&\quad\quad \quad \quad\quad \quad \quad\quad \quad  r^{-s}|S_{r, d\uc}(\uu) \Sigma(r, Y/R, W, \mathfrak P;k,d, \uc, \uu)|\\
&\ll P^{\ve} \max_{\mathcal Q_\delta, C\ll 1}
\sum_{d\asymp D}\sum_{k\asymp K}\sum_{\substack{\uu\in\ZZ^s,  |\uu|\leq V\\ Q_\uc^*(\uu')\not=0 \\\text{ or } ((S^{-1})^T\uu)_s\not=0 }}\sum_{\substack{d\mid r\asymp R\\ r\mid (dD_F)^\infty}}R^{-s}|S_{r, d\uc}(\uu)|\\ 
 &\quad\quad \times WQ^{-3/2} P^{s} (1+P^2W)^{(1-s)/2}(Y/R)^{-s/2+3/2+\mathds 1_{2\mid s}/2}.
\end{split}
\end{equation*}
Using Lemma~\ref{lem:splitting-d},
 when $\delta$ is sufficiently small (depending on $\ve$ and $s$) this contribution is
\begin{align*}
\\&\ll P^{\ve} \max_{ \mathcal Q_\delta, C\ll 1}P^{s}Q^{-s/4-1}\ll P^{s-4-(s-8-\mathds 1_{2\mid s})/3+\ve},
\end{align*}
which suffices for $s\geq 9$. 

\subsubsection{Case: $Q_\uc^*(\uu')=((S^{-1})^T\uu)_s=0 $}
\label{sec:bad-c-Q-S-zero}
Using Lemma~\ref{q1sum}\eqref{q1sumbadc}, we see that  
\begin{align*}
& E_{1,2}(P,\delta)\ll P^{\ve} \max_{ \mathcal Q_\delta, C\ll 1}\sum_{d\asymp D}\sum_{k\asymp K}\sum_{\substack{\uu\in\ZZ^s,  |\uu|\leq V\\ Q_\uc^*(\uu')=((S^{-1})^T\uu)_s=0  }}\sum_{\substack{d\mid r\asymp R\\  r\mid (dD_F)^\infty}}\\ & \quad \quad \quad \quad  \quad \ \ r^{-s}|S_{r, d\uc}(\uu) \Sigma(r, Y/R, W, \mathfrak P;k,d, \uc, \uu)|\\
&\ll P^{\ve} \max_{\mathcal Q_\delta, C\ll 1} \sum_{d\asymp D}\sum_{k\asymp K}\sum_{\substack{\uu\in\ZZ^s,  |\uu|\leq V\\ Q_\uc^*(\uu')=((S^{-1})^T\uu)_s=0}}\sum_{\substack{d\mid r\asymp R\\ r\mid (dD_F)^\infty}}R^{-s}|S_{r, d\uc}(\uu)|\\ 
&\quad\quad \times WQ^{-3/2} P^{s} (1+P^2W)^{(1-s)/2} (Y/R)^{-s/2+5/2+\mathds 1_{2\mid s}/2}.
\end{align*}
Using Lemma~\ref{rsumbound} for the $r$-sum, we see that 
\begin{align*}
E_{1,2}(P, \delta)&\ll P^{\ve} \max_{\mathcal Q_\delta, C\ll 1} Y^{-s/2+5/2+\mathds 1_{2\mid s}/2}WQ^{-3/2}P^{s}(1+P^2W)^{(1-s)/2}\\ & \quad \quad \quad \quad \quad \ \ \times D^{3/2}K R^{-1-\mathds 1_{2\mid s}/2}\sum_{\substack{\uu\in\ZZ^s, |\uu|\leq V\\ Q_\uc^*(\uu')=((S^{-1})^T\uu)_s=0}}1.
\end{align*}
To estimate the sum over $\uu$, we note that the conditions $$Q_\uc^*(\uu')=((S^{-1})^T\uu)_s=0  $$ defines a smooth variety of degree 2 and projective dimension $s-3$, as $Q_\uc^*$ is nonsingular. As a result, using Lemma~\ref{dimgrowth},
\begin{equation*}
\#\{ |\uu|\leq V:Q_\uc^*(\uu')=((S^{-1})^T\uu)_s=0\} \ll V^{s-3+\ve}.
\end{equation*}
Thus 
\begin{align*}
E_{1,2}(P, \delta)&\ll P^{\ve}\max_{\mathcal Q_\delta, C\ll 1}Y^{-s/2+5/2+\mathds 1_{2\mid s}/2}WQ^{-3/2} P^{s} \\ & \quad \quad \quad \quad \quad \ \ \times (1+P^2 W)^{(1-s)/2} D^{3/2}K R^{-1-\mathds 1_{2\mid s}/2} V^{s-3}.
\end{align*}
With $V=\frac{Y}{P}(1+P^2 W)P^\ve$, the maximal is achieved at $W=Y^{-1}Q^{-1/2+\delta}$, which finally gives when $\delta$ is sufficiently small
\begin{align*}
E_{1,2}(P, \delta)&\ll P^{\ve} \max_{\mathcal Q_\delta, C\ll 1}Y^{-s/2+3/2+\mathds 1_{2\mid s}/2}Q^{-3/2}P^{s}(Q/Y)^{(1-s)/2} \\ & \quad \quad \quad \quad \quad \ \ \times D^{1/2}R^{-1-\mathds 1_{2\mid s}/2}Q^{s/4-3/4}\\
&\ll P^{\ve}\max_{\mathcal Q_\delta, C\ll 1} Q^{-s/4-3/4+\mathds 1_{2\mid s}/2}D^{1/2}K^{-1-\mathds 1_{2\mid s}/2}R^{-1-\mathds 1_{2\mid s}/2}P^{s}\\
&\ll Q^{-s/4-1+\mathds 1_{2\mid s}/4}P^{s+\ve}\ll P^{s-4-(s-8-\mathds 1_{2\mid s})/3+\ve},
\end{align*}
using $Y\ll Q/K$, $D\ll Q^{1/2}$ and $RK\gg DK\asymp Q^{1/2}$.
This is admissible as soon as $s\geq 9$ when $\delta$ is sufficiently small in terms of $\ve$ and $s$. 

\subsection{$N_{2,2}(P, \delta)$ contribution from good pairs \texorpdfstring{$\uc$}{c} for even \texorpdfstring{$s$}{s}}\label{sec:good-pairs-unconditional}

In this section we prove Lemma \ref{N2splitbounds}\eqref{N22bounds} when $s$ is even. Recall the sums \(E_{2,1}(P, \delta)\) and \(E_{2,2}(P, \delta)\) as corresponding contributions coming from an application of \eqref{E-two}.
 
\subsubsection{The case $F_\uc^*(\uu)\not=0$}\label{sec:good-c-even-s-F-nonzero}
Using Lemma~\ref{q1sum}\eqref{q1sumgoodc}, we see that \emph{unconditionally} for even $s$ and $\delta$ sufficiently small (depending on $\ve$ and $s$)
\begin{align*}
&E_{2,1}(P, \delta)\ll P^{\ve} \max_{ \mathcal Q_\delta}\sum_{d\asymp D}\sum_{k\asymp K}\sum_{\substack{|\uc|\asymp C\\ \uc \text{ good}}}\sum_{\substack{\uu\in\ZZ^s,  |\uu|\leq V\\ F_{\uc}^*(\uu)\not=0 }}\sum_{\substack{d\mid r\asymp R\\ r\mid (2dD_F\det M_\uc)^\infty}}\\& \hspace{4cm} r^{-s}|S_{r, d\uc}(\uu) \Sigma(r, Y/R, W, \mathfrak P;k,d, \uc, \uu)|\\
\ll & P^{\ve} \max_{\mathcal Q_\delta} \sum_{d\asymp D}\sum_{k\asymp K}\sum_{|\uc|\asymp C}\, \, \,\sum_{\substack{\uu\in\ZZ^s,  |\uu|\leq V\\ F_{\uc}^*(\uu)\not=0}}\sum_{\substack{d\mid r\asymp R\\ r\mid (2dD_F\det M_\uc)^\infty}}R^{-s}|S_{r, d\uc}(\uu)| \\
&\times WQ^{-3/2} P^{s} (1+P^2W)^{(1-s)/2}(1+P^2W\lambda_\uc)^{-1/2} (Y/R)^{-s/2+1},
\end{align*}
which gives 
\begin{align}\label{N221bounds}
    E_{2,1}(P, \delta)\ll P^{s-4-(s-9)/3-1/15+\ve}
\end{align} by
 using Lemma~\ref{cursum}. 

\subsubsection{The case $F_\uc^*(\uu)=0$}\label{sec:good-c-even-s-F-zero}
In this case, Lemma~\ref{q1sum} gives 
\begin{align*}
& E_{2,2}(P, \delta)\ll P^{\ve} \max_{ \mathcal Q_\delta}\sum_{d\asymp D}\sum_{k\asymp K}\sum_{\substack{|\uc|\asymp C\\ \uc \text{ good}}}\sum_{\substack{\uu\in\ZZ^s,  |\uu|\leq V\\F_{\uc}^*(\uu)=0 }}\sum_{\substack{d\mid r\asymp R\\ r\mid (2dD_F\det M_\uc)^\infty}}\\& \hspace{4cm} r^{-s}|S_{r, d\uc}(\uu) \Sigma(r, Y/R, W, \mathfrak P;k,d, \uc, \uu)|\\
\ll & P^{\ve} \max_{ \mathcal Q_\delta} \sum_{d\asymp D}\sum_{k\asymp K}\sum_{\substack{|\uc|\asymp C\\ \uc \text{ good}}}\sum_{\substack{\uu\in\ZZ^s,  |\uu|\leq V\\ F_{\uc}^*(\uu)=0 }}\sum_{\substack{d\mid r\asymp R\\ r\mid (2dD_F\det M_\uc)^\infty}}R^{-s}|S_{r, d\uc}(\uu)| \\&
\times W Q^{-3/2} P^{s} (1+P^2W)^{(1-s)/2}(1+P^2W\lambda_\uc)^{-1/2} (Y/R)^{-s/2+2}.
\end{align*}
Using Lemma~\ref{rsumbound} for the $r$-sum we see that 
\begin{align}\nonumber
& E_{2,2}(P, \delta)\ll P^{\ve}\max_{\mathcal Q_\delta} \sum_{d\asymp D}\sum_{k\asymp K}\sum_{\substack{|\uc|\asymp C\\ \uc \text{ good}}}\sum_{\substack{\uu\in\ZZ^s,  |\uu|\leq V\\ F_{\uc}^*(\uu)=0 }} Y^{-s/2+2} WQ^{-3/2} \\
\nonumber& \quad \quad \quad\quad \quad \times P^{s}(1+P^2W)^{(1-s)/2}(1+P^2W\lambda_\uc)^{-1/2} D^{1/2}R^{-1/2}\\
\nonumber
&\ll P^{\ve} \max_{\mathcal Q_\delta} Y^{-s/2+2} R^{-1/2}D^{3/2} K WQ^{-3/2} P^{s} (1+P^2W)^{(1-s)/2}\\& \quad \quad \quad\quad \quad \times \sum_{\substack{|\uc|\asymp C\\ \uc \text{ good}}}\sum_{\substack{\uu\in\ZZ^s,  |\uu|\leq V\\ F_{\uc}^*(\uu)=0 }} (1+P^2W\lambda_{\uc})^{-1/2}.\label{evenn22}
\end{align}

Now we focus on the $\uc, \uu$ sum. For a fixed good $\uc$, $F^*_\uc(\uu)$ is an irreducible quadratic form in $\uu$ and therefore by~\cite[Theorem~2]{HeathBrown02} we have
\begin{equation}\label{evenusum}
\#\{|\uu|\leq V:F_\uc^*(\uu)=0\}\ll  V^{s-2+\ve}.
\end{equation}
However, this saving in the $V$ variable together with \eqref{eq:B1} will not be enough. Therefore, we further split the sum over $\uu$ into terms for which $\scrF^*(\uu)\neq 0 $ and $\scrF^*(\uu)=0$.

For fixed $\uu$ such that $\scrF^*(\uu)\neq 0$, we note that the polynomial $F^*_\uc(\uu)$ must be a non-zero polynomial of degree $s-1$ in the $\uc$ variable. $\scrF^*(\uu)$ is the discriminant of the polynomial $F^*_\uc(\uu)$, seen as a polynomial in $\uc$.   Therefore,  $F^*_\uc(\uu)$  has at most $s-1$ primitive roots. As a result, we reach an alternate bound:
\begin{equation}\label{evencsum}
\#\{|\uc|\asymp C, \uc \text{ good},  |\uu|\leq V: \scrF^*(\uu)\neq 0,F_\uc^*(\uu)=0\}\ll V^{s+\ve}. 
\end{equation}
Combining \eqref{evenusum} with \eqref{eq:B1} and comparing this bound with \eqref{evencsum}, we have 
\begin{align}\nonumber	 
&\sum_{\substack{ \uc\in\ZZ^2, |\uc| \asymp C\\ \textrm{ good primitive}}}	\sum_{\substack{\uu\in \Z^s, |\uu|\leq V\\ F_\uc^*(\uu)= 0, \scrF^*(\uu)\neq 0}}(1+P^2W\lambda_\uc)^{-1/2}\\
\nonumber
&\ll V^{s-2+\ve}C+V^{s-2+\ve}\min\{ C^2(1+P^2W)^{-1/2},V^2\}\\
\nonumber
&\ll V^{s-2+\ve}C+V^{s-3/2+\ve}C^{3/2}(1+P^2W)^{-3/8}\\
&\ll P^{\ve} C \Big(V^{s-2}+V^{s-3/2}C^{1/2}(1+P^2W)^{-3/8}\Big).\label{eq:105}
\end{align}
Now we consider the case $\mathcal F^*(\uu)=0$. 
If \(\uc\) is good and \(s\geq 8\), then Lemma~\ref{lem:intersection-of-dual-varieties} shows that the variety  $\scrF^*(\uu)=F_\uc^*(\uu)=0$ has projective dimension \(s-3\) and no components of degree 1 or 3. Hence, applying Lemma~\ref{dimgrowth} to each component of $\scrF^*(\uu)=F_\uc^*(\uu)=0$, we find
\begin{align*}
\{|\uu|\leq V:\scrF^*(\uu)=F^*_\uc(\uu)=0 )\}\ll  V^{s-3+\ve}.
\end{align*}
Therefore,  
\begin{multline}\label{eq:104}
\sum_{\substack{ \uc\in\ZZ^2 , |\uc| \asymp C\\ \textrm{ good primitive}}}		 \sum_{\substack{\uu\in \Z^s, |\uu|\leq V\\ F_\uc^*(\uu)= 0, \scrF^*(\uu)= 0}}(1+P^2W\lambda_\uc)^{-1/2}\\ \ll  V^{s-3+\ve}C(1+C(1+P^2W)^{-1/2}).
\end{multline}
Combining \eqref{eq:105} and \eqref{eq:104} we see that 
\begin{align*}
&\sum_{\substack{ \uc\in\ZZ^2, |\uc| \asymp C\\ \textrm{ good primitive}}}			 \sum_{\substack{\uu\in \Z^s, |\uu|\leq V\\ F_\uc^*(\uu)= 0}}(1+P^2W\lambda_\uc)^{-1/2}\\&\ll P^{\ve}C \Big(V^{s-3/2}C^{1/2}(1+P^2W)^{-3/8}+V^{s-2}+V^{s-3}C(1+P^2W)^{-1/2}\Big).
\end{align*}

Substituting these bounds in \eqref{evenn22}, we obtain 
\begin{align*}
&E_{2,2}(P, \delta)\ll P^{\ve}\max_{\mathcal Q_\delta} Y^{-s/2+2}R^{-1/2}WD^{3/2}KCQ^{-3/2}P^{s}(1+P^2W)^{(1-s)/2}\\
&\quad \quad \times \Big(V^{s-3/2}C^{1/2}(1+P^2W)^{-3/8}+V^{s-2}+V^{s-3}C(1+P^2W)^{-1/2}\Big).
\end{align*}
Recall that $DKC\asymp Q^{1/2}, V=YP^{-1} (1+P^2W)P^\ve$ and $W\ll Y^{-1}Q^{-1/2}P^\delta$.  We see that the maximum is reached when $W\asymp Y^{-1}Q^{-1/2}P^\delta, R\asymp 1$ and hence $1+P^2W\asymp P^\delta Q/Y, V\asymp P^{1/3+\ve+\delta}$. Thus, for $\delta$ is sufficiently small (depending on $\ve$ and $s$), we have
\begin{align*}
     E_{2,2}(P, \delta)&\ll P^{\ve}\max_{\mathcal Q_\delta} Y^{-s/2+1}D^{1/2}Q^{-3/2}P^{s}(Q/Y)^{(1-s)/2}C^{1/2}\\
&\times \Big(P^{s/3-1/2}(Q/Y)^{-3/8}+P^{s/3-2/3}+P^{s/3-1}C^{1/2}(Q/Y)^{-1/2}\Big).
\end{align*}
 Further, the maximum is attained when $Y\asymp Q, R\asymp 1$
and upon further using $KDC\asymp Q^{1/2}$ and $\delta$ sufficiently small, we obtain
\begin{align}\nonumber
\MoveEqLeft[2.5] E_{2,2}(P, \delta)\\
\nonumber
&\ll P^\ve\max_{\mathcal Q_\delta} Q^{-s/2+1+1/4}Q^{-3/2}P^{s}
\Big(P^{s/3-1/2}+P^{s/3-2/3}+P^{s/3-1}Q^{1/4}\Big)\\
\nonumber
&\ll
P^{s+\ve} Q^{-s/2-1/4}\Big(P^{s/3-1/2}+P^{s/3-2/3}\Big)
\\
&\qquad\ll P^{s-s/3-2/3-1/6+\ve}\ll  P^{s-4-(s-10)/3-1/6+\ve}.\label{N222bounds}
\end{align}
Combing \eqref{N221bounds} and \eqref{N222bounds}, we obtain Lemma \ref{N2splitbounds}\eqref{N22bounds} when $s$ is even.

\subsection{$N_{2,2}(P, \delta)$ from good pairs \texorpdfstring{$\uc$}{c} for odd \texorpdfstring{$s$}{s}}\label{sec:good-pairs-GLH}

In this section, we prove Lemma \ref{N2splitbounds}\eqref{N22bounds} when $s$ is odd.
Recall that the contributions $E_{2,1}(P,\delta)$ and $E_{2,2}(P,\delta)$ arise from the splitting \eqref{eq:123}.
  We will first assume the generalized Lindel\"of hypothesis (GLH) as in Lemma~\ref{hypo:GRH} for $E_{2,1}(P, \delta)$ and the proof for the unconditional case follows similarly by simple modifications which we shall describe at the end. The estimation for $E_{2,2}(P, \delta)$ is unconditional.  
 
\subsubsection{Case: $F_\uc^*(\uu)\neq \square$}\label{sec:odd_s_nonsquare_Fstar_N211}
 When $F_\uc^*(\uu)\neq \square$, we can use Lemma~\ref{q1sum} to see that for $\delta$ sufficiently small and under GLH \begin{align*}
&E_{2,1}(P, \delta)\ll P^{\ve} \max_{ \mathcal Q_\delta}\sum_{d\asymp D}\sum_{k\asymp K}\sum_{\substack{|\uc|\asymp C\\ \uc \text{ good}}}\sum_{\substack{\uu\in\ZZ^s,  |\uu|\leq V\\ F_{\uc}^*(\uu)\not=\square }}\,\sum_{\substack{d\mid r\asymp R\\ r\mid (2dD_F\det M_\uc)^\infty}}\\ &\quad \quad \quad \quad r^{-s}|S_{r, d\uc}(\uu) \Sigma(r, Y/R, W, \mathfrak P;k,d, \uc, \uu)|\\
\ll& P^{\ve} \max_{\mathcal Q_\delta} Y^{-s/2+1}R^{-s/2-1}WQ^{-3/2} P^{s} (1+P^2W)^{-(s-1)/2}\\&\times \sum_{d\asymp D}\sum_{k\asymp K}\sum_{\substack{|\uc|\asymp C\\ \uc \text{ good}}}\sum_{\substack{\uu\in\ZZ^s,  |\uu|\leq V\\ F_{\uc}^*(\uu)\not=\square }}\sum_{\substack{d\mid r\asymp R\\  r\mid (2dD_F\det M_\uc)^\infty}}|S_{r, d\uc}(\uu)(1+P^2W\lambda_\uc)|^{-1/2},
\end{align*}
which gives
\begin{equation}\label{N221boundsodd}
    E_{2,1}(P, \delta)\ll P^{s-4-(s-9)/3-1/15+\ve}
\end{equation} by 
 using Lemma~\ref{cursum}.  Unconditionally, we see that Lemma \ref{q1sum} gives an extra factor of $Y_1^{1/2}\ll Q^{1/2}=P^{2/3}$ compared to the bound \eqref{N221boundsodd} under GLH, yielding \begin{equation}\label{N221boundsodduncond}
    E_{2,1}(P, \delta)\ll P^{s-4-(s-11)/3-1/15+\ve}
\end{equation} unconditionally.

\subsubsection{Case: $F_\uc^*(\uu)=\square$}\label{sec:Fcusq}
We now consider the sum over $\uu$ for which $F_\uc^*(\uu)=\square$. In this case, Lemma~\ref{q1sum} gives 
\begin{align*}
&E_{2,2}(P, \delta)\ll P^{\ve}  \max_{ \mathcal Q_\delta}\sum_{d\asymp D}\sum_{k\asymp K}\sum_{\substack{|\uc|\asymp C\\ \uc \text{ good}}}\sum_{\substack{\uu\in\ZZ^s,  |\uu|\leq V\\F_{\uc}^*(\uu)=\square }}\sum_{\substack{d\mid r\asymp R\\ r\mid (2dD_F\det M_\uc)^\infty}}\\ &\quad \quad \quad\quad \quad \quad \quad r^{-s}|S_{r, d\uc}(\uu) \Sigma(r, Y/R, W, \mathfrak P;k,d, \uc, \uu)|\\
& \ll P^{\ve} \max_{ \mathcal Q_\delta} \sum_{d\asymp D}\sum_{k\asymp K}\sum_{\substack{|\uc|\asymp C\\ \uc \text{ good}}}\sum_{\substack{\uu\in\ZZ^s,  |\uu|\leq V\\ F_{\uc}^*(\uu)=\square }}\sum_{\substack{d\mid r\asymp R\\  r\mid (2dD_F\det M_\uc)^\infty}}R^{-s}|S_{r, d\uc}(\uu)| \\
&\times WQ^{-3/2} P^{s} (1+P^2W)^{-(s-1)/2}(1+P^2W\lambda_\uc)^{-1/2} (Y/R)^{-s/2+3/2}.
\end{align*}
Using Lemma~\ref{rsumbound} for the $r$-sum we obtain 
\begin{align*}
&E_{2,2}(P, \delta)\ll P^{\ve} \max_{\mathcal Q_\delta} \sum_{d\asymp D}\sum_{k\asymp K}\sum_{\substack{|\uc|\asymp C\\ \uc \text{ good}}}\sum_{\substack{\uu\in\ZZ^s,  |\uu|\leq V\\ F_{\uc}^*(\uu)=\square }} Y^{-s/2+3/2} WQ^{-3/2} P^{s}\\ &\quad \quad \quad \quad  \quad \quad \times (1+P^2W)^{(1-s)/2}(1+P^2W\lambda_\uc)^{-1/2} D^{1/2}\\
&\ll P^{\ve}\max_{\mathcal Q_\delta} Y^{-s/2+3/2} D^{3/2} K WQ^{-3/2} P^{s} \\ &\quad \quad \quad \quad \quad \quad  \times(1+P^2W)^{-(s-1)/2}\sum_{\substack{|\uc|\asymp C\\ \uc \text{ good}}}\sum_{\substack{\uu\in\ZZ^s, |\uu|\leq V\\ F_{\uc}^*(\uu)=\square }} (1+P^2W\lambda_{\uc})^{-1/2}.
\end{align*}

We now estimate the sum over primitive good $\uc$ and $\uu$ such that $F_\uc^*(\uu)=\square$ by considering whether $\mathcal F^*(\uu)=0$ or not. If $\mathcal F^*(\uu)\not=0$, we can bound the $\uc, \uu$ sum in two ways. First, we fix such a $\uu$ and estimate the sum over $\uc$ using~\cite[Theorem 5]{Bro_03}. Here, for a fixed $\uu$, we see $F_\uc^*(\uu)=z^2$ as a polynomial of degree $s-1$ in the $\uc$ variable. Moreover, when $\scrF^*(\uu)\neq 0$, $F_\uc^*(\uu)$ is square-free and therefore this is an irreducible polynomial in $z$. Moreover, $F_\uc^*(\uu)=(z^{(s-1)/2})^2$ would be a homogeneous polynomial in $\uc$ and $z$. Therefore, the hypothesis of~\cite[Theorem 5]{Bro_03} is indeed satisfied and as a result we obtain:
\begin{align}\label{eq:100}
&\quad \sum_{\substack{|\uc|\asymp C, |\uu|\leq V\\ \scrF^*(\uu)\neq 0, F_\uc^*(\uu)=\square }}(1+P^2W\lambda_{\uc})^{-1/2}\\ &\ll \sum_{\substack{|\uu|\leq V\\ \scrF^*(\uu)\neq 0}}\#\{|\uc|
\asymp C, \uc\textrm{ primitive and good, } F_\uc^*(\uu)=\square\}\ll P^{\ve} V^s C.\nonumber
\end{align}
In a different approach, we first fix a good $\uc$ and estimate the sum over $\uu$ using a slight generalization of~\cite[Theorem 2]{HeathBrown02} obtained in~\cite[Lemma 3.7]{V19}:
\begin{equation}\label{eq:more-counting}
\#\{|\uu|\leq V: F_\uc^*(\uu)=z^2:|\uu|\leq V \}\ll  V^{s-1+\ve}.
\end{equation}
Combining it with the estimate for $\sum_\uc (1+P^2W\lambda_\uc)^{-1/2} $ from \eqref{eq:B1} to obtain the second bound:
\begin{equation}\label{eq:101}
\sum_{\substack{|\uc|\asymp C, \uc \text{ good},  |\uu|\leq V\\ \scrF^*(\uu)\neq 0, F_\uc^*(\uu)=\square }}(1+P^2W\lambda_{\uc})^{-1/2}\ll P^\ve\Big( V^{s-1}C+V^{s-1}C^2(1+P^2W)^{-1/2}\Big).
\end{equation}
Combining \eqref{eq:100} and \eqref{eq:101} we see that 
\begin{align}\nonumber
\MoveEqLeft \sum_{\substack{|\uc|\asymp C, \uc \text{ good},   |\uu|\leq V\\ \scrF^*(\uu)\neq 0, F_\uc^*(\uu)=\square }}(1+P^2W\lambda_{\uc})^{-1/2} 
\\ &\ll P^{\ve} V^{s-1}(1+P^2W)^{-1/2}C \min\{V(1+P^2W)^{1/2},C\}.\label{eq:1001}
\end{align}
If $\mathcal F^*(\uu)=0$, then we use Lemma~\ref{dimgrowth} to estimate
\begin{align*}
\#\{|\uu|\leq V:\scrF^*(\uu)=0, F_\uc^*(\uu)=\square\} 
&\ll \#\{|\uu|\leq V:\scrF^*(\uu)=0\}\\ &\ll  V^{s-2+\ve},
\end{align*}
so that 
\begin{equation}\label{eq:1011}
\sum_{\substack{|\uc|\asymp C, \uc \text{ good}, |\uu|\leq V\\ \scrF^*(\uu)= 0,  F_\uc^*(\uu)=\square }}(1+P^2W\lambda_{\uc})^{-1/2}\ll P^{\ve} \Big(V^{s-2}C+V^{s-2}C^2(1+P^2W)^{-1/2}\Big).
\end{equation}
Combining \eqref{eq:1001} and \eqref{eq:1011}, we obtain
\begin{align} \nonumber
E_{2,2}(P, \delta)
&\ll P^{\ve} \max_{\mathcal Q_\delta}Y^{-s/2+3/2}D^{3/2}KWQ^{-3/2} P^{s}(1+P^2W)^{-s/2}
\\ \nonumber
& \quad \quad \quad\quad \quad \times V^{s-1}C\min\{V(1+P^2W)^{1/2},C\}\\ \nonumber
&\quad  +\max_{\mathcal Q_\delta}Y^{-s/2+3/2}D^{3/2}KWQ^{-3/2} P^{s}(1+P^2W)^{-s/2}
\\ \nonumber
& \quad \quad \quad\quad \quad \times V^{s-2}C((1+P^2W)^{1/2}+C)\\ \nonumber
& \ll P^{\ve}  \max_{\mathcal Q_\delta} Y^{-s/2+3/2}D^{1/2}WQ^{-1} P^{s}(1+P^2W)^{-s/2}
\\ \nonumber
& \quad \quad \quad\quad \quad \times C^{1/2}(1+P^2W)^{1/4}V^{s-1/2}\\ \nonumber
& \quad+ \max_{\mathcal Q_\delta}Y^{-s/2+3/2}D^{1/2}WQ^{-1} P^{s}(1+P^2W)^{-s/2}
\\  \nonumber
& \quad \quad \quad\quad \quad \times((1+P^2W)^{1/2}+C)V^{s-2}\\ \nonumber
 &\ll P^{\ve}\max_{\mathcal Q_\delta}Y^{-s/2+3/2}D^{1/2}WQ^{-1} P^{s}(1+P^2W)^{-s/2}\\
& \times (C^{1/2}(1+P^2W)^{1/4}+((1+P^2W)^{1/2}+C)V^{-3/2})V^{s-1/2}.\label{eq:102}
\end{align}
Comparing the powers of $D,C$ and $K$ in the above expression, since $KDC\asymp Q^{1/2}$, the above expression is maximized when $C\asymp Q^{1/2},DK\asymp 1$ and this the above is
\begin{equation*}
\begin{split}
  &\ll P^{\ve} \max_{\mathcal Q_\delta}Y^{-s/2+3/2}WQ^{-1} P^{s}(1+P^2W)^{-s/2}\\& \quad \quad \quad\quad \quad \times (Q^{1/4}(1+P^2W)^{1/4}+((1+P^2W)^{1/2}+Q^{1/2})V^{-3/2})V^{s-1/2}.
\end{split}\end{equation*}
Recalling that $V=YP^{-1} (1+P^2W)P^\ve$ and $W\ll Y^{-1}Q^{-1/2}P^\delta$, we see that \eqref{eq:102} again takes maximum when $W\asymp Y^{-1}Q^{-1/2}P^\delta$, which means $1+P^2W\asymp (Q/Y)P^\delta$ and $V\asymp P^\ve P^\delta Q/P=P^{1/3+\ve+\delta}$. Therefore, for $\delta$ sufficiently small (depending on $\ve$ and $s$) we have 
\begin{align*}
E_{2,2}(P, \delta)&\ll P^{\ve} \max_{\mathcal Q_\delta}Y^{-s/2+1/2}Q^{-3/2} P^{s}(1+P^2W)^{-s/2}\\& \quad \quad \quad\quad \quad \times (Q^{1/4}(1+P^2W)^{1/4}+Q^{1/2}V^{-3/2})V^{s-1/2}\\&\ll P^{\ve} \max_{\mathcal Q_\delta}Y^{-s/2+1/2}Q^{-3/2} P^{s}(Q/Y)^{-s/2}\\& \quad \quad \quad\quad \quad \times (Q^{1/4}(Q/Y)^{1/4}+Q^{1/2}P^{-1/2})P^{s/3-1/6}\\
&\ll P^{\ve}\max_{\mathcal Q_\delta}Y^{-s/2+1/2} P^{s-2}(Q/Y)^{-s/2}\\& \quad \quad \quad\quad \quad \times (P^{1/3}(Q/Y)^{1/4}+P^{1/6})P^{s/3-1/6}.
\end{align*}
Since the power of $Y$ appearing in the above is positive, this contribution is maximum when $Y\asymp Q$ and therefore for $\delta$ small enough
\begin{align}
 E_{2,2}(P, \delta)   & \ll \nonumber
 Q^{-s/2+1/2} P^{s-2}(P^{1/3}+P^{1/6})P^{s/3-1/6+\ve}\\&\ll  P^{s-4-(s-9)/3-1/6+\ve }.\label{N222boundsodd}
\end{align}

Combining \eqref{N221boundsodd} (or \eqref{N221boundsodduncond}) and \eqref{N222boundsodd} we obtain Lemma \ref{N2splitbounds}\eqref{N22bounds} when $s$ is odd under GLH (or unconditionally, respectively).

\section{Heuristic comparison of delta symbol methods}
\label{sec:heuristic}
In this section, we give a heuristic with which to compare our two-dimensional delta symbol to other existing methods. 
Based on the one-dimensional delta symbol and the  two-dimensional Farey dissection over function fields, we consider an $R$-dimensional delta symbol method over \(\QQ\) to be any identity of the type
	\begin{equation}\label{eq:heuristic}
			\delta_{\nn}
			=
			\sum_{1\leq q\leq Q}
			\ {\starsum_{\ua\bmod{q}}}
    \int_{\substack{|\uw|\leq \frac{Q^\epsilon}{qQ^{\eta}}}}
			p_{q,\ua}(\uw)
			e((\ua/q+\uw)\cdot\nn)\,d\uw
			+O_{\eta,N}(Q^{-N}),
	\end{equation}
for some \(0<\eta\leq 1/R\), some explicit smooth functions $p_{q,\ua}$, and for all \(\nn\in\ZZ^R\) and all \(Q,N,\epsilon>0\).
We believe that many practitioners of the circle method have some intuition along the following lines. 
\begin{heuristic}\label{babyheu}
    If we apply \eqref{eq:heuristic} to a sequence of vectors \(\nn\) which typically have size around $M$, then we should choose $Q$ so that $M=Q^{1+\eta}$ holds. In this way, for \(q\asymp Q\), the function $e((\ua/q+\uw)\cdot\nn)$ does not oscillate very much as $\uw$ varies over the domain of integration. It is desirable for the efficacy of this approach that $Q$ should be taken as small as possible.
\end{heuristic}

According to the heuristic above, it may be desired for $Q=M^{1/(1+\eta)}$ to be as small as possible, in other words $\eta$ should be as large as possible.
However we have the restriction $\eta \leq 1/R$ in  \eqref{eq:heuristic}. The reason is that
the numbers \(\ua/q+\uw\) will have to run over essentially the entire unit box \([0,1]^R\) in order to approximate the function \(\delta_{\nn}\) accurately in $\ell^\infty$ norm. By Khinchine's theorem, this requires \(|\uw|\gg_\epsilon q^{-1}Q^{-1/R-\epsilon}\) for any \(\epsilon>0\), and hence \(\eta\leq 1/R\) must hold.
In particular, our form of the  two-dimensional delta symbol in Theorem \ref{thm:delta} with \(\eta =1/2\) has the optimal choice of $Q=M^{2/3}$. 
 
To perform a Kloosterman refinement by taking advantage of cancellations in the sum over $\ua$, one needs to arrange that the function \(p_{q,\ua}\) is the same for many different \(\ua\). We partition \((\ZZ^R/q\ZZ^R)^*\) into  equivalence classes $[\ua]$ such that \([\ua]=[\ub]\) exactly when \(p_{q,\ua}=p_{q,\ub}\) as functions. We can hope to make use of cancellations in the sums \(\sum_{\ub\in [\ua]}e((\ub/q+\uw)\cdot\nn)\) to get additional savings. 
There may be a trivial class $\{\ua:p_{q,\ua}(\uw)= 0 \ \forall \uw\}$ on which $p_{q,\ua}$ vanishes; we shall exclude this class from our analysis as it contributes nothing to \eqref{eq:heuristic}. We let $A$ be the average size of a nontrivial class $[\ua]$, that is
\begin{align*}
A&=\Big(\sum_{1\leq q\leq Q}
\# \mathcal A_q \Big)^{-1}
\Big(\sum_{1\leq q\leq Q} \sum_{C\in\mathcal A_q}
\#C\Big),
    \\ \mathcal A_q &= \{
C:
C=[\ua] \text{ for some }\ua \text{ with } (\ua,q)=1, p_{q,\ua}\neq 0
\}.
\end{align*}
Our Theorem \ref{thm:delta} allows averages over $\ua$ with $A\asymp Q$.
 We mention that the methods in~\cite{NV,HBP,PSW} effectively take the optimal values $\eta=1/R$, $A=\#((\ZZ^R/q\ZZ^R)^*)$, but  they can only be applied to situations where the exponential sum is an absolute square, due to the use of a classical major-arc/minor-arc decomposition rather than the $\delta$-method.
With this notation we give a heuristic for $\delta$-methods in Diophantine problems.
\begin{heuristic}\label{heu}
Suppose we use \eqref{eq:heuristic} as a form of the circle method to count solutions to $\uF=\vec{0}$, where $\uF(\x)=(F_1(\x), \dots, F_R(\x))$ is a system of $R$ polynomials in $\x\in \mathbb Z^s$ with $|\uF(\x)|\ll M$. Then we should take $Q=M^{1/(1+\eta)}$ and the efficacy of \eqref{eq:heuristic} is roughly captured by the quantity $$Q^{s/2}A^{-1/2},$$ where $A$ is as defined above. A smaller value of this quantity suggests a more effective $\delta$-method. In principle, a double Kloosterman refinement utilizing also the average over $q$ might save a another factor of $Q^{1/2}$, although in practice, this saving may be less due to lack of good estimates for short character sums without GLH. Additionally one may want to apply differencing methods to exponential sums over $\uF$, such as van der Corput differencing. In such cases one can instead take $M$ to be an upper bound for a system $\Delta_{\mathbf h} \uF(\x)$ of differenced polynomials.
\end{heuristic}

The reasoning behind Heuristic~\ref{heu} is as follows. Using the delta symbol in \eqref{eq:heuristic} and summing over $\x$, we see that the number of $\x$ with $|\x|\leq P,$ $\uF=\vec{0}$, is essentially given by
\begin{equation*}
    \sum_{q\leq Q}\underset{\hphantom{\ua\bmod{q}}}{\starsum_{\ua\bmod{q}}}
\int_{|\uw|\ll \frac{1}{qQ^{\eta}}}p_{q, \ua}(\uw)\sum_{|\ux|\leq P}e((\ua/q+\uw)\cdot \uF (\mathbf{x}))
			d\uw.
\end{equation*}
After any differencing steps one applies Poisson summation in the $\x$ variable to modulus $q$; the innermost sum becomes $\sum_{\uu} q^{-s}S_q(\ua,\uu)I_q(\uw,\uu)$ for some exponential sums $S_q(\ua, \uu)$ and exponential integrals $I_q(\uw, \uu)$. The integral $I_q(\uw, \uu)$ allows a truncation of the $\uu$ variables up to essentially $\frac{q}{P}(1+M|\uw|)$ (see e.g. Lemma \ref{lem:IQ} in our setting). The zero frequency $\uu=\mathbf 0$ will generally give the main term. For the non-zero frequencies, one may expect $I_q\ll P^s(1+M|\uw|)^{-s/2}$ from stationary phase analysis and the bound $S_q\ll q^{s/2}$, assuming square-root cancellation of exponential sums. If we can further make use of the average over $\ua$ to get a saving of $(\#[\ua])^{-1/2}$, we may find that the non-zero frequencies contribute to a term of size roughly $$q^{s/2}(1+M|\uw|)^{s/2}(\#[\ua])^{-1/2}\leq (Q+M/Q^\eta)^{s/2}(\#[\ua])^{-1/2}.$$ This error term is of size $\gg Q^{s/2}(\#[\ua])^{-1/2}$ as soon as $Q\geq M^{1/(1+\eta)}\gg M^{R/(R+1)}$. This predicts an error in our original problem of size
$$
\sum_{q\leq Q}\ {\starsum_{\ua\bmod{q}}}
\int_{|\uw|\ll \frac{1}{qQ^{\eta}}}p_{q, \ua}(\uw)
O(Q^{s/2}(\#[\ua])^{-1/2})\,d\uw.
$$
We generally expect $\sum_{q}\sum_{\ua}^*\int |p_{q,\ua}(\uw)|\,d\uw \ll Q^\epsilon$ to hold and that we can replace $\#[\ua]$ by its average $A$. Hence the quantity above should be around $Q^{s/2}A^{-1/2}$ as we posit above.
Furthermore, if one caries out a double Kloosterman refinement, one may save another $Q^{1/2}$ from the $q$-sum.

\begin{remark}
    It is tempting to interpret Heuristic~\ref{heu} as a genuine prediction for the size of the error term in the best possible case. 
    But we should remember that the differencing procedures referred to at the end of the heuristic might alter the actual error term. Moreover better than square-root cancellation in exponential sums may be available such as 
     Ramanujan sums, which appear for quadratic forms in an even number of variables.
\end{remark}

We now compare various versions of delta symbols in dimension two in applications to a pair of quadratic forms with the this heuristic, that is $R=2$ and $\uF=(F_1, F_2)$ are quadratic forms so that $M=P^2$.

\begin{example} \label{eg:GRH}
The nested delta symbol of Munshi~\cite{M} uses a value of $Q\asymp P^{3/2}$ with a function $p_{q, \ua}=p_q$ that allows an average over $\ua$ with $A\asymp Q^{5/3}$. The value $Q^{5/3}$ is drawn from equation (7) of Munshi's paper, where $q_1 q_2 $ different values of $\ua$ are averaged with \(q_1\leq P, q_2\leq P^{1/2}\) and the additional condition that $q_1$ divides a certain quadratic form $Q_2$. This congruence condition can be detected using additive characters modulo $q_1$, resulting in $q_1^2q_2 $ values of \(\ua\), which is typically around $P^{5/2}=Q^{5/3}$ values of \(\ua\) as claimed. Therefore, the heuristic suggests that (without introducing a double Kloosterman refinement) we have an error of size $Q^{s/2} A^{-1/2} \asymp Q^{s/2-5/6}$ which is $P^{(3s-5)/4}$.

Our Theorem~\ref{thm:delta} allows us to take $Q=P^{4/3}$ and to make use of $\ua$-averages with $A\asymp Q$. Thus the heuristic leads to an error term of size $Q^{s/2}A^{-1/2}=P^{2(s-1)/3}$, which is strictly smaller than the heuristic error term $P^{(3s-5)/4}$ from Munshi~\cite{M} as soon as $s\geq 8$.
It is possible that further savings can be obtained as indicated in Remark~\ref{rem:square}.

Both the nested $\delta$-method of Munshi and our Theorem \ref{thm:delta} allow a double Kloosterman refinement, that is to make use of averages over $q$. In 
Munshi~\cite{M}, any non-trivial bounds for character sums in the form $\sum_{q_2\leq P^{1/2}}\chi(q_2)$ for Dirichlet characters $\chi$ with conductors up to $P$ would suffice to handle the case $s=11$, which is the one he considers there. In our case, we need to have more than $Q^{-1/4}$ in savings from the sum $\sum_{q\leq Q}\chi(q)$ for Dirichlet characters $\chi$ with conductor of size as large as $Q^{s/2}$ to handle the case when $s=9$. Due to the large size of the conductors of the characters, this saving in the $q$-sum is more difficult to obtain unconditionally compared to that in Munshi~\cite{M}. In both methods, when $s$ is even, one has more than square-root cancellations due to Ramanujan sums when the moduli $q$ are generic, although typically this advantage is balanced against worse bounds for non-generic moduli.

According to Heuristic~\ref{heu}, the smaller size of $Q$ should be considerably more advantageous for equations of more variables or higher degrees, for which $M$ is larger compared to the size of the variables. These will however bring additional complications by comparison with the quadratic case.
\end{example}

\begin{example}
Another point of comparison is the one-dimensional delta symbol method over $\mathbb Q(i)$, applied to the complex number \(n=F_1+iF_2\), where $F_i$ are quadratic forms in variable of size $P$. In the Gaussian integer version of the method, the denominators $q$  would be Gaussian integers $q_1+iq_2$ of absolute value at most $P$. However, to apply this to a pair \(F_1,F_2\) with coefficients and variables in $\mathbb Z$, we must clear denominators to give rational vectors \(\ua/\operatorname{Nm}(q)\), where the denominator now has size $P^2$. Thus in \eqref{eq:heuristic} we must actually take $Q=P^2$. We have $A\asymp q^2$, and so $Q^{s/2}A^{-1/2}\asymp P^{s-2}$. However, this heuristic is not accurate because, for sums that come from problems over $\mathbb Q[i]$, one should not perform Poisson summation modulo $\operatorname{Nm}(q)$ but rather modulo $q$. This allows Browning--Pierce--Schindler~\cite{BPS} to obtain an error term smaller than $P^{s-4}$ by this technique for certain pairs of quadratic forms. 
\end{example}

\begin{ack}
The authors thank Tim Browning for helpful discussions and providing comments on an earlier version. The authors also would like to thank the anonymous reviewers for their valuable comments and suggestions.
\end{ack}

\begin{oa}
    For the purpose of open access, the
author has applied a Creative Commons Attribution (CC-BY) licence to any Author Accepted
Manuscript version arising from this submission.
\end{oa}

\begin{fund}
The second author was supported by a Leverhulme Early Career Fellowship, and also received support from ERC grant 101054336. 
\end{fund}

\appendix

\section{Dependency graph of the main results}
\begin{figure}[H]
    \centering
\begin{tikzpicture}[scale=0.7, every node/.style={transform shape},
	lemma/.style={rectangle, draw=black!80, fill=white, rounded corners, minimum width=2cm, minimum height=0.5cm, align=center},
	theorem/.style={rectangle, draw=black!80, fill=blue!20, rounded corners, minimum width=2cm, minimum height=1 cm, align=center},
	proposition/.style={rectangle, draw=black!80, fill=blue!10, rounded corners, minimum width=2cm, minimum height=.9 cm, align=center},
	application/.style={rectangle, draw=black!80, fill=white, rounded corners, minimum width=2cm, align=center},
	case/.style={rectangle, draw=black!80, fill=gray!15, rounded corners, minimum width=1.2cm, minimum height=0.7cm, align=center,inner sep=1},
	arrow/.style={-Latex, thick},
	every path/.style={rounded corners=8pt}
	]

	\node[theorem] (thm1) at (-6-3,0) {Theorem \ref{thm:application}};
	\node[theorem] (thm12) at (-10-3,0) {Theorem \ref{thm:delta}};
	\node[lemma] (prop51) at (-10-3,-1.5) {Proposition \ref{prop:delta2}};
	\node[proposition] (lem53) at (-6-3,-1.5) {Lemma \ref{lem:Ni-bounds}};
	
	\node[case] (N0) at (-10,-3) {$N_0(P,\delta)$};
	\node[case] (N1) at (-6-1,-3) {$N_1(P,\delta)$};
	\node[case] (N2) at (-1-2,-3) {$N_2(P,\delta)$: Lemma \ref{N2splitbounds}};
	
	\node[lemma] (lem82) at (-10,-4) {Lemma \ref{N_0asymp}};
	\node[lemma] (lem81) at (-13,-4) {Lemma \ref{lem:major1}};
	
	\node[lemma] (lem91) at (-13,-12+1) {Lemma \ref{dimgrowth}\\ dimension growth};
	\node[lemma] (lem94) at (-13.5,-15+0.5) {Lemma \ref{counting}\\ counting lemmas};
	\node[lemma] (lem92) at (-13.5,-13) {Lemma \ref{lem:intersection-of-dual-varieties}\\ dimension control};
	
	\node[case] (Gc) at (-4.7-1,-4) {$N_{2,2}(P, \delta)$\\Good $\uc$};
	\node[case] (Bc) at (-1.6-1,-5) {$N_{2,1}(P, \delta)$\\ Bad $\uc$};
    \node[case] (U0) at (1.1-1,-6) {$N_{2,0}(P, \delta)$\\  $\uu=\boldsymbol{0}$};
	
	\node[case] (even) at (-5.3-1,-5) {$s$ even};
	\node[case] (odd) at (-4-1,-5) {$s$ odd};
    \node[case] (nzero) at (-2.5,-6-1.5) {$Q^*_{\uc}\not=0$ \text{or} \\$(S^{-1})^T_s\not=0$};
    \node[case] (zero) at (-0.7,-6-2.5) {$Q^*_{\uc}$=0 \text{and}\\ $(S^{-1})^T_s=0$};

	\node[case] (evgood) at (-6.4-1,-6) {$F_{\uc}^*\neq 0$};
	\node[case] (evbad) at (-5.1-1,-6) {$F_{\uc}^*=0$};
	
	\node[case] (oddgood) at (-3.7-1,-6) {$F_{\uc}^*\neq \square$};
	\node[case] (oddbad) at (-2.3-1,-6) {$F_{\uc}^*=\square$};
	
	\node[proposition] (lem132) at (-6.5-2.2,-12.5) {Lemma \ref{q1sum}\\ $S_{q_1,\uc}I_{q_1r}$};
	\node[proposition] (lem133) at (-4.1-2.2,-12.5) {Lemma \ref{rsumbound}\\ $S_{r, d\uc}$ general};
    \node[proposition] (lem134) at (-1.7-2.2,-12.5) {Lemma \ref{lem:splitting-d}\\ $ S_{r,d\uc}$ bad $\uc$};
	\node[proposition] (lem135) at (0.7-2.2,-12.5) {Lemma \ref{cursum}\\  $S_{r,d\uc}$ good $\uc$};
	
	\node[lemma] (lem61) at (-13,-7.8+1) {Lemma \ref{lem:IQ} $I_{q}$};
	\node[lemma] (lem62) at (-13,-8.5+1) {Lemma \ref{p2integral} $p_{1,q}I_{q}$};
	\node[lemma] (lem63) at (-13,-9.3+1) {Lemma \ref{p1integral} $p_{2,q}I_q$};
	\node[lemma] (lem64) at (-13,-10+1) {Lemma \ref{p1derivative} $p_{2,q}{\scriptstyle\frac{d}{dq}}I_q$};
	\node[lemma] (lem75) at (-10.5-1,-16) {Lemma \ref{lem:Dqfinal}\\ $D_q$ general};
	\node[lemma] (lem76) at (-7.5,-16) {Lemmas \ref{lem:type 1}, \ref{hypo:GRH}, \ref{badc}\\ Type I $p$ good $\uc$ (GLH)\\and good $p$ for bad $\uc$};
	\node[lemma] (lem711) at (1.6,-16) {Lemma \ref{weakbound}\\ $S_{q, d\uc}$ general};
	\node[lemma] (lem78) at (-1.2,-16) {Lemma \ref{lem:type II}\\Type II $p$ good $\uc$};
	\node[lemma] (lem712) at (-4,-16) {Lemma \ref{badpairgoodp}\\ $S_{p, p\uc}$ bad $\uc$};
	
	\draw[-] (lem53)--(N0);
	\draw[-] (lem53)--(N1);
	\draw[-] (lem53)--(N2);
	\draw[-] (N2)--(Gc);
	\draw[-] (N2)--(Bc);
	\draw[-] (Gc)--(even);
	\draw[-] (Gc)--(odd);
	\draw[-] (even)--(evgood);
	\draw[-] (even)--(evbad);
	\draw[-] (odd)--(oddgood);
	\draw[-] (odd)--(oddbad);
	\draw[-] (N2)--(U0);
    \draw[-] (Bc)--(nzero);
    \draw[-] (Bc)--(zero);
	\draw[arrow] (lem82) -- (N0);
	\draw[arrow] (lem81) -- (lem82);
    \draw[arrow] (lem81) -- (N0);
	\draw[arrow] ($(lem61.north east)-(1pt, 2pt)$) -- (lem82.south);
	\draw[arrow] (lem75) -- (lem82.south);
	
	\draw[arrow] ($(lem61.north east)-(1pt,2pt)$)--($(N1.south west)+(1pt,2pt)$);
	\draw[arrow] ($(lem62.north east)-(1pt, 2pt)$) --($(N1.south west)+(1pt,2pt)$);
	\draw[arrow] (lem75) -- ($(N1.south west)+(1pt,2pt)$);
	\draw[arrow] ($(lem91.north east)-(1pt, 2pt)$) --node[pos=0.5,draw, fill=white, rounded corners=2pt, inner sep=2pt] {$\mathcal F^*=0$}  ($(N1.south west)+(1pt,2pt)$);
	
	\draw[arrow] (lem132.north) -- (evgood.south);
	\draw[arrow] (lem135.north) -- (evgood.south);
	\draw[arrow] (lem132.north) -- (evbad.south);
	\draw[arrow] (lem133.north) -- (evbad.south);
	\draw[arrow] ($(lem91.north east)-(1pt, 2pt)$) -- (evbad.south);
	\draw[arrow] ($(lem92.north east)-(1pt, 2pt)$) --node[pos=0.07,draw, fill=white, rounded corners=2pt, inner sep=2pt] {$\mathcal F^*=0$} (evbad.south);
	\draw[arrow] ($(lem94.north east)-(1pt, 2pt)$)-- (evbad.south);
	
	\draw[arrow] (lem132.north) -- (oddgood.south);
	\draw[arrow] (lem135.north) -- (oddgood.south);
	\draw[arrow] (lem132.north) -- (oddbad.south);
	\draw[arrow] (lem133.north) -- (oddbad.south);
	\draw[arrow] ($(lem91.north east)-(1pt, 2pt)$) -- (oddbad.south);
	\draw[arrow] ($(lem94.north east)-(1pt, 2pt)$) -- (oddbad.south);
	
	\draw[arrow] ($(lem63.north east)+(0pt,-2pt)$) -- ($(lem132.north)+(-10pt,0pt)$);
	\draw[arrow] ($(lem64.south east)+(-1pt, 2pt)$) -- ($(lem132.north)+(-10pt,0pt)$);
	\draw[arrow] (lem76) -- (lem132);
	
	\draw[arrow] ($(lem711.north west)+(5pt, 0pt)$) -- (lem133.south);
    \draw[arrow] ($(lem711.north west)+(5pt, 0pt)$)--(lem134.south);
    \draw[arrow] (lem712)--(lem134.south);
    \draw[arrow] ($(lem94.south east)+(-1pt, 2pt)$)--(lem134.south);

	\draw[arrow] (lem78) -- (lem135.south);
	\draw[arrow] ($(lem711.north west)+(5pt, 0pt)$) -- (lem135.south);
	\draw[arrow] ($(lem94.south east)+(-1pt, 2pt)$) -- (lem135.south);
	
	\draw[arrow] (lem132.north) --(zero.south west);		
	\draw[arrow] (lem133.north) -- (zero.south west);		
	\draw[arrow] ($(lem91.north east)-(1pt, 2pt)$) --(zero.south west);		
	\draw[arrow] (lem132.north)--(nzero.south);		
	\draw[arrow] (lem134) -- (nzero.south);		

    \draw[arrow] (lem63)--(U0);
    \draw[arrow] (lem711)-- (U0);

	\draw[arrow] (thm12)  -- (prop51);
	\draw[arrow] (prop51) -- (lem53);
	\draw[arrow] (lem53) -- (thm1);

\end{tikzpicture}
 \caption{Main results for the proof of Theorem~\ref{thm:application}.
 }
    \label{fig:Proof structure}
\end{figure}

\bibliographystyle{plain}

\end{document}